\numberwithin{equation}{section}
\renewcommand{\subset}{\subseteq}
\newcommand{\B}{\mathcal{B}}
\newcommand{\F}{\mathcal{F}}
\newcommand{\GG}{\mathbb{G}}
\newcommand{\QQ}{\mathbb{Q}}
\newcommand{\RR}{\mathbb{R}}
\newcommand{\ZZ}{\mathbb{Z}}
\newcommand{\pre}[2]{\tensor[^{#1}]{#2}{}}
\newcommand{\Nbhd}{\mathbf{N}}
\newcommand{\Bor}{\mathsf{Bor}}
\newcommand{\pI}{\mathrm{I}}
\newcommand{\pII}{\mathrm{II}}
\newcommand{\markdef}[1]{\textbf{#1}}
\newcommand{\pow}{\mathscr{P}}
\newcommand{\leng}{\operatorname{lh}}
\DeclareMathOperator{\Deg}{Deg}
\DeclareMathOperator{\conc}{\mathbin{ {}^\smallfrown {} }}
\newcommand{\Cho}{\mathrm{G}}
\newcommand{\fCho}{f\mathrm{G}}
\newcommand{\SC}{\mathrm{SC}}
\newcommand{\SFC}{f\mathrm{SC}}
\newcommand{\ZFC}{{\sf ZFC}}
\newcommand{\PSP}{{\sf PSP}}
\newcommand{\bPSP}{\( \Bor_\kappa \)-{\sf PSP}}
\newcommand{\Lev}{\mathrm{Lev}}
\newcommand{\Succ}{\mathrm{Succ}}
\DeclareMathOperator{\cl}{cl}
\newtheorem{theorem}{Theorem}[section]
\newtheorem{lemma}[theorem]{Lemma}
\newtheorem{corollary}[theorem]{Corollary}
\newtheorem{proposition}[theorem]{Proposition}
\theoremstyle{definition}
\newtheorem{claim}{Claim}[theorem]
\newtheorem*{claim*}{Claim}
\newtheorem{definition}[theorem]{Definition}
\newtheorem{fact}[theorem]{Fact}
\newtheorem{example}[theorem]{Example}
\newtheorem{question}[theorem]{Question}
\newtheorem*{question*}{Question}
\theoremstyle{remark}
\newtheorem{remark}[theorem]{Remark}
\newenvironment{enumerate-(a)}{\begin{enumerate}[label={\upshape (\alph*)}, leftmargin=2pc]}{\end{enumerate}}
\newenvironment{enumerate-(a)-r}{\begin{enumerate}[label={\upshape (\alph*)}, leftmargin=2pc,resume]}{\end{enumerate}}
\newenvironment{enumerate-(A)}{\begin{enumerate}[label={\upshape (\Alph*)}, leftmargin=2pc]}{\end{enumerate}}
\newenvironment{enumerate-(A)-r}{\begin{enumerate}[label={\upshape (\Alph*)}, leftmargin=2pc,resume]}{\end{enumerate}}
\newenvironment{enumerate-(i)}{\begin{enumerate}[label={\upshape (\roman*)}, leftmargin=2pc]}{\end{enumerate}}
\newenvironment{enumerate-(i)-r}{\begin{enumerate}[label={\upshape (\roman*)}, leftmargin=2pc,resume]}{\end{enumerate}}
\newenvironment{enumerate-(I)}{\begin{enumerate}[label={\upshape (\Roman*)}, leftmargin=2pc]}{\end{enumerate}}
\newenvironment{enumerate-(I)-r}{\begin{enumerate}[label={\upshape (\Roman*)}, leftmargin=2pc,resume]}{\end{enumerate}}
\newenvironment{enumerate-(1)}{\begin{enumerate}[label={\upshape (\arabic*)}, leftmargin=2pc]}{\end{enumerate}}
\newenvironment{enumerate-(1)-r}{\begin{enumerate}[label={\upshape (\arabic*)}, leftmargin=2pc,resume]}{\end{enumerate}}
\newenvironment{itemizenew}{\begin{itemize}[leftmargin=2pc]}{\end{itemize}}
\begin{document}

\title{Generalized Polish spaces at regular uncountable cardinals}
\date{\today}

\author{Claudio Agostini}
\address[Claudio Agostini]
{TU Wien,
Institute of Discrete Mathematics and Geometry,
Wiedner Hauptstrasse 8-10/104, 1040 Wien, Austria}
\email[Claudio Agostini]{claudio.agostini@tuwien.ac.at}

\author{Luca Motto Ros}
\address[Luca Motto Ros]
{Universit\`a degli Studi di Torino,
Dipartimento di Matematica ``G. Peano'',
Via Carlo Alberto 10, 10123 Torino, Italy}
\email[Luca Motto Ros]{luca.mottoros@unito.it}

\author{Philipp Schlicht}
\address[Philipp Schlicht]{School of Mathematics,
University of Bristol,
Fry Building.
Woodland Road,
Bristol, BS8~1UG, UK}
\email[Philipp Schlicht]{philipp.schlicht@bristol.ac.uk}

\subjclass[2010]{Primary 03E15; Secondary 54E99} 
\keywords{Generalized metrics,
long Choquet games,
generalized Polish spaces,
generalized standard Borel spaces}
\thanks{We are grateful to the referee for their helpful comments. The work of the first- and second-listed authors was supported by the Gruppo 
Nazionale per le Strutture Algebriche, Geometriche e le loro Applicazioni (GNSAGA) of 
the Istituto Nazionale di Alta Matematica (INdAM) of Italy, and by project PRIN 2017 
``Mathematical Logic: models, sets, computability'', prot. 2017NWTM8R.
The third-listed author has received funding through EPSRC grant number EP/V009001/1, FWF grant number I4039 and the European Union's Horizon 2020 research and innovation programme under the Marie Sk\l odowska-Curie grant agreement No 794020 (IMIC). 
For the purpose of open access, the authors have applied a ‘Creative Commons Attribution' (CC BY) public copyright licence to any Author Accepted Manuscript (AAM) version arising from this submission.}

\begin{abstract}
In the context of generalized descriptive set theory, we systematically compare and 
analyze various notions of Polish-like spaces and standard \( \kappa \)-Borel spaces for 
\( \kappa \) an uncountable (regular) cardinal satisfying \( \kappa^{< \kappa} = \kappa \). 
As a result, we obtain a solid framework where one can develop the theory in full 
generality. We also provide natural characterizations of the generalized Cantor and Baire 
spaces. Some of the results obtained considerably extend previous work 
from~\cite{CoskSchlMR3453772,Gal19,LuckeSchlMR3430247}, and answer some 
questions contained therein.
\end{abstract}

\maketitle

%\setcounter{tocdepth}{2}
%\tableofcontents

%%%%%%%%%%%%
\section{Introduction}

Generalized descriptive set theory is a very active field of research which in recent years received a lot
of attention because of its deep connections with other areas such as model theory, determinacy and
higher pointclasses from classical descriptive set theory, combinatorial set theory, classification of
uncountable structures and non-separable spaces, and so on. Basically, the idea is to
replace \( \omega \) with an uncountable cardinal in the definitions of the Baire space
\( \pre{\omega}{\omega} \) and Cantor space \( \pre{\omega}{2} \), as well as in all other
topologically-related notions. For example, one considers \( \kappa \)-Borel sets (i.e.\ sets in the
smallest \( \kappa^+ \)-algebra generated by the topology) instead of Borel sets, \( \kappa \)-Lindel\"of
spaces (i.e.\ spaces such that all their open coverings admit a \( {<} \kappa \)-sized subcovering) instead of
compact spaces, \( \kappa \)-meager sets (i.e.\ unions  of \( \kappa \)-many nowhere dense sets) instead
of meager sets, and so on. See~\cite{Friedman:2011nx,AM} for a general introduction and the basics of this subject.

The two spaces lying at the core of the theory are then:
\begin{enumerate-(1)}
\item
 the \markdef{generalized Baire space}
 \[
 \pre{\kappa}{\kappa} = \{ x \mid x \colon \kappa \to \kappa \}
 \]
of all sequences with values in \( \kappa \) and length \( \kappa \), equipped with the so-called \markdef{bounded topology} $\tau_b$, i.e.\ the topology generated by the sets of the form
\[
\Nbhd_s = \{ x \in \pre{\kappa}{\kappa} \mid s \subseteq x \}
 \]
with \( s \) ranging in the set \( \pre{<\kappa}{\kappa} \) of sequences with values in \( \kappa \) and length \( {<} \kappa \);
\item
the \markdef{generalized Cantor space}
\[
\pre{\kappa}{2} = \{ x \mid x \colon \kappa \to 2 \}
 \]
of all binary sequences of length \( \kappa \), which is a closed subset of \( \pre{\kappa}{\kappa} \) and is thus equipped with the relative topology.
\end{enumerate-(1)}
When $\kappa$ is regular, these topologies are \markdef{\( \kappa \)-additive}, i.e. they are closed under intersections of  length \( < \kappa \). 
Since the classical Cantor and Baire spaces are second-countable, it is natural to desire that, accordingly,
\( \pre{\kappa}{\kappa} \) and \( \pre{\kappa}{2} \) have weight \( \kappa \): this amounts to require that
\( \kappa^{< \kappa} = \kappa \) or, equivalently, that \( \kappa \) is regular and such that
\( 2^{< \kappa} = \kappa \). Thus such assumption is usually adopted as one of the basic conditions to
develop the theory, and this paper is no exception.

Despite the achievements already obtained by generalized descriptive set theory, there is still a missing
ingredient. The success and strong impact experienced by classical descriptive set theory in other areas
of mathematics is partially due to its wide applicability: the theory is developed for arbitrary
completely metrizable second-countable (briefly: \markdef{Polish}) \markdef{spaces} and for \markdef{standard Borel spaces}, which are
ubiquitous in most mathematical fields. In contrast, generalized descriptive set theory so far concentrated
(with a few exceptions) only on \( \pre{\kappa}{\kappa} \) and \( \pre{\kappa}{2} \), and this constitutes a
potential limitation. Our goal is to fill this gap by considering various generalizations 
of Polish and standard Borel spaces already proposed in the literature, adding a few more natural options, and
then systematically compare them from various points of view (see Figure~\ref{fig:sum_up_relationships}). 
Some of these results substantially extend and improve previous work appeared in~\cite{CoskSchlMR3453772,Gal19}.

Our analysis reveals that when moving to uncountable cardinals $\kappa$, there is no 
preferred option among the possible generalizations of Polishness. Depending on which 
properties one decides to focus on, certain classes behave better than others, but there 
is no single class simultaneously sharing all nice features typically enjoyed by the 
collection of (classical) Polish spaces. For example, if one is interested in maintaining the 
usual closure properties of the given class (e.g.\ products, closed and \( G_\delta \) 
subspaces, and so on), then the ``right'' classes are arguably those of 
(\( \kappa \)-additive) \( \SFC_\kappa \)-spaces or \( \GG \)-Polish spaces---see 
Definitions~\ref{def:strongfairChoquet} and~\ref{def:K-Polish}, 
Theorem~\ref{thm:charK-Polish}, and 
Theorems~\ref{thm:closurepropertiesSFC},~\ref{thm:closureproperties}, 
and~\ref{thm:closurecontopenimages}.  On the other hand, if one is interested in an 
analogue of the Cantor-Bendixson theorem for perfect spaces, then one should better 
move to the class of (\( \kappa \)-additive) \( \SC_\kappa \)-spaces---see 
Definition~\ref{def:strongChoquet}, \cite[Proposition 3.1]{CoskSchlMR3453772}, and 
Theorem~\ref{thm:PSP for perfect SC_kappa kappa-additive}.

These different possibilities and behaviors are reconciled at the level of 
\( \kappa \)-Borel sets:  all the proposed generalizations give rise to the same class of 
spaces up to \( \kappa \)-Borel isomorphism (Theorems~\ref{thm:char_up_to_kBorel_iso} 
and~\ref{th:standardBorel}), thus they constitute  a natural
and solid setup to work with. We also provide a mathematical explanation of the special 
role played in the theory by the generalized Cantor and Baire spaces. On the one hand 
they admit nice characterizations which are analogous to the ones obtained in the 
classical setting by Brouwer and Alexandrov-Urysohn 
(Theorems~\ref{thm:charBairespaceallcardinals}, \ref{thm:charCantornonweaklycompact}, and~\ref{thm:charCantorweaklycompact}). 
On the other hand, when restricting to \( \kappa \)-additive spaces all our classes can be 
described, up to homeomorphism, as collections of simply definable subsets of 
\( \pre{\kappa}{\kappa} \) and \( \pre{\kappa}{2} \) (Theorems~\ref{thm:charK-Polish}, \ref{thm:SC-space+G-Polish}, \ref{thm:charK-Polish-Lindelof}, and~\ref{thm:char-spherically-complete-K-Polish-Lindelof}). 

In a sequel to this paper, we will provide many more concrete examples of (not 
necessarily \( \kappa \)-additive) Polish-like spaces, thus showing that such classes are 
extremely rich and not limited to simple subspaces of \( \pre{\kappa}{\kappa} \). In 
combination with the more theoretical observations presented in this paper, we believe 
that our results provide a wide yet well-behaved setup for developing generalized 
descriptive set theory, thus opening the way to fruitful applications to other areas of 
mathematics.

%%%%%%%%%%%%%%%%%%%%%%%%%%%%%%%

\section{Polish-like spaces} \label{sec:relationship_SC_SFC_Polish}

\subsection{Spaces, games, and metrics}

Throughout the paper we work in \( \ZFC \) and assume that \( \kappa \) is an 
uncountable regular cardinal satisfying \( 2^{< \kappa} = \kappa \) (equivalently: 
\( \kappa^{< \kappa} = \kappa \)).
Unless otherwise specified, from now on all topological spaces are assumed to be regular and Hausdorff, and we will refer to them just as ``spaces''.
In this framework, (classical) Polish spaces can 
equivalently be defined as:
\begin{enumerate}[label={(Pol.\ \upshape \arabic*)}, leftmargin=4.5pc]
\item \label{intro:pol1}
completely metrizable second-countable spaces;
\item \label{intro:pol2}
strong Choquet second-countable spaces, where strong Choquet means that 
the second player has a winning strategy in a suitable topological game, called strong 
Choquet game, on the given space (see below for the precise definition).
\end{enumerate}
Consider now pairs \( ( X, \mathscr{B} )\) with \( X \) a nonempty set and \( \mathscr{B} \) 
a \( \sigma \)-algebra on \( X \). Such pairs are called Borel spaces if \( \mathscr{B} \)
 is countably generated and separates points%
\footnote{A family \( \mathscr{B} \subseteq \pow(X) \) separates points if for all distinct 
\( x,y \in X \) there is \( B \in \mathscr{B} \) with \( x \in B \) and \( y \notin B \).} 
or, equivalently, if there is a metrizable second-countable topology on \( X \) generating 
\( \mathscr{B} \) as its Borel \(\sigma\)-algebra. Standard Borel spaces can then 
equivalently be defined as:
\begin{enumerate}[label={(St.Bor.\ \upshape \arabic*)}, leftmargin=4.5pc]
\item \label{intro:stbor1}
Borel spaces \( ( X, \mathscr{B} ) \) such that there is a Polish topology on \( X \) 
generating \( \mathscr{B} \) as its Borel \(\sigma\)-algebra;
\item \label{intro:stbor2}
Borel spaces which are Borel isomorphic to a Borel subset of \( \pre{\omega}{\omega} \) 
(or any other uncountable Polish space, including \( \pre{\omega}{2} \)).
\end{enumerate}

In~\cite{MottoRos:2013}, a notion of standard \( \kappa \)-Borel space was introduced by 
straightforwardly generalizing the definition given by~\ref{intro:stbor2}. Call a pair 
\( (X, \mathscr{B}) \) a \( \kappa \)-Borel space if \( \mathscr{B} \) is a 
\( \kappa^+ \)-algebra on \( X \) which separates points and admits a $\leq \kappa$-sized
basis. The elements of \( \mathscr{B} \) are then called \markdef{\( \kappa \)-Borel} sets 
of \( X \). If \( (X, \mathscr{B}) \) is a \( \kappa \)-Borel space and \( Y \subseteq X \), then 
setting \( \mathscr{B} \restriction Y = \{ B \cap Y \mid B \in \mathscr{B} \} \) we get that 
\( (Y, \mathscr{B} \restriction Y) \) is again a \( \kappa \)-Borel space.
If \( (X,\mathscr{B}) \) and \( (X', \mathscr{B}') \) are \( \kappa \)-Borel spaces, we say that 
a function \( f \colon X \to X' \) is \markdef{\( \kappa \)-Borel} (\markdef{measurable}) if
 \( f^{-1}(B) \in \mathscr{B} \) for all \( B \in \mathscr{B}' \). A \markdef{\( \kappa \)-Borel 
 isomorphism} between \( (X,\mathscr{B}) \) and \( (X', \mathscr{B}') \) is a bijection \( f \)  
 such that both \( f \) and \( f^{-1} \) are \( \kappa \)-Borel; 
 two \( \kappa \)-Borel spaces are then 
 \markdef{\( \kappa \)-Borel isomorphic} if there is a \( \kappa \)-Borel isomorphism 
 between them.
 Finally, a 
 \markdef{\( \kappa \)-Borel embedding} \( f \colon X \to X' \) is an injective function 
 which is a \( \kappa \)-Borel isomorphism between \( (X, \mathscr{B}) \) and 
 \( (f(X), \mathscr{B'} \restriction f(X)) \).  Notice that every \( T_0 \) topological space 
 \( (X,\tau) \) of weight $\leq \kappa$  can be seen as a \( \kappa \)-Borel space in a canonical 
 way by pairing it  with the collection
\[
\Bor_\kappa(X,\tau)
\]
of all its \( \kappa \)-Borel subsets, i.e.\ with the smallest \( \kappa^+ \)-algebra  
generated by its topology. (We sometimes remove \( \tau \) from this notation if clear 
from the context.) If not specified otherwise, we are always tacitly referring to such 
\( \kappa^+ \)-Borel structure when dealing with \( \kappa \)-Borel isomorphisms and 
\( \kappa \)-Borel embeddings between topological spaces.

 We are now ready to generalize~\ref{intro:stbor2}.

\begin{definition}\label{def:standardBorel}
A \( \kappa \)-Borel space \( (X, \mathscr{B}) \) is \markdef{standard}%
\footnote{Our definition of a standard \( \kappa \)-Borel space is slightly different yet equivalent to the one considered in~\cite{MottoRos:2013}. Indeed, the difference is that in~\cite[Definition 3.6]{MottoRos:2013} a \( \leq \kappa \)-weighted topology generating the standard \( \kappa \)-Borel structure is singled out---see also the discussion after Corollary~\ref{cor:topgrneratingstandardBorel}.}
 if
it is \( \kappa \)-Borel isomorphic to a \( \kappa \)-Borel subset of \( \pre{\kappa}{\kappa} \).
\end{definition}

Generalizations of~\ref{intro:stbor1} were instead not considered 
in~\cite{MottoRos:2013} 
because at that time no natural generalization of the concept of a Polish space was 
introduced yet. But clearly, once we are given a notion of a Polish-like space for 
\( \kappa \) (e.g.\ the ones we are going to consider below, namely  
\( \SC_\kappa \)-spaces, \( \SFC_\kappa \)-spaces, or \( \GG \)-Polish spaces), we can 
accordingly generalize~\ref{intro:stbor1} by considering those \( \kappa \)-Borel spaces 
which admit a topology of the desired type generating \( \mathscr{B} \) as its 
\( \kappa^+ \)-algebra of \( \kappa \)-Borel sets. This yields to several formally different 
definitions: we will however show 
that they all coincide, so that there is no need to notationally and terminologically 
distinguish them at this point.

We now move to some natural generalizations of Polishness.
 In~\cite{CoskSchlMR3453772}, the authors considered a natural generalization 
 of~\ref{intro:pol2} to uncountable regular cardinals \( \kappa \) in order to obtain a notion of 
 ``Polish-like'' spaces, called therein strong \( \kappa \)-Choquet spaces. Let us recall the 
 relevant definitions. 
The (classical) Choquet game \( \Cho_\omega(X) \) on a topological space \( X \) is the 
game where two players \( \pI \) and \( \pII \) alternatively pick nonempty open sets 
\( U_n\) and \( V_n \)
\[
\begin{array}{c|cccccc}
\pI & U_0 &  & U_1 & & \dotsc & \\
\hline
\pII & & V_0 & & V_1 & & \dotsc
\end{array}
 \]
so that \( U_{n+1} \subseteq V_n \subseteq U_n \); player \( \pII \) wins the run if  the set \( \bigcap_{n \in \omega} U_n = \bigcap_{n \in \omega} V_n \) is nonempty.
The strong Choquet game \( \Cho^s_\omega(X) \) is the variant of \( \Cho_\omega(X) \) where \( \pI \) additional plays points \( x_n \in U_n \)
\[
\begin{array}{c|cccccc}
\pI & (U_0, x_0) &  & (U_1, x_1) & & \dotsc & \\
\hline
\pII & & V_0 & & V_1 & & \dotsc
\end{array}
 \]
 and \( \pII \) ensures that \( x_n \in V_n \subseteq U_n \); the winning condition stays the same.

It is (almost) straightforward to generalize such games to uncountable \( \kappa \)'s: just 
let players \( \pI \) and \( \pII \) play for \( \kappa\)-many rounds, and still declare \( \pII \) 
as the winner of the run if the final intersection \( \bigcap_{\alpha < \kappa} U_\alpha = \bigcap_{\alpha < \kappa} V_\alpha \) is nonempty.
However, since \( \kappa > \omega \) we now have to decide what should happen at limit levels
\( \gamma < \kappa \). Firstly, since the space \( X \) is not necessarily \( \kappa \)-additive we require
\( U_\gamma, V_\gamma \) to be just open \emph{relatively to what has been played so far},
i.e.\ relatively to \( \bigcap_{\alpha < \gamma} U_\alpha = \bigcap_{\alpha< \gamma} V_\alpha \) (this obviously
applies to all rounds with index \( \gamma \geq \omega \), not only to the limit ones). A more subtle
issue is deciding who wins the game if at some limit \( \gamma < \kappa \) we already have
\( \bigcap_{\alpha < \gamma} U_\alpha = \bigcap_{\alpha < \gamma} V_\alpha = \emptyset \), so that the
game cannot continue from that round on. Following~\cite{CoskSchlMR3453772}, the \markdef{\( \kappa \)-Choquet game} \( \Cho_\kappa(X) \) and the \markdef{strong} \markdef{\( \kappa \)-Choquet game} \( \Cho^{s}_\kappa(X) \) on \( X \) are defined
by letting \( \pI \) win in such situations.  In other words, \( \pII \) has to ensure that for all limit
\( \gamma \leq \kappa \) (thus including, in particular,  the final stage \( \gamma = \kappa \)), the intersection
\( \bigcap_{\alpha < \gamma} U_\alpha = \bigcap_{\alpha< \gamma} V_\alpha \) is nonempty. This leads
to the following definition.

\begin{definition} \label{def:strongChoquet}
A space \( X \) is called \markdef{strong \( \kappa \)-Choquet} (or \markdef{\( \SC_\kappa \)-space}) if 
it has%
\footnote{Notice that we are deliberately allowing our spaces to have weight strictly 
smaller than \( \kappa \). Although this might sound unnatural at first glance, it allows us 
to state some of our results in a more elegant form and is perfectly coherent with what 
is done in the classical case, where one includes among Polish spaces also those of finite 
weight.}
weight \( \leq \kappa \) and
player \( \pII \) has a winning strategy in \( \Cho^s_\kappa(X) \).
\end{definition}

The other natural option, not yet considered so far in the literature, is to make the game more fair by
deciding that \( \pI \) partially shares the burden of having a nonempty intersection and takes care of
limit levels \( \gamma < \kappa \). In other words: \( \pII \) wins if he can guarantee that
\( \bigcap_{\alpha< \kappa} U_\alpha = \bigcap_{\alpha< \kappa} V_\alpha \neq \emptyset \),
\emph{provided that for all limit \( \gamma < \kappa \) the intersection
\( \bigcap_{\alpha < \gamma} U_\alpha = \bigcap_{\alpha< \gamma} V_\alpha \) is nonempty} (if this fails
at some limit stage before \( \kappa \), then the run terminates and \( \pII \) automatically wins). We call this version of the Choquet game
\markdef{fair \( \kappa \)-Choquet game} and denote it by \( \fCho_\kappa(X) \), while its further variant
with player \( \pI \) additionally choosing points is called \markdef{strong fair \( \kappa \)-Choquet
game} and is denoted by \( \fCho^s_\kappa(X) \), accordingly.

\begin{definition} \label{def:strongfairChoquet}
A space \( X \) is called \markdef{strong fair \( \kappa \)-Choquet} (or  \( \SFC_\kappa \)\markdef{-space}) if
it has weight \( \leq \kappa \)
and
player \( \pII \) has a winning strategy in \( \fCho^s_\kappa(X) \).
\end{definition}

Since it is more difficult for player \( \pII \) to win the strong \( \kappa \)-Choquet game 
than its fair variant, it is clear from the definition that every \( \SC_\kappa \)-space is in 
particular an \( \SFC_\kappa \)-space. Moreover, both \( \pre{\kappa}{\kappa} \) and 
\( \pre{\kappa}{2} \) are trivially \( \SC_\kappa \)-spaces (any legal strategy where \( \pII \) 
plays basic open sets is automatically winning in the corresponding strong 
\( \kappa \)-Choquet games), and thus they are also \( \SFC_\kappa \)-spaces.

\begin{remark} \label{rmk:strategyrangeinbasis}
Although it is not part of the rules in Choquet-like games, in the above definitions one 
could equivalently require the players to pick only open sets from any given basis of the 
topological space (possibly intersected with all previous moves, if the space is not 
$\kappa$-additive)---see~\cite[Lemma 2.5]{CoskSchlMR3453772}. This restriction will 
turn out to be useful in some of the proofs below.
\end{remark}

We next move to generalizations of~\ref{intro:pol1}. This requires to find suitable 
analogues of metrics over the real line for spaces that are not necessarily first countable. 
One solution is to consider metrics over a structure other than $\RR$. 
Consider a totally ordered%
\footnote{This means that the order \( \leq_\GG \) is linear and translation-invariant (on both sides).} 
(Abelian) group 
\[
\GG = \langle G, +_\GG, 0_\GG, \leq_\GG \rangle
\]
with \markdef{degree} \( \Deg(\GG) = \kappa \), where \( \Deg(\GG) \) denotes
the coinitiality of the positive cone \( \GG^+ = \{ \varepsilon \in \GG \mid  0_\GG <_\GG \varepsilon \}  \) of \( \GG \).%
\footnote{This is also called the \markdef{base number} of \( \GG \) in \cite{Gal19} and the \markdef{character} of \( \GG \) in~\cite{Sik50}.}
A \markdef{\( \GG \)-metric} on a nonempty space \( X \) is then a function \( d \colon X^2 \to \GG^+\cup\{0_\GG\} \) satisfying the usual rules of a distance function: for all \( x,y,z \in X \)
\begin{itemizenew}
\item
\( d(x,y) = 0_\GG \iff x = y \)
\item
\( d(x,y) = d(y,x) \)
\item
\( d(x,z) \leq_\GG d(x,y) +_\GG d(y,z) \).
\end{itemizenew}
Every \( \GG \)-metric space \( (X,d) \) is naturally equipped with the (\( d \)-)topology generated by its open balls
\[
B_d(x,\varepsilon) = \{ y \in X \mid d(x,y) <_\GG \varepsilon \},
 \]
where \( x \in X \) and \( \varepsilon \in \GG^+ \). If \( X \) is already a topological space, we say that the \( \GG \)-metric \( d \) is compatible with the topology of \( X \) if the latter coincides with the \( d \)-topology. 
A topological space is called \markdef{$\GG$-metrizable} if it admits a compatible $\GG$-metric.

Let \( (X,d) \) be a \( \GG \)-metric space. A sequence%
\footnote{Notice that when speaking about Cauchy sequences and Cauchy-completeness we always refer to sequences of length \( \kappa = \mathrm{Deg}(\GG) \).}
\( (x_i)_{i < \kappa} \) of points from \( X \) is \mbox{(\( d \)-)Cauchy} if
\[
\forall \varepsilon \in \GG^+ \, \exists \alpha < \kappa \, \forall \beta,\gamma \geq \alpha \, (d(x_\beta , x_\gamma) <_\GG \varepsilon).
 \]
The space \( (X,d) \) (or the \( \GG \)-metric \( d \)) is Cauchy-complete if every Cauchy sequence \( (x_i)_{i < \kappa} \) converges to some (necessarily unique) \( x \in X \), that is,
\[
\forall \varepsilon \in \GG^+ \, \exists \alpha < \kappa\, \forall \beta \geq \alpha \, (d(x_\beta , x) <_\GG \varepsilon ).
\]

We are now ready to generalize~\ref{intro:pol1}.

\begin{definition} \label{def:K-Polish}
A space \( X \) is \markdef{\( \GG \)-Polish} if it is completely \( \GG \)-metrizable and
 has weight (equivalently, density character) \(\leq \kappa \).
\end{definition}

\begin{remark}
These definitions are not new. Spaces with generalized metrics taking values in a 
structure different from $\RR$ have been introduced in \cite{Sik50} and have been 
widely studied since then, see for 
example \cite{Shu_Tang1964, ReichelMR0458373, NyikosReichelMR1173266}. To the 
best of our knowledge, the systematic study of \emph{completely} $\GG$-metrizable 
spaces is instead of more recent interest, and so far it has been developed mainly 
in~\cite{Gal19}.
\end{remark}

Clearly, \( \GG \)-Polish spaces are closed under closed subspaces. 
Moreover, the space \( \pre{\kappa}{\kappa} \) (endowed with the bounded topology) is always \( \GG \)-Polish, as witnessed by the \( \GG \)-metric
\begin{equation} \label{eq:metricBaire}
d(x,y) =
\begin{cases}
0_\GG & \text{if }  x= y \\
r_\alpha & \text{if }x \restriction \alpha = y \restriction \alpha \text{ and } x(\alpha) \neq y(\alpha)
\end{cases}
 \end{equation}
where \( (r_\alpha)_{\alpha < \kappa} \) is a strictly decreasing sequence coinitial in \( \GG^+ \) (the choice of such a sequence is irrelevant).
It follows that all closed subspaces of \( \pre{\kappa}{\kappa} \), notably including 
\( \pre{\kappa}{2} \), are \( \GG \)-Polish for any $\GG$ as above. Notice also that 
commutativity of the group operation is not strictly needed in order to define the metric, but it is usually required to 
ensure that \( \GG \) itself form a \( \GG \)-metric space with distance function \( d(x,y) = | x -_\GG y |_{\GG} \).
Sometimes it is further required that \( \GG \) is Cauchy-complete with respect to the above metric: in this case \( \GG \) itself would become \( \GG \)-Polish.

We decided to work with the theory of metrics over a totally ordered Abelian group $\GG$ since it is arguably the most common choice in literature.
However, other choices are possible. For example, Reichel in \cite{ReichelMR0458373} 
studied metrics with values in a totally ordered Abelian semigroup with minimum. 
Coskey and Schlicht in \cite{CoskSchlMR3453772} considered (ultra)metrics with values 
in a linear order (where the operation $+_\GG$ is the \textit{minimum} function). Or 
$\GG$ can be non-Abelian as well. All these choices would essentially lead to the same 
results presented here for Abelian groups: see 
Remark~\ref{rmk:choice_of_structure_for_GG}. The reason why we decided to follow the 
common practice of sticking to totally ordered Abelian groups is that metrics over 
groups grant most of the properties of standard metrics.
For example, it is easy to show that for every $x\in X$ and every sequence 
$( r_\alpha )_{\alpha<\kappa}$ coinitial in $\GG^+$, the family
$\{B_d(x,r_\alpha)\mid \alpha<\kappa\}$ is a local basis of $x$ which is well-ordered by reverse 
inclusion $\supseteq$. If one wants to consider metrics taking values in less structured 
sets, like monoids or semigroups, this condition must be explicitly added to the axioms 
that define the metric (see e.g.\ \cite{ReichelMR0458373}).

We conclude this section by addressing another natural question: is there any advantage 
in choosing a particular totally ordered Abelian group \( \GG \) over the others?
In the countable case, $\RR$ plays a key role among all the possible choices of range for 
the metrics: for example, every connected (real-valued) metric space does not admit a 
metric with range contained in $\QQ$. In the uncountable case, the situation is the 
opposite: different choices of \( \GG \) almost always lead to the same class of spaces, 
making less relevant the actual choice of the range of the metrics.
For example, it is well-known that given an uncountable regular cardinal $\kappa$ 
and two totally ordered Abelian groups $\GG$ and $\GG'$ of degree 
$\Deg(\GG)=\Deg(\GG')=\kappa$, a space of weight $\leq \kappa$ is 
$\GG$-metrizable, if and only if it is $\GG'$-metrizable if and only if it is 
$\kappa$-additive (see Theorem~\ref{thm:charK-metrizable}, which is taken 
from~\cite{Sik50}, but see also~\cite{Shu_Tang1964}). 
In Theorem~\ref{thm:charK-Polish} and Corollary~\ref{cor:Gisirrelevant}, we show that a 
similar statement holds for \emph{completely} \( \GG \)-metrizable spaces, hence the notion of 
$\GG$-Polish as well is independent from the choice of the actual $\GG$.

The fact that there is no preferred structure for the range of our generalized metrics 
implies that every possible generalization-to-level-\( \kappa \) of the reals yields to an 
example of 
$\GG$-Polish 
space (as long as this generalization preserves properties like being Cauchy-complete 
with respect to its canonical metric over itself).
For example, this applies to the long reals introduced by Klaua in \cite{klauaMR0146090} 
and studied by Asper\'o and Tsaprounis in \cite{AsperoTsaprounisMR3780585}, or to the 
generalization of $\RR$ introduced in \cite{Gal19} using the surreal numbers. 
See also \cite{DalesWoodinMR1420859} for other examples of $\GG$-Polish spaces, as 
well as methods to construct Cauchy-complete totally ordered fields.

%%%%%%%%%%%%
\subsection{Relationships}

The goal of this subsection is to compare the proposed classes of Polish-like (topological) 
spaces; in Section~\ref{sec:standardBorel} we will extend our analysis to encompass the 
various generalizations of standard (\( \kappa \)-)Borel spaces.

\begin{definition}
Let \( X \) be a  space. A set \( A \subseteq X \) is \( G^\kappa_\delta \) if it can be written as a \( \kappa \)-sized intersection of open sets of \( X \); it is \( F^\kappa_\sigma \) if it can be written as a \( \kappa \)-sized union of closed sets of \( X \) or, equivalently, if its complement is \( G^\kappa_\delta \).
A $G^\kappa_\delta$ set is called \markdef{proper} if it is not $F^\kappa_\sigma$.
\end{definition}

It is easy to construct $\SFC_\kappa$-subspaces of, say, the generalized Cantor space \( \pre{\kappa}{2} \) which are properly \( G^\kappa_\delta \): consider e.g.\ 
\begin{equation} \label{eq:Gdeltaofcantor} \{ x \in \pre{\kappa}{2} \mid \forall \alpha \exists \beta \geq \alpha \, (x(\beta) = 1) \} . 
\end{equation}
As in the classical case, this specific example is particularly relevant.

\begin{fact}\label{fct:Baire_is_G_delta_of_Cantor}
The generalized Baire space $\pre{\kappa}{\kappa}$ is homeomorphic to the $G^\kappa_\delta$ subset of  $\pre{\kappa}{2}$ from~\eqref{eq:Gdeltaofcantor}.
\end{fact}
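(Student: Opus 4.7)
The plan is to mimic the classical proof, encoding each $x\in\pre{\kappa}{\kappa}$ as a $\kappa$-length binary sequence consisting of blocks of $0$'s of lengths $x(\gamma)$ separated by isolated $1$'s. Concretely, for $x\in\pre{\kappa}{\kappa}$ define recursively $s_0 = 0$, $s_{\gamma+1} = s_\gamma + x(\gamma) + 1$, and $s_\gamma = \sup_{\delta<\gamma} s_\delta$ at limits. Since each $x(\delta)<\kappa$ and $\kappa$ is regular, every $s_\gamma$ with $\gamma<\kappa$ is an ordinal below $\kappa$; strict monotonicity at successors combined with the length being $\kappa$ forces $\sup_{\gamma<\kappa} s_\gamma = \kappa$. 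Set $f(x)(\alpha)=1$ iff $\alpha = s_\gamma + x(\gamma)$ for some $\gamma<\kappa$. The positions $p_\gamma := s_\gamma+x(\gamma)$ of $1$'s are strictly increasing and cofinal in $\kappa$, so $f(x)\in\pre{\kappa}{2}$ belongs to the set $A$ from equation~\eqref{eq:Gdeltaofcantor}.

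For the inverse, given $y\in A$, let $(p_\gamma)_{\gamma<\kappa}$ be the increasing enumeration of positions of $1$'s in $y$ (there are exactly $\kappa$ many by regularity of $\kappa$). Define $g(y)(\gamma) := p_\gamma - s_\gamma$ via ordinal subtraction, where $s_0 = 0$, $s_{\delta+1} = p_\delta + 1$, and $s_\gamma = \sup_{\delta<\gamma} p_\delta$ at limit $\gamma$. A routine induction shows that $s_\gamma \leq p_\gamma$ (so the subtraction is defined) and $g(y)(\gamma)<\kappa$. The key identity at limit $\gamma$ is that in the image of $f$ the sup $\sup_{\delta<\gamma}p_\delta$ coincides with $s_\gamma$: this follows from $p_\delta < s_{\delta+1} = p_\delta+1 \leq s_\gamma$ for all $\delta<\gamma$, together with the fact that shifting a strictly increasing cofinal sequence by $+1$ does not change its supremum. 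These verifications yield $f\circ g = \id_A$ and $g\circ f = \id_{\pre{\kappa}{\kappa}}$.

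To see that $f$ is a homeomorphism, note that for every $s\in\pre{<\kappa}{\kappa}$ the computation of the $s_\gamma$'s and $p_\gamma$'s for $\gamma<\leng(s)$ depends only on $s$, so there is a canonical $t(s)\in\pre{<\kappa}{2}$ of length $s_{\leng(s)}$ with $f(\Nbhd_s) = \Nbhd_{t(s)}\cap A$; this witnesses continuity of $f^{-1}$. Conversely, given $x\in f^{-1}(\Nbhd_t\cap A)$ with $t$ of length $\ell$, pick a successor $\alpha<\kappa$ with $s_\alpha \geq \ell$ (possible since $\sup_\alpha s_\alpha = \kappa$) and set $s:=x\restriction\alpha$: then $f(\Nbhd_s)\subseteq\Nbhd_t\cap A$, proving $f$ is continuous. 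The main obstacle is handling limit stages correctly in both the encoding and the decoding: one must ensure that at a limit $\gamma<\kappa$ the partial sums $s_\gamma$ stay below $\kappa$ (which uses regularity) and that the decoded ``block'' $g(y)(\gamma) = p_\gamma - \sup_{\delta<\gamma} p_\delta$ may be nonzero, so that the enumeration of $1$'s need not be continuous at limits yet still matches the chosen encoding.
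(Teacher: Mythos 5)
Your proof is correct: the paper states this Fact without proof, and your argument is precisely the expected generalization of the classical block-coding homeomorphism between \( \pre{\omega}{\omega} \) and the set of binary sequences with infinitely many \( 1 \)'s. The points that actually need care at the level of \( \kappa \) — that regularity keeps the partial sums \( s_\gamma \) below \( \kappa \), that the suprema defining \( s_\gamma \) at limits agree in the encoding and the decoding, and that basic open sets map to basic open sets in both directions — are all addressed correctly.
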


The following is a well-known fact, but we reprove it here for the reader's convenience.

\begin{lemma}\label{lem:closed_are_G_delta}
Every closed subset \( C \) of a space\footnote{Recall that all spaces are tacitly assumed to be regular Hausdorff.} $X$ of weight $\leq \kappa$ is $G^\kappa_\delta$ in $X$.
\end{lemma}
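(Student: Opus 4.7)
The plan is to use the two hypotheses in the statement, namely regularity (Hausdorff is not actually needed here) and weight $\leq \kappa$, in tandem. First I would fix a basis $\mathcal{B} = \{U_\alpha \mid \alpha < \kappa\}$ of $X$ of size $\leq \kappa$. Since $X \setminus C$ is open and $X$ is regular, for every $x \in X \setminus C$ one can find a basic open set $U_\alpha \in \mathcal{B}$ with $x \in U_\alpha$ and $\overline{U_\alpha} \subseteq X \setminus C$, i.e.\ $\overline{U_\alpha} \cap C = \emptyset$. Let
\[
I = \{ \alpha < \kappa \mid \overline{U_\alpha} \cap C = \emptyset \}.
\]

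Next I would check the two inclusions of the equality
\[
C = \bigcap_{\alpha \in I} (X \setminus \overline{U_\alpha}).
\]
The inclusion $\subseteq$ is immediate from the definition of $I$. For the reverse inclusion, if $y \notin C$, then by the previous paragraph some $\alpha \in I$ satisfies $y \in U_\alpha \subseteq \overline{U_\alpha}$, hence $y \notin X \setminus \overline{U_\alpha}$, so $y$ is not in the intersection. Thus $C$ is displayed as an intersection of $|I| \leq \kappa$ open sets. If $|I| < \kappa$, I would pad the family with copies of the open set $X$ itself (or with a fixed open set containing $C$) to reach exactly $\kappa$ many terms, matching the literal reading of the definition of $G^\kappa_\delta$.

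There is no real obstacle: the only subtle point is ensuring that the open neighborhood witnessing regularity can be chosen inside the fixed basis $\mathcal{B}$, but this is standard — given any open neighborhood $V$ of $x$ whose closure avoids $C$, regularity provides a further open $W \ni x$ with $\overline{W} \subseteq V$, and any basic $U_\alpha \in \mathcal{B}$ with $x \in U_\alpha \subseteq W$ then satisfies $\overline{U_\alpha} \subseteq \overline{W} \subseteq V$, hence $\overline{U_\alpha} \cap C = \emptyset$, as required.
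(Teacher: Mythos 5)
Your proof is correct and follows essentially the same argument as the paper: fix a $\leq\kappa$-sized basis, use regularity to separate each point of $X \setminus C$ from $C$ by a basic open set with closure disjoint from $C$, and write $C$ as the intersection of the complements of those closures. The extra details you supply (verifying both inclusions, padding to exactly $\kappa$ terms, and choosing the witness inside the basis) are all sound but routine.
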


\begin{proof}
Let $\B$ be a basis for \( X \) of size $\leq \kappa$.
By regularity of \( X \), for every $x\in X\setminus C$ there is $U\in \B$ such that $x\in U$ and $\cl(U)\subset X\setminus C$.
Thus
\[ 
C=\bigcap \{X\setminus \cl(U)\mid {U \in \B} \wedge {\cl(U) \cap C = \emptyset}\}. \qedhere
 \] 
\end{proof}

\begin{proposition} \label{prop:G_deltasubspaces}
If \( X \) is an  \( \SFC_\kappa \)-space and \( Y \subseteq X \) is \( G^\kappa_\delta \), then \( Y \) is an  \( \SFC_\kappa \)-space as well.
\end{proposition}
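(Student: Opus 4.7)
The plan is to simulate a play of $\fCho^s_\kappa(Y)$ inside $X$, using a winning strategy $\sigma$ for $\pII$ in $\fCho^s_\kappa(X)$, while ensuring that the final intersection produced is forced inside $Y$. Since $X$ has weight $\leq \kappa$, so does any subspace, so only the game-theoretic condition needs attention. Write $Y = \bigcap_{\alpha < \kappa} U_\alpha$ with each $U_\alpha$ open in $X$, and fix such a $\sigma$. We shall describe a winning strategy $\sigma'$ for $\pII$ in $\fCho^s_\kappa(Y)$ by recursion on the rounds, maintaining at each stage $\alpha$ a parallel legal partial run of $\fCho^s_\kappa(X)$ following $\sigma$.

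At stage $\alpha$, suppose $\pI$ plays $(V_\alpha, y_\alpha)$ in $Y$, so that $V_\alpha$ is open relatively to $\bigcap_{\beta < \alpha} W_\beta$ (with $W_\beta$ the prior moves of $\pII$ in $Y$) and $y_\alpha \in V_\alpha$. Picking an open $V'_\alpha$ of $X$ with $V'_\alpha \cap Y = V_\alpha \cdot$ (mod the prior intersection), I feed $\sigma$ the simulated move $(V^*_\alpha, y_\alpha)$, where
\[
V^*_\alpha = V'_\alpha \cap U_\alpha \cap \bigcap_{\beta < \alpha} W'_\beta,
\]
with $W'_\beta$ the prior responses of $\sigma$ in the simulation. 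This $V^*_\alpha$ is open relatively to $\bigcap_{\beta < \alpha} W'_\beta$, and contains $y_\alpha$ since $y_\alpha \in V_\alpha \subseteq V'_\alpha$, $y_\alpha \in Y \subseteq U_\alpha$, and $y_\alpha \in \bigcap_{\beta < \alpha} W_\beta \subseteq \bigcap_{\beta < \alpha} W'_\beta$; so the simulated move is legal. Let $W'_\alpha = \sigma(\ldots, (V^*_\alpha, y_\alpha))$ and set $\sigma'$ to respond with $W_\alpha = W'_\alpha \cap Y$. A quick check shows $W_\alpha$ is open in $\bigcap_{\beta < \alpha} W_\beta = \bigcap_{\beta < \alpha} W'_\beta \cap Y$, contains $y_\alpha$, and is contained in $V_\alpha$.

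It remains to verify that $\sigma'$ wins. At every limit $\gamma < \kappa$ where the $Y$-intersection $\bigcap_{\beta < \gamma} W_\beta$ is nonempty, the same holds in $X$ since $\bigcap_{\beta < \gamma} W_\beta \subseteq \bigcap_{\beta < \gamma} W'_\beta$, so the $X$-simulation remains alive and $\sigma$ continues to win; if some limit intersection in $Y$ is empty, $\sigma'$ wins vacuously by the rules of the fair game. Assuming all limit intersections in $Y$ are nonempty, $\sigma$ yields $\bigcap_{\alpha < \kappa} W'_\alpha \neq \emptyset$; and since $W'_\alpha \subseteq V^*_\alpha \subseteq U_\alpha$ for every $\alpha$, this intersection lies in $\bigcap_\alpha U_\alpha = Y$, so it coincides with $\bigcap_\alpha W_\alpha$, which is therefore nonempty as required. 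The main subtlety is the relativization at limit rounds, namely confirming that each $V^*_\alpha$ obtained by intersecting with $U_\alpha$ and the prior $X$-moves is legal in the fair Choquet game, and that nonemptiness transfers correctly between the two parallel runs at limits; once this bookkeeping is set up, the argument runs smoothly.
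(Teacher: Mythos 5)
Your proof is correct and takes essentially the same route as the paper's: simulate the run in $X$ by feeding the strategy each of player $\pI$'s moves intersected with the $\alpha$-th open set of the $G^\kappa_\delta$ presentation and the prior simulated moves, then return the response intersected with $Y$, using $W'_\alpha \subseteq U_\alpha$ to force the final intersection into $Y$. Your set $V^*_\alpha$ plays exactly the role of the paper's $\tilde{U}_\alpha \subseteq O_\alpha$, and the transfer of nonemptiness at limit stages is handled identically.
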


\begin{proof}
Let \( O_\alpha \subseteq X \) be open sets such that \( Y = \bigcap_{\alpha < \kappa} O_\alpha \) and fix a winning strategy \( \tau \) for \( \pII \) in \( \fCho^s_\kappa(X) \).
We define (by  recursion on the round) a strategy for \( \pII \) in \( \fCho^s_\kappa(Y) \) as follows. 
Suppose that until a certain round $\alpha<\kappa$, player $\pI$ has played a sequence \( \langle  ({U}_\beta, x_\beta) \mid \beta\leq \alpha \rangle \)  following the rules of \( \fCho^s_\kappa(Y) \).
Each set \( U_\beta \) is open in $Y$ relatively to the intersection of all previous moves, 
hence it can be seen as the intersection of $Y$ (and all previous moves of $\pI$) with 
some  open set of $X$. Proceeding recursively, we can thus associate to each $U_\beta$ 
a set $\tilde{U}_\beta \subseteq O_\beta$ such that \( U_\beta = \tilde{U}_\beta \cap Y \), 
where $\tilde{U}_\beta$ is open in $X$ relatively to the intersection 
$\bigcap_{\zeta<\beta}\tilde{U}_\zeta$ of all previous sets  (this can be done because 
each \( O_\beta \) is open in $X$).
Then \( \langle  (\tilde{U}_\beta, x_\beta) \mid \beta\leq \alpha \rangle \) is a legal sequence of moves for $\pI$ in \( \fCho^s_\kappa(X) \). 
If \( V_\alpha \) is what \(\tau\) requires \( \pII \) to play against 
\( \langle  (\tilde{U}_\beta , x_\beta) \mid \beta\leq \alpha \rangle \) in 
\( \fCho^s_\kappa(X) \), we get that \( V_\alpha \cap Y \neq \emptyset \), as witnessed by 
\( x_\alpha \) itself, and \( V_\alpha \subseteq \tilde{U}_\alpha \subseteq O_\alpha \): so 
we can let \( \pII \) respond to \( \pI \)'s move in the game \( \fCho^s_\kappa(Y) \) on 
\( Y \) with \( V_\alpha \cap Y \). 
By construction, the resulting strategy for \( \pII \) is legal with respect to the rules of 
\( \fCho^s_\kappa(Y) \). Moreover, if for all limit \( \gamma < \kappa \) the intersection 
\( \bigcap_{\alpha < \gamma} (V_\alpha \cap Y) \) is nonempty, then so is 
\( \bigcap_{\alpha< \gamma} V_\alpha \):
since \( \tau \) is winning in \( \fCho^s_\kappa(X) \), this means that \( \bigcap_{\alpha< \kappa} V_\alpha \neq \emptyset \), whence by \( V_\alpha \subseteq O_\alpha \) we also get
\[
\bigcap_{\alpha < \kappa} (V_\alpha \cap Y) = \bigg(\bigcap_{\alpha < \kappa} V_ \alpha \bigg) \cap Y =  \bigcap_{\alpha < \kappa} V_ \alpha  \cap  \bigcap_{\alpha < \kappa} O_\alpha = \bigcap_{\alpha< \kappa} V_\alpha \neq \emptyset. \qedhere
 \]
\end{proof}

Recall that given an infinite cardinal \( \nu \), a topological space \( X \) is called \markdef{\( \nu \)-additive} if its topology is closed under intersections of  length \( < \nu \).
Every topological space is \( \omega \)-additive, and as already noticed the generalized Baire 
and Cantor spaces \( \pre{\kappa}{\kappa}, \pre{\kappa}{2} \) are both 
\( \kappa \)-additive when \( \kappa \) is regular. Moreover, if \( X \) is regular and 
\( \nu \)-additive for some \( \nu > \omega \), then \( X \) is zero-dimensional  (i.e.\ it has a 
basis consisting of clopen sets). Indeed, fix a point \( x \in X \) and an open neighborhood 
\( U \) of it. Using regularity, recursively construct a sequence \( (U_n)_{n \in \omega} \) of 
open neighborhoods of \( x \) such that \( U_0 = U \) and \( \cl (U_{n+1}) \subseteq U_n \). 
Then \( V = \bigcap_{n \in \omega} U_n = \bigcap_{n \in \omega} \cl(U_n) \) contains 
\( x \), it is closed, and it is also open by \( \nu \)-additivity (here we use 
\( \nu > \omega \)). Thus \( X \) admits a basis consisting of clopen sets, as required. Notice 
also that if \( X \) has weight \( \kappa \), then such a clopen basis can be taken of size 
\( \kappa \) as well.

Recall also the correspondence between closed subsets of \( \pre{\kappa}{\kappa} \) and 
trees on \( \kappa \). Given an ordinal  \(\gamma\) and a nonempty set \( A \), we denote 
by \( \pre{\gamma}{A} \) the set of all sequences of length \( \gamma \) and values in 
\( A \). We then set \( \pre{< \kappa}{\kappa} = \bigcup_{\gamma < \kappa} \pre{\gamma}{\kappa} \), and for \( s \in \pre{< \kappa}{\kappa} \) we let \( \leng(s) \) be the length of 
\( s \), that is, the unique ordinal \( \gamma < \kappa \) such that \( s \in \pre{\gamma}{\kappa} \). The concatenation between two sequences \( s,t \) is denoted by 
\( s {}^\smallfrown{} t \), and to simplify the notation we just write 
\( s {}^\smallfrown{} i \) and \( i {}^\smallfrown{} s \) if \( t = \langle i \rangle \) is a 
sequence of length \( 1 \). 
If \( \alpha \leq \leng(s) \), we denote by \( s \restriction \alpha \) the restriction of \( s \) 
to its first \( \alpha \)-many digits. We write \( s \subseteq t \) to say that \( s \) is an initial 
segment of \( t \), that is, \( \leng(s) \leq \leng(t) \) and \( s = t \restriction \leng(s) \). The 
sequences \( s \) and \( t \) are \markdef{comparable} if \( s \subseteq t \) or 
\( t \subseteq s \), and \markdef{incomparable} otherwise.
A set \( T \subseteq \pre{< \kappa}{\kappa} \) is called \markdef{tree} if it is closed under 
initial segments. For \( \alpha < \kappa \) we denote by \( \mathrm{Lev}_\alpha(T) \) the 
\(\alpha\)-th level of the tree \( T \), namely,
\[ 
\mathrm{Lev}_\alpha(T) = \{ t \in T \mid \leng(t) = \alpha \} . 
\]
Given \( s \in T \), we also define the localization of \( T \) at \( s \) as 
\[ 
T_s = \{  t \in T \mid t \text{ is comparable with } s \} . 
\]
The bounded topology on \( \pre{\kappa}{\kappa} \) is the unique topology on such a space 
with the following property:
a set \( C \subseteq \pre{\kappa}{\kappa} \) is closed if and only it there is some tree
\( T \subseteq \pre{<\kappa}{\kappa} \) such that \( C = [T] \), where  the \markdef{body} \( [T] \) of the tree \( T \) is defined by
\[
[T] = \{  x \in \pre{\kappa}{\kappa} \mid \forall \alpha < \kappa \, ( x \restriction \alpha \in T) \}.
 \]
The above tree  \( T \) can always be required to be \markdef{pruned}, that is, such that for all 
\( s \in T \) and \( \leng(s) \leq \alpha < \kappa \) there is \( s \subseteq t \in T \) such that 
\( \leng(t) = \alpha \), i.e.\ \( \mathrm{Lev}_\alpha(T_s) \neq \emptyset \) for all \( \alpha < \kappa \). Indeed, if \( C \) is closed, then the tree 
\( T_C = \{ x \restriction \alpha \mid x \in C \wedge \alpha < \kappa \} \) is pruned 
and such that \( C = [T_C] \). Sometimes, one needs to consider a further closure property for trees. We say that the tree \( T \) is \markdef{\( {<} \kappa \)-closed} if for all 
sequences \( s \in \pre{\gamma}{\kappa} \) with \( \gamma < \kappa \) limit, if  
\( s \restriction \alpha \in T \) for all \( \alpha < \gamma \), then \( s \in T \) as well. A tree 
\( T \) is called \markdef{superclosed} if it is pruned and \( {<} \kappa \)-closed; this in 
particular implies that if \( s \in T \), then \( \Nbhd_s \cap [T] \neq \emptyset \) or, 
equivalently, \( [T_s ] \neq \emptyset \). Not all closed subsets of \( \pre{\kappa}{\kappa} \) are the body of a superclosed tree: consider e.g.\ the set 
\begin{equation} \label{eq:finitelymanyzeroes}
X_0 =  \{ x \in \pre{\kappa}{2} \mid |\{ \alpha<\kappa \mid x(\alpha) = 0 \}| < \aleph_0 \}  .
 \end{equation}
This justifies the following terminology: a closed \( C \subseteq \pre{\kappa}{\kappa} \) is called \markdef{superclosed} if \( C = [T] \) for some  superclosed tree \( T \).

Sikorski proved in~\cite[Theorem (x)]{Sik50} that every regular $\kappa$-additive space 
of weight $\leq \kappa$ is homeomorphic to a subspace of $\pre{\kappa}{2}$, and that 
the latter is $\GG$-metrizable. 
We can sum up his results as follows, where we additionally
use Fact~\ref{fct:Baire_is_G_delta_of_Cantor} to further add item~\ref{thm:charK-metrizable-4} to the list of equivalent conditions. 

\begin{theorem}[{\cite[Theorem (viii)-(x)]{Sik50}}] \label{thm:charK-metrizable}
For any space \( X \) of weight $\leq \kappa$ 
and any totally ordered Abelian group \( \GG \) with \( \Deg(\GG) = \kappa \)
the following are equivalent:
\begin{enumerate-(a)}
\item\label{thm:charK-metrizable-1}
\( X \) is \( \kappa \)-additive;
\item\label{thm:charK-metrizable-2}
\( X \) is \( \GG \)-metrizable;
\item \label{thm:charK-metrizable-3}
\( X \) is homeomorphic to a subset of \( \pre{\kappa}{2} \);
\item \label{thm:charK-metrizable-4}
\( X \) is homeomorphic to a subset of \( \pre{\kappa}{\kappa} \).
\end{enumerate-(a)}
\end{theorem}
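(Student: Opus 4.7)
The equivalence of \ref{thm:charK-metrizable-1}, \ref{thm:charK-metrizable-2}, and \ref{thm:charK-metrizable-3} is Sikorski's theorem, so the plan is to sketch that cycle briefly and then argue separately that \ref{thm:charK-metrizable-3} and \ref{thm:charK-metrizable-4} are interchangeable. I would prove the cycle in the order \ref{thm:charK-metrizable-1} $\Rightarrow$ \ref{thm:charK-metrizable-3} $\Rightarrow$ \ref{thm:charK-metrizable-2} $\Rightarrow$ \ref{thm:charK-metrizable-1}.

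For \ref{thm:charK-metrizable-1} $\Rightarrow$ \ref{thm:charK-metrizable-3}, I would invoke the observation made just after Definition~2.7: a regular, $\kappa$-additive space of weight $\leq \kappa$ admits a clopen basis $\B = \{ B_\alpha \mid \alpha < \kappa \}$ of size $\leq \kappa$. The map $f \colon X \to \pre{\kappa}{2}$ defined by $f(x)(\alpha) = 1$ iff $x \in B_\alpha$ is injective (the basis separates points in a Hausdorff space), continuous (each $B_\alpha$ is clopen), and open onto its image: for every basic $B_\alpha$ one has $f(B_\alpha) = f(X) \cap \{ y \in \pre{\kappa}{2} \mid y(\alpha) = 1 \}$. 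For \ref{thm:charK-metrizable-3} $\Rightarrow$ \ref{thm:charK-metrizable-2}, it suffices to exhibit a compatible $\GG$-metric on $\pre{\kappa}{2}$—the one obtained by restricting the formula~\eqref{eq:metricBaire} to $\pre{\kappa}{2}$ works—since $\GG$-metrizability is plainly hereditary. For \ref{thm:charK-metrizable-2} $\Rightarrow$ \ref{thm:charK-metrizable-1}, take any family $\{ O_i \mid i < \lambda \}$ of open sets with $\lambda < \kappa$ and any $x \in \bigcap_{i<\lambda} O_i$; choose $\varepsilon_i \in \GG^+$ with $B_d(x, \varepsilon_i) \subseteq O_i$. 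Since $\Deg(\GG) = \kappa$, a family of size $\lambda < \kappa$ cannot be coinitial in $\GG^+$, so there is $\varepsilon \in \GG^+$ with $\varepsilon \leq_\GG \varepsilon_i$ for all $i$, whence $B_d(x, \varepsilon) \subseteq \bigcap_i O_i$.

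To connect \ref{thm:charK-metrizable-4} to the rest, note first that \ref{thm:charK-metrizable-3} $\Rightarrow$ \ref{thm:charK-metrizable-4} is immediate because $\pre{\kappa}{2}$ is a closed subspace of $\pre{\kappa}{\kappa}$, while \ref{thm:charK-metrizable-4} $\Rightarrow$ \ref{thm:charK-metrizable-3} follows from Fact~\ref{fct:Baire_is_G_delta_of_Cantor}: compose an embedding $X \hookrightarrow \pre{\kappa}{\kappa}$ with the homeomorphism of $\pre{\kappa}{\kappa}$ onto a $G^\kappa_\delta$ subset of $\pre{\kappa}{2}$. As a self-contained alternative route for \ref{thm:charK-metrizable-4} $\Rightarrow$ \ref{thm:charK-metrizable-1}, I would observe that $\pre{\kappa}{\kappa}$ is itself $\kappa$-additive: if $\{s_i\}_{i<\lambda}$ with $\lambda<\kappa$ are in $\pre{<\kappa}{\kappa}$ and $\bigcap_{i<\lambda} \Nbhd_{s_i} \neq \emptyset$, then the $s_i$ are pairwise comparable and their union $s^*$ has length $<\kappa$ by regularity of $\kappa$, so $\bigcap_i \Nbhd_{s_i} = \Nbhd_{s^*}$ is open; and $\kappa$-additivity passes trivially to subspaces.

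There is no substantive obstacle beyond what Sikorski already did: the new step—adding \ref{thm:charK-metrizable-4}—is a direct application of Fact~\ref{fct:Baire_is_G_delta_of_Cantor}. The only detail worth double-checking is that the coinitiality argument for \ref{thm:charK-metrizable-2} $\Rightarrow$ \ref{thm:charK-metrizable-1} really needs only $\lambda<\kappa=\Deg(\GG)$ and not any stronger property of $\GG$, which it does.
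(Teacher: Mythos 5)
Your proposal is correct and follows the paper's plan exactly: the paper cites Sikorski for the equivalence of \ref{thm:charK-metrizable-1}--\ref{thm:charK-metrizable-3} (reproving only \ref{thm:charK-metrizable-2} \( \Rightarrow \) \ref{thm:charK-metrizable-1} as Lemma~\ref{lem:K-Polishareadditive}, with the same coinitiality argument you give) and adds item \ref{thm:charK-metrizable-4} via Fact~\ref{fct:Baire_is_G_delta_of_Cantor}, just as you do. The only point worth flagging is that continuity of your embedding into \( \pre{\kappa}{2} \) with the \emph{bounded} topology needs \( \kappa \)-additivity of \( X \) to handle preimages of the basic sets \( \Nbhd_s \) for \( s \) of infinite length (clopenness of each \( B_\alpha \) alone only gives continuity for the product topology), but that hypothesis is available in case \ref{thm:charK-metrizable-1}, so nothing is lost.
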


Since conditions~\ref{thm:charK-metrizable-1}, \ref{thm:charK-metrizable-3}, and~\ref{thm:charK-metrizable-4} do not refer to \( \GG \) at all, 
this shows in particular that the choice of the actual group in the definition of the generalized metric is irrelevant. 
We are now going to prove that analogous results hold also for \( \SFC_\kappa \)-spaces, \( \SC_\kappa \)-spaces, and $\GG$-Polish spaces (see Theorems~\ref{thm:charK-Polish} and~\ref{thm:SC-space+G-Polish}).

\begin{proposition}
 \label{prop:charadditiveSCandSFCkappaspaces}
Let $X$ be a \( \kappa \)-additive \( \SFC_\kappa \)-space. Then \( X \) is homeomorphic to a closed \( C \subseteq \pre{\kappa}{\kappa} \).
If furthermore $X$ is an \( \SC_\kappa \)-space, then \( C \) can be taken to be superclosed.
\end{proposition}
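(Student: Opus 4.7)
The plan is to mimic the standard tree-scheme construction of a homeomorphism between a Baire-like space and a Polish-like space, adapted to the $\kappa$-additive setting by using the winning strategy in $\fCho^s_\kappa(X)$ to handle limit stages. Since $X$ is regular and $\kappa$-additive of weight $\leq \kappa$, it is zero-dimensional and admits a clopen basis $\B$ of size $\leq\kappa$; I would close $\B$ under finite Boolean operations (still of size $\leq\kappa$, thanks to $\kappa^{<\kappa}=\kappa$) and enumerate $\B=\{B_\alpha:\alpha<\kappa\}$. Fix a winning strategy $\sigma$ for player II in $\fCho^s_\kappa(X)$, assumed by Remark~\ref{rmk:strategyrangeinbasis} to have responses in $\B$.

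I would recursively build a tree $T\subseteq\pre{<\kappa}{\kappa}$ and, for each $s\in T$ of positive length, a triple $(W_s,x_s,V_s)$, representing along the branch through $s$ player I's move $(W_s,x_s)$ at stage $\leng(s)-1$ and player II's response $V_s=\sigma(\text{history through }s)\in\B$, with $x_s\in V_s\subseteq W_s$. At a successor level $\alpha+1$ the children of $s\in T_\alpha$ would be enumerated so that simultaneously (i) each $W_{s\frown i}\subseteq V_s$ is either contained in $B_\alpha$ or disjoint from it, forcing basis-separation of points along branches; and (ii) for every $y\in V_s$ some child $s\frown i$ has $x_{s\frown i}=y$, so that every point of $X$ can be targeted via the strong clause of the game. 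At a limit level $\gamma$ I would put $b\in T_\gamma$ iff $\bigcap_{0<\alpha<\gamma}V_{b\restriction\alpha}\neq\emptyset$, equivalently iff the partial play through $b$ remains legal in $\fCho^s_\kappa$.

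Defining $\phi\colon[T]\to X$ by $\phi(b)=$ unique point of $\bigcap_{\alpha<\kappa}W_{b\restriction(\alpha+1)}$, uniqueness follows from (i), and nonemptiness from the winning property of $\sigma$ applied to the (now legal throughout $\kappa$) play through $b\in [T]$. Surjectivity: for each $x\in X$, build $b_x\in[T]$ by descending at each stage into a child with distinguished point $x$ (available by (ii)); the strong fair rule $V\ni x$ keeps $x\in V_{b_x\restriction(\alpha+1)}$ at every successor stage, and $x$ itself witnesses the nonemptiness of every limit intersection along $b_x$, so the recursion goes through at limits and $\phi(b_x)=x$. Continuity of $\phi$ and of $\phi^{-1}$ is then routine from the $W$-scheme being a clopen neighbourhood base at $\phi(b)$. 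In the $\SC_\kappa$ refinement, a winning strategy for II in $\Cho^s_\kappa(X)$ automatically yields $\bigcap_{\alpha<\gamma}V_{b\restriction\alpha}\neq\emptyset$ at every limit along every legal play, so no branch is ever pruned, and the resulting $T$ is pruned and ${<}\kappa$-closed, i.e., superclosed. The most delicate step is clause (ii): naïvely one would want a distinct child for each point $y\in V_s$, but $|V_s|$ may exceed $\kappa$; the remedy is to index children by pairs $(W,y)$ with $W\in\B$ a basis neighbourhood of $y$ inside $V_s\cap B_\alpha^{\chi(y,B_\alpha)}$, and exploit $\kappa^{<\kappa}=\kappa$ together with the ability to spread ``witness children'' across successive levels so that the per-node branching stays ${\leq}\kappa$ while every $y$ is eventually assigned a dedicated child. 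This combinatorial bookkeeping is the only substantial ingredient not already present in the classical proof.
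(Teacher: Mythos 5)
Your overall architecture (clopen basis of size $\leq\kappa$, strategy with range in the basis, tree of partial plays, pruning at limits via nonemptiness of $\bigcap V_{b\restriction\alpha}$, ${<}\kappa$-closure in the $\SC_\kappa$ case) is exactly the paper's. But your clause (ii) --- ``for every $y\in V_s$ some child $s\conc i$ has $x_{s\conc i}=y$'' --- is unachievable, and your surjectivity argument collapses with it. Each node of $T\subseteq\pre{<\kappa}{\kappa}$ has at most $\kappa$ children, each carrying one distinguished point, so at most $\kappa$-many points of $V_s$ can be distinguished points of children of $s$; since $|X|$ can be $2^\kappa$ (e.g.\ $X=\pre{\kappa}{2}$), already at the root almost every $x\in X$ has no child with distinguished point $x$, and the recursion defining $b_x$ fails at its first step. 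The proposed remedy does not repair this: indexing children by pairs $(W,y)$ does not reduce the cardinality, and ``spreading witness children across successive levels'' cannot make ``every $y$ eventually a dedicated child'' true, because the whole tree has only $\kappa^{<\kappa}=\kappa$ nodes. Moreover your surjectivity argument needs the dedicated child at \emph{every} stage, not eventually.

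The paper's resolution is different in exactly this step and is the real content of the proof. One does \emph{not} ask for a child per point; one asks that the sets attached to the children of $s$ form a \emph{partition} of $V(s)$ (refined so each piece is contained in or disjoint from $B_\gamma$). The cardinality collapse comes from the strategy, not from the points: although player $\pI$ has possibly more than $\kappa$-many legal moves $(U,x)$ with $x\in U\subseteq V(s)$, player $\pII$'s replies all lie in the fixed basis $\B$, so the set of possible replies $\{V_j\mid j<\delta\}$ has size $\leq\kappa$; one chooses a single $\pI$-move $(U_j,x_j)$ yielding each reply $V_j$, and disjointifies the $V_j$'s (splitting by $B_\gamma$) to get the $\leq\kappa$-many children. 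The $V_j$'s cover $V(s)$ precisely because $\pI$ could have played any point of $V(s)$. Surjectivity then reads: each $x\in X$ lies in a unique cell of the partition at every level, these cells are nested, and $x$ itself witnesses that all limit intersections along the resulting branch are nonempty, so the branch survives into $[T]$ and is mapped to $x$. You correctly located the delicate step, but the bookkeeping you sketch for it does not close the gap; replacing ``one child per point'' by ``partition indexed by the $\leq\kappa$-many strategy replies'' is the missing idea.
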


\begin{proof}
We prove the two statements simultaneously.
Let \( ( B_\alpha )_{ \alpha < \kappa } \) be an enumeration of a clopen basis \( \mathcal{B} \) of \( X \), possibly with repetitions.
Depending on whether \( X \) is an \( \SC_\kappa \)-space or just an \( \SFC_\kappa \)-space, let \(\sigma\) be a winning strategy for player \( \pII \) in \( \Cho^s_\kappa(X) \) or \( \fCho^s_\kappa(X) \).
By Remark~\ref{rmk:strategyrangeinbasis},
without loss of generality we can assume that the range of \(\sigma\) is contained in \( \mathcal{B} \).
To simplify the notation, given an ordinal \( \beta \), let \( \Succ(\beta) \) be the collection of all successor ordinals \( \leq \beta \). Set also
\[
\pre{<\Succ(\kappa)}{\kappa} = \{ s \in \pre{< \kappa}{\kappa} \mid \leng(s) \in \Succ(\kappa) \}.
 \]
We will construct a family of the form
\[
\mathcal{F}=\left \{  \langle x_s, U_s, V_s, \hat{V}_s \rangle \mid s \in \pre{<\Succ(\kappa)}{\kappa} \right\},
\]
and set for every \( t \in \pre{\leq \kappa}{\kappa} = \pre{< \kappa}{\kappa} \cup \pre{\kappa}{\kappa} \) with \( \leng(t) = \gamma \leq \kappa \),
\begin{equation}  \label{eq:V(T)}
V(t) = \bigcap_{\alpha \in \Succ(\gamma)}\hat{V}_{t \restriction \alpha}.
 \end{equation}
(In particular, when \( \gamma = 0 \) we get \( V(\emptyset) = X \) because \( \Succ(0) = \emptyset \).) The family \( \mathcal{F} \) will be designed so that for any \( \gamma < \kappa \) and \( s \in \pre{\gamma+1}{\kappa} \) the following properties are satisfied:

\begin{enumerate-(i)}
\item \label{def}
 $x_s\in X$, and $U_s$, $V_s$, $\hat{V}_s$ are all clopen in \( X \).
\item \label{partialplay}
If \( V(s ) \neq \emptyset \),
then the sequence
\( \langle (U_{s \restriction \alpha}, x_{s \restriction \alpha} ), V_{s \restriction \alpha} \mid \alpha\in\Succ(\gamma+1) \rangle \)
is a (partial) run in the strong (fair) \( \kappa \)-Choquet game on \( X \) in which \( \pII \) follows $\sigma$. 
\item \label{compatibility}
Either
$ \hat{V}_s \subseteq B_\gamma $ or $\hat{V}_s \cap B_\gamma = \emptyset $.
\item \label{chain}
\( \hat{V}_s\subseteq V_s \subseteq  U_s \subseteq V(s \restriction \gamma) \).
\item\label{partition}
$\{ \hat{V}_s \mid s \in \pre{\gamma+1}{\kappa} \} $ is a partition%
\footnote{In our terminology, an indexed family \( \{ A_i \mid i \in I \} \) of subsets of \( X \) is a partition of \( X \) if \(\bigcup_{i \in I} A_i = X \) and \( A_i \cap A_j = \emptyset \) for distinct \( i,j \in I \). 
In particular, some of the \( A_i \)'s might be empty and for \( i \neq j \) we have \( A_i = A_j \) if and only if both \( A_i \) and \( A_j \) are empty.}
 of \( X \).
\end{enumerate-(i)}

Condition~\ref{chain} implies that 
\begin{equation} \label{eq:monotone} 
\hat{V}_s \subseteq \hat{V}_{s \restriction \alpha} 
\end{equation} 
for every \( s \in \pre{<\Succ( \kappa)}{\kappa} \) and \( \alpha \in \Succ(\leng(s)) \). Together with condition~\ref{partition}, this entails that
\begin{enumerate}[label={\upshape (v')}, leftmargin=2pc]
\item \label{partition'}
For any $ \gamma < \kappa $, successor or not, $\{ V(t) \mid t \in \pre{\gamma}{\kappa} \} $ is a partition of \( X \).
\end{enumerate}
From condition~\ref{partition'} and~\eqref{eq:monotone} it easily follows that if \( t,t' \in \pre{<\kappa}{\kappa} \) are such that \( V(t) \cap V(t') \neq \emptyset \), then \( t  \) and \( t' \) are comparable.
The inclusion~\eqref{eq:monotone} also implies that  if \( \leng(t) \) is a successor ordinal, then \( V(t) = \hat{V}_t \). If instead \( \gamma =  \leng(t) \leq \kappa \) is limit,  then
\begin{equation} \label{eq:V(t)limitcase}
V(t) =\bigcap_{\alpha\in \Succ(\gamma)} U_{t \restriction \alpha}=\bigcap_{\alpha\in \Succ(\gamma)} V_{t \restriction \alpha}
\end{equation}
by condition~\ref{chain} again. Notice also that the additional properties discussed in 
this paragraph have a local (i.e.\ level-by-level) nature: for example, to 
have~\ref{partition'} at some level \( \gamma \), it is enough to have 
conditions~\ref{chain} and~\ref{partition} at all levels  \( \gamma' \leq \gamma \).

Given \( \mathcal{F} \) as above, one obtains  the required homeomorphism of \( X \) with 
a (super)closed set \( C \subseteq \pre{\kappa}{\kappa} \) as follows.
Since \( X \) is Hausdorff, if \( \leng(t) = \kappa \) then $V(t)$ has at most one element by 
condition~\ref{compatibility}.
Consider the tree
\[
T = \{ t \in \pre{< \kappa}{\kappa} \mid V(t) \neq \emptyset \}.
\]
It is pruned by condition~\ref{partition'} and the comment following it. Furthermore, if 
\( X \) is an \( \SC_\kappa \)-space (i.e.\ \(\sigma\) is a  winning in the game 
\( \Cho^s_\kappa(X) \)), then \( T \) is also \( {<} \kappa \)-closed by 
condition~\ref{partialplay} and equation~\eqref{eq:V(t)limitcase}. 

We now prove that the (super)closed set \( C = [T] \) is homeomorphic to \( X \).
Since \(\sigma\) is a winning strategy in the strong (fair) $\kappa$-Choquet game,   the 
set \( V(t)\) is nonempty for every $t\in [T]$ by condition~\ref{partialplay} and 
equation~\eqref{eq:V(t)limitcase} again, thus it contains exactly one point: let 
$f \colon [T] \to X$ be the map that associates to every $t\in [T]$ the unique element in $V(t)$.
We claim that $f$ is a homeomorphism.

\begin{claim} \label{claim:bijection}
$f$ is bijective.
\end{claim}

\begin{proof}
To see that $f$ is injective, let \( t , t' \in [T] \) be distinct and \( \alpha< \kappa \) be such 
that \( t \restriction \alpha \neq t' \restriction \alpha \). By condition~\ref{partition'}
we have
\( V(t \restriction \alpha) \cap V(t' \restriction \alpha) = \emptyset \), and hence 
\( f(t) \neq f(t') \) because  \( f(t) \in V(t) \subseteq V(t \restriction \alpha) \) and 
\( f(t') \in V(t') \subseteq V(t' \restriction \alpha) \). To see that \( f \) is also surjective, fix 
any \( x \in X \).
By~\ref{partition'} again (and the comment following it), for each \( \alpha < \kappa \) 
there is a unique \( t_{\alpha}  \) of length \(\alpha\) with \( x \in V(t_\alpha) \), and 
moreover $t_\alpha\subseteq t_\beta$ for all $\alpha \leq \beta < \kappa$.
Let $t = \bigcup_{\alpha<\kappa} t_\alpha$, so that \( x \in V(t) = \bigcap_{\alpha < \kappa} V(t_\alpha) =
\bigcap_{\alpha < \kappa} V(t \restriction \alpha) \): then \( x \) itself witnesses \( t \in [T] \), and  $f(t)=x$.
\end{proof}

\begin{claim}
\( f \) is a homeomorphism.
\end{claim}

\begin{proof}
Observe that  by definition of $f$, its surjectivity,  and condition~\ref{partition'}, 
\begin{equation} \label{eq:preimage}
f (\Nbhd_s \cap [T] ) = V(s) =  \hat{V}_s
\end{equation}
for all $s\in T$ with \( \leng(s) \in \Succ(\kappa) \). Since 
\( \{ \Nbhd_s \cap [T] \mid s\in T \cap \pre{<\Succ(\kappa)}{\kappa} \} \) is a basis for the 
relative topology of \( [T] \), and since $\{\hat{V}_s\mid s\in T \cap \pre{<\Succ(\kappa)}{\kappa} \}$ is a basis for $X$ by conditions~\ref{def},~\ref{compatibility}, and \ref{partition}, then $f$ and $f^{-1}$ are continuous.
\end{proof}

It remains to construct the required family \( \mathcal{F} \) by recursion on \( \gamma < \kappa \).
We assume that for every \( t \in \pre{< \kappa}{\kappa} \) with \( \leng(t) = \gamma \) 
and all \( \alpha \in \Succ(\gamma) \), the elements \( x_{t \restriction \alpha} \), 
\( U_{t \restriction \alpha} \), \( V_{t \restriction \alpha} \), and \( \hat{V}_{t \restriction \alpha} \) have been defined so that conditions~\ref{def}--\ref{partition} are satisfied up 
to level \( \gamma \) (when \( \gamma > 0 \) this is the inductive hypothesis, while if 
\( \gamma = 0 \) the assumption is obviously vacuous because \( \Succ(0) \) is empty): our 
goal is to define \( x_{t {}^\smallfrown{} i}\), \( U_{t {}^\smallfrown{} i} \), 
\( V_{t {}^\smallfrown{} i} \), and \( \hat{V}_{t {}^\smallfrown{} i} \) for all \( t \) as above 
and \( i < \kappa \) in such a way that conditions~\ref{def}--\ref{partition} are preserved.

Recall the definition of the sets \( V(t) \) from equation~\eqref{eq:V(T)}.
If \( V(t) = \emptyset \), then we set \( U_{t {}^\smallfrown{} i} = V_{t {}^\smallfrown{} i} = \hat{V}_{t {}^\smallfrown{} i} = \emptyset \) for all \( i < \kappa \) and let \( x_{t {}^\smallfrown{} i} \) be an arbitrary point in \( X \).
Assume now that \( V(t) \neq \emptyset \).
Notice that \( V(t) \) is clopen:
if \( \gamma > 0 \) this follows from \( \kappa \)-additivity of \( X \) and the fact that \( \hat{V}_{t \restriction \alpha} \) is clopen for every \( \alpha\in \Succ(\gamma) \) by~\ref{def}, while if \(\gamma = 0 \) then \( V(\emptyset) = X \) by definition.  
By condition~\ref{partialplay}, the sequence \( \langle (U_{t \restriction \alpha}, x_{t \restriction \alpha} ), V_{t \restriction \alpha} \mid \alpha\in\Succ(\gamma) \rangle \) is a partial run in the corresponding Choquet-like game in which \( \pII \) is following \(\sigma\).
 We let such run continue for one more round by letting \( \pI \) play some \( (U,x) \) with 
 \( U \) clopen and \( x \in U \subseteq V(t) \), and \( \pII \) reply with some  
 \( V \in \mathcal{B} \) following the winning strategy \(\sigma\), so that in particular 
 \( x \in V \subseteq U \). Let \( \{ V_j \mid j < \delta \} \) be the collection of all those sets 
 \( V \) that can be obtained in this way: even if there are possibly more than 
 \( \kappa \)-many moves for \( \pI \) as above, there are at most \( \kappa \)-many replies 
 of \( \pII \) because \( |\mathcal{B}| \leq \kappa \), hence $\delta\leq \kappa$. For each 
 \( j < \delta \) we then choose one of player \( \pI \)'s moves \( (U_j,x_j) \) yielding \( V_j \) 
 as \( \pII \)'s reply. In particular, \( x_j \in V_j \subseteq U_j \). Let \( ( \hat{V}_i )_{ i < \nu } \) (where \( \nu \leq \kappa \)) be an enumeration without repetitions of the nonempty 
 sets in
 \[
\left \{ \left( V_j \setminus \bigcup\nolimits_{\ell < j} V_\ell \right) \cap B_\gamma \,\, \middle| \,\, j < \delta \right \} \cup \left \{ \left( V_j \setminus \bigcup\nolimits_{\ell < j} V_\ell \right) \setminus B_\gamma \,\, \middle| \,\, j < \delta \right \},
 \]
and for each \( i < \nu \) let \( j(i) < \delta \leq \kappa \) be such that \( \hat{V}_i \subseteq V_{j(i)} \). 
Notice that the \( \hat{V}_i \)'s are clopen by \( \kappa \)-additivity again. Finally, set
\begin{align*}
x_{t {}^\smallfrown{} i} & = x_{j(i)} &&&
U_{t {}^\smallfrown{} i} & = U_{j(i)} &&&
V_{t {}^\smallfrown{} i} & = V_{j(i)} &&&
\hat{V}_{t {}^\smallfrown{} i} & = \hat{V}_i
\end{align*}
if \( i < \nu \), and  \( U_{t {}^\smallfrown{} i} = V_{t {}^\smallfrown{} i} = \hat{V}_{t {}^\smallfrown{} i} = \emptyset \) with \( x_{t {}^\smallfrown{} i} \) an arbitrary point of \( X \) if \( \nu \leq i < \kappa \).

It is not hard to see that conditions~\ref{def}--\ref{chain} are preserved by  construction.
As for  condition~\ref{partition}, by inductive hypothesis (or \( V(\emptyset) = X \) if \( \gamma = 0 \)) we get~\ref{partition'} at level \( \gamma \), that is, 
\( \{ V(t) \mid  t \in \pre{\gamma}{\kappa} \} \)
is a partition of \( X \). Thus the desired result straightforwardly follows from the fact 
that the \( V_j \)'s cover \( V(t) \) because in our construction player \( \pI \) can play any 
\( x \in V(t) \) in her last round (paired with a suitable clopen set \( U \) such that 
\( x \in U \subseteq V(t) \), which exists because \( V(t) \) is clopen).
\end{proof}

We now consider the problem of simultaneously embedding two \( \kappa \)-additive \( \SFC_\kappa \)-spaces \( X' \subseteq X \) into \( \pre{\kappa}{\kappa} \). Applying Proposition~\ref{prop:charadditiveSCandSFCkappaspaces} to \( X \) we get a closed \( C \) and a homeomorphism \( f \colon C \to X \). If $X'$ is a closed in $X$, it follows that also \( C' = f^{-1}(X') \) is closed in \( C \) and hence in \( \pre{\kappa}{\kappa} \). However, when \( X' \) is an \( \SC_\kappa \)-space we would like to have that \( C' \) is superclosed. 
To this aim we need to modify our construction.

\begin{proposition} \label{prop:simultaneousembeddings}
Let \( X \) be a \( \kappa \)-additive \( \SFC_\kappa \)-space and \( X' \subseteq X \) be a closed \( \SC_\kappa \)-subspace. Then there is a closed \( C \subseteq \pre{\kappa}{\kappa} \) and a homeomorphism \( f \colon C \to X \) such that \( C' = f^{-1}(X')  \) is superclosed.
 \end{proposition}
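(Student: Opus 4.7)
The plan is to reduce to Proposition~\ref{prop:charadditiveSCandSFCkappaspaces} applied to $X'$, and then extend the resulting data to $X$. Fix a clopen basis $\mathcal{B}$ of $X$ of size $\leq\kappa$; then $\mathcal{B}'=\{B\cap X'\mid B\in\mathcal{B}\}$ is a clopen basis of $X'$. Applying Proposition~\ref{prop:charadditiveSCandSFCkappaspaces} to the $\kappa$-additive $\SC_\kappa$-space $X'$ using $\mathcal{B}'$ and a winning strategy for $\pII$ in $\Cho^s_\kappa(X')$ produces a superclosed tree $T^{*}\subseteq\pre{<\kappa}{\kappa}$, a homeomorphism $g\colon [T^{*}]\to X'$, and the associated family $\{\hat V^{*}_s\mid s\in T^{*}\cap\pre{<\Succ(\kappa)}{\kappa}\}$ of clopen subsets of $X'$. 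The task becomes to build a pruned tree $T\supseteq T^{*}$ together with a homeomorphism $f\colon [T]\to X$ such that $f\restriction [T^{*}]=g$: then $C=[T]$ is closed in $\pre{\kappa}{\kappa}$, and $C'=f^{-1}(X')=[T^{*}]$ is superclosed, as required.

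To construct $T$ I would run the recursion of Proposition~\ref{prop:charadditiveSCandSFCkappaspaces} on $X$, using a winning strategy $\sigma$ for $\pII$ in $\fCho^s_\kappa(X)$ with range in $\mathcal{B}$, while enforcing the additional matching invariant
\[
\hat V_s\cap X' = \hat V^{*}_s\quad\text{for every } s\in T^{*}\cap\pre{<\Succ(\kappa)}{\kappa}.
\]
At a node $t\in T^{*}$, the partition of $V(t)$ is built in two stages. In \emph{Stage~1} I install the children of $t$ in $T^{*}$: for each $i<\nu'_t$ (the number of children of $t$ in $T^{*}$), I pick a clopen subset $W_{t\conc i}$ of $X$ with $W_{t\conc i}\cap X'=\hat V^{*}_{t\conc i}$ (such a lift exists because $\hat V^{*}_{t\conc i}$ is a $<\kappa$-Boolean combination of elements of $\mathcal{B}'$, which lifts to the same combination of elements of $\mathcal{B}$), and set
\[
\hat V_{t\conc i} = \bigl(W_{t\conc i}\cap V(t)\bigr)\setminus\bigcup_{\ell<i}\hat V_{t\conc\ell}.
\]
By $\kappa$-additivity each $\hat V_{t\conc i}$ is clopen, and the disjointness of the $\hat V^{*}_{t\conc i}$ in $X'$ gives the matching condition $\hat V_{t\conc i}\cap X'=\hat V^{*}_{t\conc i}$, so Stage~1 covers $V(t)\cap X'$. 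In \emph{Stage~2} I cover the clopen residual $Z_t=V(t)\setminus\bigcup_{i<\nu'_t}\hat V_{t\conc i}\subseteq X\setminus X'$ by applying the original construction of Proposition~\ref{prop:charadditiveSCandSFCkappaspaces} freshly to $Z_t$ via $\sigma$, producing the Type~2 children $\{t\conc i\mid\nu'_t\leq i<\kappa\}\subseteq T\setminus T^{*}$. At a node $t\in T\setminus T^{*}$ (where $V(t)\cap X'=\emptyset$), I continue the $\sigma$-play started at the most recent ancestor of $t$ in $T\setminus T^{*}$, exactly as in Proposition~\ref{prop:charadditiveSCandSFCkappaspaces}. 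The basis refinement by $B_\gamma$ at level $\gamma+1$ is applied in both stages, and using $B'_\gamma=B_\gamma\cap X'$ in the $X'$-construction makes the refinements in $X$ and $X'$ compatible, so the matching invariant survives.

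The verification follows the template of Proposition~\ref{prop:charadditiveSCandSFCkappaspaces}. For a branch $t\in[T^{*}]$, the matching invariant passes to the limit, giving $V(t)\cap X'=\bigcap_\alpha\hat V^{*}_{t\restriction\alpha}=\{g(t)\}$, while the basis refinement forces $V(t)$ to be a singleton, so $f(t)=g(t)$. For a branch $t\in[T]\setminus[T^{*}]$, the switch point has some level $\beta<\kappa$, and along $t$ the fresh $\sigma$-play has $\kappa$-many rounds forming a legal $\sigma$-run; by $\sigma$ winning $\fCho^s_\kappa(X)$, the intersection $V(t)$ is nonempty and picks out a unique point of $X\setminus X'$. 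Bijectivity, continuity, and the identity $f^{-1}(X')=[T^{*}]$ then follow from the equation $f(\Nbhd_s\cap[T])=\hat V_s$ combined with the matching invariant, which in particular prevents Type~2 branches from limiting to points of $X'$. The main obstacle of the proof is purely combinatorial bookkeeping: verifying that Stages~1 and~2 together with the basis refinement preserve all the invariants of Proposition~\ref{prop:charadditiveSCandSFCkappaspaces} at every level, and that the freshly restarted $\sigma$-plays at distinct switch points all run for $\kappa$ rounds along their respective branches.
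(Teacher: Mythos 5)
Your overall architecture (build the superclosed tree for $X'$ first, then graft the rest of $X$ around it) is reasonable, but there is a genuine gap in the two-stage partition at a node $t\in T^{*}$, precisely at the words ``the clopen residual $Z_t$''. When $t$ has $\kappa$-many children in $T^{*}$ (which happens as soon as $V^{*}(t)$ is not $\kappa$-Lindel\"of, e.g.\ already at the root when $X'\cong\pre{\kappa}{\kappa}$ sits as a closed nowhere dense subset of $X$), the set $\bigcup_{i<\nu'_t}\hat V_{t\conc i}$ is a union of $\kappa$-many clopen sets; $\kappa$-additivity only makes unions of \emph{fewer than} $\kappa$ clopen sets clopen, so $Z_t=V(t)\setminus\bigcup_{i<\nu'_t}\hat V_{t\conc i}$ is closed but need not be open. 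If $Z_t$ contains a point in the closure of $\bigcup_i\hat V_{t\conc i}$, then no clopen (indeed no open) set disjoint from all Stage~1 pieces contains that point, so Stage~2 cannot cover $Z_t$ and the partition invariant --- on which bijectivity and continuity of $f$ rest --- fails at level $\leng(t)+1$. Your justification for the lifts (each $\hat V^{*}_{t\conc i}$ is a ${<}\kappa$-Boolean combination of traces of basis elements, lifted to the same combination in $X$) controls each piece individually but says nothing about their union; for a bad choice of lifts the union genuinely fails to be closed. The gap is repairable: since $X'$ is closed in $X$ and the $\hat V^{*}_{t\conc i}$ are relatively clopen, the family $\{\hat V^{*}_{t\conc i}\}_{i<\nu'_t}$ is \emph{discrete} in $X$ (every point of $X$ has a neighbourhood meeting at most one of them), and in a $\kappa$-additive space of weight $\leq\kappa$ a discrete family of closed sets can be expanded to a pairwise disjoint clopen family with clopen union (e.g.\ via the rank function $\rho(x)=$ least $\alpha$ such that $\Nbhd_{x\restriction\alpha}$ meets at most one piece, after embedding into $\pre{\kappa}{\kappa}$). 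But this expansion lemma is a nontrivial missing ingredient, not ``purely combinatorial bookkeeping''.

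For comparison, the paper sidesteps the problem entirely by never splitting the partition into two stages: \emph{all} children of every node are generated by a single round of a single run of $\fCho^s_\kappa(X)$, played with a ``mixed'' strategy $\sigma$ (Claim~\ref{claim:common_strategy}) that simulates a winning strategy for $\pII$ in $\Cho^s_\kappa(X')$ as long as $\pI$'s chosen points lie in $X'$ and switches to a winning strategy for $\fCho^s_\kappa(X)$ as soon as a point leaves $X'$. The construction of Proposition~\ref{prop:charadditiveSCandSFCkappaspaces} then runs verbatim (so the partition and clopenness invariants come for free), and the key property of $\sigma$ --- that $\bigcap_{\alpha<\gamma}V_\alpha\cap X'\neq\emptyset$ whenever each $V_\alpha\cap X'\neq\emptyset$ --- makes the subtree $T'=\{t\mid V(t)\cap X'\neq\emptyset\}$ automatically superclosed. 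If you want to salvage your approach you must either prove and invoke the discrete clopen expansion lemma above, or restructure so that the $T^{*}$-children and the remaining children arise from one Choquet run, which is essentially the paper's proof.
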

 
\begin{proof}
The idea is to apply the argument from the previous proof but starting with a strategy $\sigma$ that is winning for \( \pII \) in $\fCho_\kappa^s(X)$ and, when ``restricted'' to $X'$, it is also winning in $\Cho_\kappa^s(X')$. Let \( \B \) be a basis for \( X \) of size \( \leq \kappa \).

\begin{claim}\label{claim:common_strategy}
There is a winning strategy $\sigma$ for player $\pII$ in $\fCho^s_\kappa(X)$ with range in $\B$ such that for any $\gamma\leq \kappa$ and for any (possibly partial) run $\langle (U_\alpha,x_\alpha), V_\alpha\mid \alpha< \gamma\rangle$ in $\fCho^s_\kappa(X)$ where player $\pII$ followed $\sigma$, one has $\bigcap_{\alpha<\gamma} V_\alpha\cap X'\neq \emptyset$ if and only if $V_\alpha\cap X'\neq\emptyset$ for every $\alpha<\gamma$.
\end{claim}

\begin{proof}[Proof of the claim]
Let \( \sigma' \) be an arbitrary winning strategy for \( \pII \) in \( \Cho^s_\kappa(X') \), and let \( \sigma'' \) be a winning strategy for \( \pII \) in \( \fCho^s_\kappa(X) \) with range contained in 
\( \B \). Define the strategy \(\sigma\) as follows. Suppose that at stage \( \alpha < \kappa \) player \( \pI \) has played the sequence \( \langle (U_\beta, x_\beta) \mid \beta \leq \alpha \rangle \) in the game \( \fCho^s_\kappa(X) \).  
\begin{enumerate-(1)}
\item \label{case1mixingstrategies}
As long as all points \( x_\beta \) belongs to \( X' \),  player \( \pII \) considers the auxiliary partial play \mbox{\( \langle (U_\beta  \cap X', x_\beta) \mid \beta \leq \alpha \rangle \)} of \( \pI \) in \( \Cho^s_\kappa(X') \) and she uses \( \tau' \) to get her next move \( V'_\alpha \) in the game \( \Cho^s_\kappa(X') \). Since \( V'_\alpha \) is open in \( X' \), there is \( W \) open in \( X \) such that \( V'_\alpha = W \cap X' \): let \( \pII \) play  any \( V_\alpha \in \mathcal{B} \) such that \( x_\alpha \in V_\alpha \subseteq W \cap  \bigcap_{\beta \leq \alpha} U_\beta \) as her next move in the game \( \fCho^s_\kappa(X) \) (this is possible because \( W \cap  \bigcap_{\beta \leq \alpha} U_\beta \) is open by \( \kappa \)-additivity).
\item \label{case2mixingstrategies}
If \( \alpha \) is smallest such that \( x_\alpha \notin X' \), from that point on player \( \pII \) uses her strategy \( \sigma''\) pretending that \( (U_\alpha \setminus X',x_\alpha) \) was the first move of \( \pI \) in a new run of \( \fCho^s_\kappa(X) \).
\end{enumerate-(1)}
We claim that \(\sigma\) is as required, so
fix any \( \gamma \leq \kappa \). Let \( \langle (U_\alpha,x_\alpha), V_\alpha\mid \alpha< \gamma\rangle \) 
be a partial run in which \( \pII \) followed \(\sigma\) and assume that \( V_\alpha \cap X' \neq \emptyset \) for every \( \alpha < \gamma \). By~\ref{case2mixingstrategies} this implies 
that \( x_\alpha \in X' \) for all \( \alpha < \gamma \). If \( \gamma = \alpha+1 \) is a 
successor ordinal, then \( \bigcap_{\beta<\gamma} V_\beta\cap X' = V_\alpha \cap X' \neq \emptyset \) by assumption. Assume instead that \(\gamma\) is limit. By \( x_\alpha \in X' \) and~\ref{case1mixingstrategies}, for all \( \alpha < \gamma \) 
we have
\begin{equation} \label{eq:mixedstrategies}  
U_{\alpha+1}  \cap X' \subseteq V_\alpha \cap X' \subseteq V'_\alpha \subseteq U_\alpha \cap X' , 
\end{equation}
where \( V'_\alpha \subseteq X' \) is again \(\pII \)'s reply to the 
partial play \( \langle (U_\beta  \cap X', x_\beta) \mid \beta \leq \alpha \rangle \) of 
\( \pI \) in \( \Cho^s_\kappa(X') \) according to \( \sigma' \). It follows that 
\( \langle (U_\alpha \cap X' ,x_\alpha), V'_\alpha\mid \alpha< \gamma\rangle \)
is a (legal) 
partial run in \( \Cho^s_\kappa(X') \) where \( \pII \) followed \( \sigma' \), and since the 
latter is winning in such game we get \( \bigcap_{\alpha < \gamma} V_\alpha \cap X' = \bigcap_{\alpha < \gamma} V'_\alpha \neq \emptyset \) (the first equality follows from~\eqref{eq:mixedstrategies} and the fact that \(\gamma\) is limit). This also implies that \(\sigma\) wins \( \fCho^s_\kappa(X) \) in all runs where \( V_\alpha \cap X' \neq \emptyset \) for all \( \alpha < \kappa \); on the other hand, when this is not the case and \( \alpha < \kappa \) is smallest such that \( V_\alpha \cap X' = \emptyset \), then the tail of the run from level \( \alpha \) on is a (legal) run in \( \fCho^s_\kappa(X) \) in which \( \pII \) followed \( \sigma'' \), thus \( \pII \) won as well. This shows that \( \sigma \) is winning for \( \pII \) in \( \fCho^s_\kappa(X) \) and concludes the proof.
\end{proof}

Starting from \( \sigma \) as in Claim~\ref{claim:common_strategy}, argue as 
in the proof of Proposition~\ref{prop:charadditiveSCandSFCkappaspaces} to 
build a family \( \mathcal{F}=\left \{ \langle x_s, U_s, V_s, \hat{V}_s \rangle \mid s \in \pre{<\Succ(\kappa)}{\kappa} \right\} \) 
and a homeomorphism 
\( f \colon C \to X \), where \( C = [T] \) is the closed subset of $\pre{\kappa}{\kappa}$ defined by the tree \( T = \{ t \in \pre{< \kappa}{\kappa} \mid V(t) \neq \emptyset \} \), and \( f(t) \) is the unique point in \( V(t) \) for all \( t \in [T] \). 
Consider now the tree  defined by
\[ 
T' = \{ t \in \pre{< \kappa}{\kappa} \mid V(t) \cap X' \neq \emptyset \}.
 \] 
Clearly \( T' \subseteq T \). Moreover, for every $t\in T'$ we have $\Nbhd_t\cap [T']\neq\emptyset$: indeed, if \( t \in T' \), then there is 
\( x \in V(t) \cap X' \), hence \( f^{-1}(x) \supseteq t \) and by construction \( x \) witnesses 
\( f^{-1}(x) \restriction \alpha \in T' \) for all \( \alpha < \kappa \), so $f^{-1}(x)\in \Nbhd_t\cap [T']$. In particular, this implies that \( T' \) is pruned. We now prove that 
\( T' \) is also superclosed. Let \( t \in \pre{\gamma}{\kappa} \) for \(  \gamma < \kappa \) 
limit be such that \( t \restriction \alpha \in T' \) for all \( \alpha < \gamma \). Then 
\( \hat{V}_{t \restriction \alpha} \cap X' \neq \emptyset \) for all 
\( \alpha \in \Succ(\gamma) \), hence also \( V_{t \restriction \alpha} \cap X' \neq \emptyset \) by \( \hat{V}_{t \restriction \alpha} \subseteq V_{t \restriction \alpha} \). 
By the choice of $\sigma$, it follows that 
\( \bigcap_{\alpha \in \Succ(\gamma)} V_{ t \restriction \alpha} \cap X' \neq \emptyset \), 
hence \( t \in T' \) since \( V(t) = \bigcap_{\alpha \in \Succ(\gamma)} V_{t \restriction \alpha} \) when $t$ has limit length. 

Finally, we want to show that \( f^{-1}(X') = [T'] \). Given \( x \in X' \), then \( x \) itself 
witnesses \( f^{-1}(x) \in [T'] \). Conversely, if \( t \in [T'] \) then \( V_{t \restriction \alpha} \cap X' \supseteq V(t \restriction \alpha) \cap X' \neq \emptyset \) for all \( \alpha \in \Succ(\kappa) \), hence  
by the choice of $\sigma$ again we have that \( \bigcap_{\alpha \in \Succ(\kappa)} V_{t \restriction \alpha} \cap X' \neq \emptyset  \). 
Since \(  \bigcap_{\alpha \in \Succ(\kappa)} V_{t \restriction \alpha} = V(t) = \{ f(x) \} \), 
it follows that \( f(x) \in X' \) as desired.
\end{proof}

Proposition~\ref{prop:simultaneousembeddings} allows us to considerably extend~\cite[Proposition 1.3]{LuckeSchlMR3430247} from superclosed subsets of \( \pre{\kappa}{\kappa} \) to arbitrary closed \( \SC_\kappa \)-subspaces of a \( \kappa \)-additive \( \SFC_\kappa \)-space.

\begin{corollary} \label{cor:simultaneousembeddings}
Let \( X \) be a \( \kappa \)-additive \( \SFC_\kappa \)-space. Then every closed \( \SC_\kappa \)-subspace \( Y \) of \( X \) is a retract of it.
\end{corollary}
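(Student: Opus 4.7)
The plan is to reduce the statement to a tree-theoretic problem and then define the retraction by transfinite recursion. First I would apply Proposition~\ref{prop:simultaneousembeddings} to obtain a closed tree \( T \subseteq \pre{<\kappa}{\kappa} \), a superclosed subtree \( T' \subseteq T \), and a homeomorphism \( f \colon [T] \to X \) with \( f^{-1}(Y) = [T'] \). It then suffices to exhibit a continuous retraction \( r \colon [T] \to [T'] \), since \( f \circ r \circ f^{-1} \) will be the desired retraction of \( X \) onto \( Y \).

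The core step is the construction of a length-preserving auxiliary map \( \rho \colon T \to T' \) by recursion on \( \leng(s) \), satisfying \( \rho(s) = s \) whenever \( s \in T' \) and \( \rho(s \restriction \alpha) = \rho(s) \restriction \alpha \) for all \( \alpha \leq \leng(s) \). Set \( \rho(\emptyset) = \emptyset \). At a successor stage, given \( \rho(t) \in T' \): if \( \rho(t) \conc i \in T' \), declare \( \rho(t \conc i) = \rho(t) \conc i \); otherwise set \( \rho(t \conc i) = \rho(t) \conc j \) for some fixed \( j < \kappa \) with \( \rho(t) \conc j \in T' \), which exists because \( T' \) is pruned. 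At a limit stage of length \( \gamma < \kappa \), put \( \rho(s) = \bigcup_{\beta < \gamma} \rho(s \restriction \beta) \); by the inductive clause every proper initial segment of this union lies in \( T' \), hence so does the union itself by \( {<} \kappa \)-closedness of \( T' \). An easy induction verifies both target properties (for the first, the fact that \( s \in T' \) implies \( s \restriction \alpha \in T' \) for all \( \alpha \) is what makes the argument run).

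I would then extend \( \rho \) to \( r \colon [T] \to [T'] \) by letting \( r(x) = \bigcup_{\alpha < \kappa} \rho(x \restriction \alpha) \), noting that monotonicity of \( \rho \) together with \( {<} \kappa \)-closedness applied at the top ensure \( r(x) \in [T'] \), while the first target property yields \( r \restriction [T'] = \id_{[T']} \). For continuity, observe that \( r(x) \restriction \alpha = \rho(x \restriction \alpha) \) for every \( \alpha < \kappa \) by monotonicity, so for each \( s \in T' \)
\[
r^{-1}(\Nbhd_s \cap [T']) = \bigcup \{ \Nbhd_t \cap [T] \mid t \in T,\ \leng(t) = \leng(s),\ \rho(t) = s \},
\]
which is open in \( [T] \).

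The main obstacle is the limit stage of the recursion defining \( \rho \): without \( {<} \kappa \)-closedness of \( T' \) there is no canonical way to produce an element of \( T' \) at a limit level, and the argument would collapse. This is precisely why one must go through Proposition~\ref{prop:simultaneousembeddings} to obtain a superclosed (not merely closed) representation of \( Y \), and it is also the only point where the hypothesis that \( Y \) is \( \SC_\kappa \) (and not just \( \SFC_\kappa \)) is genuinely used.
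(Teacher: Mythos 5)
Your proof is correct. The reduction is exactly the paper's: both you and the authors invoke Proposition~\ref{prop:simultaneousembeddings} to replace \( X \) by a closed \( [T] \subseteq \pre{\kappa}{\kappa} \) and \( Y \) by a superclosed \( [T'] \). The difference lies in the second half: the paper simply cites \cite[Proposition 1.3]{LuckeSchlMR3430247}, which provides a retraction of the whole of \( \pre{\kappa}{\kappa} \) onto any superclosed set, and then restricts that retraction to \( [T] \); you instead construct a retraction of \( [T] \) onto \( [T'] \) directly, via the length-preserving tree map \( \rho \). Your construction is in essence the standard proof of the cited result (adapted to work inside \( T \), which is legitimate since \( T \) is pruned), so what you gain is a self-contained argument, and you correctly isolate the role of \( {<}\kappa \)-closedness of \( T' \) at limit stages as the point where the \( \SC_\kappa \) hypothesis on \( Y \) enters. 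The recursion, the verification that \( \rho \) fixes \( T' \) pointwise, and the continuity computation via preimages of basic open sets are all sound. Two trivial remarks: at the final step \( \alpha = \kappa \) no closedness of \( T' \) is needed (membership in \( [T'] \) is by definition the statement that all proper initial segments lie in \( T' \)); and, as in the paper's own proof, the degenerate case \( Y = \emptyset \) must be excluded, since the empty space is vacuously \( \SC_\kappa \) but is not a retract of a nonempty space.
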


\begin{proof}
By Proposition~\ref{prop:simultaneousembeddings},
without loss of generality we may assume that \( X \) is a closed subspace of \( \pre{\kappa}{\kappa} \) and \( Y \subseteq X \) a superclosed set. By~\cite[Proposition 1.3]{LuckeSchlMR3430247} there is a retraction \( r \) from \( \pre{\kappa}{\kappa} \) onto \( Y \). Then \( r \restriction X \) is a retraction of \( X \) onto \( Y \).
\end{proof}

None of the conditions on \( Y \) can be dropped in the above result: 
every retract of a Hausdorff space is necessarily closed in it, and 
by~\cite[Proposition 1.4]{LuckeSchlMR3430247} the space \( X_0 \) from equation~\eqref{eq:finitelymanyzeroes} is a closed \( \SFC_\kappa \)-subspace of the \( \SC_\kappa \)-space \( \pre{\kappa}{2} \) which is not a retract of it. Notice also that there are even clopen (hence strong \( \kappa \)-Choquet) subspaces of \( \pre{\kappa}{\kappa} \)  which are not superclosed, for example \mbox{\( \{ x \in \pre{\kappa}{\kappa} \mid \exists n < \omega \, (x(n) \neq 0 ) \} \)}. This shows that even in the special case \( X = \pre{\kappa}{\kappa} \), our Corollary~\ref{cor:simultaneousembeddings} properly extends~\cite[Proposition 1.3]{LuckeSchlMR3430247}.

Lemma~\ref{lem:closed_are_G_delta},
Proposition~\ref{prop:G_deltasubspaces} and 
Proposition~\ref{prop:charadditiveSCandSFCkappaspaces} together lead to the 
following characterization of \( \kappa \)-additive \( \SFC_\kappa \)-spaces.

\begin{theorem}
 \label{thm:charadditivefairSCkappaspaces}
For any space\footnote{Recall that all spaces are tacitly assumed to be regular Hausdorff.} \( X \) the following are equivalent:
\begin{enumerate-(a)}
\item \label{thm:charadditivefairSCkappaspaces-a}
\( X \) is a \( \kappa \)-additive  \( \SFC_\kappa \)-space;
\item \label{thm:charadditivefairSCkappaspaces-b}
\( X \) is homeomorphic to a \( G^\kappa_\delta \) subset of \( \pre{\kappa}{\kappa} \);
\item \label{thm:charadditivefairSCkappaspaces-c}
\( X \) is homeomorphic to a closed subset of \( \pre{\kappa}{\kappa} \).
\end{enumerate-(a)}
In particular, \( \pre{\kappa}{\kappa} \) is universal for \( \kappa \)-additive \( \SFC_\kappa \)-spaces, and hence also for \( \kappa \)-additive \( \SC_\kappa \)-spaces.
\end{theorem}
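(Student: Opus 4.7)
\smallskip

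The plan is to close the chain of implications $(c) \Rightarrow (b) \Rightarrow (a) \Rightarrow (c)$, exploiting the three results highlighted just before the theorem statement. Each step corresponds to one of those results, so there is essentially nothing new to invent here: the theorem is the packaging of the work done so far.

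For $(c) \Rightarrow (b)$: Since $\pre{\kappa}{\kappa}$ has weight $\kappa$ (the family $\{\Nbhd_s \mid s \in \pre{<\kappa}{\kappa}\}$ is a basis of size $\kappa^{<\kappa} = \kappa$), Lemma~\ref{lem:closed_are_G_delta} applies and gives that any closed $C \subseteq \pre{\kappa}{\kappa}$ is $G^\kappa_\delta$ in $\pre{\kappa}{\kappa}$. Transporting via the homeomorphism yields $(b)$.

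For $(b) \Rightarrow (a)$: As noted in the text following Definition~\ref{def:strongChoquet}, $\pre{\kappa}{\kappa}$ is trivially an $\SC_\kappa$-space, hence an $\SFC_\kappa$-space. Applying Proposition~\ref{prop:G_deltasubspaces} to a $G^\kappa_\delta$ subset $Y \subseteq \pre{\kappa}{\kappa}$ homeomorphic to $X$ shows that $Y$ is an $\SFC_\kappa$-space. Moreover, $Y$ inherits $\kappa$-additivity from $\pre{\kappa}{\kappa}$ (intersections of fewer than $\kappa$-many relatively open subsets of $Y$ are of the form $U \cap Y$, where $U$ is an intersection of fewer than $\kappa$-many open sets of $\pre{\kappa}{\kappa}$, hence open by $\kappa$-additivity of $\pre{\kappa}{\kappa}$). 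Therefore $X$ is a $\kappa$-additive $\SFC_\kappa$-space.

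For $(a) \Rightarrow (c)$: This is exactly the content of Proposition~\ref{prop:charadditiveSCandSFCkappaspaces}. The final assertion (universality of $\pre{\kappa}{\kappa}$ for $\kappa$-additive $\SFC_\kappa$-spaces, and a fortiori for $\kappa$-additive $\SC_\kappa$-spaces) is then an immediate consequence of the implication $(a) \Rightarrow (c)$, together with the already-observed fact that every $\SC_\kappa$-space is an $\SFC_\kappa$-space. There is no real obstacle in the argument; the only thing to be mindful of is not to confuse ``$G^\kappa_\delta$ in $X$'' with ``$G^\kappa_\delta$ in $\pre{\kappa}{\kappa}$'' in the $(c) \Rightarrow (b)$ step, and to explicitly record the inheritance of $\kappa$-additivity by subspaces in $(b) \Rightarrow (a)$.
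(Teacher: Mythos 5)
Your proof is correct and follows exactly the same route as the paper: the cycle \ref{thm:charadditivefairSCkappaspaces-a}~\( \Rightarrow \)~\ref{thm:charadditivefairSCkappaspaces-c}~\( \Rightarrow \)~\ref{thm:charadditivefairSCkappaspaces-b}~\( \Rightarrow \)~\ref{thm:charadditivefairSCkappaspaces-a} via Proposition~\ref{prop:charadditiveSCandSFCkappaspaces}, Lemma~\ref{lem:closed_are_G_delta}, and Proposition~\ref{prop:G_deltasubspaces}, respectively. Your explicit remark that \( \kappa \)-additivity is inherited by subspaces is a detail the paper leaves implicit, but it is a welcome clarification rather than a divergence.
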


\begin{proof}
The implication from \ref{thm:charadditivefairSCkappaspaces-a} to 
\ref{thm:charadditivefairSCkappaspaces-c} is 
Proposition~\ref{prop:charadditiveSCandSFCkappaspaces}, while 
\ref{thm:charadditivefairSCkappaspaces-c} implies 
\ref{thm:charadditivefairSCkappaspaces-b} by Lemma~\ref{lem:closed_are_G_delta}. 
Finally, \ref{thm:charadditivefairSCkappaspaces-b} $\Rightarrow$ 
\ref{thm:charadditivefairSCkappaspaces-a} because \( \pre{\kappa}{\kappa} \) is trivially a 
\( \kappa \)-additive \( \SFC_\kappa \)-space, and such spaces are closed under 
\( G^\kappa_\delta \) subspaces by Proposition~\ref{prop:G_deltasubspaces}.
\end{proof}

From Proposition~\ref{prop:charadditiveSCandSFCkappaspaces} we also get a 
characterization of \( \kappa \)-additive \( \SC_\kappa \)-spaces. (The fact that every 
superclosed subset of \( \pre{\kappa}{\kappa} \) is an \( \SC_\kappa \)-space is 
trivial.)

\begin{theorem} \label{thm:charadditiveSCkappaspaces}
For any space \( X \) the following are equivalent:
\begin{enumerate-(a)}
\item \label{thm:charadditiveSCkappaspaces-a}
\( X \) is a \( \kappa \)-additive   \( \SC_\kappa \)-space;
\item \label{thm:charadditiveSCkappaspaces-b}
\( X \) is homeomorphic to a superclosed subset of \( \pre{\kappa}{\kappa} \).
\end{enumerate-(a)}
\end{theorem}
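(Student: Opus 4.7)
The plan is to derive both implications from material already in place. For \ref{thm:charadditiveSCkappaspaces-a}~$\Rightarrow$~\ref{thm:charadditiveSCkappaspaces-b}, I would simply invoke Proposition~\ref{prop:charadditiveSCandSFCkappaspaces}: since every $\SC_\kappa$-space is in particular an $\SFC_\kappa$-space, if $X$ is a $\kappa$-additive $\SC_\kappa$-space, then the proposition already delivers a homeomorphism between $X$ and a superclosed $C \subseteq \pre{\kappa}{\kappa}$.

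For \ref{thm:charadditiveSCkappaspaces-b}~$\Rightarrow$~\ref{thm:charadditiveSCkappaspaces-a}, let $X = [T]$ for a superclosed tree $T \subseteq \pre{<\kappa}{\kappa}$. First, $\kappa$-additivity is inherited from $\pre{\kappa}{\kappa}$: the subspace topology is generated by the $\Nbhd_s \cap [T]$, and any intersection of $<\kappa$ open subsets of $[T]$ is the intersection with $[T]$ of the corresponding intersection in $\pre{\kappa}{\kappa}$, which is open by $\kappa$-additivity of the bounded topology.

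Second, I would describe an explicit winning strategy $\sigma$ for $\pII$ in $\Cho^s_\kappa([T])$. Given a move $(U_\alpha, x_\alpha)$ of $\pI$ at round $\alpha$, let $\pII$ choose a successor ordinal $\ell_\alpha$ strictly larger than $\alpha$ and than every previously chosen $\ell_\beta$ such that $s_\alpha := x_\alpha \restriction \ell_\alpha$ satisfies $\Nbhd_{s_\alpha} \cap [T] \subseteq U_\alpha$; such $\ell_\alpha$ exists because the $\Nbhd_{x_\alpha \restriction \eta} \cap [T]$ form a neighborhood base at $x_\alpha$. Set $V_\alpha := \Nbhd_{s_\alpha} \cap [T]$, which is a legal move since $x_\alpha \in V_\alpha \subseteq U_\alpha$.

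The key verification is that this strategy wins at every limit stage $\gamma \leq \kappa$: the rules force $x_\beta \in V_\alpha$ whenever $\alpha < \beta < \gamma$, so $s_\alpha \subseteq x_\beta$, and comparing with $s_\beta \subseteq x_\beta$ of strictly greater length gives $s_\alpha \subseteq s_\beta$. Thus $s := \bigcup_{\alpha < \gamma} s_\alpha$ is well-defined with $\leng(s) = \sup_{\alpha<\gamma} \ell_\alpha$, and every initial segment of $s$ lies in $T$. If $\gamma < \kappa$, then $\leng(s) < \kappa$ (by regularity), so $s \in T$ because $T$ is ${<}\kappa$-closed, and since $T$ is pruned there is some $y \in [T]$ extending $s$; such $y$ lies in $\bigcap_{\alpha<\gamma} V_\alpha$. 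If $\gamma = \kappa$, then $s \in \pre{\kappa}{\kappa}$ with every proper initial segment in $T$, so $s \in [T]$ and $s \in \bigcap_{\alpha<\kappa} V_\alpha$. This shows $\sigma$ is winning, completing the proof. No step here is genuinely difficult; the only point requiring a little care is setting up the $s_\alpha$ so that both the length condition and the inclusion $V_\alpha \subseteq U_\alpha$ hold simultaneously at all rounds, which is ensured by making $\ell_\alpha$ strictly increasing and above $\alpha$.
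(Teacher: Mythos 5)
Your proposal is correct and follows essentially the paper's own route: the forward direction is exactly an application of Proposition~\ref{prop:charadditiveSCandSFCkappaspaces}, and the converse is the fact (which the paper dismisses as trivial) that a superclosed \( [T] \) is \( \kappa \)-additive of weight \( \leq \kappa \) and that \( \pII \) wins \( \Cho^s_\kappa([T]) \) by playing basic open sets along the branch being built. Your explicit verification of the strategy is sound; the only cosmetic slip is that extending \( s \) to a branch \( y \in [T] \) at a limit stage \( \gamma < \kappa \) uses superclosedness (prunedness \emph{together with} \( {<}\kappa \)-closedness, to get through later limit levels), not prunedness alone---this is precisely the paper's earlier remark that \( [T_s] \neq \emptyset \) for every \( s \) in a superclosed tree.
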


\begin{remark}
Since \( \pre{\kappa}{\kappa} \) is \( \kappa \)-additive and the latter is a hereditary 
property, Theorems~\ref{thm:charadditivefairSCkappaspaces} 
and~\ref{thm:charadditiveSCkappaspaces} can obviously be turned into a 
characterization of \( \kappa \)-additivity inside the classes of \( \SFC_\kappa \)-spaces 
and \( \SC_\kappa \)-spaces, respectively.
\end{remark}

Recall that an uncountable cardinal \( \kappa \) is (\markdef{strongly}) 
\markdef{inaccessible} if it is regular and strong limit, that is, \( 2^\lambda < \kappa \) for 
all \( \lambda < \kappa \). An uncountable cardinal \( \kappa \) is \markdef{weakly 
compact} if and only if it is inaccessible and has the tree property: \( [T] \neq \emptyset \) 
for every tree \( T \subseteq \pre{<\kappa}{\kappa} \) satisfying \( 1 \leq |\mathrm{Lev}_\alpha(T)| < \kappa \) for all \( \alpha < \kappa \). A topological space \( X \) is 
\markdef{\( \kappa \)-Lindel\"of} if all its open coverings admit a subcovering of size \( < \kappa \). (Thus \(\omega\)-Lindel\"ofness is ordinary compactness.)
It turns out that the space \( \pre{\kappa}{2} \) is \( \kappa \)-Lindel\"of if and only if \( \kappa \) is weakly
compact~\cite[Theorem 5.6]{MottoRos:2013}, in which case \( \pre{\kappa}{2} \) and 
\( \pre{\kappa}{\kappa} \) are obviously not homeomorphic; if instead \( \kappa \) is not 
weakly compact, then \( \pre{\kappa}{2} \) is homeomorphic to \( \pre{\kappa}{\kappa} \) 
by~\cite[Theorem 1]{HungNegropMR367930}. This implies that if \( \kappa \) is not 
weakly compact, then we can replace \( \pre{\kappa}{\kappa} \) with \( \pre{\kappa}{2} \) 
in both Proposition~\ref{prop:charadditiveSCandSFCkappaspaces} and  
Theorem~\ref{thm:charadditivefairSCkappaspaces}. Moreover, since one can easily show 
that if \( \kappa \) is not weakly compact then there are homeomorphisms between \( \pre{\kappa}{\kappa} \) and \( \pre{\kappa}{2} \) preserving superclosed sets, for such \( \kappa \)'s we can replace \( \pre{\kappa}{\kappa} \) with \( \pre{\kappa}{2} \) in 
Theorem~\ref{thm:charadditiveSCkappaspaces} as well. As for weakly compact cardinals \( \kappa \), the equivalence between~\ref{thm:charadditivefairSCkappaspaces-a} and 
\ref{thm:charadditivefairSCkappaspaces-b} in 
Theorem~\ref{thm:charadditivefairSCkappaspaces} still holds replacing \( \pre{\kappa}{\kappa} \) with \( \pre{\kappa}{2} \) by Fact~\ref{fct:Baire_is_G_delta_of_Cantor}, but 
the same does not apply to part~\ref{thm:charadditivefairSCkappaspaces-c} and 
Theorem~\ref{thm:charadditiveSCkappaspaces} because for such a \( \kappa \) all 
(super)closed subsets of \( \pre{\kappa}{2} \) are \( \kappa \)-Lindel\"of---see 
Theorems~\ref{thm:charK-Polish-Lindelof} and~\ref{thm:char-spherically-complete-K-Polish-Lindelof}.

\medskip

We now move to \( \GG \)-Polish spaces. Our goal is to show that such spaces coincide 
with the \( \kappa \)-additive \( \SFC_\kappa \)-spaces, and thus that the definition is in 
particular independent of the chosen \( \GG \). Along the way, we also generalize some 
results independently obtained in~\cite[Section 2.3]{Gal19} and close some open 
problems and conjectures contained therein, obtaining a fairly complete picture of the 
relationships among all the proposed generalizations of Polish spaces.

In the subsequent results, $\GG$ is a totally ordered Abelian group with 
$\Deg(\GG)=\kappa$. 
Examples of groups of this form are $\ZZ^\kappa$ and $\RR^\kappa$ (and any other $(\GG')^\kappa$ for an Abelian group $\GG'$) equipped with coordinate-wise operations and lexicographic orders,  
the ``$\kappa$-versions of the reals'' proposed in \cite{AsperoTsaprounisMR3780585} or in \cite{Gal19}, or any non-standard model of the reals of degree $\kappa$.

The next lemma was essentially proved 
in~\cite[Theorem (viii)]{Sik50} and it corresponds to~\ref{thm:charK-metrizable-2} 
\( \Rightarrow \) \ref{thm:charK-metrizable-1} in Theorem~\ref{thm:charK-metrizable}. 
We reprove it here for the reader's convenience.

\begin{lemma} \label{lem:K-Polishareadditive}
Every \( \GG \)-metric space \( X \) is \( \kappa \)-additive, hence also zero-dimensional.
\end{lemma}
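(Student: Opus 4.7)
The plan is to verify $\kappa$-additivity directly at every point by exhibiting, for any family of fewer than $\kappa$ open neighborhoods of a point $x$, a single open ball around $x$ contained in all of them. The essential ingredient will be the fact that $\GG^+$ has coinitiality exactly $\kappa$, combined with the regularity of $\kappa$. Zero-dimensionality will then come for free from the discussion already appearing right after the definition of $\nu$-additivity in the excerpt (regular, $\nu$-additive spaces with $\nu > \omega$ are zero-dimensional).

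Concretely, the first step is to fix a strictly decreasing sequence $(r_\beta)_{\beta<\kappa}$ in $\GG^+$ that is coinitial in $\GG^+$; such a sequence exists because $\Deg(\GG)=\kappa$ is by assumption an uncountable cardinal (so in particular $\GG^+$ has no minimum, otherwise its coinitiality would be $1$). As observed in the paragraph preceding the lemma, the collection $\{B_d(x,r_\beta)\mid \beta<\kappa\}$ is then a local basis at $x$, well-ordered by reverse inclusion.

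The main step is as follows. Let $x\in X$ and let $(U_\alpha)_{\alpha<\gamma}$ be any family of open sets containing $x$ with $\gamma<\kappa$. For each $\alpha<\gamma$, use that the balls $B_d(x,r_\beta)$ form a local basis at $x$ to choose some $\beta_\alpha<\kappa$ with $B_d(x,r_{\beta_\alpha})\subseteq U_\alpha$. Since $|\gamma|<\kappa$ and $\kappa$ is regular, $\beta^{*}=\sup_{\alpha<\gamma}\beta_\alpha<\kappa$. By the monotonicity of the local basis, $B_d(x,r_{\beta^{*}})\subseteq B_d(x,r_{\beta_\alpha})\subseteq U_\alpha$ for every $\alpha<\gamma$, hence
\[
B_d(x,r_{\beta^{*}})\subseteq \bigcap_{\alpha<\gamma}U_\alpha.
\]
Since $x$ was an arbitrary point of the intersection, $\bigcap_{\alpha<\gamma}U_\alpha$ is open, which proves $\kappa$-additivity. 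The zero-dimensionality assertion follows at once: $\GG$-metric spaces are Hausdorff (and in fact regular by the usual triangle-inequality argument), and the paper already observed that any regular $\nu$-additive space with $\nu>\omega$ admits a basis of clopen sets.

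There is no serious obstacle here: the lemma is essentially the assertion that a local basis of length $\kappa$, well-ordered by reverse inclusion, produces a $\kappa$-additive topology. The one point that merits care is ensuring that the supremum $\beta^{*}$ really stays below $\kappa$; this is exactly where we use that $\kappa=\Deg(\GG)$ is regular, and it is the reason the lemma would fail if one allowed $\Deg(\GG)$ to be singular.
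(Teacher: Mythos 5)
Your proof is correct and follows essentially the same route as the paper: pick a point of the intersection, use that the balls around it indexed by a coinitial sequence in $\GG^+$ form a local basis, and use $\Deg(\GG)=\kappa$ (with regularity) to find a single ball inside all $\gamma<\kappa$ of the given open sets, with zero-dimensionality then following from the earlier remark on regular $\nu$-additive spaces. The only cosmetic difference is that the paper first chooses arbitrary radii $\varepsilon_\alpha$ and then invokes a common lower bound in $\GG^+$, whereas you work directly with indices into a fixed coinitial sequence; these are the same argument.
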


\begin{proof}
Let \( \gamma < \kappa \) and
\( (U_\alpha)_{\alpha < \gamma} \) be a sequence of nonempy open sets. If  \( \bigcap_{\alpha < \gamma} U_\alpha \neq \emptyset \), consider an arbitrary
\( x \in \bigcap_{\alpha < \gamma} U_\alpha \). 
The family $\{B_d(x,\varepsilon) \mid \varepsilon \in \GG^+\}$ is a local basis of $x$, so for every $\alpha<\gamma$ we may find \(\varepsilon_\alpha \in \GG^+ \) such that
\( B_d(x,\varepsilon_\alpha) \subseteq U_\alpha \). Since \( \mathrm{Deg}(\GG) = \kappa > \gamma \), there is
\( \varepsilon \in \GG^+ \) such that \( \varepsilon \leq_\GG \varepsilon_\alpha \) for all
\( \alpha < \gamma \): thus \( x \in B_d(x, \varepsilon) \subseteq \bigcap_{\alpha< \gamma} B_d(x, \varepsilon_\alpha) \subseteq \bigcap_{\alpha< \gamma} U_\alpha \).
\end{proof}

\begin{lemma} \label{lem:K-PolishareSFC}
Every \( \GG \)-Polish space \( X \) is strong fair \( \kappa \)-Choquet.
\end{lemma}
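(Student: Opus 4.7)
The plan is to mimic the classical proof that every complete metric space is strong Choquet, adapting the argument so that (i) it works for $\GG$-metrics rather than real-valued metrics, and (ii) it fits into the fair variant of the game, where $\pII$ is only asked to produce a point in the \emph{final} intersection whenever all earlier limit intersections are nonempty. Fix a compatible complete $\GG$-metric $d$ on $X$ and a strictly decreasing sequence $(r_\alpha)_{\alpha<\kappa}$ coinitial in $\GG^+$. By Lemma~\ref{lem:K-Polishareadditive}, $X$ is $\kappa$-additive, so intersections of $<\kappa$-many open sets are open in $X$; in particular the relatively open sets arising in the game from some limit round on can always be regarded as genuinely open in $X$.

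The strategy for $\pII$ in $\fCho^s_\kappa(X)$ I would use is the natural one: upon receiving $\pI$'s move $(U_\alpha,x_\alpha)$ at round $\alpha$, use regularity of $X$ to find an open $W_\alpha\ni x_\alpha$ with $\overline{W_\alpha}\subseteq U_\alpha$, and respond with
\[
V_\alpha = W_\alpha \cap B_d(x_\alpha,r_\alpha).
\]
This is a legal move since $V_\alpha$ is open, $x_\alpha\in V_\alpha\subseteq U_\alpha$, and by $\kappa$-additivity no extra care is needed at limit rounds.

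To see that the strategy is winning, take any run $\langle (U_\alpha,x_\alpha),V_\alpha\mid \alpha<\kappa\rangle$ in which $\pII$ followed it and $\bigcap_{\alpha<\gamma}V_\alpha\neq\emptyset$ for every limit $\gamma<\kappa$ (otherwise $\pII$ wins by default). For $\alpha<\beta<\kappa$ the rules give $x_\beta\in V_\beta\subseteq U_\beta\subseteq V_\alpha\subseteq B_d(x_\alpha,r_\alpha)$, hence $d(x_\alpha,x_\beta)<_\GG r_\alpha$. Thus $(x_\alpha)_{\alpha<\kappa}$ is $d$-Cauchy: given $\varepsilon\in\GG^+$, coinitiality of $(r_\alpha)$ yields $\alpha_0$ with $r_{\alpha_0}<_\GG\varepsilon$, and for $\alpha_0<\beta\leq\gamma$ one gets $d(x_\beta,x_\gamma)<_\GG r_\beta\leq_\GG r_{\alpha_0}<_\GG\varepsilon$. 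Note that no halving of $\varepsilon$ is needed here — the monotone chain $V_\beta\subseteq B_d(x_\alpha,r_\alpha)$ already produces the tight bound $r_\beta$ rather than $2r_\beta$, which is important because $\GG$ need not be divisible. Cauchy-completeness provides $x\in X$ with $x_\beta\to x$, and since $x_\beta\in V_\alpha$ for all $\beta>\alpha$ we get $x\in\overline{V_\alpha}\subseteq\overline{W_\alpha}\subseteq U_\alpha$ for every $\alpha<\kappa$. Therefore $x\in\bigcap_{\alpha<\kappa}U_\alpha=\bigcap_{\alpha<\kappa}V_\alpha$, and $\pII$ wins.

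The only real subtlety is the absence of a canonical ``$\varepsilon/2$'' in an arbitrary totally ordered Abelian group, but as observed above this is bypassed by the decreasing containment built into the strategy. It is worth noting that the same strategy would \emph{not} suffice for the stronger (non-fair) Choquet game $\Cho^s_\kappa(X)$: completeness alone does not force intermediate limit intersections to be nonempty, which corresponds topologically to a superclosedness/spherical-completeness condition and is genuinely stronger than complete $\GG$-metrizability.
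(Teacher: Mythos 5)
Your proof is correct and follows essentially the same route as the paper's: fix a Cauchy-complete compatible $\GG$-metric and a coinitial decreasing sequence $(r_\alpha)_{\alpha<\kappa}$, have $\pII$ answer with a small ball around $x_\alpha$ whose closure sits inside $U_\alpha$, observe that the played points form a Cauchy sequence, and use completeness to place the limit in $\bigcap_{\alpha<\kappa}U_\alpha=\bigcap_{\alpha<\kappa}V_\alpha$. The only differences are cosmetic (the paper shrinks the radius $\varepsilon_\alpha\leq_\GG r_\alpha$ of a single ball instead of intersecting $B_d(x_\alpha,r_\alpha)$ with a regularity-shrunk $W_\alpha$, and leaves the $\kappa$-additivity and no-halving points implicit), so no further comparison is needed.
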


\begin{proof}
Fix a compatible Cauchy-complete metric \( d \) on \( X \) and a strictly decreasing sequence \( (r_\alpha)_{\alpha < \kappa} \) coinitial in \( \GG^+ \).
Consider the strategy \(\tau\) of \( \pII \) in \( \fCho^s_\kappa(X) \) in which he replies to 
player \( \pI \)'s move \( (U_\alpha, x_\alpha) \) by picking a ball 
\( V_\alpha =  B_d(x_\alpha, \varepsilon_\alpha) \) with \( \varepsilon_\alpha \in \GG^+ \) 
small enough so that \( \varepsilon_\alpha \leq_\GG r_\alpha \) and \( \cl(V_\alpha) \subseteq U_\alpha \). In particular, we will thus have \( \cl(V_{\alpha+1}) \subseteq V_\alpha \). 
Suppose that \( \langle (U_\alpha, x_\alpha), V_\alpha \mid \alpha < \kappa \rangle \) is a 
run in \( \fCho^s_\kappa(X) \) in which \( \bigcap_{\alpha < \gamma} V_\alpha \neq \emptyset \) for every limit \( \gamma < \kappa \). Then
the choice of the \( \varepsilon_\alpha \)'s ensures that \( (x_\alpha)_{\alpha < \kappa} \) 
is a Cauchy sequence, and thus it converges to some \( x \in X \) by Cauchy-completeness 
of \( d \). It follows that \( x \in \bigcap_{\alpha < \kappa} \cl(V_\alpha) = \bigcap_{\alpha< \kappa} V_\alpha \neq \emptyset \), and thus \( \tau \) is a winning strategy for player \( \pII \).
\end{proof}

\begin{theorem} \label{thm:charK-Polish}
For any space \( X \) the following are equivalent:
\begin{enumerate-(a)}
\item \label{thm:charK-Polish-a}
\( X \) is \( \GG \)-Polish;
\item \label{thm:charK-Polish-b}
\( X \) is a \( \kappa \)-additive \( \SFC_\kappa \)-space;
\item \label{thm:charK-Polish-c}
\( X \) is homeomorphic to a \( G^\kappa_\delta \) subset of \( \pre{\kappa}{\kappa} \);
\item \label{thm:charK-Polish-d}
\( X \) is homeomorphic to a closed subset of \( \pre{\kappa}{\kappa} \).
\end{enumerate-(a)}
\end{theorem}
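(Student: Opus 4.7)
The plan is to close the cycle \ref{thm:charK-Polish-a} $\Rightarrow$ \ref{thm:charK-Polish-b} $\Rightarrow$ \ref{thm:charK-Polish-d} $\Rightarrow$ \ref{thm:charK-Polish-a}, exploiting the fact that the three nontrivial links in it are essentially already done in earlier results. The equivalence between \ref{thm:charK-Polish-b}, \ref{thm:charK-Polish-c}, and \ref{thm:charK-Polish-d} is simply Theorem~\ref{thm:charadditivefairSCkappaspaces}, so nothing new is needed there.

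For \ref{thm:charK-Polish-a} $\Rightarrow$ \ref{thm:charK-Polish-b}: let $X$ be $\GG$-Polish. By Lemma~\ref{lem:K-Polishareadditive}, $X$ is $\kappa$-additive, and by Lemma~\ref{lem:K-PolishareSFC}, $X$ is strong fair $\kappa$-Choquet; the weight constraint $\leq \kappa$ is part of Definition~\ref{def:K-Polish}. Combining these, $X$ is a $\kappa$-additive $\SFC_\kappa$-space.

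For \ref{thm:charK-Polish-d} $\Rightarrow$ \ref{thm:charK-Polish-a}: recall from the discussion following Definition~\ref{def:K-Polish} that $\pre{\kappa}{\kappa}$ is $\GG$-Polish with the Cauchy-complete $\GG$-metric $d$ defined in equation~\eqref{eq:metricBaire}, using a strictly decreasing sequence $(r_\alpha)_{\alpha < \kappa}$ coinitial in $\GG^+$. It is then enough to observe (again already noted in the paper after Definition~\ref{def:K-Polish}) that $\GG$-Polishness is inherited by closed subspaces: if $Y \subseteq \pre{\kappa}{\kappa}$ is closed, then $d \restriction (Y \times Y)$ is a compatible $\GG$-metric on $Y$, and any $d$-Cauchy sequence in $Y$ is Cauchy in $\pre{\kappa}{\kappa}$, hence converges to a point which must lie in $Y$ by closedness; the weight of $Y$ is trivially at most that of $\pre{\kappa}{\kappa}$, which is $\kappa$.

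There is no serious obstacle: the technical work has been absorbed into Lemmas~\ref{lem:K-Polishareadditive} and~\ref{lem:K-PolishareSFC}, Theorem~\ref{thm:charadditivefairSCkappaspaces}, and the explicit construction of the metric on $\pre{\kappa}{\kappa}$. The only minor point to verify carefully is the inheritance of Cauchy-completeness by closed subspaces, which is a one-line argument as above. The theorem therefore follows by direct assembly of these pieces, and as a byproduct one gets that the class of $\GG$-Polish spaces does not depend on the choice of the totally ordered Abelian group $\GG$ of degree $\kappa$ (a fact that will be recorded separately as Corollary~\ref{cor:Gisirrelevant}).
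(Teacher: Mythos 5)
Your proposal is correct and follows essentially the same route as the paper: the equivalence of \ref{thm:charK-Polish-b}--\ref{thm:charK-Polish-d} is delegated to Theorem~\ref{thm:charadditivefairSCkappaspaces}, the implication \ref{thm:charK-Polish-a} \( \Rightarrow \) \ref{thm:charK-Polish-b} to Lemmas~\ref{lem:K-Polishareadditive} and~\ref{lem:K-PolishareSFC}, and \ref{thm:charK-Polish-d} \( \Rightarrow \) \ref{thm:charK-Polish-a} to the explicit metric of equation~\eqref{eq:metricBaire} together with closure of \( \GG \)-Polishness under closed subspaces. The only difference is that you spell out the inheritance of Cauchy-completeness by closed subspaces, which the paper dismisses as ``easily implies''; your one-line verification of that point is correct.
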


\begin{proof}
The equivalence of~\ref{thm:charK-Polish-b}, \ref{thm:charK-Polish-c}, and~\ref{thm:charK-Polish-d} is Theorem~\ref{thm:charadditivefairSCkappaspaces}, and~\ref{thm:charK-Polish-d} easily implies~\ref{thm:charK-Polish-a} 
(use e.g.\ the $\GG$-metric described in equation~\eqref{eq:metricBaire}).
The remaining implication \ref{thm:charK-Polish-a} $\Rightarrow$~\ref{thm:charK-Polish-b} follows from Lemma~\ref{lem:K-Polishareadditive} and Lemma~\ref{lem:K-PolishareSFC}.
\end{proof}

As usual, when \( \kappa \) is not weakly compact we can replace \( \pre{\kappa}{\kappa} \) with its homeomorphic copy \( \pre{\kappa}{2} \) in conditions~\ref{thm:charK-Polish-c} 
and~\ref{thm:charK-Polish-d} above. When \( \kappa \) is instead weakly compact, by 
Fact~\ref{fct:Baire_is_G_delta_of_Cantor} we can still replace \( \pre{\kappa}{\kappa} \) 
with \( \pre{\kappa}{2} \) in condition~\ref{thm:charK-Polish-c}, 
but the same does not apply to condition~\ref{thm:charK-Polish-d} because of 
\( \kappa \)-Lindel\"ofness---see Theorem~\ref{thm:charK-Polish-Lindelof}. In view of this 
observation, the implication~\ref{thm:charK-Polish-a} \( \Rightarrow \) \ref{thm:charK-Polish-c} in Theorem~\ref{thm:charK-Polish} is just a reformulation of~\cite[Corollary 2.36]{Gal19}, which is thus nicely complemented by the reverse implication \ref{thm:charK-Polish-c} \( \Rightarrow \) \ref{thm:charK-Polish-a} from our result.

Theorem~\ref{thm:charK-Polish} shows in particular that the notion of \( \GG \)-Polish 
space does not depend on the particular choice of the group \( \GG \). 

\begin{corollary} \label{cor:Gisirrelevant}
Let \( \GG, \GG' \) be two totally ordered (Abelian) groups, both of degree \( \kappa \), 
and \( X \) be a space. 
Then \( X \) is \( \GG \)-Polish if and only if it is \( \GG' \)-Polish.
\end{corollary}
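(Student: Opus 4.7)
The plan is to observe that this is an immediate consequence of Theorem~\ref{thm:charK-Polish}, which provides characterizations of $\GG$-Polish spaces that make no reference at all to the particular group $\GG$. Specifically, I would apply the theorem twice and chain the equivalences through a $\GG$-independent condition.

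In more detail: by the equivalence \ref{thm:charK-Polish-a} $\Leftrightarrow$ \ref{thm:charK-Polish-b} of Theorem~\ref{thm:charK-Polish} applied to $\GG$, the space $X$ is $\GG$-Polish if and only if $X$ is a $\kappa$-additive $\SFC_\kappa$-space. The condition ``$\kappa$-additive $\SFC_\kappa$-space'' is purely topological and involves only $\kappa$, not $\GG$. Applying the same equivalence of Theorem~\ref{thm:charK-Polish} now to $\GG'$ (which is legitimate since $\Deg(\GG')=\kappa$, so the standing assumptions of the theorem are met), we get that $X$ is a $\kappa$-additive $\SFC_\kappa$-space if and only if $X$ is $\GG'$-Polish. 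Composing the two biconditionals yields the claim.

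There is essentially no obstacle here: all the real content is already packaged into Theorem~\ref{thm:charK-Polish}, whose proof used Lemma~\ref{lem:K-Polishareadditive} and Lemma~\ref{lem:K-PolishareSFC} in one direction (a $\GG$-Polish space is automatically a $\kappa$-additive $\SFC_\kappa$-space, regardless of $\GG$) and Proposition~\ref{prop:charadditiveSCandSFCkappaspaces} in the other direction (any $\kappa$-additive $\SFC_\kappa$-space embeds as a closed subspace of $\pre{\kappa}{\kappa}$, which is $\GG$-Polish for any totally ordered Abelian group $\GG$ with $\Deg(\GG)=\kappa$, via the metric in equation~\eqref{eq:metricBaire}). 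The only thing to check is that both hypotheses on $\GG$ and $\GG'$ (being totally ordered Abelian and of degree $\kappa$) are exactly those needed for Theorem~\ref{thm:charK-Polish} to apply, which is immediate.
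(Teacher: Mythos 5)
Your proposal is correct and is exactly the paper's argument: the corollary is stated as an immediate consequence of Theorem~\ref{thm:charK-Polish}, whose conditions~\ref{thm:charK-Polish-b}--\ref{thm:charK-Polish-d} make no reference to \( \GG \), so one chains the equivalence \ref{thm:charK-Polish-a} \( \Leftrightarrow \) \ref{thm:charK-Polish-b} applied first to \( \GG \) and then to \( \GG' \). Nothing is missing.
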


For this reason, from now on will systematically avoid to specify which kind of \( \GG \) 
we are considering and freely use the term ``\( \GG \)-Polish'' as a shortcut for ``\( \GG \)-Polish with respect to a(ny) totally ordered (Abelian) group of degree \( \kappa \)''.

\begin{remark}\label{rmk:choice_of_structure_for_GG}
The only property of the metric $d$ required in the proofs of Lemma~\ref{lem:K-Polishareadditive} and Lemma~\ref{lem:K-PolishareSFC}
is that 
\begin{equation}
\text{For all } x\in X, \text{ the family }  \{B_d(x,\varepsilon) \mid \varepsilon \in \GG^+\} \mbox{ is a local basis of } x.\label{eq:local_base_condition_for_metric} 
\end{equation}
Hence, Theorem~\ref{thm:charK-metrizable} and Theorem~\ref{thm:charK-Polish} (and 
Corollary~\ref{cor:Gisirrelevant}) can be extended to metrics taking values in any other 
kind of structure, as long as condition~\eqref{eq:local_base_condition_for_metric} is still 
satisfied. (In particular, commutativity of $\GG$ is not really needed.)
This includes the case of 
completely $S$-quasimetrizable spaces for a totally ordered semigroup $S$ considered in \cite{ReichelMR0458373}, 
or spaces admitting a complete $\kappa$-ultrametric as defined 
in~\cite{CoskSchlMR3453772}.  In particular, the concepts of (complete) metric space 
and (complete) ultrametric space lead to the same class of spaces in generalized 
descriptive set theory. This is in strong contrast to what happens in the classical setting, 
where Polish ultrametric spaces form a proper subclass of arbitrary Polish spaces 
because admitting a compatible ultrametric implies zero-dimensionality.
\end{remark}

Another easy corollary of Theorem~\ref{thm:charK-Polish} is that a \( G^\kappa_\delta \) 
subset of a \( \GG \)-Polish space is necessarily \( \GG \)-Polish as well (see the proof of Theorem~\ref{thm:Kpolishsubspaces} for details). We complement 
this in  Corollary~\ref{cor:subspace}, using an extension result for continuous functions 
(Proposition~\ref{prop:extension}). These results are the natural generalization of the 
classical arguments in~\cite[Theorems 3.8 and 3.11]{KechrisMR1321597}, and already 
appeared  in~\cite[Theorems 2.34 and 2.35]{Gal19} where, as customary in the subject, the fact that \( \GG \) is Abelian is assumed and used. 
However,  we fully reprove both results for the sake of completeness and to confirm that 
also in this case commutativity of \( \GG \) is not required.

\begin{lemma} \label{lem:smallelements}
Let \( \GG \) be a totally ordered (non-necessarily Abelian) group 
such that \( \GG^+ \) has no minimum.
Then for every \( \varepsilon \in \GG^+ \) and every \( n \in \omega \) there is \( \delta \in \GG^+ \) with%
\footnote{As customary, we denote by \( n \delta \) the finite sum \( \underbrace{\delta +_\GG \dotsc +_\GG \delta}_{n \text{ times}} \).}
 \( n \delta \leq_\GG \varepsilon \).
\end{lemma}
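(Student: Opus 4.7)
My plan is to proceed by induction on $n$, with all of the real content concentrated in the case $n=2$.

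The cases $n=0$ and $n=1$ are immediate: for $n=0$ any $\delta\in\GG^+$ works since $0\delta = 0_\GG \leq_\GG \varepsilon$, and for $n=1$ the hypothesis of arbitrarily small positive elements directly supplies some $\delta\in\GG^+$ with $\delta\leq_\GG\varepsilon$. For the inductive step from $n\geq 1$ to $n+1$, given $\varepsilon\in\GG^+$ I would first invoke the case $n=2$ (treated below) to produce $\delta'\in\GG^+$ with $\delta'+_\GG\delta'\leq_\GG\varepsilon$, and then apply the inductive hypothesis to $\delta'$ to obtain $\eta\in\GG^+$ with $n\eta\leq_\GG\delta'$. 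Since $n\geq 1$ and the sum of positive elements is positive, one has $\eta\leq_\GG n\eta\leq_\GG\delta'$; right-translating $n\eta\leq_\GG\delta'$ by $\eta$ and then left-translating $\eta\leq_\GG\delta'$ by $\delta'$ chains together to give
\[
(n+1)\eta = n\eta +_\GG \eta \leq_\GG \delta' +_\GG \eta \leq_\GG \delta' +_\GG \delta' \leq_\GG \varepsilon.
\]

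The substantive step is thus the case $n=2$: for every $\varepsilon\in\GG^+$ there exists $\delta\in\GG^+$ with $\delta+_\GG\delta\leq_\GG\varepsilon$. I would argue by contradiction. Suppose $\delta+_\GG\delta >_\GG \varepsilon$ for every $\delta\in\GG^+$. Pick $\delta_0\in\GG^+$ with $\delta_0<_\GG\varepsilon$ (by the hypothesis), and set $\mu = \varepsilon +_\GG (-\delta_0)$; right-translating $\delta_0<_\GG\varepsilon$ by $-\delta_0$ shows that $\mu\in\GG^+$. Now choose $\delta_1\in\GG^+$ strictly below both $\delta_0$ and $\mu$, which is possible by applying the hypothesis to $\min(\delta_0,\mu)\in\GG^+$. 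Right-translating $\delta_1<_\GG\mu$ by $\delta_0$ yields $\delta_1+_\GG\delta_0 <_\GG \varepsilon$, while the standing assumption gives $\varepsilon <_\GG \delta_1+_\GG\delta_1$. Putting these together produces $\delta_1+_\GG\delta_0 <_\GG \delta_1+_\GG\delta_1$, and left-translating by $-\delta_1$ finally yields $\delta_0 <_\GG \delta_1$, which contradicts $\delta_1<_\GG\delta_0$.

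The only real obstacle is the bookkeeping with sides of translation, since commutativity is not available: one must consistently treat $\mu=\varepsilon+_\GG(-\delta_0)$ as a right-subtraction and use left- or right-translation as appropriate, exploiting the fact that each remains order-preserving in a totally ordered group. Once this is kept straight the argument is entirely routine.
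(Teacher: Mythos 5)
Your proof is correct. The reduction to the case $n=2$ is common to both arguments (the paper simply asserts that this case suffices, whereas you spell out the iteration; your inductive step, including the side-of-translation bookkeeping in $(n+1)\eta = n\eta +_\GG \eta \leq_\GG \delta' +_\GG \eta \leq_\GG \delta' +_\GG \delta' \leq_\GG \varepsilon$, is sound). Where you genuinely diverge is in the $n=2$ case itself: the paper gives a one-line direct construction, taking any $\varepsilon'$ with $0_\GG <_\GG \varepsilon' <_\GG \varepsilon$ and setting $\delta = \min\{\varepsilon',\, -\varepsilon' +_\GG \varepsilon\}$, so that $\delta +_\GG \delta \leq_\GG \varepsilon' +_\GG (-\varepsilon' +_\GG \varepsilon) = \varepsilon$ by translation-invariance on both sides. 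You instead argue by contradiction, but the two witnesses you manipulate, $\delta_0$ and $\mu = \varepsilon +_\GG (-\delta_0)$, are exactly the paper's pair (up to using right- rather than left-subtraction, which is immaterial provided the translations are applied on the matching sides, as you do). So the underlying idea is the same; the direct version buys brevity and avoids the negation of the statement, while your version costs an extra auxiliary element $\delta_1$ and a cancellation step but is equally rigorous. If you wanted, you could shortcut your own argument by observing that $\delta = \min\{\delta_0, \mu\}$ already satisfies $\delta +_\GG \delta \leq_\GG \mu +_\GG \delta_0 = \varepsilon$ directly, with no contradiction needed.
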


\begin{proof}
It is clearly enough to prove the result for \( n = 2 \).
Let \( \varepsilon' \in \GG^+ \) be such that \( 0_\GG <_\GG \varepsilon' <_\GG \varepsilon \) and set \( \delta = \min \{ \varepsilon', -\varepsilon'+_\GG \varepsilon \} \). Since \( \leq_\GG \) is translation-invariant on both sides we get
\[
\delta+_\GG \delta \leq_\GG 
\varepsilon' +_\GG (- \varepsilon' +_\GG \varepsilon) = \varepsilon.
\qedhere
 \]
\end{proof}

\begin{proposition} \label{prop:extension}
Let \( X \) be a $\GG$-metrizable space, and \( (Y,d) \) be a Cauchy-complete  \( \GG \)-metric space.
Let \( A \subseteq X \) be any set and \( f \colon A \to Y \) be continuous. Then there is a \( G^\kappa_\delta \) set \( B \subseteq X \) and a continuous function \( g \colon B \to Y \) such that \( A \subseteq B \subseteq \cl(A) \) and \( g \) extends \( f \), i.e.\ \( g \restriction A = f \).
\end{proposition}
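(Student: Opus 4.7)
The strategy is a Lavrentiev-style extension via \emph{oscillation}, following the classical arguments of \cite[Theorem 3.8]{KechrisMR1321597} but formulated so that commutativity of $\GG$ is not used. Fix a compatible $\GG$-metric $d_X$ on $X$ (which exists by $\GG$-metrizability), write $d$ for the given Cauchy-complete $\GG$-metric on $Y$, and fix a strictly decreasing sequence $(r_\alpha)_{\alpha<\kappa}$ coinitial in $\GG^+$. For each $\varepsilon \in \GG^+$ set
\[
B_\varepsilon=\{x\in\cl(A)\mid\exists U \text{ open in } X,\, x\in U \text{ and } \mathrm{diam}_d(f(U\cap A))\leq_\GG\varepsilon\}.
\]
Each $B_\varepsilon$ is open in $\cl(A)$, so $B:=\bigcap_{\alpha<\kappa}B_{r_\alpha}$ is $G^\kappa_\delta$ in $\cl(A)$. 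Since $X$ is $\GG$-metrizable, $\cl(A)=\bigcap_{\alpha<\kappa}\{x\in X\mid d_X(x,A)<_\GG r_\alpha\}$ is itself $G^\kappa_\delta$ in $X$, and hence so is $B$.

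Next I would verify $A\subseteq B$: given $x\in A$ and $\varepsilon\in\GG^+$, apply Lemma~\ref{lem:smallelements} to pick $\delta\in\GG^+$ with $2\delta\leq_\GG\varepsilon$ and use continuity of $f$ at $x$ to find an open $U\ni x$ in $X$ with $f(U\cap A)\subseteq B_d(f(x),\delta)$, yielding $\mathrm{diam}_d(f(U\cap A))\leq_\GG\varepsilon$ by the triangle inequality. To define the extension, fix $x\in B$. Since $\{B_{d_X}(x,r_\alpha)\mid\alpha<\kappa\}$ is a local basis at $x$ well-ordered by reverse inclusion and $x\in\cl(A)$, pick $x_\alpha\in A\cap B_{d_X}(x,r_\alpha)$ for every $\alpha<\kappa$. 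The sequence $(f(x_\alpha))_{\alpha<\kappa}$ is $d$-Cauchy: given $\varepsilon\in\GG^+$, choose an open $U\ni x$ witnessing $x\in B_\varepsilon$, then take $\alpha_0$ with $B_{d_X}(x,r_{\alpha_0})\subseteq U$; for all $\beta,\gamma\geq\alpha_0$ we get $d(f(x_\beta),f(x_\gamma))\leq_\GG\varepsilon$. By Cauchy-completeness of $(Y,d)$, the sequence converges to some point, which I define to be $g(x)$. Independence from the choice of approximating sequence is obtained by interleaving any two such sequences (the interleaved sequence is still Cauchy by the same oscillation argument) together with uniqueness of Hausdorff limits; in particular $g\restriction A=f$ by considering the constant sequence.

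Continuity of $g$ on $B$ is the last step: given $x\in B$ and $\varepsilon\in\GG^+$, use Lemma~\ref{lem:smallelements} to pick $\delta\in\GG^+$ with $3\delta\leq_\GG\varepsilon$, and choose an open $U\ni x$ with $\mathrm{diam}_d(f(U\cap A))\leq_\GG\delta$. For any $y\in U\cap B$, pick approximating sequences $(x_\alpha)$ for $x$ and $(y_\alpha)$ for $y$ inside $A\cap U$ (possible for large $\alpha$); then $d(g(x),g(y))$ is bounded via the triangle inequality by $d(g(x),f(x_\alpha))+d(f(x_\alpha),f(y_\alpha))+d(f(y_\alpha),g(y))\leq_\GG 3\delta\leq_\GG\varepsilon$ once $\alpha$ is large enough that the outer two terms are each $\leq_\GG\delta$. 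Hence $g$ is continuous at $x$.

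The main obstacle I anticipate is the careful non-Abelian bookkeeping in the last paragraph: one must keep the three-term triangle estimate in a fixed left-to-right order and absorb arbitrarily small error using Lemma~\ref{lem:smallelements} rather than dividing by $2$ or $3$ in $\GG$. The only other delicate point is confirming, via coinitiality of $(r_\alpha)_{\alpha<\kappa}$ in $\GG^+$, that the Cauchy condition defined with respect to sequences of length $\kappa$ really does force convergence of $(f(x_\alpha))_{\alpha<\kappa}$ in $(Y,d)$; this is where Cauchy-completeness of $Y$ enters essentially.
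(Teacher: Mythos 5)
Your proposal is correct and follows essentially the same route as the paper's proof: the oscillation sets $B_\varepsilon$ are the paper's $O_\varepsilon$ intersected with $\cl(A)$, the extension is defined by the same Cauchy-sequence argument, and the continuity estimate is the same $3\delta\leq_\GG\varepsilon$ computation. The only (cosmetic) differences are that you approximate points of $B$ using the metric balls $B_{d_X}(x,r_\alpha)$ rather than the neighborhoods witnessing $x\in O_{r_\alpha}$, and you verify directly that $\cl(A)$ is $G^\kappa_\delta$ instead of citing Lemma~\ref{lem:closed_are_G_delta}.
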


\begin{proof}
Given any \( \varepsilon \in \GG^+ \), let
\( O_\varepsilon \) be the collection of those \( x \in X \) admitting an open neighborhood 
\( U \) such that \( d(f(y), f(z)) <_\GG \varepsilon \) for all \( y,z \in U \cap A \). By 
definition, each \( O_\varepsilon \) is open in \( X \), and since \( f \colon A \to Y \) is 
continuous then \( A \subseteq O_\varepsilon \) for all \( \varepsilon \in \GG^+ \) (here 
we are implicitly using Lemma~\ref{lem:smallelements}). Fix a strictly decreasing 
sequence \( (r_\alpha)_{\alpha < \kappa} \) coinitial in \( \GG^+ \), and set
\[
B = \cl(A) \cap \bigcap_{\alpha < \kappa} O_{r_\alpha},
 \]
so that \( A \subseteq B \subseteq \cl(A) \) and \( B \) is \( G^\kappa_\delta \) by Lemma~\ref{lem:closed_are_G_delta}.
Fix \( x \in B\), and for every \( \alpha < \kappa \) fix an open neighborhood \( U^x_\alpha \) of
\( x \) witnessing \( x \in O_{r_\alpha} \). 
Without loss of generality we
may assume that \( U^x_\beta \subseteq U^x_\alpha \) if \( \alpha \leq \beta  < \kappa \) (if not, then $\tilde{U}^x_\beta=\bigcap_{\zeta\leq \beta } U^x_\zeta$ is as desired by $\kappa$-additivity of $X$).  
Since \( x \in B \subseteq \cl(A) \), for each \( \alpha < \kappa \) we can pick some \( y_\alpha \in U^x_\alpha \cap A \). The sequence
\( (f(y_\alpha))_{\alpha < \kappa} \) is \( d \)-Cauchy by construction, thus it converges to some \( y \in Y \)
by Cauchy-completeness of \( d \): set \( g(x) = y \). By uniqueness of limits, it is easy to check that the
map \( g \) is well-defined (i.e.\ the value \( g(x) \) is independent of the choice of the \( U^x_\alpha \)'s and
\( y_\alpha \)'s), and that \( g(x) = f(x) \) for all \( x \in A \). It remains to show that \( g \) is also
continuous at every \( x \in B \).
Given any \( \varepsilon \in \GG^+ \), we want to find an open neighborhood \( U \) of \( x \) such that
\( g(U \cap B) \subseteq B_d(g(x), \varepsilon) \). Let \( U^x_\alpha \) and \( y_\alpha \) be as in
the definition of \( g(x) \). Using Lemma~\ref{lem:smallelements}, 
find \( \delta \in \GG^+ \)  such that \( 3\delta \leq_\GG \varepsilon \). Let \( \alpha \) be large enough so that
\( d(f(y_\alpha), g(x)) < \delta \) and \( r_\alpha < \delta \), so that
\( f(U^x_\alpha \cap A) \subseteq B_d\left(g(x), 2 \delta \right) \). We claim that
\( U = U^x_\alpha \) is as required. Indeed, if \( z \in U \cap B \), then when defining \( g(z) \) we may
without loss of generality pick \( U^z_\alpha \) so that \( U^z_\alpha \subseteq U^x_\alpha \): it then
follows that
\[
g(z) \in \cl (  f (U^z_\alpha \cap A)  ) \subseteq \cl (f(U^x_\alpha \cap A)) \subseteq \cl \left(B_d\left(g(x), 2\delta \right)\right)  \subseteq B_d(g(x), \varepsilon),
\]
as required.
\end{proof}

\begin{corollary} \label{cor:subspace}
Let \( X \) be a $\GG$-metrizable space, and let \( Y \subseteq X \) be a completely \( \GG \)-metrizable subspace of $X$. Then \( Y \) is a \( G^\kappa_\delta \) subset of \( X \).
\end{corollary}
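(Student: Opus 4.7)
My plan is to apply Proposition~\ref{prop:extension} to the identity map on $Y$, viewed as a map from the subspace $Y \subseteq X$ into $Y$ equipped with its Cauchy-complete $\GG$-metric. Concretely, let $d'$ be a compatible Cauchy-complete $\GG$-metric on $Y$. Since $d'$ is compatible with the subspace topology inherited from $X$, the identity function $f \colon Y \to (Y,d')$ is a homeomorphism, hence continuous. I then invoke Proposition~\ref{prop:extension} with $A = Y$, the ambient $\GG$-metrizable space $X$, and the Cauchy-complete target $(Y,d')$. This yields a $G^\kappa_\delta$ set $B \subseteq X$ with $Y \subseteq B \subseteq \cl_X(Y)$ and a continuous map $g \colon B \to Y$ such that $g \restriction Y = \id_Y$.

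The remaining step is to show $B = Y$, which would finish the proof since $B$ is $G^\kappa_\delta$ in $X$ by construction. Fix any $x \in B$. Since $x \in \cl_X(Y)$ and $X$ is $\GG$-metrizable with local bases of size $\kappa$ at every point (for example using a coinitial decreasing sequence $(r_\alpha)_{\alpha<\kappa}$ in $\GG^+$), I can choose a sequence $(y_\alpha)_{\alpha<\kappa}$ in $Y$ converging to $x$ in $X$. By continuity of $g$ at $x$, the images $g(y_\alpha)$ converge to $g(x)$ in $(Y,d')$, and therefore also in $X$ because $d'$ induces the subspace topology. But $g(y_\alpha) = y_\alpha$ for every $\alpha$, so we have $y_\alpha \to x$ and $y_\alpha \to g(x)$ simultaneously in $X$. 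Hausdorffness of $X$ forces $x = g(x) \in Y$, giving $B \subseteq Y$, and the reverse inclusion is immediate.

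The only potentially delicate point is the interaction between the two $\GG$-metrics in play: the metric of $X$ restricted to $Y$ (which need not be complete) and the complete metric $d'$ on $Y$ (which need not extend to $X$). This is handled by observing that both metrics induce the same topology on $Y$, namely the subspace topology, so that convergence of a sequence in $Y$ with respect to $d'$ is the same as convergence in the subspace sense, and hence in $X$. Once this is clear, the Hausdorff argument goes through without difficulty, and no use of commutativity of $\GG$ is required beyond what Proposition~\ref{prop:extension} already uses.
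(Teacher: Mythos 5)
Your proposal is correct and follows essentially the same route as the paper: both apply Proposition~\ref{prop:extension} to $A=Y$ with $f=\id_Y$ mapped into $(Y,d')$ and conclude $B=Y$. The paper simply asserts that the resulting $g$ is the identity on $B$, whereas you supply the (correct) justification via convergence of a $\kappa$-sequence from $Y$ and Hausdorffness; this is a welcome elaboration rather than a different argument.
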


\begin{proof}
Apply Proposition~\ref{prop:extension} with \( A = Y \) and \( f \) the identity map from \( Y \) to itself. The resulting \( g \colon B \to Y \) 
coincides with the identity map on $B$ on a dense subset of their common domain: since both functions are continuous, they coincide.
It follows that $B = g(B) \subseteq Y$, hence
\( B=Y \) and \( Y \) is \( G^\kappa_\delta \). 
\end{proof}

In~\cite{Gal19} it is asked whether the reverse implication holds, i.e.\ whether 
\( G^\kappa_\delta \) subsets of \( \GG \)-Polish spaces need to be \( \GG \)-Polish as well 
(see the discussion in the paragraph after~\cite[Theorem 2.10]{Gal19}): our 
Theorem~\ref{thm:charK-Polish} already yields a positive answer, and thus it allows us to 
characterize which subspaces of a \( \GG \)-Polish space are still \( \GG \)-Polish.

\begin{theorem} \label{thm:Kpolishsubspaces}
Let \( X \) be \( \GG \)-Polish and \( Y \subseteq X \). Then \( Y \) is \( \GG \)-Polish if and only if \( Y \) is \( G^\kappa_\delta \) in \( X \).
\end{theorem}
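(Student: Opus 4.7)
The plan is to derive both directions from Theorem~\ref{thm:charK-Polish} together with Corollary~\ref{cor:subspace}, so that essentially no new work is needed.

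For the forward implication, assume $Y$ is $\GG$-Polish. Since $X$ is in particular $\GG$-metrizable and $Y$ is a completely $\GG$-metrizable subspace of $X$, Corollary~\ref{cor:subspace} immediately gives that $Y$ is $G^\kappa_\delta$ in $X$. This direction requires no further argument.

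For the reverse implication, assume $Y$ is $G^\kappa_\delta$ in $X$. By Theorem~\ref{thm:charK-Polish}, $X$ is homeomorphic to a $G^\kappa_\delta$ subset of $\pre{\kappa}{\kappa}$, so without loss of generality we may identify $X$ with such a subset. I would then show that $Y$ is itself $G^\kappa_\delta$ in $\pre{\kappa}{\kappa}$: writing $Y = \bigcap_{\alpha < \kappa} U_\alpha$ with each $U_\alpha$ open in $X$, extend each $U_\alpha$ to a set $V_\alpha$ open in $\pre{\kappa}{\kappa}$ with $U_\alpha = V_\alpha \cap X$, and similarly write $X = \bigcap_{\beta < \kappa} W_\beta$ with each $W_\beta$ open in $\pre{\kappa}{\kappa}$. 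Then
\[
Y = \bigcap_{\alpha < \kappa} V_\alpha \cap \bigcap_{\beta < \kappa} W_\beta,
\]
which is a $\kappa$-sized intersection of open subsets of $\pre{\kappa}{\kappa}$ (using regularity of $\kappa$ so that $|\kappa \times \kappa| = \kappa$), hence $G^\kappa_\delta$ in $\pre{\kappa}{\kappa}$. A second application of Theorem~\ref{thm:charK-Polish} then yields that $Y$ is $\GG$-Polish.

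There is no real obstacle here: the substance has already been absorbed into Theorem~\ref{thm:charK-Polish} and Corollary~\ref{cor:subspace}. The only point worth flagging explicitly is the bookkeeping that a $G^\kappa_\delta$ set inside a $G^\kappa_\delta$ subset of $\pre{\kappa}{\kappa}$ is again $G^\kappa_\delta$ in $\pre{\kappa}{\kappa}$, which relies on the closure of the class of $G^\kappa_\delta$ sets under $\kappa$-sized intersections---a consequence of $\kappa$ being regular.
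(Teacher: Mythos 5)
Your proof is correct and follows essentially the same route as the paper: the forward direction is exactly Corollary~\ref{cor:subspace}, and the reverse direction embeds $X$ into $\pre{\kappa}{\kappa}$ via Theorem~\ref{thm:charK-Polish} and observes that a $G^\kappa_\delta$ inside a $G^\kappa_\delta$ (the paper uses the closed-subset form of the characterization, which amounts to the same thing by Lemma~\ref{lem:closed_are_G_delta}) is again $G^\kappa_\delta$ in $\pre{\kappa}{\kappa}$, so Theorem~\ref{thm:charK-Polish} applies once more. Your explicit bookkeeping with the $V_\alpha$ and $W_\beta$ is a fine way to spell out the step the paper leaves implicit.
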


\begin{proof}
One direction follows from Corollary~\ref{cor:subspace}. For the other direction,
since \( X \) is homeomorphic to a closed subset of \( \pre{\kappa}{\kappa} \) by 
Theorem~\ref{thm:charK-Polish}, every \( G^\kappa_\delta \) subspace \( Y \subseteq X \) 
is homeomorphic to a \( G^\kappa_\delta \) subset of \( \pre{\kappa}{\kappa} \). Using 
again Theorem~\ref{thm:charK-Polish}, it follows that \( Y \) is \( \GG \)-Polish as well.
\end{proof}

By Theorem~\ref{thm:charK-Polish}, Theorem~\ref{thm:Kpolishsubspaces}
admits a natural counterpart characterizing \( \SFC_\kappa \)-subspaces of \( \kappa \)-additive \( \SFC_\kappa \)-spaces.

To complete the description of how our classes of spaces relate to each other, we just 
need to characterize those spaces which are in all of them and thus have the richest 
structure (this includes e.g.\ the generalized Cantor and Baire spaces). To this aim, we 
need to introduce one last notion inspired by~\cite[Definition 6.1]{CoskSchlMR3453772} 
and~\cite{Gal19}.

\begin{definition}
A \( \GG \)-metric \( d \) on a space \( X \) is called \markdef{spherically complete} if the 
intersection of every \emph{decreasing} (with respect to inclusion) sequence of open balls is nonempty. If in the 
definition we consider only sequences of order type \( \kappa \) (respectively, \( {<} \kappa \) or \( {\leq} \kappa \)) we say that the metric is \markdef{spherically \( \kappa \)-complete} (respectively, \markdef{spherically \( {<} \kappa \)-complete} or 
\markdef{spherically \( {\leq} \kappa \)-complete}).
\end{definition}

\begin{remark} \label{rmk:spherically} 
Let \( (X,d) \) be a \( \GG \)-metric space.
\begin{enumerate-(i)} 
\item
If the space \( X \) has weight $\leq \kappa$,  then the metric \( d \) is spherically complete if and 
only if it is spherically \( {\leq} \kappa \)-complete. For the non trivial direction, fix 
a decreasing sequence \( (\varepsilon_i)_{i < \kappa} \) 
coinitial in \( \GG^+ \) 
and consider 
an arbitrary decreasing chain of balls \( B_\alpha = B_d(x_\alpha,r_\alpha) \) for 
\( \alpha < \lambda \) with \(\lambda \) a regular cardinal greater than \( \kappa \). If for all 
\( i < \kappa \) there is \( \alpha_i < \lambda \) such that \( r_{\alpha_i} < \varepsilon_i \), 
then by spherically \( \kappa \)-completeness we have 
\( \bigcap_{i < \kappa} B_{\alpha_i} = \{ x \} \) for some \( x \). It follows that 
\( B_\alpha = \{ x \} \) for all \( \alpha \geq \sup_{i < \kappa} \alpha_i \), hence 
\( \bigcap_{\alpha < \lambda} B_{\alpha} = \{ x \} \neq \emptyset \). The remaining case is 
when there is \( \delta \in \GG^+ \) such that \(r_\alpha \geq \delta \) for all \( \alpha < \lambda \). 
If \( \bigcap_{\alpha < \lambda} B_\alpha = \emptyset \), then we could recursively construct an 
increasing sequence \( (\alpha_\beta)_{\beta < \lambda} \) of ordinals \( < \lambda \) such that 
\( x_{\alpha_\beta} \notin B_{\alpha_{\beta'}} \) for all \( \beta < \beta' < \lambda \). By the case 
assumption, we thus have \( d(x_{\alpha_\beta}, x_{\alpha_{\beta'}}) \geq \delta \) for all 
distinct \( \beta, \beta' < \lambda \), contradicting the fact that \( X \) has weight 
$\leq\kappa < \lambda$. 
\item
If \( d \) is spherically \( \kappa \)-complete, then it is also Cauchy-complete (independently of the weight of the space). Thus if \( d \) is spherically complete, then it is both spherically \( {<} \kappa \)-complete and Cauchy-complete.
\item \label{rmk:spherically-iii}
The converse does not hold: 
there are examples of \( \GG \)-metric spaces \( (X,d) \) of 
weight \( \kappa \) such that \( d \) is both Cauchy-complete and spherically 
\( {<} \kappa \)-complete, yet it is not spherically \( \kappa \)-complete. 
For example, consider the subspace $X = \{ x_\alpha \in \pre{\kappa}{2} \mid \alpha < \kappa \}$ of $\pre{\kappa}{2}$, where $x_\alpha(\alpha) =1$ and $x_\alpha(\beta) = 0$ for all $\beta \neq \alpha$. Fix a decreasing sequence $(r_\alpha)_{\alpha < \kappa}$ coinitial in $\GG^+$ and some $s \in \GG^+$. 
The ultrametric on $X$ 
defined by $d(x_\alpha,x_\beta) = s + \max \{ r_\alpha, r_\beta \}$ for distinct $\alpha,\beta<\kappa$ is discrete and hence trivially Cauchy-complete. Moreover, it is $< \kappa$-spherically complete. But the decreasing sequence $(B_d(x_\alpha, s + r_\alpha))_{\alpha<\kappa}$ has empty intersection.
Thus for a given 
\( \GG \)-metric \( d \) being Cauchy-complete and spherically \( {<} \kappa \)-complete is 
strictly weaker than being spherically \mbox{(\( {\leq} \kappa \)-)}complete.
 \end{enumerate-(i)}
\end{remark}

\begin{definition} \label{def:Gpolish+sphericallycomplete}
A \( \GG \)-Polish space is \markdef{spherically} (\markdef{\( {<} \kappa \)-})\markdef{complete} if it admits a compatible Cauchy-complete metric which is also spherically (\( {<} \kappa \)-)complete.
\end{definition}

In~\cite{Gal19}, spherically \( {<} \kappa \)-complete \( \GG \)-Polish spaces are also called 
strongly \( \kappa \)-Polish spaces. Although in view of 
Remark~\ref{rmk:spherically}\ref{rmk:spherically-iii} this seems to be the weakest 
among the two possibilities considered in 
Definition~\ref{def:Gpolish+sphericallycomplete}, it will follow from 
Theorem~\ref{thm:SC-space+G-Polish} that they are indeed equivalent: if a space of 
weight \( \leq \kappa \) admits a compatible Cauchy-complete spherically 
\( {<} \kappa \)-complete \( \GG \)-metric, then it also admits a (possibly different) 
compatible Cauchy-complete \( \GG \)-metric which is (fully) spherically complete. We 
point out that the implication~\ref{thm:SC-space+G-Polish-2} \( \Rightarrow \) \ref{thm:SC-space+G-Polish-1} already appeared  in~\cite[Theorem 2.45]{Gal19}, 
although with a different terminology.

\begin{theorem}\label{thm:SC-space+G-Polish}
For any space \( X \) the following are equivalent:
\begin{enumerate-(a)}
\item\label{thm:SC-space+G-Polish-1}
\( X \) is a \( \kappa \)-additive \(\SC_\kappa \)-space;
\item\label{thm:SC-space+G-Polish-4}
\( X \) is both an \( \SC_\kappa \)-space and \( \GG \)-Polish;
\item\label{thm:SC-space+G-Polish-2}
\( X \) is a spherically \( {<} \kappa \)-complete \( \GG \)-Polish space;
\item\label{thm:SC-space+G-Polish-3}
\( X \) is a spherically complete \( \GG \)-Polish space;
\item\label{thm:SC-space+G-Polish-5}
\( X \) is homeomorphic to a superclosed subset of \( \pre{\kappa}{\kappa} \).
\end{enumerate-(a)}
\end{theorem}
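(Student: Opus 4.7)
The plan is to reduce the five-way equivalence to just two new implications, namely \ref{thm:SC-space+G-Polish-2} \( \Rightarrow \) \ref{thm:SC-space+G-Polish-4} and \ref{thm:SC-space+G-Polish-5} \( \Rightarrow \) \ref{thm:SC-space+G-Polish-3}, by routing all other steps through the already-established characterizations. Every \( \SC_\kappa \)-space is \( \SFC_\kappa \), and every \( \GG \)-Polish space is \( \kappa \)-additive by Lemma~\ref{lem:K-Polishareadditive}, so Theorem~\ref{thm:charK-Polish} yields immediately \ref{thm:SC-space+G-Polish-1} \( \iff \) \ref{thm:SC-space+G-Polish-4}, while \ref{thm:SC-space+G-Polish-1} \( \iff \) \ref{thm:SC-space+G-Polish-5} is exactly Theorem~\ref{thm:charadditiveSCkappaspaces}. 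The implication \ref{thm:SC-space+G-Polish-3} \( \Rightarrow \) \ref{thm:SC-space+G-Polish-2} is tautological from the definitions. Thus the cycle \ref{thm:SC-space+G-Polish-1} \( \Rightarrow \) \ref{thm:SC-space+G-Polish-5} \( \Rightarrow \) \ref{thm:SC-space+G-Polish-3} \( \Rightarrow \) \ref{thm:SC-space+G-Polish-2} \( \Rightarrow \) \ref{thm:SC-space+G-Polish-4} \( \Rightarrow \) \ref{thm:SC-space+G-Polish-1} boils down to establishing the two remaining implications.

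For \ref{thm:SC-space+G-Polish-2} \( \Rightarrow \) \ref{thm:SC-space+G-Polish-4}, the proposal is to reuse verbatim the strategy of player \( \pII \) from the proof of Lemma~\ref{lem:K-PolishareSFC}, but this time starting from a compatible Cauchy-complete \emph{and} spherically \( {<} \kappa \)-complete \( \GG \)-metric \( d \). In that strategy \( \pII \) replies to \( (U_\alpha, x_\alpha) \) with an open ball \( V_\alpha = B_d(x_\alpha, \varepsilon_\alpha) \) of radius \( \varepsilon_\alpha \leq_\GG r_\alpha \) and with \( \cl(V_\alpha) \subseteq U_\alpha \); the game rules then force the sequence \( (V_\alpha)_{\alpha<\kappa} \) to be a \( \subseteq \)-decreasing chain of open balls. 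At any limit \( \gamma < \kappa \), spherical \( {<} \kappa \)-completeness gives \( \bigcap_{\alpha<\gamma} V_\alpha \neq \emptyset \) with no prior assumption required, and at \( \gamma = \kappa \) the Cauchy argument from Lemma~\ref{lem:K-PolishareSFC} still applies unchanged. Hence the strategy is winning in \( \Cho^s_\kappa(X) \), so \( X \) is an \( \SC_\kappa \)-space; together with \( \GG \)-Polishness this yields \ref{thm:SC-space+G-Polish-4}.

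For \ref{thm:SC-space+G-Polish-5} \( \Rightarrow \) \ref{thm:SC-space+G-Polish-3}, the idea is to equip \( \pre{\kappa}{\kappa} \) with the ultrametric \( d \) from equation~\eqref{eq:metricBaire}: its open balls are precisely the basic clopen sets \( \Nbhd_s \), so any decreasing chain of balls corresponds to a \( \subseteq \)-increasing chain \( (s_i) \) of sequences with union \( \bar{s} \) of length \( \leq \kappa \). If \( \leng(\bar{s}) = \kappa \) then \( \bar{s} \) itself lies in \( \bigcap_i \Nbhd_{s_i} \); otherwise any extension of \( \bar{s} \) witnesses \( \Nbhd_{\bar{s}} \neq \emptyset \), so \( d \) is spherically complete on \( \pre{\kappa}{\kappa} \). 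Restricting \( d \) to a superclosed set \( [T] \), balls become \( \Nbhd_s \cap [T] \) with \( s \in T \), and a decreasing chain of such balls produces \( s_i \in T \) with union \( \bar{s} \); if \( \leng(\bar{s}) < \kappa \) then \( {<} \kappa \)-closedness of \( T \) places \( \bar{s} \) in \( T \) and prunedness gives a point in \( \Nbhd_{\bar{s}} \cap [T] \), while if \( \leng(\bar{s}) = \kappa \) then \( \bar{s} \in [T] \) by definition. I expect the main care to be needed in this last step, where both clauses (pruned and \( {<} \kappa \)-closed) of the definition of superclosed are indispensable.
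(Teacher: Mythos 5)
Your proposal is correct and follows essentially the same route as the paper: the same cycle of implications, with the two substantive steps being \ref{thm:SC-space+G-Polish-2} \( \Rightarrow \) \ref{thm:SC-space+G-Polish-4} via a ball-valued winning strategy made winning at limits by spherical \( {<}\kappa \)-completeness, and \ref{thm:SC-space+G-Polish-5} \( \Rightarrow \) \ref{thm:SC-space+G-Polish-3} via the metric of equation~\eqref{eq:metricBaire} on a superclosed set (which the paper dismisses as trivial and you correctly spell out). The only cosmetic difference is that for \ref{thm:SC-space+G-Polish-2} \( \Rightarrow \) \ref{thm:SC-space+G-Polish-4} the paper takes an arbitrary winning strategy in \( \fCho^s_\kappa(X) \) and restricts its range to \( d \)-balls via Remark~\ref{rmk:strategyrangeinbasis}, whereas you rebuild the explicit strategy of Lemma~\ref{lem:K-PolishareSFC}; both work.
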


\begin{proof}
Item \ref{thm:SC-space+G-Polish-4}
implies \ref{thm:SC-space+G-Polish-1} because all \( \GG \)-Polish spaces are \( \kappa \)-additive (Lemma~\ref{lem:K-Polishareadditive}), while \ref{thm:SC-space+G-Polish-1}
implies \ref{thm:SC-space+G-Polish-5} by Theorem~\ref{thm:charadditiveSCkappaspaces}. Moreover, any
superclosed subset of \( \pre{\kappa}{\kappa} \) is trivially spherically complete 
with respect to the \( \GG \)-metric on \( \pre{\kappa}{\kappa} \) defined in
equation~\eqref{eq:metricBaire}, thus ~\ref{thm:SC-space+G-Polish-5}
implies \ref{thm:SC-space+G-Polish-3}, and~\ref{thm:SC-space+G-Polish-3}
obviously implies \ref{thm:SC-space+G-Polish-2}. Finally, to prove that~\ref{thm:SC-space+G-Polish-2}
implies \ref{thm:SC-space+G-Polish-4}, recall that every \( \GG \)-Polish space \( X \) is an \( \SFC_\kappa \)-space by
Theorem~\ref{thm:charK-Polish}.
Fix a compatible spherically \( {<} \kappa \)-complete $\GG$-metric on \( X \)
and a winning strategy \(\tau \) for \( \pII \) in \( \fCho^s_\kappa(X) \), and observe that by Remark~\ref{rmk:strategyrangeinbasis}
we can assume that \(\tau \) requires \( \pII \) to play only open \( d \)-balls \( V_\alpha \) because the latter
form a basis for the topology of \( X \). Then \( \tau \) is also winning in
\( \Cho^s_\kappa(X) \) because spherically \( {<} \kappa \)-completeness implies that
\( \bigcap_{\alpha < \gamma} V_\alpha \neq \emptyset \) for every limit \( \gamma < \kappa \) .
\end{proof}

Theorems~\ref{thm:charK-Polish} and~\ref{thm:SC-space+G-Polish} allow us to reformulate our Corollary~\ref{cor:simultaneousembeddings} on retractions in terms of \( \GG \)-Polish spaces. (Again, we have that none of the conditions on \( Y \) can be dropped, see the comment after Corollary~\ref{cor:simultaneousembeddings}.)

\begin{corollary}
If \( X \) is \( \GG \)-Polish, then all its closed subspaces \( Y \) which are also spherically complete \( \GG \)-Polish (possibly with respect to a different \( \GG \)-metric) are retracts of \( X \).
\end{corollary}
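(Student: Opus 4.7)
The plan is to recognize this corollary as an essentially immediate reformulation of Corollary~\ref{cor:simultaneousembeddings} via the topological characterizations of $\GG$-Polishness and its spherically complete variant already established in Theorems~\ref{thm:charK-Polish} and~\ref{thm:SC-space+G-Polish}. No new topological or game-theoretic argument is needed; all the work consists in translating the metric hypotheses into the Choquet framework where Corollary~\ref{cor:simultaneousembeddings} lives.

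Concretely, I would proceed in three short steps. First, apply Theorem~\ref{thm:charK-Polish} (equivalence of \ref{thm:charK-Polish-a} and \ref{thm:charK-Polish-b}) to conclude that the ambient space $X$ is a $\kappa$-additive $\SFC_\kappa$-space. Second, apply Theorem~\ref{thm:SC-space+G-Polish} (equivalence of \ref{thm:SC-space+G-Polish-3} and \ref{thm:SC-space+G-Polish-1}) to the subspace $Y$, endowed with its relative topology, to conclude that $Y$ is a $\kappa$-additive $\SC_\kappa$-space. Here it is crucial that these properties are purely topological, so the parenthetical remark in the statement (that the compatible spherically complete $\GG$-metric on $Y$ need not arise from a metric on $X$) causes no trouble: the characterizing conditions depend only on $Y$ as a topological space, not on any chosen metric. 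Third, since $Y$ is closed in $X$ by hypothesis, Corollary~\ref{cor:simultaneousembeddings} immediately yields a retraction $r \colon X \to Y$.

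There is essentially no obstacle: once the previous characterization theorems are in hand, the proof is a one-line application of Corollary~\ref{cor:simultaneousembeddings}. The only point that merits an explicit sentence in the write-up is the observation that $\GG$-Polishness and spherically complete $\GG$-Polishness are intrinsic topological conditions, so mixing metrics on $X$ and on $Y$ creates no issue. Everything else reduces to citing the already-proved equivalences.
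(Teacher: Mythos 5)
Your proposal is correct and is exactly the argument the paper intends: the corollary is presented there as an immediate reformulation of Corollary~\ref{cor:simultaneousembeddings} via Theorems~\ref{thm:charK-Polish} and~\ref{thm:SC-space+G-Polish}, which is precisely your three-step translation. Your explicit remark that the characterizations are purely topological (so the possibly different metric on \( Y \) is harmless) is a worthwhile clarification that the paper leaves implicit.
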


Moreover, using the results obtained so far, one can easily observe that the classes of \( \SC_\kappa \)-spaces and \( \GG \)-Polish spaces  are incomparable with respect to inclusion.
On the one hand, there are \( \GG \)-Polish spaces which are not \( \SC_\kappa \)-spaces: 
in~\cite[Theorem 2.41]{Gal19} it is observed that Sikorski's \( \kappa \)-\( \RR \) is such an 
example, but it is also enough to consider any closed subset of \( \pre{\kappa}{\kappa} \) 
which is not strong \( \kappa \)-Choquet, such as the space $X_0$ defined in 
equation~\eqref{eq:finitelymanyzeroes}.
Conversely,  there are \( \SC_\kappa \)-spaces which are not \( \GG \)-Polish (to the best 
of our knowledge, examples of this kind were not yet provided in the literature): just 
take any non-$\kappa$-additive \( \SC_\kappa \)-space, such as \( \pre{\kappa}{\kappa} \) 
equipped with the order topology induced by the lexicographical ordering. 

In a different direction, Theorem~\ref{thm:SC-space+G-Polish} allows us to characterize 
inside one given class those spaces which happen to also belong to a different one in a 
very natural way. For example, among \( \SC_\kappa \)-spaces we can distinguish those 
that are also \( \GG \)-Polish by checking \( \kappa \)-additivity. Conversely, working in 
the class of \( \GG \)-Polish spaces we can isolate those spaces \( X \) in which player 
\( \pII \) wins the strong \( \kappa \)-Choquet game \( \Cho^s_\kappa(X) \) by checking 
spherical completeness.

Figure~\ref{fig:sum_up_relationships} sums up the relationship among the various 
classes of (regular Hausdorff) spaces of weight $\leq \kappa$ considered so 
far. At the end of Section~\ref{sec:characterizations_Cantor_Baire} we will further enrich 
this picture by distinguishing the class of $\kappa$-Lindel\"of spaces---see 
Theorems~\ref{thm:charK-Polish-Lindelof} and~\ref{thm:char-spherically-complete-K-Polish-Lindelof}.

\begin{figure}[h]
    \centering

\tikzset{every picture/.style={line width=0.75pt}} %set default line width to 0.75pt        

\begin{tikzpicture}[x=0.75pt,y=0.75pt,yscale=-1,xscale=1]
%uncomment if require: \path (0,300); %set diagram left start at 0, and has height of 300

%Flowchart: Alternative Process [id:dp33350764482057516] 
\draw   (39.34,81.13) .. controls (39.34,62.86) and (54.15,48.05) .. (72.41,48.05) -- (331.26,48.05) .. controls (349.53,48.05) and (364.34,62.86) .. (364.34,81.13) -- (364.34,203.98) .. controls (364.34,222.25) and (349.53,237.05) .. (331.26,237.05) -- (72.41,237.05) .. controls (54.15,237.05) and (39.34,222.25) .. (39.34,203.98) -- cycle ;
%Rounded Rect [id:dp38538797282827675] 
\draw   (58.34,129.72) .. controls (58.34,116.65) and (68.93,106.05) .. (82,106.05) -- (435.67,106.05) .. controls (448.74,106.05) and (459.34,116.65) .. (459.34,129.72) -- (459.34,200.71) .. controls (459.34,213.78) and (448.74,224.38) .. (435.67,224.38) -- (82,224.38) .. controls (68.93,224.38) and (58.34,213.78) .. (58.34,200.71) -- cycle ;
%Rounded Rect [id:dp3118017309104746] 
\draw   (76.34,161.25) .. controls (76.34,154.52) and (81.8,149.05) .. (88.54,149.05) -- (437.14,149.05) .. controls (443.88,149.05) and (449.34,154.52) .. (449.34,161.25) -- (449.34,197.85) .. controls (449.34,204.59) and (443.88,210.05) .. (437.14,210.05) -- (88.54,210.05) .. controls (81.8,210.05) and (76.34,204.59) .. (76.34,197.85) -- cycle ;

% Text Node
\draw (100,53) node [anchor=north west][inner sep=0.75pt]  [xslant=-0.03] [align=left] {\begin{minipage}[lt]{170.30736pt}\setlength\topsep{0pt}
\begin{center}
\textbf{$\displaystyle \mathbb{G}${-metrizable}} or, 

equivalently, $\displaystyle \kappa $-additive

\begin{small}
(Up to homeomorphisms: subsets of $\displaystyle \pre{\kappa}{\kappa}$)
\end{small}
\end{center}

\end{minipage}};
% Text Node
\draw (85,111) node [anchor=north west][inner sep=0.75pt]  [xslant=-0.03] [align=left] {\begin{minipage}[lt]{191.62572000000003pt}\setlength\topsep{0pt}
\begin{center}
\textbf{$\displaystyle \mathbb{G}$-Polish}

\begin{small}
(Up to homeomorphism: closed subsets of $\displaystyle \pre{\kappa}{\kappa} $)
\end{small}
\end{center} 
\end{minipage}};
% Text Node
\draw (74,163) node [anchor=north west][inner sep=0.75pt]  [xslant=-0.03] [align=left] {\begin{minipage}[lt]{216.67088000000004pt}\setlength\topsep{0pt}
\begin{center}
\textbf{Spherically (\( {<} \kappa \)-)complete $\displaystyle \mathbb{G}$-Polish} 

\begin{small}
(Up to homeomorphism: superclosed subsets of $\displaystyle \pre{\kappa}{\kappa} $)
\end{small}
\end{center}

\end{minipage}};
% Text Node
\draw (393.76,172.38) node [anchor=north west][inner sep=0.75pt]  [xslant=-0.03] [align=left] {\textbf{SC$\displaystyle _{\kappa }$}};
% Text Node
\draw (388.75,121) node [anchor=north west][inner sep=0.75pt]  [xslant=-0.03] [align=left] {\textbf{$\displaystyle \boldsymbol{f}$SC$\displaystyle _{\kappa }$}};

\end{tikzpicture}

    \caption{Relationships among different Polish-like classes.}
    \label{fig:sum_up_relationships}
\end{figure}
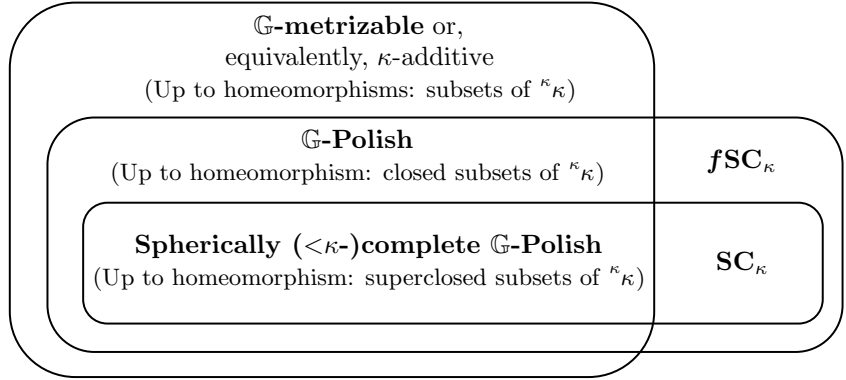

Despite the fact that the classes we are considering are all different from each other, we 
now show that 
one can still pass from one to the other by changing (and sometimes even refining) the 
underlying topology, yet maintaining the same notion of \( \kappa \)-Borelness.

\begin{proposition}\label{prop:refintetoGpolish}
Let \( (X,\tau) \) be an \( \SFC_\kappa \)-space (respectively,  \( \SC_\kappa \)-space). Then there is \( \tau' \supseteq \tau \) such that \( \Bor_\kappa(X,\tau') = \Bor_\kappa(X,\tau) \) and \( (X,\tau') \) is a \( \kappa \)-additive \( \SFC_\kappa \)-space (respectively, \( \SC_\kappa \)-space).
\end{proposition}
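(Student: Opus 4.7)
The plan is to let $\tau'$ be the smallest $\kappa$-additive topology refining $\tau$. Concretely, fix a basis $\mathcal{B}$ for $\tau$ of size $\leq \kappa$ and set
\[
\mathcal{B}' = \left\{ \bigcap_{i < \gamma} B_i \,\,\middle|\,\, \gamma < \kappa,\, (B_i)_{i<\gamma} \in \mathcal{B}^\gamma \right\},
\]
which has size $\leq \kappa^{<\kappa} = \kappa$; let $\tau'$ be the topology generated by $\mathcal{B}'$. By regularity of $\kappa$, an intersection of $<\kappa$-many elements of $\mathcal{B}'$ is again in $\mathcal{B}'$, and applying the distributive law $\bigcap_\alpha \bigcup_i B^\alpha_i = \bigcup_f \bigcap_\alpha B^\alpha_{f(\alpha)}$ shows that $\tau'$ is $\kappa$-additive of weight $\leq \kappa$. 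For $\Bor_\kappa$-preservation, note that each element of $\mathcal{B}'$ is a $<\kappa$-intersection of $\tau$-open sets, hence $\kappa$-Borel in $(X, \tau)$; conversely, every $\tau'$-open set is a union of $\leq \kappa$ elements of $\mathcal{B}'$ since $|\mathcal{B}'| \leq \kappa$. Regularity of $(X, \tau')$ is verified by picking a basic $B' = \bigcap_i B_i \in \mathcal{B}'$ inside a given $\tau'$-neighborhood $U$ of $x$, using $\tau$-regularity to find $W_i \in \tau$ with $x \in W_i$ and $\cl_\tau(W_i) \subseteq B_i$, and noting that $V = \bigcap_i W_i \in \tau'$ satisfies $\cl_{\tau'}(V) \subseteq \cl_\tau(V) \subseteq B' \subseteq U$.

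The core task is transferring the Choquet-type winning strategy. Fix a winning strategy $\sigma$ for $\pII$ in $\fCho^s_\kappa(X, \tau)$ (respectively, in $\Cho^s_\kappa(X, \tau)$), which by Remark~\ref{rmk:strategyrangeinbasis} may be assumed to have range inside $\mathcal{B}$. I would define a strategy $\sigma'$ for $\pII$ in the corresponding game on $(X, \tau')$ by simulating an auxiliary run of the $\tau$-game indexed lexicographically by pairs $(\alpha, i)$ with $\alpha < \kappa$ and $i < \delta_\alpha < \kappa$ (the sum $\sum_{\alpha<\kappa} \delta_\alpha$ is $\leq \kappa$ by regularity, so the simulation fits within $\kappa$-many rounds, padding trivially if necessary). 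At round $\alpha$ of the $\tau'$-game, when $\pI$ plays $(U_\alpha, x_\alpha)$, choose $B'_\alpha = \bigcap_{i < \delta_\alpha} B^\alpha_i \in \mathcal{B}'$ with $x_\alpha \in B'_\alpha \subseteq U_\alpha$; in the auxiliary $\tau$-game, let $\pI$ play $(B^\alpha_i \cap \bigcap_{(\beta,j) <_{\mathrm{lex}} (\alpha,i)} \tilde V^{(\beta,j)},\, x_\alpha)$ at sub-round $(\alpha, i)$, with $\pII$'s reply $\tilde V^{(\alpha, i)} \in \mathcal{B}$ dictated by $\sigma$. Then set $V_\alpha = \bigcap_{i < \delta_\alpha} \tilde V^{(\alpha, i)}$; this is $\tau'$-open by $\kappa$-additivity, contains $x_\alpha$, and is contained in $B'_\alpha \subseteq U_\alpha$.

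Finally, $\sigma'$ is winning because $\bigcap_{\alpha<\kappa} V_\alpha = \bigcap_{(\alpha, i)} \tilde V^{(\alpha, i)}$, so it suffices to ensure nonemptiness of the final auxiliary intersection. In the $\SC_\kappa$ case, $\sigma$ being winning in $\Cho^s_\kappa(X, \tau)$ yields this immediately. In the $\SFC_\kappa$ case, I would check that the auxiliary game's precondition holds at each limit sub-round $\eta < \kappa$: if $\eta$ corresponds to $(\alpha, 0)$ with $\alpha$ limit then the partial intersection equals $\bigcap_{\beta < \alpha} V_\beta$, nonempty by hypothesis; if $\eta$ corresponds to $(\alpha, i)$ with $i$ limit then the partial intersection contains $x_\alpha$, since $x_\alpha \in \tilde V^{(\alpha, i')}$ for all $i' < i$ (by the strong-Choquet rule that $\pII$'s moves include $\pI$'s played point) and $x_\alpha \in U_\alpha \subseteq \bigcap_{\beta < \alpha} V_\beta$. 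The main obstacle is precisely this bookkeeping at limit sub-rounds: one must simultaneously exploit the strong-Choquet feature of $\pI$'s point-moves to cover ``interior'' limits and the hypotheses on the $\tau'$-game to cover ``boundary'' limits, ensuring that the auxiliary run satisfies the precondition required to invoke $\sigma$.
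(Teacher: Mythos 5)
Your proposal is correct and follows essentially the same route as the paper: the paper likewise takes \( \tau' \) to be the topology generated by the \( {<}\kappa \)-sized intersections of \( \tau \)-open sets, notes that the weight stays \( \leq \kappa \) by \( \kappa^{<\kappa} = \kappa \) and that \( \tau \subseteq \tau' \subseteq \Bor_\kappa(X,\tau) \) forces \( \Bor_\kappa(X,\tau') = \Bor_\kappa(X,\tau) \), and for the transfer of the winning strategy simply cites \cite[Proposition 4.3 and Lemma 4.4]{CoskSchlMR3453772}. Your lexicographic simulation (including the check that at limit sub-rounds the auxiliary partial intersections are nonempty, using \( x_\alpha \in \tilde V^{(\alpha,i')} \) inside a block and the fair-game hypothesis at block boundaries) is exactly the argument that citation stands in for.
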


\begin{proof}
It is enough to let \( \tau' \) be the topology generated by the \( {<} \kappa \)-sized 
intersections of \( \tau \)-open sets. Arguing as 
in~\cite[Proposition 4.3 and Lemma 4.4]{CoskSchlMR3453772}, player \(\pII \)  still has a 
winning strategy in the relevant Choquet-like game on \( (X,\tau') \). Moreover the 
weight of \( (X,\tau') \) is still \( \leq \kappa \) because we assumed 
\( \kappa^{< \kappa}= \kappa \). Finally, \( \kappa \)-Borel sets do not change because by 
definition \( \tau \subseteq \tau' \subseteq \Bor_\kappa(X,\tau) \).
\end{proof}

This allows us to strengthen~\cite[Theorem 3.3]{CoskSchlMR3453772} and extend it to \( \SFC_\kappa \)-spaces. 

\begin{corollary} \label{cor:refintetoGpolish-}
If \( X \) is an \( \SFC_\kappa \)-space, then there is a pruned tree \( T \subseteq \pre{<\kappa}{\kappa} \) and a continuous bijection \( f \colon [T] \to X \). Moreover, if \( X \) is an \( \SC_\kappa \)-space then \( T \) can be taken to be superclosed.
\end{corollary}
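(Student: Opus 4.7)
The plan is to reduce to the \(\kappa\)-additive case already handled in Proposition~\ref{prop:charadditiveSCandSFCkappaspaces} (or equivalently Theorem~\ref{thm:charadditivefairSCkappaspaces}) by passing to a finer topology, and then exploit the fact that refining the topology yields a continuous (but no longer open) bijection onto the original space.

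First I would apply Proposition~\ref{prop:refintetoGpolish} to the given \(\SFC_\kappa\)-space \((X,\tau)\) to obtain a finer topology \(\tau' \supseteq \tau\) such that \((X,\tau')\) is a \emph{\(\kappa\)-additive} \(\SFC_\kappa\)-space (and a \(\kappa\)-additive \(\SC_\kappa\)-space if \((X,\tau)\) was already \(\SC_\kappa\)). Then, invoking Proposition~\ref{prop:charadditiveSCandSFCkappaspaces}, I obtain a pruned tree \(T \subseteq \pre{<\kappa}{\kappa}\) and a homeomorphism \(h \colon [T] \to (X,\tau')\); moreover, in the \(\SC_\kappa\) case the same proposition lets me choose \(T\) to be superclosed.

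Now I set \(f = \id_X \circ\, h \colon [T] \to (X,\tau)\). Since \(\tau \subseteq \tau'\), the identity map \(\id_X \colon (X,\tau') \to (X,\tau)\) is continuous, so \(f\) is the composition of two continuous maps and hence continuous. Bijectivity of \(f\) is inherited from \(h\), since the identity is a bijection and only the topology on the target has been coarsened. This gives a continuous bijection \(f \colon [T] \to X\) with \(T\) pruned (and superclosed in the \(\SC_\kappa\) case), as required.

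The only subtle point is to verify that the step of coarsening the topology back from \(\tau'\) to \(\tau\) really preserves all we need; but this is immediate because continuity of \(\id_X \colon (X,\tau') \to (X,\tau)\) is literally the statement \(\tau \subseteq \tau'\), which is exactly the content of Proposition~\ref{prop:refintetoGpolish}. There is no genuine obstacle here: the corollary is essentially the combination of Propositions~\ref{prop:refintetoGpolish} and~\ref{prop:charadditiveSCandSFCkappaspaces}, with the former providing the reduction to the \(\kappa\)-additive setting and the latter providing the tree representation in that setting.
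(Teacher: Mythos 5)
Your proposal is correct and follows exactly the paper's own argument: refine the topology via Proposition~\ref{prop:refintetoGpolish} to reach the \( \kappa \)-additive case, apply Proposition~\ref{prop:charadditiveSCandSFCkappaspaces} to get a homeomorphism from the body of a pruned (respectively superclosed) tree onto \( (X,\tau') \), and observe that the map remains a continuous bijection after coarsening back to \( \tau \). There is nothing to add.
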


\begin{proof}
Refine the topology \( \tau \) of \( X \) to a topology \( \tau' \supseteq \tau \) as in 
Proposition~\ref{prop:refintetoGpolish}. Then use 
Theorem~\ref{prop:charadditiveSCandSFCkappaspaces} to find a pruned (superclosed, if \( X \) was \( \SC_\kappa \)) tree \(  T \subseteq \pre{<\kappa}{\kappa} \) and a 
homeomorphism \( f \colon [T] \to (X,\tau') \). Since \( f \) remains a continuous bijection 
when stepping back to \( \tau \), we get that \( T \) and \( f \) are as required.
\end{proof}

By Proposition~\ref{prop:refintetoGpolish} (together with Theorem~\ref{thm:charK-Polish}), every \( \SFC_\kappa \)-space, and thus every \( \SC_\kappa \)-space, can be 
turned into a \( \GG \)-Polish space sharing the same \( \kappa \)-Borel structure  by 
suitably refining its topology.
In contrast, it is not always possible to \emph{refine} the topology \( \tau \) of an 
\( \SFC_\kappa \)-space \( X \) to turn it into an \( \SC_\kappa \)-space, even if we start 
with a \( \kappa \)-additive (hence \( \GG \)-Polish) one and we further allow to change its 
\( \kappa \)-Borel structure. Indeed, as shown in the next example, there are 
$\kappa$-additive strong fair \( \kappa \)-Choquet (i.e.\ $\GG$-Polish) spaces $(X,\tau)$ such that for 
every topology $\tau'\supseteq\tau$, the space $(X,\tau')$ is not an $\SC_\kappa$-space.

\begin{example}\label{xmp:no_top_refinement_from_SFC_to_SC}
Consider a closed (hence \( \GG \)-Polish) subspace \( C  \subseteq \pre{\kappa}{\kappa} \) which is not a continuous image of $\pre{\kappa}{\kappa}$. Such a set exists by~\cite[Theorem 1.5]{LuckeSchlMR3430247}: we can e.g.\ let \( C \) be the set of
well-orders on $\kappa$ (coded as elements of 
$\pre{\kappa}{2} \subseteq \pre{\kappa}{\kappa}$ via the usual G\"odel pairing function). 
If one could find a refinement \( \tau' \) of the bounded topology $\tau$ on \( C \) such that 
\( (C, \tau') \) is an \( \SC_\kappa \)-space (recall that any \( \SC_\kappa \)-space has weight $\leq\kappa$ by definition), then \( (C,\tau') \) would be a continuous image 
of \( \pre{\kappa}{\kappa} \) by~\cite[Theorem 3.5]{CoskSchlMR3453772} and thus so 
would be \( (C,\tau) \), contradicting the choice of \( C \).
\end{example}

Nevertheless, if we drop the requirement that \( \tau' \) refines the original topology 
\( \tau \) of \( X \), then we can get a result along the lines above. This is due to the next 
technical lemma, which will be further extended in Section~\ref{sec:standardBorel} (see 
Corollary~\ref{cor:superclosed}).

\begin{lemma}\label{lem:superclosed}
Every closed set \( C \subseteq \pre{\kappa}{\kappa} \) is \( \kappa \)-Borel isomorphic to a superclosed set \( C' \subseteq \pre{\kappa}{\kappa} \).
\end{lemma}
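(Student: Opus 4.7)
The plan is to enlarge the pruned tree of $C$ by attaching zero-tails past every limit-length sequence all of whose proper prefixes lie in the tree but which itself does not. This produces a superclosed tree $T'$ whose body contains $C$ together with a set of at most $\kappa$ isolated points, which can then be absorbed via a $\kappa$-Borel isomorphism.

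Let $T \subseteq \pre{<\kappa}{\kappa}$ be the unique pruned tree with $[T] = C$, and set
\[
D = \{ s \in \pre{<\kappa}{\kappa} \setminus T : \leng(s) \text{ is a limit ordinal and } s \restriction \alpha \in T \text{ for all } \alpha < \leng(s) \},
\]
\[
T' = T \cup \{ s {}^\smallfrown{} 0^\alpha : s \in D,\ \alpha < \kappa \}, \qquad D^* = \{ s {}^\smallfrown{} 0^\kappa : s \in D \}.
\]
I would first verify that $T'$ is superclosed. Pruning is clear because each added node extends to the branch $s {}^\smallfrown{} 0^\kappa$. For ${<}\kappa$-closedness at a limit level $\gamma$, take any $t \in \pre{\gamma}{\kappa}$ with all proper prefixes in $T'$: either every $t \restriction \alpha$ already lies in $T$ (in which case $t \in T$, or $t \in D \subseteq T'$ by definition), or some $t \restriction \alpha_0$ is of the form $s {}^\smallfrown{} 0^\beta$ with $s \in D$, which (because the proper prefixes of $s$ lie in $T$ and elements of $D$ are pairwise incomparable) forces $t \restriction \alpha = s {}^\smallfrown{} 0^{\alpha - \leng(s)}$ for all $\alpha \in [\alpha_0, \gamma)$, hence $t = s {}^\smallfrown{} 0^{\gamma - \leng(s)} \in T'$.

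Next I would check that $[T'] = C \sqcup D^*$, that $|D^*| \leq \kappa$, and that every point of $D^*$ is isolated in $[T']$. The key points are that distinct elements of $D$ are pairwise incomparable (a proper prefix of an element of $D$ must lie in $T$, not in $D$) and that no branch of $T$ extends any $s \in D$ (otherwise $s = x \restriction \leng(s) \in T$, contradicting $s \in D$). Therefore $\Nbhd_s \cap [T'] = \{s {}^\smallfrown{} 0^\kappa\}$, and moreover $|D^*| = |D| \leq |\pre{<\kappa}{\kappa}| = \kappa$.

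Finally, I would produce the $\kappa$-Borel isomorphism using the background observation that any subset of $\pre{\kappa}{\kappa}$ of cardinality at most $\kappa$ has $\kappa$-Borel algebra equal to its full power set (singletons are closed, hence $\kappa$-Borel, and $\Bor_\kappa$ is closed under ${\leq}\kappa$-sized unions). If $|C| \leq \kappa$, pick the evidently superclosed set $C' = \{\langle \alpha \rangle {}^\smallfrown{} 0^\kappa : \alpha < |C|\}$ of the same cardinality; any bijection $C \to C'$ is automatically a $\kappa$-Borel isomorphism. If instead $|C| > \kappa$, take $C' = [T']$ (so $|C'| = |C|$ since $|D^*| \leq \kappa < |C|$), pick a $\kappa$-sized subset $E \subseteq C$ together with a bijection $h \colon E \to E \cup D^*$ via a Hilbert-Hotel shuffle, and define $f = h \cup \id_{C \setminus E} \colon C \to [T']$; both $f$ and $f^{-1}$ are $\kappa$-Borel because $E$, $C \setminus E$, and $E \cup D^*$ are $\kappa$-Borel subsets of $\pre{\kappa}{\kappa}$ and the relevant restrictions of $f$ take place between sets carrying discrete $\kappa$-Borel structure. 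The main technical point is establishing the ${<}\kappa$-closedness of $T'$; the remainder is routine bookkeeping.
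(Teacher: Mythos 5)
Your proof is correct and follows essentially the same route as the paper: attach constant-$0$ tails to the limit-length ``missing'' nodes $L(T)=D$ to form a superclosed $T'$ with $[T']\setminus C$ of size $\le\kappa$, handle the case $|C|\le\kappa$ separately, and absorb the $\le\kappa$ new points via a Hilbert-hotel bijection on a $\kappa$-sized subset of $C$; you merely supply more detail on the ${<}\kappa$-closedness of $T'$, which the paper states as clear. One harmless slip: the pruned tree with body $C$ need not be unique (a pruned tree in the paper's sense may contain nodes lying on no branch), so you should simply fix $T=T_C=\{x\restriction\alpha\mid x\in C,\ \alpha<\kappa\}$.
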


\begin{proof}
If \( C \) has \( {\leq} \kappa \)-many points, then any bijection between \( C \) and
\( C' = \{ \alpha \conc 0^{(\kappa)} \mid \alpha < |C| \} \),
where \( 0^{(\kappa)} \) is the constant sequence with length \( \kappa \) and value \( 0 \), 
is a \( \kappa \)-Borel isomorphism between \( C \) and the superclosed set
\( C' \), hence we may assume without loss of generality that \( | C| >  \kappa \). Let \( T \subseteq \pre{< \kappa}{\kappa} \) be a pruned tree
such that \( C = [T] \). Let \( L(T) \) be the set of sequences \( s \in \pre{< \kappa}{\kappa} \) 
of limit length such that \( s \notin T \) but \( s \restriction \alpha \in T \) for all \( \alpha < \leng(s) \). (Clearly, the set \( L(T) \) is empty if and only if \( C \) is already superclosed). 
Set \( C' = [T'] \) with
\[
T' = T \cup	\{ s \conc  0^{(\alpha)} \mid s \in L(T) \wedge \alpha < \kappa \},
 \]
 where \( 0^{(\alpha)} \) denotes the sequence of length \( \alpha \) constantly equal to \( 0 \). 
 The tree \( T' \) is clearly pruned and \( {<} \kappa \)-closed, hence \( C' \) is 
 superclosed.
 Notice also that \( C' \setminus C = \{ s {}^\smallfrown{} 0^{(\kappa)} \mid s \in L(T) \} \)
 has size \( \leq \kappa \).
Pick a set \( A \subseteq C \) of size \( \kappa \) and fix any bijection
\( g \colon A \to A \cup (C' \setminus C) \).
Since both \( C \) and \( C' \) are Hausdorff, it is easy to check that the map
\[
f \colon C \to C', \qquad x \mapsto
\begin{cases}
g(x) & \text{if } x \in A \\
x & \text{otherwise}
\end{cases}
 \]
 is a \( \kappa \)-Borel isomorphism.
\end{proof}

Combining this lemma with Proposition~\ref{prop:refintetoGpolish} and 
Theorem~\ref{thm:charadditivefairSCkappaspaces} we thus get

\begin{proposition}
Let \( (X,\tau) \) be an \( \SFC_\kappa \)-space. Then there is a topology \( \tau' \)  on 
\( X \) such that \( \Bor_\kappa(X,\tau') = \Bor_\kappa(X,\tau) \) and \( (X,\tau') \) is a 
\( \kappa \)-additive \( \SC_\kappa \)-space (equivalently, a spherically complete 
\( \GG \)-Polish space).
\end{proposition}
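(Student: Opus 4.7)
The plan is to chain together the three pieces of machinery that have just been assembled. Starting from the given $\SFC_\kappa$-space $(X,\tau)$, first apply Proposition~\ref{prop:refintetoGpolish} to obtain a refinement $\tau_1\supseteq\tau$ such that $(X,\tau_1)$ is a $\kappa$-additive $\SFC_\kappa$-space with the same $\kappa$-Borel structure as $(X,\tau)$, that is, $\Bor_\kappa(X,\tau_1)=\Bor_\kappa(X,\tau)$.

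Next, by Theorem~\ref{thm:charadditivefairSCkappaspaces} the $\kappa$-additive $\SFC_\kappa$-space $(X,\tau_1)$ is homeomorphic to a closed set $C\subseteq\pre{\kappa}{\kappa}$, via some homeomorphism $\phi\colon (X,\tau_1)\to C$. Lemma~\ref{lem:superclosed} then provides a superclosed $C'\subseteq\pre{\kappa}{\kappa}$ together with a $\kappa$-Borel isomorphism $\psi\colon C\to C'$. By Theorem~\ref{thm:charadditiveSCkappaspaces}, $C'$ (viewed with the subspace topology inherited from $\pre{\kappa}{\kappa}$) is a $\kappa$-additive $\SC_\kappa$-space.

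Now transport the topology of $C'$ back to $X$ through the composition $\psi\circ\phi\colon X\to C'$: define $\tau'$ to be the unique topology on $X$ making $\psi\circ\phi$ a homeomorphism between $(X,\tau')$ and $C'$. Then $(X,\tau')$ is itself a $\kappa$-additive $\SC_\kappa$-space. For the $\kappa$-Borel structures, a homeomorphism is in particular a $\kappa$-Borel isomorphism, so
\[
\Bor_\kappa(X,\tau')=(\psi\circ\phi)^{-1}(\Bor_\kappa(C'))=\phi^{-1}(\Bor_\kappa(C))=\Bor_\kappa(X,\tau_1)=\Bor_\kappa(X,\tau),
\]
where we used that $\psi$ is a $\kappa$-Borel isomorphism, $\phi$ is a homeomorphism, and the conclusion of Proposition~\ref{prop:refintetoGpolish}. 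The parenthetical equivalence with being a spherically complete $\GG$-Polish space is then immediate from Theorem~\ref{thm:SC-space+G-Polish}.

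There is no real obstacle: the work has been done in the preceding three results, and the proof is essentially a bookkeeping exercise. The only mild subtlety worth flagging is that $\tau'$ will in general be incomparable with the original topology $\tau$, as it must be by Example~\ref{xmp:no_top_refinement_from_SFC_to_SC}; this is why the argument has to go through Lemma~\ref{lem:superclosed}, which genuinely changes the underlying set-theoretic identification of points, rather than through a further refinement of $\tau_1$.
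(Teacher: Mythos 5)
Your proof is correct and follows essentially the same route as the paper, which obtains the result precisely by combining Proposition~\ref{prop:refintetoGpolish}, Theorem~\ref{thm:charadditivefairSCkappaspaces}, and Lemma~\ref{lem:superclosed}. Your write-up just makes explicit the bookkeeping (transporting the topology of the superclosed set back to \( X \) and tracking the \( \kappa \)-Borel structures) that the paper leaves implicit.
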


As a corollary, we finally obtain:

\begin{theorem}\label{thm:char_up_to_kBorel_iso}
Up to \( \kappa \)-Borel isomorphism, the following classes of spaces are the 
same:
\begin{enumerate-(1)}
\item
\( \SFC_\kappa \)-spaces;
\item
\( \SC_\kappa \)-spaces;
\item
\( \GG \)-Polish spaces;
\item
\( \kappa \)-additive \( \SC_\kappa \)-spaces or, equivalently, spherically complete \( \GG \)-Polish spaces.
\end{enumerate-(1)}
\end{theorem}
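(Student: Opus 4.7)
The proof is essentially an assembly of the pieces built up through this subsection. First I would record the obvious inclusions between the four classes: every $\kappa$-additive $\SC_\kappa$-space is trivially an $\SC_\kappa$-space, every $\SC_\kappa$-space is an $\SFC_\kappa$-space, and by Theorem~\ref{thm:charK-Polish} the $\GG$-Polish spaces coincide exactly with the $\kappa$-additive $\SFC_\kappa$-spaces, so they too sit inside the class of $\SFC_\kappa$-spaces. Moreover, by Theorem~\ref{thm:SC-space+G-Polish}, the class of $\kappa$-additive $\SC_\kappa$-spaces coincides with the class of spherically complete $\GG$-Polish spaces, so item~(4) is already unambiguously defined and is (as a genuine class of topological spaces, not yet up to $\kappa$-Borel isomorphism) the smallest among the four.

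Consequently, to establish that all four classes are the same up to $\kappa$-Borel isomorphism, it suffices to show that every $\SFC_\kappa$-space is $\kappa$-Borel isomorphic to a $\kappa$-additive $\SC_\kappa$-space. But this is precisely the content of the Proposition immediately preceding the statement: given an $\SFC_\kappa$-space $(X,\tau)$, there is a topology $\tau'$ on the same underlying set $X$ such that $(X,\tau')$ is a $\kappa$-additive $\SC_\kappa$-space and $\Bor_\kappa(X,\tau') = \Bor_\kappa(X,\tau)$. The identity map $\id_X \colon (X,\tau) \to (X,\tau')$ is then automatically a $\kappa$-Borel isomorphism, and this is what we need.

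There is no real obstacle in this final step: the genuine work has already been done earlier, namely in Proposition~\ref{prop:refintetoGpolish} (topologically refining to reach $\kappa$-additivity while preserving the $\kappa$-Borel structure) and in Lemma~\ref{lem:superclosed} (upgrading a closed subset of $\pre{\kappa}{\kappa}$ to a $\kappa$-Borel isomorphic superclosed one), combined through Theorem~\ref{thm:charadditivefairSCkappaspaces}. The theorem is the clean final packaging of these facts, showing that the choice among the various Polish-like notions becomes immaterial once one passes to $\kappa$-Borel structure; I would therefore present the proof as the short chain of implications outlined above, citing the two propositions and Theorems~\ref{thm:charK-Polish} and~\ref{thm:SC-space+G-Polish} to justify each step.
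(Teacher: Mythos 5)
Your proposal is correct and follows exactly the paper's route: the paper also derives the theorem immediately from the unnamed proposition preceding it (every $\SFC_\kappa$-space admits a topology with the same $\kappa$-Borel sets that is $\kappa$-additive $\SC_\kappa$), which in turn is assembled from Proposition~\ref{prop:refintetoGpolish}, Lemma~\ref{lem:superclosed}, and Theorem~\ref{thm:charadditivefairSCkappaspaces}, together with the trivial inclusions and the identifications in Theorems~\ref{thm:charK-Polish} and~\ref{thm:SC-space+G-Polish}. Nothing is missing.
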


Theorem~\ref{thm:char_up_to_kBorel_iso} shows that, as we already claimed after 
Definition~\ref{def:standardBorel}, we can consider any class of Polish-like spaces to 
generalize~\ref{intro:stbor1}: they all yield the same notion, and it is thus not necessary 
to formally specify one of them. Furthermore, in Section~\ref{sec:standardBorel} we will 
prove that the class of \( \kappa \)-Borel spaces obtained in this way coincide with the 
class of all standard \( \kappa \)-Borel spaces as defined in 
Definition~\ref{def:standardBorel}, so we do not even need to introduce a different 
terminology.

The sweeping results obtained so far allow us to improve some results from the 
literature and solve 
 some open problems contained therein, so let us conclude this 
section with a brief discussion on this matter. 
In~\cite[Theorem 2.51]{Gal19} it is proved that, in our terminology, if \( X \) is a 
spherically \( {<} \kappa \)-complete \( \GG \)-Polish space and \( \kappa \) is weakly 
compact, then every \( \SC_\kappa \)-subspace \( Y \subseteq X \) is \( G^\kappa_\delta \) 
in \( X \).
By Theorem~\ref{thm:charK-Polish} and Corollary~\ref{cor:subspace}, we actually have 
that every \( \SC_\kappa \)-subspace \( Y \) of a $\GG$-metrizable space $X$ is 
\( G^\kappa_\delta \) in \( X \): hence the further hypotheses on \( \kappa \) and \( X \) 
required in \cite[Theorem 2.51]{Gal19} are not necessary.
Furthermore, in~\cite[Lemma 2.47]{Gal19} the converse is shown to hold assuming that 
\( X \) is a \( \GG \)-metric \( \SC_\kappa \)-space (which through \( \kappa \)-additivity 
implies that \( X \) is \( \GG \)-Polish by Theorem~\ref{thm:SC-space+G-Polish}) and 
\( Y \) is spherically \( {<} \kappa \)-complete. Theorems~\ref{thm:Kpolishsubspaces} 
and~\ref{thm:SC-space+G-Polish} show that we can again weaken the hypotheses on 
\( X \) by dropping the requirement that \( X \) be a \( \SC_\kappa \)-space: 
if \( X \) is \( \GG \)-Polish and \( Y \subseteq X \) is spherically \( {<} \kappa \)-complete 
and \( G^\kappa_\delta \), then \( Y \) is a \( \SC_\kappa \)-space.
Finally, Theorem~\ref{thm:SC-space+G-Polish} shows that \cite[Theorem 2.53]{Gal19} 
and \cite[Proposition 3.1]{CoskSchlMR3453772} deal with the same phenomenon: if $X$ 
is a $\kappa$-perfect $\SC_\kappa$-space, then there is a continuous injection $f$ from the 
generalized Cantor space into $X$, and if furthermore $X$ is $\kappa$-additive, then $f$ 
can be taken to be a homeomorphism on the image. The latter fact will be slightly improved in 
Theorem~\ref{thm:PSP for perfect SC_kappa kappa-additive}, where we show that in the 
latter case the range of \( f \) can be taken to be superclosed.

Summing up the results above, one can now complete and improve the diagram from~\cite[p.\ 25]{Gal19}, which corresponds to Arrows 1--7 of Figure~\ref{fig:subsets_G_Polish} (although~\cite{Gal19} sometimes requires additional assumption on the space \( Y \) or on the cardinal \( \kappa \), see the discussion below).

{

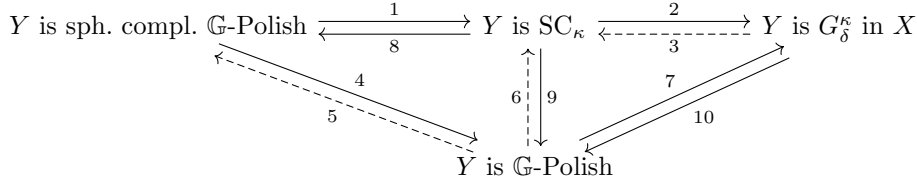
\begin{figure}[h]
    \centering
    
\begin{tikzcd}
\mbox{$Y$ is sph.\ compl.\ $\GG$-Polish} \arrow[rr, shift left, "1"] \arrow[rrdd, shift left, "4"] &  & \mbox{$Y$ is $\SC_\kappa$} \arrow[rr, shift left, "2"] \arrow[ll, shift left, "8"] \arrow[dd, shift left, "9"]                                    &  & \mbox{$Y$ is $G^\kappa_\delta$ in $X$} \arrow[lldd, shift left, "10"] \arrow[ll, 
%"Y sp. com.", %
dashed, shift left, "3"] \\
                                                   &  &                                                                                                              &  &                                                                     \\
                                                   &  & \mbox{$Y$ is $\GG$-Polish} \arrow[lluu, 
                                                   %"Y sp. com.", %
                                                   dashed, shift left, "5"] \arrow[rruu, shift left, "7"] \arrow[uu, 
                                                   %"Y sp. com.", %
                                                   dashed, shift left, "6"] &  &                                                                    
\end{tikzcd}

    \caption{Properties of subspaces \( Y \subseteq X \) for \( X \)  a $\GG$-Polish space. A line means implication without further assumptions, while a dotted line means that the implication holds under the further assumption that $Y$ is spherically complete or, equivalently, an $\SC_\kappa$-space.}
    \label{fig:subsets_G_Polish}
\end{figure}
}

Here is a list of our improvements:
\begin{itemizenew}
\item First of all, the ambient space $X$ can be any $\GG$-Polish space, and need not be spherically complete as assumed in \cite{Gal19}.
\item
The new Arrows 8 and 9 hold because by \( \GG \)-metrizability \( Y \) is \( \kappa \)-additive, and hence a spherically \( {<} \kappa \)-complete \( \GG \)-Polish space.
\item
Arrow 10 holds as well by Theorem~\ref{thm:Kpolishsubspaces}.
\item
The implication Arrow 2 holds unconditionally (\( \kappa \) does not need to be weakly compact, as originally required in~\cite{Gal19}).
\item
The requirement that, in our terminology, \( Y \) be spherically \( {<} \kappa \)-complete cannot instead be 
dropped in the implication Arrow 3: indeed, there are even closed subsets of 
\( X = \pre{\kappa}{\kappa} \) which are not homeomorphic to a superclosed subset of 
\( \pre{\kappa}{\kappa} \), and hence they are not strong \( \kappa \)-Choquet. Thus in this case the hypothesis in~\cite{Gal19} were already optimal.
\item 
We now obtained that Arrows 5 and 6, which were forbidden in~\cite{Gal19}, hold when additionally requiring that \( Y \) be spherically \( < \kappa \)-complete (the same hypothesis as in Arrow 3): taking into account Galeotti's counterexamples, such a hypothesis cannot be dropped.
\end{itemizenew}

%%%%%%%%%%%%%

\section{Characterizations of \( \pre{\kappa}{\kappa} \) and \( \pre{\kappa}{2} \)}\label{sec:characterizations_Cantor_Baire}

The (classical) Cantor and Baire spaces play a central role in classical descriptive set 
theory. It is remarkable that they admit a purely topological characterization 
(see~\cite[Theorems 7.4 and 7.7]{KechrisMR1321597}).

\begin{theorem} \label{thm:classicalchar} \ 
\begin{enumerate-(1)}
\item \label{thm:classicalchar-1}
(Brouwer) Up to homeomorphism, the Cantor space \( \pre{\omega}{2} \) is the unique nonempty perfect compact metrizable zero-dimensional space.
\item \label{thm:classicalchar-2}
(Alexandrov-Urysohn) Up to homeomorphism, the Baire space \( \pre{\omega}{\omega} \) is the unique nonempty Polish zero-dimensional space such that
all its compact subsets have empty interior.
\end{enumerate-(1)}
\end{theorem}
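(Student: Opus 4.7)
My plan is to prove both parts by constructing a scheme of nonempty clopen sets $\{U_s\}$ indexed by finite sequences and then showing that the map sending each infinite sequence $x$ to the unique element of $\bigcap_{n} U_{x \restriction n}$ is a homeomorphism onto the given space. For part~\ref{thm:classicalchar-1}, given a nonempty perfect compact metrizable zero-dimensional space $X$ with compatible metric $d$, I would recursively build a family $\{U_s \mid s \in \pre{<\omega}{2}\}$ of nonempty clopen subsets of $X$ such that $U_\emptyset = X$, the sets $U_{s \conc 0}$ and $U_{s \conc 1}$ are disjoint with union $U_s$, and $\operatorname{diam}_d(U_s) \leq 2^{-\leng(s)}$. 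The successor step uses perfectness to find two distinct points inside each nonempty $U_s$ and zero-dimensionality to separate them by a clopen partition of $U_s$ into two nonempty pieces, which can be further refined to have small diameter via the standard trick of covering by small clopen sets and disjointifying. Compactness of $X$ ensures that each $\bigcap_n U_{x \restriction n}$ is nonempty, while the shrinking diameters force it to be a singleton; the resulting map $\pre{\omega}{2} \to X$ is a continuous bijection, hence a homeomorphism because its domain is compact and its codomain Hausdorff.

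For part~\ref{thm:classicalchar-2}, given a nonempty Polish zero-dimensional $X$ in which every compact subset has empty interior, I would fix a compatible complete metric $d$ and construct a Lusin scheme $\{U_s \mid s \in \pre{<\omega}{\omega}\}$ of nonempty clopen sets with $U_\emptyset = X$, $\operatorname{diam}_d(U_s) \leq 2^{-\leng(s)}$, and $U_s = \bigsqcup_{n \in \omega} U_{s \conc n}$, all summands nonempty. The crucial local step is this: any nonempty clopen $U_s$ in $X$ is non-compact, since it has nonempty interior (namely itself), which by hypothesis forbids compactness. Thus, given any clopen cover of $U_s$ by sets of diameter $< 2^{-\leng(s)-1}$ (produced from separability and zero-dimensionality), no finite subcover suffices, and disjointifying a countable such cover yields an infinite clopen partition of $U_s$ into nonempty pieces of the required small diameter. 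Cauchy-completeness of $d$ then guarantees that each branch $x \in \pre{\omega}{\omega}$ picks out a unique point of $X$, producing a continuous bijection $f \colon \pre{\omega}{\omega} \to X$. Continuity of $f^{-1}$ follows from the fact that the $U_s$ form a basis of $X$ indexed by $\pre{<\omega}{\omega}$ in precisely the same combinatorial fashion as the basis $\{\Nbhd_s\}$ of $\pre{\omega}{\omega}$, so that $f(\Nbhd_s) = U_s$ for each $s$.

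The main obstacle I anticipate is converting the global hypothesis in part~\ref{thm:classicalchar-2} ``every compact subset has empty interior'' into the local combinatorial statement needed for the construction, namely that each nonempty clopen $U_s$ admits an \emph{infinite} partition into small-diameter nonempty clopen pieces. The argument sketched above handles this cleanly by identifying the nonempty interior of $U_s$ with $U_s$ itself; once this step is secured, the remainder of both proofs is routine scheme-manipulation and verification that the canonical map is a homeomorphism, along the same template that justifies part~\ref{thm:classicalchar-1}. The rest of the paper will presumably need to replace these two arguments by suitably generalized versions at level $\kappa$, using the scheme technology developed in Section~\ref{sec:relationship_SC_SFC_Polish} and $\kappa$-Lindel\"of-type replacements for compactness.
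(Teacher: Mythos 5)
The paper does not actually prove this statement: it is the classical Brouwer and Alexandrov--Urysohn theorem, recorded with a pointer to \cite[Theorems 7.4 and 7.7]{KechrisMR1321597}, so there is no in-paper argument to compare against. Your sketch is essentially the standard textbook proof from that source (a Cantor scheme of clopen sets for part (1), a Lusin scheme for part (2)), and the overall strategy is sound; two small points should be tightened if you write it out in full. In part (1), one cannot in general partition a clopen set into \emph{exactly two} nonempty clopen pieces \emph{and} simultaneously force both pieces to have diameter $\leq 2^{-\leng(s)-1}$ in a single step (shrinking the diameter may require more than two pieces, and regrouping them into two destroys the diameter bound); the usual fix is to partition into finitely many (at least two) small nonempty clopen pieces at each stage and then note that the body of such a finitely branching scheme is homeomorphic to $\pre{\omega}{2}$, or equivalently to require only that diameters vanish along branches rather than at every level. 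In part (2), the assertion that \emph{any} clopen cover of $U_s$ by sets of diameter $<2^{-\leng(s)-1}$ admits no finite subcover is not what non-compactness gives you directly: non-compactness of the closed set $U_s$ inside the complete space $(X,d)$ yields failure of total boundedness at some threshold $\delta>0$, so you must choose the mesh of your cover below $\min\{\delta,2^{-\leng(s)-1}\}$ before disjointifying in order to guarantee that the resulting clopen partition is genuinely infinite. With these two routine adjustments your argument matches the cited classical proof.
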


Our next goal is to find analogous characterizations of the generalized Baire and Cantor 
spaces. To this aim, we first have to generalize the above mentioned topological notions 
to our setup.

First of all, we notice that a special feature of \( \pre{\kappa}{\kappa} \) and 
\( \pre{\kappa}{2} \) which is not shared by some of the other \( \SC_\kappa \)-spaces is 
\( \kappa \)-additivity: since this condition already implies that the space be 
zero-dimensional, the latter will always be absorbed by \( \kappa \)-additivity and will not  
explicitly appear in our statements.
As for compactness, it is natural to replace it with the property of being 
\( \kappa \)-Lindel\"of. Notice that this condition may play a role in the characterization 
of \( \pre{\kappa}{2} \) only when \( \kappa \) is weakly compact, as otherwise 
\( \pre{\kappa}{2} \) is not \( \kappa \)-Lindel\"of. However, this is not a true limitation, 
because if \( \kappa \) is not weakly compact, then the spaces \( \pre{\kappa}{2} \) and 
\( \pre{\kappa}{\kappa} \) are homeomorphic, and thus the characterization of 
\( \pre{\kappa}{\kappa} \) takes care of both. In view of the Hurewicz 
dichotomy~\cite[Theorem 7.10]{KechrisMR1321597}, which 
in~\cite{LuckeMottSchlMR3557473} has been analyzed in detail in the context of 
generalized descriptive set theory, we will also consider \markdef{\( K_\kappa \)-sets}, 
i.e.\ sets in a topological space which can be written as unions of \( \kappa \)-many 
\( \kappa \)-Lindel\"of sets.

We now come to perfectness.
The notion of an isolated point may be transferred to the generalized context in (at least) two natural ways:
\begin{itemizenew}
\item
keeping the original definition: a point \( x \) is \markdef{isolated} in \( X \) if there is an open set \( U \subseteq X \) such that \( U = \{ x \} \);
\item
allowing short intersections of open sets 
(see e.g.\ \cite[Section 3]{CoskSchlMR3453772}):  a point \( x \) is \markdef{\( \kappa \)-isolated} in \( X \) if there are \( {<} \kappa \)-many open sets whose intersection is \( \{ x \} \).
\end{itemizenew}
A topological space is then called (\markdef{\( \kappa \)-})\markdef{perfect} if it has no (\( \kappa \)-)isolated points.

If we restrict the attention to \( \kappa \)-additive spaces, as we do in this section, the two notions coincide.
However, the notion of \( \kappa \)-perfectness is in a sense preferable when the space 
\( X \) is not $\kappa$-additive because it implies that \( X \) has weight at least $\kappa$ 
and that all its nonempty open sets have size \( \geq \kappa \) (use the regularity of 
\( \kappa \) and the fact that all our spaces are Hausdorff). If we further require \( X \) to 
be strong \( \kappa \)-Choquet, we get the following strengthening of the last property.

\begin{lemma}  \label{lem:equivperf1}
Let \( X \) be an \( \SC_\kappa \)-space. If \( X \) is \( \kappa \)-perfect, then every open set \( U \subseteq X \) has size \( 2^\kappa \).
\end{lemma}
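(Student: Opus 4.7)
The plan is to prove both inequalities. The upper bound $|U| \le 2^\kappa$ is immediate: $X$ has weight $\le \kappa$ by the definition of an \( \SC_\kappa \)-space, and since $X$ is Hausdorff (in fact even $T_0$ suffices), each point is determined by the set of basic open neighborhoods containing it, giving $|X| \le 2^\kappa$. The substantial content is the lower bound $|U| \ge 2^\kappa$, which I will prove by embedding $\pre{\kappa}{2}$ into $U$ via a Cantor-like scheme driven by a winning strategy for player \( \pII \) in \( \Cho^s_\kappa(X) \).

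Fix a winning strategy $\sigma$ for \( \pII \) in \( \Cho^s_\kappa(X) \). The aim is to recursively construct, for each $s \in \pre{<\kappa}{2}$, triples $(U_s, x_s, V_s)$ with $x_s \in U_s$ and $V_s$ open, such that for every $x \in \pre{\kappa}{2}$ the sequence
\[
\langle (U_{x \restriction \beta}, x_{x \restriction \beta}), V_{x \restriction \beta} \mid \beta < \kappa \rangle
\]
is a legal run of \( \Cho^s_\kappa(X) \) in which \( \pII \) follows $\sigma$, and moreover $U_{s \conc 0} \cap U_{s \conc 1} = \emptyset$ for every $s \in \pre{<\kappa}{2}$. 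The construction proceeds by recursion on $\leng(s)$. At $s = \emptyset$, set $U_\emptyset = U$, pick any $x_\emptyset \in U$, and let $V_\emptyset$ be $\sigma$'s response. At a successor stage $\leng(s) = \alpha + 1$, the set $V_{s \restriction \alpha}$ is open and nonempty; here I invoke $\kappa$-perfectness: if $V_{s \restriction \alpha}$ were a singleton $\{y\}$, then $y$ would be (\( \kappa \)-)isolated, contradicting the hypothesis. So $V_{s \restriction \alpha}$ contains two distinct points, which by Hausdorffness can be separated by disjoint relatively open sets $U_{s \conc 0}, U_{s \conc 1} \subseteq V_{s \restriction \alpha}$; choose $x_{s \conc i} \in U_{s \conc i}$ and apply $\sigma$ to extend the play. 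At a limit stage $\leng(s) = \gamma$, the intersection $Y_s = \bigcap_{\beta < \gamma} V_{s \restriction \beta}$ is nonempty because $\sigma$ is winning in the strong \( \kappa \)-Choquet game (which declares \( \pII \) the loser at any limit level where the intersection is empty); pick any $x_s \in Y_s$, take $U_s$ to be $Y_s$ viewed as a relatively open subset of itself, and let $V_s$ be $\sigma$'s response.

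With the scheme built, for each $x \in \pre{\kappa}{2}$ the associated full run is legal and $\pII$ follows the winning strategy, hence $\bigcap_{\alpha < \kappa} V_{x \restriction \alpha} \neq \emptyset$; choose $f(x)$ in this intersection. For distinct $x, y \in \pre{\kappa}{2}$, letting $\alpha$ be least with $x(\alpha) \neq y(\alpha)$, one has $f(x) \in U_{x \restriction (\alpha+1)}$ and $f(y) \in U_{y \restriction (\alpha+1)}$, which are disjoint by construction. Thus $f \colon \pre{\kappa}{2} \to U$ is injective and $|U| \ge 2^\kappa$.

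The main subtlety, and the only place where $\kappa$-perfectness is used, is ensuring that the open set $V_{s \restriction \alpha}$ is not a singleton at every successor splitting step; everything else is a routine orchestration of the strong \( \kappa \)-Choquet game along all $2^\kappa$ branches simultaneously, with the only care being that at limit stages one must choose the relatively open set $U_s$ compatibly with the rules of the game (which is automatic once one takes $U_s = Y_s$).
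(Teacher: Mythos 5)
Your proof is correct and follows essentially the same route as the paper: the paper notes that an open subset of a \( \kappa \)-perfect \( \SC_\kappa \)-space is again a \( \kappa \)-perfect \( \SC_\kappa \)-space and then cites \cite[Proposition 3.1]{CoskSchlMR3453772} for the injection of \( \pre{\kappa}{2} \), which is precisely the Cantor-scheme-along-the-winning-strategy construction you carry out by hand. The only blemish is a harmless indexing slip at the successor stage (for \( s \) of length \( \alpha+1 \) you split \( V_{s \restriction \alpha} \) into \( U_{(s \restriction \alpha) \conc 0} \) and \( U_{(s \restriction \alpha) \conc 1} \), not \( U_{s \conc 0} \) and \( U_{s \conc 1} \)); your two key uses of the hypotheses --- that a singleton value of \( V \) would be a \( {<}\kappa \)-sized intersection of open sets, contradicting \( \kappa \)-perfectness, and that the non-fair winning condition guarantees nonempty intersections at limit rounds --- are both exactly right.
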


\begin{proof}
If \( X \) is \( \kappa \)-perfect, then so is every open \( U \subseteq X \). Since \( U \) is 
strong \( \kappa \)-Choquet as well, there is a continuous injection from 
\( \pre{\kappa}{2} \) into \( U \) by~\cite[Proposition 3.1]{CoskSchlMR3453772}, hence 
\( |U| = 2^\kappa  \). 
\end{proof}

In the statement of Lemma~\ref{lem:equivperf1} one could further replace the open set 
\( U \) with a \( {<} \kappa \)-sized intersection of open sets.
The lemma is instead not true for arbitrary \( \SFC_\kappa \)-spaces, even when requiring 
\( \kappa \)-additivity (and thus it does not work for arbitrary \( \GG \)-Polish spaces either).  
For a counterexample,
consider the closed subspace \( X_0 \) of  $\pre{\kappa}{2}$ defined in 
equation~\eqref{eq:finitelymanyzeroes}:
by Theorem~\ref{thm:charK-Polish}, \( X_0 \) is a \( \kappa \)-additive \( \SFC_\kappa \)-space (equivalently, a \( \GG \)-Polish space), it is clearly \( \kappa \)-perfect, yet it has 
size \( \kappa \).

In the next lemma we crucially use the fact that we assumed
 \( \kappa^{< \kappa} = \kappa \).

\begin{lemma} \label{lem:sizetoweight}
If \( Y \) is a \( T_0 \)-space of size \( > \kappa \), then \( Y \) has weight \( \geq \kappa \).
\end{lemma}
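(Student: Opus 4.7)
The plan is to derive a contradiction from the assumption that the weight of $Y$ is strictly less than $\kappa$, exploiting the hypothesis $\kappa^{<\kappa} = \kappa$ (equivalently $2^{<\kappa} = \kappa$, which is one of the standing assumptions of the paper).

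Suppose toward a contradiction that $Y$ has weight $\lambda < \kappa$, and let $\mathcal{B}$ be a basis for $Y$ with $|\mathcal{B}| = \lambda$. Consider the map
\[
\Phi \colon Y \to \mathscr{P}(\mathcal{B}), \qquad y \mapsto \{ B \in \mathcal{B} \mid y \in B \}.
\]
I would first verify that $\Phi$ is injective using the $T_0$ axiom: given distinct $x, y \in Y$, there is an open set $U$ containing exactly one of them, say $x \in U$ and $y \notin U$; since $\mathcal{B}$ is a basis, one can refine to some $B \in \mathcal{B}$ with $x \in B \subseteq U$, and then $y \notin B$, so $\Phi(x) \neq \Phi(y)$.

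Consequently $|Y| \leq |\mathscr{P}(\mathcal{B})| = 2^\lambda$. Since $\lambda < \kappa$, we have $2^\lambda \leq 2^{<\kappa} = \kappa$, contradicting the hypothesis $|Y| > \kappa$. Hence the weight of $Y$ must be at least $\kappa$, as required.

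There is no serious obstacle: the argument is a one-line cardinality count, and the only place where the standing assumption on $\kappa$ is used is in the final inequality $2^{<\kappa} = \kappa$. This is precisely why the lemma is stated as depending on $\kappa^{<\kappa} = \kappa$.
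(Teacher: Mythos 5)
Your proof is correct and is essentially identical to the paper's: the same injection $y \mapsto \{B \in \mathcal{B} \mid y \in B\}$ into $\mathscr{P}(\mathcal{B})$, with the same cardinality bound $|Y| \leq 2^{\lambda} \leq 2^{<\kappa} = \kappa$. You merely spell out the $T_0$ injectivity check that the paper leaves implicit.
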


\begin{proof}
Let \( \mathcal{B} \) be any basis of \( Y \). Then the map sending each point of \( Y \) into the set of its basic open neighborhoods is an injection into
\( \pow(\mathcal{B}) \). Thus if there is such a \( \mathcal{B} \) of size \( \nu < \kappa \) then \( |Y| \leq 2^\nu \leq \kappa^{< \kappa} =  \kappa \).
\end{proof}

A tree \( T \subseteq \pre{< \kappa}{\kappa} \) is \markdef{splitting} if for every \( s \in T \) there are incomparable \( t,t' \in T \) extending \( s \) (without loss of generality we can further require that \( \leng(t) = \leng(t') \)). 
We now show that the splitting condition captures the topological notion of perfectness 
for \( \kappa \)-additive \( \SC_\kappa \)-spaces. (Notice that the equivalence between 
items~\ref{char_perfectness_additive_notweaklycompact - kappa_perfect} 
and~\ref{char_perfectness_additive_notweaklycompact - splitting tree} in 
Lemma~\ref{lem:equivperf2} may be seen as the analogue of 
Theorem~\ref{thm:charadditiveSCkappaspaces} for (\( \kappa \)-)perfect 
\( \kappa \)-additive \( \SC_\kappa \)-spaces.)

\begin{lemma}\label{lem:equivperf2}
Let \( X \) be a nonempty $\kappa$-additive \( \SC_\kappa \)-space. The following are equivalent:
\begin{enumerate-(a)}
\item \label{char_perfectness_additive_notweaklycompact - kappa_perfect}
\( X \) is ($\kappa$-)perfect;
\item \label{char_perfectness_additive_notweaklycompact - big_size}
every nonempty open subset of \( X \) has size $>\kappa$;
\item \label{char_perfectness_additive_notweaklycompact - big_weight}
every nonempty open subspace of \( X \) has weight $\kappa$; 
\item \label{char_perfectness_additive_notweaklycompact - splitting tree+homeo}
every superclosed \( T \subseteq \pre{<\kappa}{\kappa} \) such that \( X \) is homeomorphic to \( [T] \) is splitting;
\item \label{char_perfectness_additive_notweaklycompact - splitting tree}
there is a splitting superclosed%
\footnote{This is a bit redundant: if \( T \) is splitting and \( {<} \kappa \)-closed, then it is automatically pruned.}
 tree $T \subset \pre{<\kappa}{\kappa}$ with $[T]$ homeomorphic to  $X$. 
\end{enumerate-(a)}
\end{lemma}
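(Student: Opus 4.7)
The plan is to establish the cycle (a) $\Rightarrow$ (b) $\Rightarrow$ (c) $\Rightarrow$ (a) for the first three conditions, and then obtain the equivalence of (a) with (d) and (e) via the tree representation afforded by Theorem~\ref{thm:charadditiveSCkappaspaces}. A useful preliminary observation is that $\kappa$-additivity collapses the two notions of isolated point: any $\kappa$-many-set intersection yielding $\{x\}$ is already open, so every $\kappa$-isolated point is isolated outright. Hence the parenthetical ``$\kappa$-'' in (a) is genuinely redundant in this setting.

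For (a) $\Rightarrow$ (b) I would simply invoke Lemma~\ref{lem:equivperf1}, which already gives that every nonempty open subset of a $\kappa$-perfect $\SC_\kappa$-space has cardinality $2^\kappa > \kappa$. For (b) $\Rightarrow$ (c), fix a nonempty open $U \subseteq X$; since $|U| > \kappa$, Lemma~\ref{lem:sizetoweight} forces the weight of $U$ to be at least $\kappa$, while the bound $\mathrm{weight}(U) \leq \kappa$ is inherited from $X$ via Definition~\ref{def:strongChoquet}. For (c) $\Rightarrow$ (a), note that an (equivalently, $\kappa$-)isolated point $x$ would make the singleton $\{x\}$ a nonempty open subspace of weight $1 < \kappa$, contradicting (c).

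The equivalence of (a), (d), and (e) hinges on the following correspondence, valid for any superclosed tree $T \subseteq \pre{<\kappa}{\kappa}$: the isolated points of $[T]$ are in bijection with the nodes $s \in T$ all of whose extensions in $T$ are pairwise comparable. Indeed, by prunedness and ${<}\kappa$-closedness such a node $s$ admits a unique branch $x \in [T]$ extending it, so $\Nbhd_s \cap [T] = \{x\}$ exhibits $x$ as isolated; conversely, any isolated point of $[T]$ must arise this way, since basic open neighborhoods of points in $[T]$ are of the form $\Nbhd_s \cap [T]$. Hence $T$ is splitting if and only if $[T]$ is perfect.

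Given this correspondence, (a) $\Rightarrow$ (d) is immediate: if $X$ has no isolated points, then no $[T] \cong X$ can fail to be splitting. The implication (d) $\Rightarrow$ (e) follows by extracting a single witness via Theorem~\ref{thm:charadditiveSCkappaspaces}, which guarantees at least one superclosed tree $T$ with $[T] \cong X$, and that $T$ is then splitting by (d). Finally, (e) $\Rightarrow$ (a) is the other half of the correspondence: a splitting superclosed $T$ with $[T] \cong X$ certifies that $X$ has no isolated points. None of the steps looks like a real obstacle, as the argument reduces to bookkeeping once Lemmas~\ref{lem:equivperf1} and~\ref{lem:sizetoweight} together with the tree representation of $\kappa$-additive $\SC_\kappa$-spaces are in place; the only point requiring mild care is the systematic use of $\kappa$-additivity to identify the two notions of isolated point and to guarantee that the relevant intersections of open sets remain open.
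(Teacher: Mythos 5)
Your proof is correct and follows essentially the same route as the paper: (a)$\Rightarrow$(b) via Lemma~\ref{lem:equivperf1}, (b)$\Rightarrow$(c) via Lemma~\ref{lem:sizetoweight}, and the tree conditions via Theorem~\ref{thm:charadditiveSCkappaspaces} together with the observation that superclosedness forces $\Nbhd_s\cap[T]\neq\emptyset$ for $s\in T$, so that non-splitting nodes correspond exactly to isolated points. The only (immaterial) difference is organizational: the paper runs a single cycle (a)$\Rightarrow$(b)$\Rightarrow$(c)$\Rightarrow$(d)$\Rightarrow$(e)$\Rightarrow$(a), deducing (d) from the weight condition (c), whereas you close the first three into a cycle and attach (d), (e) to (a) through the ``$T$ splitting iff $[T]$ perfect'' correspondence.
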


\begin{proof}
The implication
\ref{char_perfectness_additive_notweaklycompact - kappa_perfect}~\( \Rightarrow \)~\ref{char_perfectness_additive_notweaklycompact - big_size} follows from
Lemma~\ref{lem:equivperf1}, while the implication
\ref{char_perfectness_additive_notweaklycompact - big_size}~\( \Rightarrow \)~\ref{char_perfectness_additive_notweaklycompact - big_weight} follows from
Lemma~\ref{lem:sizetoweight} and the fact that $X$ has weight (at most) $\kappa$.
In order to prove \ref{char_perfectness_additive_notweaklycompact - big_weight}~\( \Rightarrow \)~\ref{char_perfectness_additive_notweaklycompact - splitting tree+homeo}, notice that if
\( s \in T \) then $\Nbhd_s\cap[T] \neq \emptyset$ because \( T \) is superclosed. Thus $s$ 
must have two incomparable extensions, since otherwise \( \Nbhd_s \cap [T] \) would be 
a nonempty open set of weight (and size) \( 1 \). The 
implication \ref{char_perfectness_additive_notweaklycompact - splitting tree+homeo}~\( \Rightarrow \)~\ref{char_perfectness_additive_notweaklycompact - splitting tree} 
follows from Theorem~\ref{thm:charadditiveSCkappaspaces}, which ensures the 
existence of a superclosed \( T \subseteq \pre{<\kappa}{\kappa} \) with \( [T] \) 
homeomorphic to \( X \): such a \( T \) is then necessarily splitting by 
condition~\ref{char_perfectness_additive_notweaklycompact - splitting tree+homeo}. 
Finally, for the implication \ref{char_perfectness_additive_notweaklycompact - splitting tree}~\( \Rightarrow \)~\ref{char_perfectness_additive_notweaklycompact - kappa_perfect}
notice that if \( T \) is splitting and superclosed,  then for every two incomparable 
extensions \( t,t' \in T \) of a given \( s \in T \) we have 
\( \Nbhd_t \cap [T] \neq \emptyset \) and \( \Nbhd_{t'} \cap [T] \neq \emptyset \) but 
\( \Nbhd_t \cap \Nbhd_{t'} = \emptyset \), hence \( |\Nbhd_s \cap [T]| > 1 \) for all 
\( s \in T \). 
\end{proof}

\begin{remark} \label{rmk:equivperf2}
Notice that if \( \kappa \) is inaccessible, then the splitting condition on the superclosed 
tree \( T \) in items~\ref{char_perfectness_additive_notweaklycompact - splitting tree+homeo} and~\ref{char_perfectness_additive_notweaklycompact - splitting tree} 
above can be strengthened to
\begin{equation}\label{eq:strongsplitting}
\forall s \in T \, \forall \nu < \kappa \, \exists  \alpha < \kappa \, (\alpha > \leng(s) \wedge | \mathrm{Lev}_\alpha(T_s)| \geq \nu ).
\end{equation}
Notice also that if \( \alpha < \kappa \) witnesses~\eqref{eq:strongsplitting} for given \( s \in T\) and \( \nu < \kappa \), then every \( \alpha \leq \alpha' < \kappa \) witnesses the same fact because \( T \) is pruned.
\end{remark}

Lemma~\ref{lem:equivperf2} allows us to prove the following strengthening of~\cite[Proposition 3.1]{CoskSchlMR3453772} and \cite[Theorem~2.53]{Gal19},  answering in particular~\cite[Question 3.2]{CoskSchlMR3453772} for the case of \( \kappa \)-additive spaces.

\begin{theorem} \label{thm:PSP for perfect SC_kappa kappa-additive}
Let \( X \) be a nonempty \( \kappa \)-additive \( \SC_\kappa \)-space. If \( X \) is (\( \kappa \)-)perfect, then there is a superclosed \( C \subseteq X \) which is homeomorphic to \( \pre{\kappa}{2} \).
\end{theorem}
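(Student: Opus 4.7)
The plan is to use Lemma~\ref{lem:equivperf2} to identify $X$ with $[T]$ for a splitting superclosed tree $T \subseteq \pre{<\kappa}{\kappa}$, and then to build a Cantor scheme $\{s_u \mid u \in \pre{<\kappa}{2}\} \subseteq T$ by recursion on $\leng(u)$. I would set $s_\emptyset = \emptyset$; at successor stages I would use the splitting of $T$ to choose incomparable extensions $s_{u \conc 0}, s_{u \conc 1}$ of $s_u$ in $T$ of equal length, arranged so that $\leng(s_u) \geq \leng(u)$ always (possible by pruning); and at a limit $\leng(u) = \gamma$ I would set $s_u = \bigcup_{\alpha < \gamma} s_{u \restriction \alpha}$, which lies in $T$ by $<\kappa$-closure and has length $< \kappa$ by regularity of $\kappa$. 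The map $f \colon \pre{\kappa}{2} \to [T]$ defined by $f(x) = \bigcup_{\alpha < \kappa} s_{x \restriction \alpha}$ is then well-defined, continuous, and injective; and a routine check of the identity $f(\Nbhd_u \cap \pre{\kappa}{2}) = \Nbhd_{s_u} \cap C$ shows that $f$ is in fact a homeomorphism onto its image $C$.

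It remains to show that $C$ is superclosed in $\pre{\kappa}{\kappa}$. The natural candidate is the tree $T^* = \{t \in \pre{<\kappa}{\kappa} \mid \exists u \in \pre{<\kappa}{2} \, (t \subseteq s_u)\}$, which is pruned (each $t \in T^*$ extends through some $f(x)$) and satisfies $C \subseteq [T^*]$ by construction. Both the reverse inclusion $[T^*] \subseteq C$ and the $<\kappa$-closedness of $T^*$ reduce to the same combinatorial statement: given $s \in \pre{\gamma}{\kappa}$ with $\gamma \leq \kappa$ a limit ordinal and $s \restriction \alpha \in T^*$ for every $\alpha < \gamma$, one must produce a witness $u \in \pre{<\kappa}{2}$ with $s \subseteq s_u$ (when $\gamma < \kappa$) or an $x \in \pre{\kappa}{2}$ with $s = f(x)$ (when $\gamma = \kappa$).

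The main obstacle is precisely this limit analysis. The key tool is the set $U = \{u \in \pre{<\kappa}{2} \mid s_u \subseteq s\}$, which is linearly ordered by $\subseteq$ because incomparable $u, u' \in \pre{<\kappa}{2}$ yield incomparable $s_u, s_{u'}$ (the scheme diverges at their common parent). First, $U$ admits a maximum $u^* \in \pre{<\kappa}{2}$: an unbounded chain in $U$ yields $x \in \pre{\kappa}{2}$ with $f(x) \subseteq s$, which handles the case $\gamma = \kappa$ directly and is impossible for $\gamma < \kappa$ because $\leng(f(x)) = \kappa$; while a would-be bounded supremum $u^* \notin U$ of limit length $< \kappa$ is ruled out by the limit clause of the construction, which forces $s_{u^*} = \bigcup_{u \in U} s_u \subseteq s$ and hence $u^* \in U$. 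Assuming $\gamma < \kappa$, I then argue that $s = s_{u^*}$ or $s \subseteq s_{u^* \conc i}$ for some $i \in \{0,1\}$: when $\gamma > \leng(s_{u^*})$, examining some initial segment $s \restriction \alpha \subseteq s_v$ for an $\alpha$ slightly above $\leng(s_{u^*})$ forces the witnessing $v$ to extend $u^*$ (else $v=u^*$ and $\alpha \leq \leng(s_{u^*})$), hence $s \restriction \alpha \subseteq s_{u^* \conc i}$ for $i = v(\leng(u^*))$, and this $i$ must be the same for all such $\alpha$ past the divergence point of $s_{u^* \conc 0}$ and $s_{u^* \conc 1}$; the maximality of $u^*$ finally forces $\gamma \leq \leng(s_{u^* \conc i})$, giving $s \subseteq s_{u^* \conc i} \in T^*$.
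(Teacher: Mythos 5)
Your proposal is correct and follows essentially the same route as the paper: the same reduction to a splitting superclosed tree via Lemma~\ref{lem:equivperf2}, the same Cantor scheme with union at limits, the same candidate tree (the downward closure of the scheme's image), and the same key object in the limit analysis (your chain $U$ and its supremum $u^*$ are exactly the paper's $A$ and $\bar t$). The only difference is presentational — you argue the $<\kappa$-closedness directly where the paper argues contrapositively — so there is nothing further to add.
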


\begin{proof}
By Lemma~\ref{lem:equivperf2} we may assume that \( X = [T] \)  with \( T \subseteq \pre{<\kappa}{\kappa} \) superclosed and splitting.
Recursively define a map \( \varphi \colon \pre{<\kappa}{2} \to T \) by setting 
\( \varphi(\emptyset) = \emptyset \) and then  letting \( \varphi(t \conc 0) \) and 
\( \varphi ( t \conc  1) \) be incomparable extensions in \( T \) of the sequence of 
\( \varphi(t) \). At limit levels we set $\varphi(t)=\bigcup_{\alpha<\leng(t)}\varphi(t \restriction \alpha)$, which is still an element of \( T \) because the latter is 
\( {<} \kappa \)-closed.

By construction, \( \varphi \) is a tree-embedding from \( \pre{< \kappa}{2} \) into \( T \), 
i.e.\ \( \varphi \) is monotone and preserves incomparability. Moreover, 
\( \leng(\varphi(t)) \geq \leng(t) \) for every \( t \in \pre{<\kappa}{2} \).
Let \( T' \) be the subtree of \( T \) generated by \( \varphi(\pre{<\kappa}{2}) \), that is
\[
T' = \{ s \in T \mid s \subseteq \varphi(t) \text{ for some } t \in \pre{<\kappa}{2} \}.
 \]
It is easy to see that \( T' \) is pruned. We now want to check that it is also \( < \kappa \)-closed by showing that if \( s \notin  T' \) for some \( s \) of limit length, then there is \( \alpha < \leng(s) \) such that \( s \restriction \alpha \notin T' \). 
(We present a detailed argument because the claim uses in an essential way that \( \pre{<\kappa}{2} \) is finitely splitting, and would instead fail if e.g.\ $\pre{<\kappa}{2}$ is replaced by  $\pre{<\kappa}{\omega}$.) 
Set \( A =  \{ t \in \pre{< \kappa}{2} \mid \varphi(t) \subseteq s \} \). 
Since \( \varphi \) preserves incomparability, all sequences in \( A \) are comparable and 
thus the sequence \( \bar{t} = \bigcup \{ t \mid t \in A \} \in \pre{< \kappa}{2} \) is 
well-defined and such that \( \varphi(\bar{t}) \subsetneq s \) (here we use that \(\varphi\) is 
defined in a continuous way at limit levels and \( s \notin T' \)). Since \( s \notin T' \), the sequences 
\( \varphi(\bar{t} {}^\smallfrown{} 0 ) \) and \( \varphi(\bar{t} {}^\smallfrown{}  1) \) are 
both incomparable with \( s \) by the choice of \( \bar{t} \), and since \( \leng(s) \) is limit 
there is \(  \leng(\varphi(\bar{t})) < \alpha< \leng(s) \) such that the above sequences are incomparable with 
\( s \restriction \alpha \) as well:%
\footnote{Here is where we use that there are finitely many (in fact, two) extensions of $\bar{t}$ of length \( \leng(\bar{t})+1 \).}
 we claim 
that such \( \alpha \) is as required. Given an arbitrary \( t \in \pre{<\kappa}{2} \), we 
distinguish various cases. If \( t \) is incomparable with \( \bar{t} \), then \( \varphi(t) \) is 
incomparable with \( \varphi(\bar{t}) \) and thus with \( s \restriction \alpha \) as well 
because by construction \( \varphi(\bar{t}) \subseteq s \restriction \alpha \). If 
\( t \subseteq \bar{t} \), then by monotonicity  of \( \varphi \) we have that 
\( \varphi(t) \subseteq \varphi(\bar{t}) = s \restriction \leng(\varphi(\bar{t})) \) and thus \( \varphi(t) \) is a proper initial segment 
of \( s \restriction \alpha \) by \( \alpha > \leng(\varphi(\bar{t})) \). Finally, if \( t \) properly extends \( \bar{t} \), then 
\( t \supseteq \bar{t} \conc i \) for some \( i \in \{ 0,1 \} \): but then 
\( \varphi(t) \supseteq \varphi(\bar{t} {}^\smallfrown{} i) \) is incomparable with 
\( s \restriction \alpha \) again. So in all cases we get that 
\( s \restriction \alpha \not\subseteq \varphi(t) \), and since \( t \) was arbitrary this 
entails \( s \restriction \alpha \notin T' \), as required.

This shows that \( T' \) is a superclosed subtree of \( T \). Moreover, \( \varphi \) canonically induces the function \( f_\varphi \colon \pre{\kappa}{2} \to C = [T'] \) where
\[
f_\varphi ( x ) =  \bigcup\nolimits_{\alpha < \kappa} \varphi(x \restriction \alpha),
\]
which is well-defined by monotonicity of \( \varphi \) and  
\( \leng(\varphi(x \restriction \alpha)) \geq \alpha \). Moreover \( f_\varphi \) is a bijection 
because \( \varphi \) is a tree-embedding, and by construction 
\( f_\varphi(\Nbhd_t) = \Nbhd_{\varphi(t)} \cap C \) for all \( t \in \pre{<\kappa}{2} \). Since 
\( \{ \Nbhd_{\varphi(t)} \cap C \mid t \in \pre{<\kappa}{2} \} \) is clearly a basis for \( C \), 
this shows that \( f_\varphi \)  is a homeomorphism between \( \pre{\kappa}{2} \) and \( C  \).
\end{proof}

The previous theorem can be turned into the following characterization: a topological 
space contains a closed homeomorphic copy of \( \pre{\kappa}{2} \) if and only if it 
contains a nonempty closed (\( \kappa \)-)perfect \( \kappa \)-additive \( \SC_\kappa \)-
subspace.

Finally, we briefly discuss \( \kappa \)-Lindel\"of sets and \( K_\kappa \)-sets.
The Alexandrov-Urysohn characterization  of the Baire space 
(Theorem~\ref{thm:classicalchar}\ref{thm:classicalchar-2}) implicitly deals with Baire 
category. In fact, compact sets are closed, thus requiring that they have empty interior is 
equivalent to requiring that they are nowhere dense. The latter notion makes sense also 
in the generalized setting, but the notion of meagerness needs to be replaced with 
\( \kappa \)-meagerness, where a subset \( A \subseteq X \) is called 
\markdef{\( \kappa \)-meager} if it can be written as a union of \( \kappa \)-many 
nowhere dense sets. A topological space is \markdef{\( \kappa \)-Baire} if no nonempty open subset of \( X \) is 
\( \kappa \)-meager. It is not difficult to see that if \( \kappa \) is regular, then 
\( \pre{\kappa}{\kappa} \) is \( \kappa \)-Baire (see e.g.\ \cite{Friedman:2011nx,AM}), so 
the next lemma applies to it.

\begin{lemma} \label{lem:lindelKkappa}
Suppose that \( X \) is a \( \kappa \)-additive \( \kappa \)-Baire space. Then the following are equivalent:
\begin{enumerate-(a)}
\item \label{lem:lindelKkappa-1}
all \( \kappa \)-Lindel\"of subsets of \( X \) have empty interior;
\item \label{lem:lindelKkappa-2}
all \( K_\kappa \) subsets of \( X \) have empty interior.
\end{enumerate-(a)}

\end{lemma}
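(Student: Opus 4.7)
The implication from~\ref{lem:lindelKkappa-2} to~\ref{lem:lindelKkappa-1} is trivial, since every \( \kappa \)-Lindel\"of set is itself (a one-term union and hence) a \( K_\kappa \)-set. So the plan is to prove the converse.

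For the converse, the crux will be an auxiliary claim: \emph{in a regular \( \kappa \)-additive space, the closure of a \( \kappa \)-Lindel\"of set is again \( \kappa \)-Lindel\"of}. To see this, given \( A \) \( \kappa \)-Lindel\"of and an open cover \( \mathcal{U} \) of \( \cl(A) \), I use regularity to associate to each \( x \in A \) a \( U_x \in \mathcal{U} \) containing \( x \) together with an open \( V_x \ni x \) satisfying \( \cl(V_x) \subseteq U_x \). Extracting a subcover \( \{V_{x_\alpha} \mid \alpha < \lambda\} \) of \( A \) with \( \lambda < \kappa \), I obtain that \( \bigcup_{\alpha < \lambda} \cl(V_{x_\alpha}) \) is closed (because \( \kappa \)-additivity applied to complements shows that the family of closed sets is closed under \( {<} \kappa \)-unions), contains \( A \), and is contained in \( \bigcup_{\alpha < \lambda} U_{x_\alpha} \). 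Hence \( \{U_{x_\alpha} \mid \alpha < \lambda\} \) is a \( {<} \kappa \)-sized subcover of \( \cl(A) \) from \( \mathcal{U} \), as required.

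Assuming~\ref{lem:lindelKkappa-1}, let now \( A = \bigcup_{\alpha < \kappa} A_\alpha \) be a \( K_\kappa \) subset of \( X \), each \( A_\alpha \) being \( \kappa \)-Lindel\"of. By~\ref{lem:lindelKkappa-1}, each \( A_\alpha \) has empty interior; by the auxiliary claim, each \( \cl(A_\alpha) \) is \( \kappa \)-Lindel\"of, and thus it too has empty interior by~\ref{lem:lindelKkappa-1}. This means each \( A_\alpha \) is nowhere dense, so \( A \) is \( \kappa \)-meager.

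If \( A \) had nonempty interior, we could pick a nonempty open \( U \subseteq A \); then \( U \) would be \( \kappa \)-meager in \( X \) (as a subset of the \( \kappa \)-meager set \( A \), using that subsets of nowhere dense sets are nowhere dense), contradicting the \( \kappa \)-Baire property of \( X \) in the equivalent form mentioned just before the statement. Hence \( A \) has empty interior, establishing~\ref{lem:lindelKkappa-2}. The main obstacle in the plan is the auxiliary closure claim, which is where both regularity (to shrink neighborhoods) and \( \kappa \)-additivity (to obtain closure of the closed sets under \( {<} \kappa \)-sized unions) are essential.
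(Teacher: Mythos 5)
Your proof is correct, and its overall skeleton matches the paper's: both reduce \ref{lem:lindelKkappa-2} to showing that a \( K_\kappa \)-set is \( \kappa \)-meager and then invoke the \( \kappa \)-Baire property. The difference lies in how the pieces \( A_\alpha \) are shown to be nowhere dense. The paper uses the fact that \emph{in a \( \kappa \)-additive (Hausdorff) space every \( \kappa \)-Lindel\"of set is closed} (separate a point of \( \cl(A_\alpha) \setminus A_\alpha \) from a \( {<}\kappa \)-sized subcover and intersect the corresponding \( {<}\kappa \)-many neighborhoods), so that \( \cl(A_\alpha) = A_\alpha \) and hypothesis \ref{lem:lindelKkappa-1} applies directly. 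You instead prove that \emph{the closure of a \( \kappa \)-Lindel\"of set is again \( \kappa \)-Lindel\"of} (via regularity and the closure of the closed sets under \( {<}\kappa \)-sized unions) and then apply \ref{lem:lindelKkappa-1} to \( \cl(A_\alpha) \). Both auxiliary facts are legitimate under the paper's standing assumption that all spaces are regular Hausdorff; the paper's version is arguably the sharper structural statement (it identifies \( \kappa \)-Lindel\"of subsets as closed), whereas yours is the one that survives if Hausdorffness is weakened to regularity alone. A small redundancy: your first application of \ref{lem:lindelKkappa-1} to \( A_\alpha \) itself is unnecessary, since nowhere density only requires that \( \cl(A_\alpha) \) have empty interior, which your second application already delivers.
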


\begin{proof}
The nontrivial implication \ref{lem:lindelKkappa-1} \( \Rightarrow \) \ref{lem:lindelKkappa-2} follows from the fact that if 
\( A = \bigcup_{\alpha < \kappa} A_\alpha \subseteq X \) with all \( A_\alpha \)'s 
\( \kappa \)-Lindel\"of, then \( A \) is \( \kappa \)-meager because in a \( \kappa \)-additive (Hausdorff) 
space all \( \kappa \)-Lindel\"of sets are necessarily closed and thus, 
by~\ref{lem:lindelKkappa-1}, each \( A_\alpha \) is nowhere dense;  thus the interior of 
\( A \), being \( \kappa \)-meager as well, must be the empty set.
\end{proof}

Finally, observe that if a space \( X \) can be partitioned into \( \kappa \)-many nonempty 
clopen sets, then it is certainly not \( \kappa \)-Lindel\"of. The next lemma shows that 
the converse holds as well if \( X \) is \( \kappa \)-additive and of weight at most \( \kappa \). 

\begin{lemma} \label{split closed set into kappa pieces}
Let  $X$ be a nonempty $\kappa$-additive space of weight $\leq \kappa$. If \( X \) is not 
\( \kappa \)-Lindel\"of, then it can be partitioned into $\kappa$-many nonempty clopen 
subsets.
\end{lemma}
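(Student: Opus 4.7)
The plan is to turn failure of $\kappa$-Lindel\"ofness into a genuine clopen partition of size exactly $\kappa$ by first producing a ``minimal'' clopen cover and then disjointifying.

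First, recall that by $\kappa$-additivity and regularity, $X$ is zero-dimensional, and since its weight is $\leq \kappa$ it admits a clopen basis $\mathcal{B} = \{B_\alpha \mid \alpha < \kappa\}$ (with possible repetitions). Using this, I would start from any open cover of $X$ without a subcover of size $<\kappa$ (which exists because $X$ is not $\kappa$-Lindel\"of) and refine it to a \emph{clopen} cover $\mathcal{C}\subseteq\mathcal{B}$ that still has no subcover of size $<\kappa$: for each $x\in X$ pick an open set $U_x$ from the cover containing $x$ and then pick $C_x\in\mathcal{B}$ with $x\in C_x\subseteq U_x$. If $\{C_x\mid x\in X\}$ admitted a subcover of size $<\kappa$, replacing each $C_x$ in it by the corresponding $U_x$ would contradict the choice of the original cover. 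In particular $|\mathcal{C}|=\kappa$, so I can enumerate $\mathcal{C}=\{C_\alpha\mid \alpha<\kappa\}$.

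Next, I would disjointify by setting
\[
D_\alpha = C_\alpha \setminus \bigcup_{\beta<\alpha} C_\beta \qquad (\alpha<\kappa).
\]
Each $\bigcup_{\beta<\alpha} C_\beta$ is a union of fewer than $\kappa$ clopen sets; by $\kappa$-additivity its complement is open (being a $<\kappa$-intersection of clopen sets) and it is also open as a union of open sets, so it is clopen. Hence each $D_\alpha$ is clopen, and by construction the family $\{D_\alpha\mid\alpha<\kappa\}$ is pairwise disjoint and covers $X$ (each $x\in X$ belongs to $D_{\alpha(x)}$ where $\alpha(x)$ is the least $\alpha$ with $x\in C_\alpha$).

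It remains to observe that the set $A=\{\alpha<\kappa\mid D_\alpha\neq\emptyset\}$ has size $\kappa$; this is the main point where the non-$\kappa$-Lindel\"of hypothesis is used a second time. Indeed, if $|A|<\kappa$ then $X = \bigcup_{\alpha\in A}D_\alpha \subseteq \bigcup_{\alpha\in A}C_\alpha$, contradicting the fact that $\mathcal{C}$ admits no subcover of size $<\kappa$. Thus $\{D_\alpha\mid\alpha\in A\}$ is the desired partition of $X$ into exactly $\kappa$-many nonempty clopen pieces. The only mild subtlety is the double use of the hypothesis (once to extract a clopen cover with the minimality property, once to ensure the partition has full size $\kappa$), but no serious obstacle arises since $\kappa$-additivity gives zero-dimensionality and clopenness of the ``initial segment'' unions for free.
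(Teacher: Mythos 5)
Your proof is correct and follows essentially the same route as the paper's: extract a clopen cover with no subcover of size $<\kappa$ (using zero-dimensionality, which follows from $\kappa$-additivity and regularity), disjointify via $C_\alpha \setminus \bigcup_{\beta<\alpha} C_\beta$, and use $\kappa$-additivity to see the pieces are clopen. The only cosmetic difference is that the paper normalizes the enumeration up front so that every piece is nonempty, whereas you discard the empty pieces afterwards and argue there must be $\kappa$-many nonempty ones; both are fine.
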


\begin{proof}
Since \( X \) is zero-dimensional of weight $\leq \kappa$ but not \( \kappa \)-Lindel\"of, there is a clopen covering 
\( \{ U_\alpha \mid \alpha < \kappa \} \) of it which does not admit a \( {<} \kappa \)-sized 
subcover. Without loss of generality, we may assume that 
\( U_\alpha \not\subseteq \bigcup_{\beta < \alpha} U_\beta \). Then the sets 
\( V_\alpha = U_\alpha \setminus \bigcup_{\beta < \alpha} U_\beta \) form a 
\( \kappa \)-sized partition of \( X \). Since by \( \kappa \)-additivity the \( V_\alpha \)'s are 
clopen, we are done.
\end{proof}

We are now ready to characterize the generalized Baire space \( \pre{\kappa}{\kappa} \) 
in the class of \( \SC_\kappa \)-spaces (compare it with 
Theorem~\ref{thm:classicalchar}\ref{thm:classicalchar-2}).

\begin{theorem} \label{thm:charBairespaceallcardinals}
\label{thm:characterizations of kappa^kappa among superclosed spaces}
Up to homeomorphism, the generalized Baire space \( \pre{\kappa}{\kappa} \) is the 
unique nonempty \( \kappa \)-additive \( \SC_\kappa \)-space for which all \( \kappa \)-Lindel\"of  subsets (equivalently: all \( K_\kappa \)-subsets) have empty interior.
\end{theorem}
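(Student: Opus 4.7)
The easy direction is to observe that \( \pre{\kappa}{\kappa} \) itself satisfies the stated properties: it is nonempty, \( \kappa \)-additive and \( \SC_\kappa \) by construction, and every nonempty open subset \( U \) contains some basic neighborhood \( \Nbhd_s \) which is homeomorphic to \( \pre{\kappa}{\kappa} \) and splits as the disjoint clopen union \( \bigcup_{\alpha < \kappa} \Nbhd_{s \conc \alpha} \), so \( \Nbhd_s \) is not \( \kappa \)-Lindel\"of; as \( \Nbhd_s \) is closed in \( U \), neither is \( U \). Hence every \( \kappa \)-Lindel\"of subset of \( \pre{\kappa}{\kappa} \) has empty interior. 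The content of the theorem is the uniqueness direction, which I plan to prove by a Lusin-scheme construction.

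Let \( X \) be a nonempty \( \kappa \)-additive \( \SC_\kappa \)-space in which every \( \kappa \)-Lindel\"of subset has empty interior. Since open sets are their own interiors, this forces every nonempty open subset of \( X \) to be non-\( \kappa \)-Lindel\"of. By Theorem~\ref{thm:SC-space+G-Polish} I will fix a compatible spherically complete \( \GG \)-metric \( d \) on \( X \) together with a strictly decreasing sequence \( (r_\alpha)_{\alpha < \kappa} \) coinitial in \( \GG^+ \). The plan is to recursively construct a family \( \{ U_s \mid s \in \pre{<\kappa}{\kappa} \} \) of nonempty clopen subsets of \( X \) satisfying \( U_\emptyset = X \); at each successor length, \( \{ U_{s \conc \alpha} \mid \alpha < \kappa \} \) is a clopen partition of \( U_s \) into pieces of \( d \)-diameter at most \( r_{\leng(s)+1} \); and at each limit length \( \gamma \), \( U_s = \bigcap_{\alpha < \gamma} U_{s \restriction \alpha} \).

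The successor step is the heart of the argument and is where I expect the main obstacle: the partition of \( U_s \) must have \emph{exactly} \( \kappa \)-many nonempty clopen pieces and must \emph{simultaneously} control the \( d \)-diameter. My plan is first to cover \( U_s \) by clopen sets of \( d \)-diameter at most \( r_{\leng(s)+1} \) (possible because \( X \) is zero-dimensional by \( \kappa \)-additivity and \( \GG \)-metric, using Lemma~\ref{lem:smallelements} to halve \( r_{\leng(s)+1} \) and then choosing a basic clopen neighborhood inside a sufficiently small \( d \)-ball around each point); then to reduce this cover to size \( \leq \kappa \) using the weight bound on \( X \); and then to observe that since \( U_s \) is not \( \kappa \)-Lindel\"of no subcover of size \( {<}\kappa \) can exist, so the cover has exactly \( \kappa \) pieces \( (C_\alpha)_{\alpha < \kappa} \). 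Applying the difference construction of Lemma~\ref{split closed set into kappa pieces} to this cover yields pairwise disjoint clopen pieces \( V_\alpha = C_\alpha \setminus \bigcup_{\beta < \alpha} C_\beta \) partitioning \( U_s \); the non-\( \kappa \)-Lindel\"of argument forces exactly \( \kappa \) of them to be nonempty (else the nonempty ones would yield a \( {<}\kappa \)-sized subcover), and each has \( d \)-diameter \( \leq r_{\leng(s)+1} \) since \( V_\alpha \subseteq C_\alpha \). I take these (re-enumerated) as the \( U_{s \conc \alpha} \). At a limit stage \( \gamma \), \( \kappa \)-additivity makes \( \bigcap_{\alpha < \gamma} U_{s \restriction \alpha} \) clopen, and picking \( x_\alpha \in U_{s \restriction \alpha} \) yields a \( d \)-Cauchy sequence (the diameters eventually drop below every \( r_\beta \)) whose \( d \)-limit lies in every \( U_{s \restriction \alpha} \) because the latter are closed.

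Once the scheme is built I define \( f \colon \pre{\kappa}{\kappa} \to X \) by letting \( f(x) \) be the unique point of \( \bigcap_{\alpha < \kappa} U_{x \restriction \alpha} \); the intersection is nonempty by the same Cauchy argument and is a singleton because its diameter is \( \leq \inf_{\alpha < \kappa} r_\alpha = 0 \). Distinct \( x, y \in \pre{\kappa}{\kappa} \) first disagree at some level \( \alpha \), placing \( f(x), f(y) \) in disjoint partition pieces \( U_{x \restriction \alpha+1}, U_{y \restriction \alpha+1} \), so \( f \) is injective; surjectivity follows by following the unique branch of partition pieces through any given \( y \in X \). Finally, since \( f(\Nbhd_s) = U_s \) for every \( s \in \pre{<\kappa}{\kappa} \) and these families are bases for \( \pre{\kappa}{\kappa} \) and \( X \) respectively, \( f \) is a homeomorphism, completing the proof.
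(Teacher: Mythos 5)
Your easy direction and your successor step are fine (the successor step is essentially Lemma~\ref{split closed set into kappa pieces} together with the observation that a nonempty open set cannot be \( \kappa \)-Lindel\"of under the hypothesis), but the limit stages of your recursion contain a genuine gap. For a limit \( \gamma < \kappa \) the sequence \( (x_\alpha)_{\alpha<\gamma} \) you extract is \emph{not} \( d \)-Cauchy in any usable sense: you only control \( d(x_\alpha,x_{\alpha'}) \leq r_{\beta+1} \) for \( \beta < \gamma \), and since \( \Deg(\GG)=\kappa \) the initial segment \( (r_\beta)_{\beta<\gamma} \) is bounded away from \( 0_\GG \), so the points need not converge (and Cauchy-completeness as defined in the paper only concerns length-\( \kappa \) sequences anyway). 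More seriously, the conclusion you need is simply false for the sets you build: a decreasing sequence of nonempty clopen sets in a \( \kappa \)-additive \( \SC_\kappa \)-space can have empty intersection---already in \( \pre{\kappa}{\kappa} \) the sets \( B_n = \{ x \mid n \leq x(0) < \omega \} \) are nonempty, clopen and decreasing with \( \bigcap_{n<\omega} B_n = \emptyset \). Spherical completeness only protects decreasing sequences of \emph{balls}, and your disjointified pieces \( C_\alpha \setminus \bigcup_{\beta<\alpha} C_\beta \) are not balls; nothing in your successor step prevents a branch of the scheme from dying at a limit level, at which point \( f \) is no longer defined on all of \( \pre{\kappa}{\kappa} \).

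The paper avoids this by first invoking Theorem~\ref{thm:charadditiveSCkappaspaces} to replace \( X \) by \( [T] \) for a \emph{superclosed} tree \( T \), and then refining the \( \kappa \)-sized clopen partition given by Lemma~\ref{split closed set into kappa pieces} to one consisting of \emph{basic} clopen sets \( \Nbhd_{t_\alpha} \cap [T] \). Along a branch the partition pieces then correspond to an increasing chain of nodes of \( T \), and the limit stages are handled combinatorially: the union of the chain lies in \( T \) by \( {<}\kappa \)-closedness, and \( \Nbhd_{\varphi(t)} \cap [T] \neq \emptyset \) because \( T \) is pruned. No metric or diameter control is needed at all. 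If you wish to keep your metric formulation, you must arrange that every \( U_s \) is of the form \( \Nbhd_t \cap [T] \) for such a \( T \) (equivalently, run your partition argument inside the superclosed representation), after which the diameter bookkeeping can be dropped. You should also say a word about the parenthetical ``equivalently: all \( K_\kappa \)-subsets'', which in the existence direction follows from Lemma~\ref{lem:lindelKkappa} since \( \pre{\kappa}{\kappa} \) is \( \kappa \)-additive and \( \kappa \)-Baire.
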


\begin{proof}
Clearly, \( \pre{\kappa}{\kappa} \) is a \( \kappa \)-additive \( \SC_\kappa \)-space. 
Moreover, every \( \kappa \)-Lindel\"of subset of \( \pre{\kappa}{\kappa} \) has empty 
interior as otherwise for some
\( s \in \pre{<\kappa}{\kappa} \) the basic clopen set \( \Nbhd_s \) would be 
\( \kappa \)-Lindel\"of as
well, which is clearly false because \( \{ \Nbhd_{s \conc \alpha} \mid \alpha < \kappa \} \) 
is a \( \kappa \)-sized
clopen partition of \( \Nbhd_s \). By Lemma~\ref{lem:lindelKkappa} and the fact that
\( \pre{\kappa}{\kappa} \) is \( \kappa \)-Baire we get that the \( K_\kappa \)-subsets of \( \pre{\kappa}{\kappa} \) have empty interior too.

Conversely, let \( X \) be any nonempty \( \kappa \)-additive \( \SC_\kappa \)-space all of whose \( \kappa \)-Lindel\"of subsets have empty interior.
By Theorem~\ref{thm:charadditiveSCkappaspaces} we may assume that \( X = [T] \) for some superclosed tree \( T \subseteq \pre{< \kappa}{\kappa} \): our aim is to define a homeomorphism 
between \( \pre{\kappa}{\kappa} \) and \( [T] \).
We recursively define a map \( \varphi \colon \pre{<\kappa}{\kappa} \to T \) by setting 
\( \varphi(\emptyset) = \emptyset \) and 
$\varphi(t)=\bigcup_{\alpha<\leng(t)}\varphi(t \restriction \alpha)$ if $\leng(t)$ is limit 
(this is still a sequence in \( T \) because the latter is \( {<} \kappa \)-closed). For the 
successor step, assume that $\varphi(t)$ has already  been defined. Notice that 
$\Nbhd_{\varphi(t)}\cap [T]$ is open and nonempty (because \( T \) is superclosed), hence 
it is not $\kappa$-Lindel\"of by assumption. 
By Lemma~\ref{split closed set into kappa pieces} there is a \( \kappa \)-sized partition of 
\( \Nbhd_{\varphi(t)}\cap [T] \) into nonempty clopen sets, which can then be refined to a partition in nonempty sets
of the form \( \{ \Nbhd_{t_\alpha} \cap [T] \mid \alpha < \kappa \} \), so that in particular $t_\alpha \in T$: set 
\( \varphi( t \conc \alpha) = t_\alpha \).
It is now easy to see that the function
\[
f_\varphi \colon \pre{\kappa}{\kappa} \to X, \quad x \mapsto \bigcup\nolimits_{\alpha < \kappa} \varphi(x \restriction \alpha)
\]
induced by \( \varphi \) is a homeomorphism between $\pre{\kappa}{\kappa}$ and $X$.
\end{proof}

Theorem~\ref{thm:characterizations of kappa^kappa among superclosed spaces} can be 
used to get an easy proof of the fact that \( \pre{\kappa}{2} \) is homeomorphic to 
\( \pre{\kappa}{\kappa} \) when \( \kappa \) is not weakly compact, i.e.\ when 
\( \pre{\kappa}{2} \) is not \( \kappa \)-Lindel\"of itself. Indeed, \( \pre{\kappa}{2} \) is 
clearly a nonempty \( \kappa \)-additive \( \SC_\kappa \)-space, so it is enough to check 
that all its \( \kappa \)-Lindel\"of subsets have empty interior. But for zero-dimensional 
spaces this is equivalent to the fact that every nonempty open subspace is not 
\( \kappa \)-Lindel\"of, which in this case is true because all basic open subsets of 
\( \pre{\kappa}{2} \) are homeomorphic to it, and thus they are not \( \kappa \)-Lindel\"of.

\bigskip

We next move to the characterization(s) of \( \pre{\kappa}{2} \). When \( \kappa \) is not 
weakly compact, 
Theorem~\ref{thm:characterizations of kappa^kappa among superclosed spaces}  
already does the job, but we are anyway seeking a generalization along the lines  of 
Brouwer's  characterization of \( \pre{\omega}{2} \) from Theorem~\ref{thm:classicalchar}\ref{thm:classicalchar-1} (thus involving perfectness and suitable compactness properties). 
Since \( \pre{\kappa}{2} \) is \( \kappa \)-Lindel\"of if and only if \( \kappa \) is weakly 
compact, we distinguish between the corresponding two cases and first concentrate  on 
the case when  \( \kappa \) is not weakly compact.
In this situation,
 there is no space at all sharing all (natural generalizations of) the conditions appearing 
 in Theorem~\ref{thm:classicalchar}\ref{thm:classicalchar-1}.

\begin{proposition}\label{prop:no_perfect_kappa_additive_SC_Lindelof}
Let $\kappa$ be a non weakly compact cardinal. Then there is no nonempty  
$\kappa$-additive (\( \kappa \)-)perfect $\kappa$-Lindel\"of $\SC_\kappa$-space.
\end{proposition}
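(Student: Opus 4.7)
The plan is to argue by contradiction, reducing the problem to the fact, already cited in the text as \cite[Theorem 5.6]{MottoRos:2013}, that $\pre{\kappa}{2}$ fails to be $\kappa$-Lindel\"of whenever $\kappa$ is not weakly compact.

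First, suppose toward contradiction that $X$ is a nonempty $\kappa$-additive $\kappa$-perfect $\kappa$-Lindel\"of $\SC_\kappa$-space. Since $X$ is a nonempty $\kappa$-additive $\kappa$-perfect $\SC_\kappa$-space, Theorem~\ref{thm:PSP for perfect SC_kappa kappa-additive} immediately produces a superclosed set $C \subseteq X$ which is homeomorphic to $\pre{\kappa}{2}$. In particular, $C$ is a closed subset of $X$.

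Next, I would invoke the standard fact that $\kappa$-Lindel\"ofness is inherited by closed subspaces: given any open cover $\{U_\alpha \mid \alpha < \lambda\}$ of $C$, where each $U_\alpha = V_\alpha \cap C$ for some $V_\alpha$ open in $X$, the enlarged family $\{V_\alpha \mid \alpha < \lambda\} \cup \{X \setminus C\}$ is an open cover of $X$ and hence admits a subcover of size $< \kappa$; restricting back to $C$ yields a $<\kappa$-sized subcover of the original. Thus $C$ is $\kappa$-Lindel\"of, contradicting the fact that $\pre{\kappa}{2}$ is not $\kappa$-Lindel\"of when $\kappa$ is not weakly compact.

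There is essentially no obstacle beyond invoking Theorem~\ref{thm:PSP for perfect SC_kappa kappa-additive} to extract a closed copy of $\pre{\kappa}{2}$: the only additional ingredient is the routine transfer of $\kappa$-Lindel\"ofness to closed subspaces, which is a direct generalization of the classical argument for compactness. The whole proof should occupy only a few lines.
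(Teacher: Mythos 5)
Your proof is correct and follows exactly the paper's own argument: extract a closed homeomorphic copy of \( \pre{\kappa}{2} \) via Theorem~\ref{thm:PSP for perfect SC_kappa kappa-additive}, observe that closed subspaces inherit \( \kappa \)-Lindel\"ofness, and contradict the fact that \( \pre{\kappa}{2} \) is not \( \kappa \)-Lindel\"of when \( \kappa \) is not weakly compact. The only difference is that you spell out the routine closed-subspace step, which the paper leaves implicit.
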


\begin{proof}
Suppose towards a contradiction that there is such a space $X$. By 
Theorem~\ref{thm:PSP for perfect SC_kappa kappa-additive}, we could then find a 
homeomorphic copy \( C \subseteq X \) of \( \pre{\kappa}{2} \) with \( C \) closed in \( X \). 
But then \( C \), and hence also \( \pre{\kappa}{2} \), would be \( \kappa \)-Lindel\"of, 
contradicting the fact that \( \kappa \) is not weakly compact.
\end{proof}

Proposition~\ref{prop:no_perfect_kappa_additive_SC_Lindelof} seems to suggest the 
we already reached a dead end in our attempt to generalize Brouwer's theorem for 
non-weakly compact cardinals. However, this is quite not true: we are now going to show 
that relaxing even just one of the conditions on the space give a compatible set of 
requirements.
For example, if we restrict the attention to \( \kappa \)-Lindel\"of \( \SC_\kappa \)-spaces, 
then \( \kappa \)-additivity and \( \kappa \)-perfectness cannot coexists by 
Proposition~\ref{prop:no_perfect_kappa_additive_SC_Lindelof}, but they can be 
satisfied separately. 
Indeed, the space
\[
X = \{ x \in \pre{\kappa}{2} \mid x(\alpha) = 0 \text{ for at most one } \alpha < \kappa \}
 \]
is a $\kappa$-additive $\kappa$-Lindel\"of $\SC_\kappa$-space, while endowing 
\( \pre{\kappa}{2} \) with the product topology (rather than the bounded topology) 
we get a $\kappa$-perfect $\kappa$-Lindel\"of (in fact, compact) $\SC_\kappa$-space.
If instead we weaken the Choquet-like condition to being just a \( \SFC_\kappa \)-space, 
then we have the following example.

\begin{proposition}
There exists a nonempty $\kappa$-additive (\( \kappa \)-)perfect $\kappa$-Lindel\"of $\SFC_\kappa$-space.
\end{proposition}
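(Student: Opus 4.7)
The plan is to take $X_0=\{x\in\pre{\kappa}{2}\mid |x^{-1}(0)|<\aleph_0\}$ from equation~\eqref{eq:finitelymanyzeroes} as the example. As a closed subspace of $\pre{\kappa}{2}$ it is $\kappa$-additive and $\GG$-Polish, hence $\SFC_\kappa$ by Theorem~\ref{thm:charK-Polish}. Moreover $X_0$ is $\kappa$-perfect: by $\kappa$-additivity, any $<\kappa$-sized intersection of basic neighborhoods of $x\in X_0$ contains some $\Nbhd_s\cap X_0$ with $s\subsetneq x$, and flipping a single coordinate of $x$ at any position $\alpha\geq\leng(s)$ yields a different element of $X_0\cap\Nbhd_s$ (turning a $1$ into a $0$ adds one zero, turning a $0$ into a $1$ removes one, so only finitely many zeros remain).

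The main obstacle is verifying $\kappa$-Lindel\"ofness, and the argument will crucially exploit the fact that each $x\in X_0$ has only \emph{finitely} many zeros---precisely the condition that prevents $X_0$ from being $\SC_\kappa$, keeping us consistent with Proposition~\ref{prop:no_perfect_kappa_additive_SC_Lindelof}. Given an open cover $\mathcal U$ of $X_0$, I will recursively construct a well-founded tree $S\subseteq\pre{<\omega}{\kappa}$ together with, for each $\sigma\in S$, a sequence $s_\sigma\in\pre{<\kappa}{2}$ having exactly $|\sigma|$ zeros, an ordinal $\beta_\sigma\in[\leng(s_\sigma),\kappa)$, and a set $U_\sigma\in\mathcal U$ such that $\Nbhd_{s_\sigma\conc 1^{(\beta_\sigma-\leng(s_\sigma))}}\cap X_0\subseteq U_\sigma$. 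Start with $s_\emptyset=\emptyset$ and $U_\emptyset\in\mathcal U$ containing the point $1^{(\kappa)}$. Given $\sigma$, let the children of $\sigma$ in $S$ be the sequences $\sigma\conc\delta$ for $\delta\in[\leng(s_\sigma),\beta_\sigma)$, set $s_{\sigma\conc\delta}=s_\sigma\conc 1^{(\delta-\leng(s_\sigma))}\conc 0$, and pick $U_{\sigma\conc\delta}\in\mathcal U$ containing $s_{\sigma\conc\delta}\conc 1^{(\kappa)}$ together with a corresponding $\beta_{\sigma\conc\delta}$.

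It remains to verify that $\{U_\sigma\mid\sigma\in S\}$ is a subcover of size $<\kappa$. For the cover property, proceed by induction on the number of zeros of $x\in X_0$ past $\leng(s_\sigma)$: if $x$ extends $s_\sigma$, then either its first zero beyond $\leng(s_\sigma)$ lies at some $\delta\geq\beta_\sigma$, and hence $x\in U_\sigma$ directly; or else $\delta\in[\leng(s_\sigma),\beta_\sigma)$, and $x$ extends $s_{\sigma\conc\delta}$ with strictly fewer remaining zeros, so the inductive hypothesis applies. The recursion terminates at height $\leq\omega$ because every element of $X_0$ has only finitely many zeros. For the size bound, induction on $n$ combined with the regularity of $\kappa$---the sum of $<\kappa$-many cardinals each $<\kappa$ is $<\kappa$---gives $|S\cap\pre{n}{\kappa}|<\kappa$ for each $n<\omega$, and summing over the $\omega<\kappa$ levels yields $|S|<\kappa$ by regularity again.
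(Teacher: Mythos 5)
Your proof is correct, and it takes a genuinely different route from the paper's. Both arguments use the same witness $X_0=\{x\in\pre{\kappa}{2}\mid |x^{-1}(0)|<\aleph_0\}$, and the easy parts (closedness, $\kappa$-additivity, being $\SFC_\kappa$ via Theorem~\ref{thm:charK-Polish}, perfectness) are handled the same way. For $\kappa$-Lindel\"ofness, however, the paper argues by contradiction: it invokes Lemma~\ref{split closed set into kappa pieces} to reduce a putative failure of $\kappa$-Lindel\"ofness to a $\kappa$-sized maximal antichain $F$ of basic clopen sets in the tree $T_0$, stratifies $F$ by the number of zeros, and derives a contradiction with a pigeonhole argument (the minimal level $\ell$ with $|F_\ell|=\kappa$ must contain a sequence with a zero beyond $\gamma=\sup\{\leng(s)\mid s\in F_{<\ell}\}$, which produces a point of $X_0$ whose covering antichain element is comparable to that sequence). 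You instead give a direct construction of a $<\kappa$-sized subcover from an arbitrary open cover, organized along a tree $S\subseteq\pre{<\omega}{\kappa}$ indexed by the successive positions of zeros; the cover property follows by induction on the remaining number of zeros, and the cardinality bound follows from regularity of $\kappa$ applied level by level. Both proofs hinge on the same structural fact---each point of $X_0$ has only finitely many zeros---but your argument is constructive and self-contained (it does not route through the clopen-partition characterization of non-$\kappa$-Lindel\"ofness), at the cost of being somewhat longer; the paper's argument is shorter but indirect and leans on the auxiliary lemma about splitting non-$\kappa$-Lindel\"of $\kappa$-additive spaces into $\kappa$-many clopen pieces.
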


\begin{proof}
Consider the tree $T_0=\{s\in \pre{<\kappa}{2}\mid |\{\alpha\mid s(\alpha)=0\}|<\omega\}$ and the space $X_0=[T_0]$ from equation~\eqref{eq:finitelymanyzeroes}, which is clearly  a $\kappa$-additive (\( \kappa \)-)perfect $\SFC_\kappa$-space. 
Suppose towards a contradiction that $X_0$ is not $\kappa$-Lindel\"of, and let $\F$ be a \( \kappa \)-sized clopen  partition of \( X_0 \) in nonempty pieces, which exists by Lemma~\ref{split closed set into kappa pieces}.
Without loss of generality, we may assume that each set in $\F$ is of the form $\Nbhd_s\cap [T_0]$ for some $s\in T_0$. Set $F=\{s\in T_0\mid \Nbhd_s\cap [T_0]\in \F\}$: then $F$ is a maximal antichain in $T_0$, i.e.\ distinct $s,t\in F$ are incomparable and for each $x\in [T_0]$ there is $s\in F$ such that $s\subseteq x$.
By definition, each sequence $s\in F$ has only a finite number of coordinates with value $0$: for each $n\in\omega$, let $F_n$ be the set of those $s\in F$ that have exactly $n$-many zeros.
Since $|F|=\kappa$ and $\{F_n\mid n\in\omega\}$ is a partition of $F$, there exists some $n$ such that $|F_n|=\kappa$: let $\ell$ be the smallest natural number with this property, 
and set $F_{<\ell}=\bigcup_{n<\ell} F_n$. Then $|F_{<\ell}|<\kappa$ and $\gamma=\sup \{\leng(s)\mid s\in F_{<\ell}\}<\kappa$ by regularity of \( \kappa \).

We claim that there is $s\in F_\ell$ such that $s(\beta)=0$ for some $\gamma\leq \beta<\leng(s)$. If not, the map \( s \mapsto \{ \alpha < \gamma \mid s(\alpha) = 0 \} \) would be an injection (because \( F \) is an antichain) from \( F_\ell \) to \( \{ A \subseteq \gamma \mid |A| = \ell \}\), contradicting $|F_\ell|=\kappa$.
Given now \( s \) as above, let $x=(s \restriction \gamma)\conc 1^{(\kappa)}$. Then $x\in X_0$ and $|\{\alpha < \kappa \mid x(\alpha)=0\}|<\ell$, thus there is $t\in F_{<\ell}$ such that $x\in \Nbhd_{t}\cap [T_0]$. Since \( t \in F_{< \ell} \) implies \( \leng(t) \leq \gamma \),  this means that $t\subseteq x \restriction \gamma =  s \restriction \gamma \subseteq s$, contradicting the fact that $F$ is an antichain.
\end{proof}

The remaining option is to drop the condition of being \( \kappa \)-Lindel\"of. In a sense, 
this is the most promising move, as we are assuming that \( \kappa \) is not weakly 
compact and thus \( \pre{\kappa}{2} \), the space we are trying to characterize, thus not 
satisfy such a property. Indeed, we are now going to show that dropping such a (wrong) 
requirement, we already get the desired characterization.

\begin{lemma}\label{lem:char_perfectness_additive_notweaklycompact} 
Suppose that $\kappa$ is not weakly compact and \( X \) is a $\kappa$-additive \( \SC_\kappa \)-space. Then \( X \) is ($\kappa$-)perfect if and only if every \( \kappa \)-Lindel\"of subsets of \( X \) has empty interior.
\end{lemma}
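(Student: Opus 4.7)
The plan is to handle the two directions separately. The reverse direction is essentially a one-line argument, and the forward direction is the substantive one that actually uses the hypothesis that $\kappa$ is not weakly compact.

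For the reverse implication, I would argue contrapositively: if $X$ fails to be $\kappa$-perfect then, since $X$ is $\kappa$-additive and the two notions of perfectness coincide in that case, there exists an isolated point $x \in X$. Then $\{x\}$ is an open set which is trivially $\kappa$-Lindel\"of, contradicting the assumption that all $\kappa$-Lindel\"of subsets have empty interior. Note that this half of the equivalence does not actually require $\kappa$ to be non-weakly-compact.

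For the forward direction, assume $X$ is $\kappa$-perfect and suppose toward a contradiction that some $\kappa$-Lindel\"of subset $A \subseteq X$ has nonempty interior $U$. The strategy is to produce a closed copy of $\pre{\kappa}{2}$ \emph{inside} $U$, because $A$ being $\kappa$-Lindel\"of would force that copy to be $\kappa$-Lindel\"of as well, contradicting the fact that $\pre{\kappa}{2}$ is not $\kappa$-Lindel\"of when $\kappa$ is not weakly compact. Concretely, by Theorem~\ref{thm:charadditiveSCkappaspaces} I may assume $X = [T]$ for some superclosed tree $T \subseteq \pre{<\kappa}{\kappa}$, and then pick $s \in T$ so that $[T_s] = \Nbhd_s \cap [T] \subseteq U$. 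The clopen subspace $[T_s]$ is nonempty, $\kappa$-additive, $\SC_\kappa$, and still $\kappa$-perfect (any $\kappa$-isolated point of $[T_s]$ would be $\kappa$-isolated in $X$). Applying Theorem~\ref{thm:PSP for perfect SC_kappa kappa-additive} to $[T_s]$ yields a superclosed $C \subseteq [T_s]$ homeomorphic to $\pre{\kappa}{2}$. Since $C$ is closed in $\pre{\kappa}{\kappa}$ it is in particular closed in $X$, hence closed in $A$, hence $\kappa$-Lindel\"of, giving the desired contradiction.

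The only subtle point (which I would regard as the main obstacle) is ensuring that the closed copy of $\pre{\kappa}{2}$ produced by Theorem~\ref{thm:PSP for perfect SC_kappa kappa-additive} actually sits inside the $\kappa$-Lindel\"of set $A$ rather than being scattered somewhere in $X$; this is resolved by first localizing to the clopen subspace $[T_s] \subseteq U \subseteq A$ and applying the theorem there, which is legitimate precisely because the properties of being $\kappa$-additive, $\SC_\kappa$, and $\kappa$-perfect are all inherited by clopen subsets.
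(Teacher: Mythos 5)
Your proof is correct and follows essentially the same route as the paper: the backward direction is the same isolated-point observation, and for the forward direction the paper likewise passes to a nonempty clopen (hence \( \kappa \)-Lindel\"of) subset of the interior and then invokes Proposition~\ref{prop:no_perfect_kappa_additive_SC_Lindelof}, whose proof is precisely your appeal to Theorem~\ref{thm:PSP for perfect SC_kappa kappa-additive}. The only difference is that you inline that proposition's argument (via the tree representation) instead of citing it.
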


\begin{proof}
It is clear that if all \( \kappa \)-Lindel\"of subsets of \( X \) have empty interior, then \( X \) has no isolated point because if \( x \in X \) is isolated then \( \{ x \} \) is open and trivially \( \kappa \)-Lindel\"of. 
Suppose now that \( X \) is perfect but has a \( \kappa \)-Lindel\"of subset with nonempty 
interior. By zero-dimensionality, it would follow that there is a nonempty clopen set 
\( O \subseteq X \) which is \( \kappa \)-Lindel\"of. But then \( O \) would be a nonempty 
\( \kappa \)-additive perfect \( \kappa \)-Lindel\"of \( \SC_\kappa \)-space, contradicting 
Proposition~\ref{prop:no_perfect_kappa_additive_SC_Lindelof}.
\end{proof}

Lemma~\ref{lem:char_perfectness_additive_notweaklycompact} allows us to replace the 
last condition in the characterization of \( \pre{\kappa}{\kappa} \) from 
Theorem~\ref{thm:characterizations of kappa^kappa among superclosed spaces} with 
(\( \kappa \)-)perfectness. Together with the fact that \( \pre{\kappa}{\kappa} \) is 
homeomorphic to \( \pre{\kappa}{2} \) when \( \kappa \) is not weakly compact, this leads 
us to the following analogue of Theorem~\ref{thm:classicalchar}\ref{thm:classicalchar-1} 
(which of course can also be viewed as an alternative characterization of \( \pre{\kappa}{\kappa} \)).

\begin{theorem} \label{thm:charCantornonweaklycompact}
Let $\kappa$ be a non weakly compact cardinal. Up to homeomorphism, the generalized Cantor space \( \pre{\kappa}{2} \) (and hence also \( \pre{\kappa}{\kappa} \)) is the unique nonempty \( \kappa \)-additive (\( \kappa \)-)perfect \( \SC_\kappa \)-space.
\end{theorem}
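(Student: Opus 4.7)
The plan is to chain together three facts already available in the paper: Theorem~\ref{thm:characterizations of kappa^kappa among superclosed spaces} (the characterization of $\pre{\kappa}{\kappa}$ via emptiness of interiors of $\kappa$-Lindel\"of sets), Lemma~\ref{lem:char_perfectness_additive_notweaklycompact} (the equivalence, under non weak compactness, between $\kappa$-perfectness and the same ``small interior'' condition), and the known fact that $\pre{\kappa}{\kappa}$ and $\pre{\kappa}{2}$ are homeomorphic when $\kappa$ is not weakly compact (cited earlier from~\cite{HungNegropMR367930}).

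First I would verify that $\pre{\kappa}{2}$ satisfies all the stated conditions: it is trivially nonempty, $\kappa$-additive, and strong $\kappa$-Choquet (a legal strategy playing basic clopen sets is winning), and it is $\kappa$-perfect since any of its basic open neighborhoods contains $2^\kappa$-many points. Thus $\pre{\kappa}{2}$ is a nonempty $\kappa$-additive $\kappa$-perfect $\SC_\kappa$-space, as required.

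For uniqueness, let $X$ be any nonempty $\kappa$-additive $\kappa$-perfect $\SC_\kappa$-space. Since $\kappa$ is not weakly compact, Lemma~\ref{lem:char_perfectness_additive_notweaklycompact} applies and gives that every $\kappa$-Lindel\"of subset of $X$ has empty interior. Therefore $X$ meets all the hypotheses of Theorem~\ref{thm:characterizations of kappa^kappa among superclosed spaces}, and hence $X$ is homeomorphic to $\pre{\kappa}{\kappa}$. Finally, using the fact recalled in the discussion preceding the theorem that $\pre{\kappa}{\kappa}$ and $\pre{\kappa}{2}$ are homeomorphic whenever $\kappa$ is not weakly compact, we conclude $X \cong \pre{\kappa}{2}$.

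There is essentially no obstacle here: all of the real work is hidden in Lemma~\ref{lem:char_perfectness_additive_notweaklycompact} and Theorem~\ref{thm:characterizations of kappa^kappa among superclosed spaces}, so the proof is just a matter of assembling them in the right order. The only mild subtlety worth stating explicitly is why the ``(\(\kappa\)-)'' in ``(\(\kappa\)-)perfect'' is harmless: for $\kappa$-additive spaces, being perfect and being $\kappa$-perfect coincide (as already noted after the definition of $\kappa$-perfectness), so the hypothesis matches the formulation used in Lemma~\ref{lem:char_perfectness_additive_notweaklycompact}.
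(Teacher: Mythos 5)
Your proposal is correct and follows exactly the route the paper itself takes: the paper derives Theorem~\ref{thm:charCantornonweaklycompact} by using Lemma~\ref{lem:char_perfectness_additive_notweaklycompact} to substitute ($\kappa$-)perfectness for the ``$\kappa$-Lindel\"of sets have empty interior'' condition in Theorem~\ref{thm:characterizations of kappa^kappa among superclosed spaces}, and then invoking the homeomorphism between $\pre{\kappa}{\kappa}$ and $\pre{\kappa}{2}$ for non weakly compact $\kappa$. Nothing is missing; your explicit remark on why perfect and $\kappa$-perfect coincide for $\kappa$-additive spaces is a fine (if minor) addition.
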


We now move to the case when \( \kappa \) is weakly compact.
In contrast to the previous situation,  the condition of being \( \kappa \)-Lindel\"of 
obviously becomes relevant (and necessary) because \( \pre{\kappa}{2} \) now has  that 
property---this is the only difference between 
Theorem~\ref{thm:charCantornonweaklycompact} and 
Theorem~\ref{thm:charCantorweaklycompact}.

\begin{theorem} \label{thm:charCantorweaklycompact}
\label{characterizations of Cantor among superclosed spaces}
Let $\kappa$ be a weakly compact cardinal.
Up to homeomorphism, the generalized Cantor space \( \pre{\kappa}{2} \) is the unique 
nonempty \( \kappa \)-additive ($\kappa$-)perfect  \( \kappa \)-Lindel\"of \( \SC_\kappa \)-space.
\end{theorem}

\begin{proof}
For the nontrivial direction, let \( X \) be any nonempty perfect \( \kappa \)-additive 
\( \kappa \)-Lindel\"of \( \SC_\kappa \)-space. By 
Lemma~\ref{lem:equivperf2} 
we may assume that \( X =  [T] \) for some splitting superclosed tree \( T \subseteq \pre{\kappa}{\kappa} \). 
Notice that the fact that \( X \) is \( \kappa \)-Lindel\"of entails that \( |\mathrm{Lev}_\alpha(T)| < \kappa \) for all \( \alpha < \kappa \): this will be used in combination with the strong form of the splitting condition from equation~\eqref{eq:strongsplitting} in Remark~\ref{rmk:equivperf2} to prove the following claim.

\begin{claim} \label{claimcharcantor}
For every \( \alpha< \kappa \) there is a nonzero \( \beta < \kappa \) such that \( | \mathrm{Lev}_{\alpha+\beta}(T_t)| = |\pre{\beta}{2} | \) for all \( t \in \mathrm{Lev}_\alpha(T) \).
\end{claim}

\begin{proof}
Recursively define a sequence of ordinals \( (\gamma_n)_{ n \in \omega} \), as follows. Set \( \gamma_0  = 0 \). Suppose now that the \( \gamma_i \) have been defined for all \( i \leq n \), and set \( \bar{\gamma}_n = \sum_{i \leq n} \gamma_i \).  Then choose \( \gamma_{n+1} <\kappa \) large enough to ensure
\begin{enumerate-(1)}
\item \label{con1inclaim}
\( \gamma_{n+1} \geq \max \left\{  2^{|\gamma_n|}, |\mathrm{Lev}_{\alpha + \bar{\gamma}_n}(T)| \right\} \);
\item \label{con2inclaim}
\( |\mathrm{Lev}_{\alpha+\bar{\gamma}_n+\gamma_{n+1}}(T_s)| \geq |\gamma_n| \) for all \( s \in \mathrm{Lev}_{\alpha+\bar{\gamma}_n}(T) \).
\end{enumerate-(1)}
Such a \( \gamma_{n+1} \) exists because \(  |\mathrm{Lev}_{\alpha + \bar{\gamma}_n}(T)| < \kappa \) (because \( X \) is \( \kappa \)-Lindel\"of) and \( 2^{|\gamma_n|} < \kappa \) (because \( \kappa \) is inaccessible). Set \( \beta = \sum_{n \in \omega} \gamma_n =  \sup_{n \in \omega} \bar{\gamma}_n \).
By construction, \( |\pre{\beta}{2}| = \prod_{n \in \omega} 2^{|\gamma_n|} = \prod_{n \in \omega} |\gamma_n| \). On the other hand for every \( t \in \mathrm{Lev}_\alpha(T) \) we have
\[ 
\prod\nolimits_{n \in \omega} |\gamma_n| \leq | \mathrm{Lev}_{\alpha+\beta}(T_t)| \leq | \mathrm{Lev}_{\alpha+\beta}(T)| \leq \prod\nolimits_{n \in \omega} |\gamma_n|,
 \] 
where the first inequality follows from~\ref{con2inclaim} while the last one follows from~\ref{con1inclaim}.
\end{proof}

Using Claim~\ref{claimcharcantor} we can easily construct a club 
\( 0 \in  C \subseteq \kappa \) such that if \( (\alpha_i)_{i < \kappa } \) is the increasing 
enumeration of \( C \) and \( \beta_i \) is such that \( \alpha_{i+1} = \alpha_i + \beta_i \), 
then there is a bijection \( \varphi_t \colon \mathrm{Lev}_{\alpha_{i+1}}(T_t) \to \pre{\beta_i}{2} \) for every \( i < \kappa \) and  \( t \in \mathrm{Lev}_{\alpha_i}(T) \). 

Define
\( \varphi \colon T \to \pre{< \kappa}{2} \) 
by recursion on \( \leng(s) \) as follows. Set \( \varphi(\emptyset) = \emptyset \). For an 
arbitrary \( s \in T \setminus \{ \emptyset \} \), let  \( j < \kappa \) be largest such that 
\( \alpha_j \leq \leng(s) \) (here we use that \( C \) is a club). If \( \alpha_j < \leng(s) \), set 
\( \varphi(s) = \varphi(s \restriction \alpha_j) \). If instead \( \alpha_j = \leng(s) \), then we 
distinguish two cases. If \( j = i+1 \) we set \( \varphi(s) = \varphi(s \restriction \alpha_i) \conc \varphi_{s \restriction \alpha_i}(s) \); if \( j \) is a limit ordinal (so that \( \leng(s) \) is a 
limit as well), we set \( \varphi(s) = \bigcup_{ \beta < \leng(s)}  \varphi( s \restriction \beta) \).

It is clear that \( \varphi \) is 
\( \subseteq \)-monotone and for all \( \alpha  \in C \) the restriction of \( \varphi \) to 
\( \mathrm{Lev}_\alpha(T) \) is a bijection with \( \pre{\alpha}{2} \). It easily follows that
\[ 
f_\varphi \colon [T] \to \pre{\kappa}{2}, \quad x \mapsto \bigcup\nolimits_{\alpha < \kappa} \varphi(x \restriction \alpha)
 \] 
is a homeomorphism, as required.
\end{proof}

The proof of the nontrivial direction in Theorem~\ref{thm:charCantorweaklycompact} 
just requires \( \kappa \) to be inaccessible (and not necessarily weakly compact). The 
stronger hypothesis on \( \kappa \) in the statement is indeed due to the other direction: 
if \( \kappa \) is not weakly compact, then \( \pre{\kappa}{2} \) is not 
\( \kappa \)-Lindel\"of and, indeed, by 
Proposition~\ref{prop:no_perfect_kappa_additive_SC_Lindelof} there are no spaces at 
all as in the statement.

\begin{remark} \label{rmk:transferofsuperclosed}
It is easy to check that the function \( f_\varphi \) constructed in the previous proof 
preserves superclosed sets, that is, it is such that \( C \subseteq [T] \) is superclosed if 
and only if \( f_\varphi(C) \subseteq \pre{\kappa}{2} \) is superclosed. This follows from 
the fact that if \( S \) is a superclosed subtree of \( T \), then the 
\( \subseteq \)-downward closure of \( \varphi(S) \) is a superclosed subtree \( S' \) of 
\( \pre{\kappa}{2} \); conversely, if \( S' \subseteq \pre{< \kappa}{2} \) is a superclosed 
tree, then \( S = \{ t \in T \mid \varphi(t) \in S' \} \) is a superclosed subtree of \( T \).
\end{remark}

In view of Theorem~\ref{thm:SC-space+G-Polish}, most of the characterizations provided 
so far can equivalently be rephrased in the context of \( \GG \)-Polish spaces. For 
example, the following is the characterization of the generalized Cantor and Baire 
spaces in terms of $\GG$-metrics.

\begin{theorem} \ 
\begin{enumerate-(1)} 
\item
Up to homeomorphism, the generalized Cantor space \( \pre{\kappa}{2} \) is the unique 
nonempty  \mbox{(\( \kappa \)-)perfect} (\( \kappa \)-Lindel\"of, if \( \kappa \) is weakly 
compact) spherically complete \( \GG \)-Polish space.
\item
Up to homeomorphism, the generalized Baire space \( \pre{\kappa}{\kappa} \) is the 
unique nonempty spherically complete \( \GG \)-Polish space for which all 
\( \kappa \)-Lindel\"of  subsets (equivalently: all \( K_\kappa \)-subsets) have empty interior.
\end{enumerate-(1)}
\end{theorem}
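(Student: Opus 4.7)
The plan is essentially to reduce both statements to the topological characterizations already established for $\kappa$-additive $\SC_\kappa$-spaces, via the equivalence provided by Theorem~\ref{thm:SC-space+G-Polish}. Since that theorem gives that a space is spherically complete $\GG$-Polish if and only if it is a $\kappa$-additive $\SC_\kappa$-space, each item of the present statement should become a direct translation of an earlier result.

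For part (2), I would first observe that $\pre{\kappa}{\kappa}$ is itself spherically complete $\GG$-Polish (this is already noted via the natural metric from equation~\eqref{eq:metricBaire}, or equivalently it is a $\kappa$-additive $\SC_\kappa$-space). Conversely, given any nonempty spherically complete $\GG$-Polish space $X$ in which all $\kappa$-Lindelöf (equivalently $K_\kappa$) subsets have empty interior, Theorem~\ref{thm:SC-space+G-Polish} converts $X$ into a nonempty $\kappa$-additive $\SC_\kappa$-space with the same property, so Theorem~\ref{thm:charBairespaceallcardinals} gives the desired homeomorphism $X \cong \pre{\kappa}{\kappa}$. The equivalence of the two versions of the hypothesis (only $\kappa$-Lindelöf versus all $K_\kappa$-subsets) comes for free from Lemma~\ref{lem:lindelKkappa}, once one notes that every $\GG$-Polish space is $\kappa$-additive (Lemma~\ref{lem:K-Polishareadditive}) and that $\pre{\kappa}{\kappa}$ is $\kappa$-Baire.

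For part (1), the argument splits according to whether $\kappa$ is weakly compact. If $\kappa$ is weakly compact, then $\pre{\kappa}{2}$ is a nonempty ($\kappa$-)perfect $\kappa$-Lindelöf $\kappa$-additive $\SC_\kappa$-space, and conversely any nonempty ($\kappa$-)perfect $\kappa$-Lindelöf spherically complete $\GG$-Polish space becomes, via Theorem~\ref{thm:SC-space+G-Polish}, a nonempty $\kappa$-additive ($\kappa$-)perfect $\kappa$-Lindelöf $\SC_\kappa$-space, which by Theorem~\ref{thm:charCantorweaklycompact} is homeomorphic to $\pre{\kappa}{2}$. If $\kappa$ is not weakly compact, the hypothesis of $\kappa$-Lindelöfness is dropped; $\pre{\kappa}{2}$ is still a nonempty $\kappa$-additive ($\kappa$-)perfect $\SC_\kappa$-space, and conversely any nonempty ($\kappa$-)perfect spherically complete $\GG$-Polish space is, by Theorem~\ref{thm:SC-space+G-Polish}, a nonempty $\kappa$-additive ($\kappa$-)perfect $\SC_\kappa$-space, which by Theorem~\ref{thm:charCantornonweaklycompact} is homeomorphic to $\pre{\kappa}{2}$.

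There is no genuine obstacle here; the only mild point to verify is that the conditions carry over cleanly under the equivalence of Theorem~\ref{thm:SC-space+G-Polish}, which amounts to noting that ($\kappa$-)perfectness and $\kappa$-Lindelöfness are purely topological notions (unaffected by the choice of a compatible $\GG$-metric), and that $\kappa$-additivity is automatic for $\GG$-Polish spaces by Lemma~\ref{lem:K-Polishareadditive}. Thus the whole proof amounts to citing Theorem~\ref{thm:SC-space+G-Polish} together with Theorems~\ref{thm:charBairespaceallcardinals}, \ref{thm:charCantornonweaklycompact}, and~\ref{thm:charCantorweaklycompact}.
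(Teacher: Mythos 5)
Your proposal is correct and coincides with the paper's own (implicit) argument: the theorem is stated there precisely as a rephrasing of Theorems~\ref{thm:charBairespaceallcardinals}, \ref{thm:charCantornonweaklycompact}, and~\ref{thm:charCantorweaklycompact} through the equivalence of spherically complete \( \GG \)-Polish spaces with \( \kappa \)-additive \( \SC_\kappa \)-spaces from Theorem~\ref{thm:SC-space+G-Polish}. Your additional remarks that (\( \kappa \)-)perfectness and \( \kappa \)-Lindel\"ofness are purely topological and hence transfer across the equivalence are exactly the right (and only) points to check.
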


In this section we studied in detail the $\kappa$-Lindel\"of property in relation with the 
generalized Cantor space: it turns out that this property has important consequences  
for other spaces as well.
For example, as it happens in the classical case, compactness-like properties always bring with them some
form of completeness. 

\begin{proposition}\label{prop:K_Lindelof_implies_SFC}
Let $X$ be a space of weight $\leq \kappa$. If $X$ is $\kappa$-Lindel\"of, then it is an $\SFC_\kappa$-space.
\end{proposition}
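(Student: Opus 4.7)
The plan is to construct an explicit winning strategy for \( \pII \) in \( \fCho^s_\kappa(X) \), leveraging the regularity of \( X \) together with \( \kappa \)-Lindel\"ofness. For each \( \alpha \leq \kappa \) let \( Y_\alpha := \bigcap_{\beta<\alpha} V_\beta \) (so \( Y_0 = X \)). At stage \( \alpha \), given \( \pI \)'s move \( (U_\alpha, x_\alpha) \), player \( \pII \) first picks an open \( \tilde{U}_\alpha \subseteq X \) with \( \tilde{U}_\alpha \cap Y_\alpha = U_\alpha \), then uses regularity of \( X \) to choose an open \( V^+_\alpha \subseteq X \) with \( x_\alpha \in V^+_\alpha \) and \( \cl(V^+_\alpha) \subseteq \tilde{U}_\alpha \). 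Crucially, at a successor stage \( \alpha=\beta+1 \) he further demands \( \cl(V^+_\alpha) \subseteq V^+_\beta \), which is possible because \( x_\alpha \in Y_\alpha \subseteq V^+_\beta \), so \( x_\alpha \) lies in the open set \( \tilde{U}_\alpha \cap V^+_\beta \). Player \( \pII \)'s actual move is \( V_\alpha := V^+_\alpha \cap Y_\alpha \). A transfinite induction yields \( Y_\alpha = \bigcap_{\beta<\alpha} V^+_\beta \) for every \( \alpha \leq \kappa \), so it suffices to show \( \bigcap_{\alpha<\kappa} V^+_\alpha \neq \emptyset \) whenever the game reaches stage \( \kappa \) (i.e.\ whenever all intersections \( Y_\gamma \) at limit stages \( \gamma < \kappa \) are nonempty).

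Suppose toward a contradiction that \( \bigcap_{\alpha<\kappa} V^+_\alpha = \emptyset \), and set \( A := \{ x_\alpha \mid \alpha < \kappa \} \). If \( |A|<\kappa \), then by regularity of \( \kappa \) some point \( a \in A \) equals \( x_\alpha \) for a cofinal set of indices \( \alpha \); since \( x_\alpha \in Y_{\alpha+1} \subseteq V^+_\beta \) whenever \( \beta \leq \alpha \), such an \( a \) belongs to every \( V^+_\beta \), contradicting the assumption. If instead \( |A|=\kappa \), I invoke the standard consequence of \( \kappa \)-Lindel\"ofness that every \( \kappa \)-sized subset of \( X \) admits a complete accumulation point, and let \( x \) be one for \( A \). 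For any \( \beta<\kappa \), all \( x_\alpha \) with \( \alpha \geq \beta+2 \) lie in \( V^+_{\beta+1} \), and fewer than \( \kappa \) indices fall outside this range, so every neighborhood of \( x \) must meet \( V^+_{\beta+1} \). Hence \( x \in \cl(V^+_{\beta+1}) \subseteq V^+_\beta \), and since \( \beta \) was arbitrary we conclude \( x \in \bigcap_{\beta<\kappa} V^+_\beta = Y_\kappa \), again a contradiction.

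The main obstacle is that, in the absence of \( \kappa \)-additivity, the ``current space'' \( Y_\alpha \) need not be open in \( X \) at limit stages, which prevents a direct application of regularity of \( X \) to the relatively open moves prescribed by the rules of the game. The detour through the open lifts \( \tilde{U}_\alpha \) and the auxiliary sets \( V^+_\alpha \subseteq X \) is what allows the strategy to be formulated entirely inside \( X \). A related temptation would be to apply \( \kappa \)-Lindel\"ofness directly to the sequence \( (\cl(V^+_{\alpha+1}))_{\alpha<\kappa} \), but this fails because the sequence is not genuinely decreasing through limit stages (a limit-stage \( V^+_\gamma \) need not lie in any previous \( V^+_\beta \)); it is exactly this gap that is closed by passing through complete accumulation points.
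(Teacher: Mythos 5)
Your proof is correct, but it takes a genuinely different route from the one in the paper. The paper uses the same opening move (regularity of \( X \) to choose \( V_\alpha \) with \( x_\alpha \in V_\alpha \) and \( \cl(V_\alpha) \) inside the previous move) and then argues globally: if \( \bigcap_{\alpha<\kappa} V_\alpha = \emptyset \), then \( \{ X \setminus \cl(V_\alpha) \mid \alpha < \kappa \} \) is an open cover of \( X \), a \( {<}\kappa \)-sized subcover shows the intersection was already empty at some limit stage \( \delta' < \kappa \), and then \( \pII \) wins vacuously by the \emph{fair} rule. You instead prove the positive statement that whenever the run survives all limit stages the final intersection actually contains a point, obtained as a complete accumulation point of the set \( A = \{ x_\alpha \mid \alpha < \kappa \} \) of points played by \( \pI \). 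Both arguments are sound; yours hinges on the fact (which you should justify in one line, as it uses the regularity of \( \kappa \): otherwise the cover by neighborhoods meeting \( A \) in \( {<}\kappa \) points would give \( |A| < \kappa \)) that in a \( \kappa \)-Lindel\"of space every \( \kappa \)-sized set has a complete accumulation point, and on the successor-stage nesting \( \cl(V^+_{\beta+1}) \subseteq V^+_\beta \) to pull that accumulation point into every \( V^+_\beta \). The paper's argument is shorter, never uses the points \( x_\alpha \) (so it applies verbatim to the non-strong fair game), and locates the failure at a limit stage where the fair rule absolves \( \pII \); your argument is more constructive, exhibits an explicit witness in the final intersection, and handles the relative-openness bookkeeping at limit stages (via the lifts \( \tilde{U}_\alpha \) and \( V^+_\alpha \)) more carefully than the paper does. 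Your closing remarks correctly identify why the sequence \( (\cl(V^+_\alpha))_{\alpha<\kappa} \) cannot be assumed decreasing through limits without \( \kappa \)-additivity, which is precisely the gap the accumulation-point step is designed to bridge.
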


\begin{proof}
Define a strategy $\sigma$ for $\pII$ such that when $\pI$ plays a (relatively) open set 
$U$ and a point $x\in U$, then $\sigma$ answers with any (relatively) open set $V$ 
satisfying $x\in V$ and $\cl(V)\subset U$ (such a \( V \) exists by regularity).
Suppose that $\langle (U_\alpha, x_\alpha), V_\alpha\mid \alpha<\kappa\rangle$ is a run 
of the strong fair $\kappa$-Choquet game played accordingly to $\sigma$
in which player \( \pI \) ensured $\bigcap_{\alpha< \gamma} V_\alpha \neq \emptyset$ for all limit $\gamma < \kappa$. 
If 
$\bigcap_{\alpha<\kappa} V_\alpha=\emptyset$, then the family $\{X\setminus \cl(V_\alpha) \mid \alpha<\kappa\}$ would be an open cover of $X$
because \( \bigcap_{\alpha < \kappa} \cl(V_\alpha) = \bigcap_{\alpha< \kappa} U_\alpha = \bigcap_{\alpha < \kappa} V_\alpha = \emptyset \), and thus it would have a subcover of size 
$<\kappa$ because \( X \) is \( \kappa \)-Lindel\"of.  But then there would be \( \delta < \kappa \) 
such that \( \bigcap_{\alpha < \delta'} \cl(V_\alpha) =\emptyset \) for all \( \delta \leq \delta' < \kappa \). Considering any limit ordinal $\gamma < \kappa$ with $\gamma \geq \delta$, we then would get 
\( \bigcap_{\alpha < \delta'} V_\alpha = \bigcap_{\alpha < \delta'} \cl(V_\alpha) = \emptyset \), 
against our assumptions on the given run.
\end{proof}

The following is the analogue in our context of the standard fact that compact metrizable spaces are automatically Polish.

\begin{corollary}
Every \( \kappa \)-Lindel\"of \( \GG\)-metrizable space is \( \GG \)-Polish.
\end{corollary}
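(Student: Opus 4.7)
The plan is to reduce everything to the chain of equivalences in Theorem~\ref{thm:charK-Polish}, which says that the \( \GG \)-Polish spaces are exactly the \( \kappa \)-additive \( \SFC_\kappa \)-spaces. So, given a \( \kappa \)-Lindel\"of \( \GG \)-metrizable space \( X \), I aim to verify three things: (i) \( X \) has weight \( \leq \kappa \); (ii) \( X \) is \( \kappa \)-additive; (iii) \( X \) is an \( \SFC_\kappa \)-space. Items (ii) and (iii) will be essentially free from earlier results in the section, so the one nontrivial step is (i).

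For (i), fix a compatible \( \GG \)-metric \( d \) on \( X \) and a strictly decreasing sequence \( (r_\alpha)_{\alpha < \kappa} \) coinitial in \( \GG^+ \). For each \( \alpha < \kappa \), the open cover \( \{ B_d(x, r_\alpha) \mid x \in X \} \) admits by \( \kappa \)-Lindel\"ofness a subcover \( \{ B_d(x^\alpha_i, r_\alpha) \mid i < \mu_\alpha \} \) with \( \mu_\alpha < \kappa \). Setting
\[
\mathcal{B} = \{ B_d(x^\alpha_i, r_\alpha) \mid \alpha < \kappa,\; i < \mu_\alpha \},
\]
regularity of \( \kappa \) gives \( |\mathcal{B}| \leq \kappa \). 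To see that \( \mathcal{B} \) is a basis, let \( x \in X \) and \( U \) be an open neighborhood of \( x \). Pick \( \alpha \) with \( B_d(x, r_\alpha) \subseteq U \). Using Lemma~\ref{lem:smallelements}, choose \( \beta \geq \alpha \) with \( 2 r_\beta \leq_\GG r_\alpha \). Then \( x \in B_d(x^\beta_{i}, r_\beta) \) for some \( i < \mu_\beta \), and the triangle inequality yields \( B_d(x^\beta_i, r_\beta) \subseteq B_d(x, r_\alpha) \subseteq U \), as required.

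Having established that \( X \) has weight \( \leq \kappa \), item (iii) is immediate from Proposition~\ref{prop:K_Lindelof_implies_SFC}, and item (ii) is Lemma~\ref{lem:K-Polishareadditive}. Combining (i)--(iii) with the equivalence \ref{thm:charK-Polish-a} \( \Leftrightarrow \) \ref{thm:charK-Polish-b} of Theorem~\ref{thm:charK-Polish} concludes the proof. The main (and only real) obstacle is the weight estimate, but it is a routine diagonalization across the coinitial sequence \( (r_\alpha)_{\alpha<\kappa} \) combined with the \( \kappa \)-Lindel\"of subcover property; nothing more delicate seems needed.
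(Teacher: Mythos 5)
Your proposal is correct and follows essentially the same route as the paper: extract a \( \leq\kappa \)-sized basis of balls from \( \kappa \)-many \( {<}\kappa \)-sized subcovers along a coinitial sequence, then invoke Proposition~\ref{prop:K_Lindelof_implies_SFC} for the \( \SFC_\kappa \) property, \( \kappa \)-additivity of \( \GG \)-metrizable spaces, and Theorem~\ref{thm:charK-Polish}. The only difference is that you spell out the triangle-inequality verification that the collected balls form a basis, which the paper leaves implicit.
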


\begin{proof}
Choose a strictly decreasing sequence \( (\varepsilon_\alpha)_{\alpha < \kappa} \) 
that is coinitial in \( \GG^+ \). By \( \kappa \)-Lindel\"ofness, for each \( \alpha < \kappa \) there 
is a covering \( \mathcal{B}_\alpha \) of \( X \) of size \( < \kappa \) consisting of open 
balls of radius \( \varepsilon_\alpha \). It follows that \( \mathcal{B} = \bigcup_{\alpha < \kappa} \mathcal{B}_\alpha \) is a basis for \( X \) of size \( \leq \kappa \). By 
Proposition~\ref{prop:K_Lindelof_implies_SFC} the space \( X \) is then strong fair \( \kappa \)-Choquet, 
and since \( \GG \)-metrizability implies \( \kappa \)-additivity we get that \( X \) is 
\( \GG \)-Polish by Theorem~\ref{thm:charK-Polish}.
\end{proof}

Using Proposition~\ref{prop:K_Lindelof_implies_SFC}, many statements of 
Section~\ref{sec:relationship_SC_SFC_Polish} can be reformulated for the special case of 
weakly compact cardinals and $\kappa$-Lindel\"of spaces.
For example, the next proposition is a reformulation of 
Proposition~\ref{prop:charadditiveSCandSFCkappaspaces} in this special case.

\begin{proposition}\label{prop:charadditiveSCandSFCkappaspaces+k_Lindelof}
Let $X$ be a \( \kappa \)-additive $\kappa$-Lindel\"of space of weight $\leq \kappa$ (in 
which case \( X \) is automatically an $\SFC_\kappa$-space by 
Proposition~\ref{prop:K_Lindelof_implies_SFC}). Then \( X \) is homeomorphic to a 
closed set \( C \subseteq \pre{\kappa}{2} \).
If furthermore $X$ is an \( \SC_\kappa \)-space, then \( C \) can be taken to be 
superclosed.
\end{proposition}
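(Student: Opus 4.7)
The first observation is that by Proposition~\ref{prop:K_Lindelof_implies_SFC}, the $\kappa$-Lindel\"ofness hypothesis already forces $X$ to be an $\SFC_\kappa$-space, so all the assumptions of Proposition~\ref{prop:charadditiveSCandSFCkappaspaces} are met. Thus $X$ is already homeomorphic to a closed (respectively, superclosed) subset of $\pre{\kappa}{\kappa}$, and the whole task is to \emph{upgrade} the ambient space from $\pre{\kappa}{\kappa}$ to $\pre{\kappa}{2}$.

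For the first statement, the cleanest route is via Sikorski's Theorem~\ref{thm:charK-metrizable}: since $X$ is $\kappa$-additive of weight $\leq \kappa$, there is a topological embedding $f\colon X \to \pre{\kappa}{2}$, and the plan is to show that the image $Y = f(X)$ is automatically closed. This is the standard ``$\kappa$-Lindel\"of subsets of $\kappa$-additive Hausdorff spaces are closed'' argument: given $y \in \pre{\kappa}{2} \setminus Y$, for each $x \in Y$ pick disjoint open $U_x \ni x$ and $W_x \ni y$ in $\pre{\kappa}{2}$; by $\kappa$-Lindel\"ofness of $Y$, finitely many (in fact, $\lambda < \kappa$ many) $U_{x_i}$'s already cover $Y$; then $\bigcap_{i<\lambda} W_{x_i}$ is an open neighborhood of $y$ by $\kappa$-additivity of $\pre{\kappa}{2}$ and is disjoint from $Y$.

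For the superclosed version, I would split according to $\kappa$. If $\kappa$ is not weakly compact, then by the discussion following Theorem~\ref{thm:charadditiveSCkappaspaces} there is a homeomorphism between $\pre{\kappa}{\kappa}$ and $\pre{\kappa}{2}$ that preserves superclosed sets, so Proposition~\ref{prop:charadditiveSCandSFCkappaspaces} immediately yields the desired superclosed copy of $X$ inside $\pre{\kappa}{2}$. If $\kappa$ is weakly compact (hence inaccessible), my plan is to re-run the construction of Proposition~\ref{prop:charadditiveSCandSFCkappaspaces} directly, but taking values in $\pre{<\kappa}{2}$ instead of $\pre{<\kappa}{\kappa}$. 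The key new input is that, because $X$ is $\kappa$-Lindel\"of and every $V(t)$ appearing in the construction is clopen in $X$, each $V(t)$ is itself $\kappa$-Lindel\"of; consequently the clopen partition $\{\hat{V}_s\mid s\in\pre{\gamma+1}{\kappa}\}$ of $V(t)$ into sets compatible with $B_\gamma$ contains only $\nu < \kappa$ nonempty pieces. By inaccessibility of $\kappa$, these $\nu$ pieces fit inside $\pre{\beta}{2}$ for some $\beta < \kappa$, so I can encode each successor branching step as a binary block of length $\beta$ and produce a tree $T' \subseteq \pre{<\kappa}{2}$ in place of the original $T$.

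The main obstacle I expect is organizing the block-lengths coherently across all nodes so that $T'$ remains \emph{superclosed}, i.e.\ $<\kappa$-closed at all limit levels. This is handled by fixing in advance a continuous increasing sequence $(\lambda_\alpha)_{\alpha<\kappa}$ of ``checkpoint levels'' in $\kappa$, chosen so that $\lambda_{\alpha+1}-\lambda_\alpha$ is large enough to accommodate the partition produced at stage $\alpha$ (possible by inaccessibility, since each partition has size $<\kappa$), and so that $\lambda_\gamma = \sup_{\alpha<\gamma}\lambda_\alpha$ at limits. The original condition~\ref{partialplay} of the construction, together with the winning strategy for $\pII$ in $\Cho^s_\kappa(X)$, then transfers unchanged to $T'$ and guarantees that if $s \in \pre{\lambda_\gamma}{2}$ is such that $s\restriction\lambda_\alpha \in T'$ for every $\alpha<\gamma$, the corresponding intersection of $\hat{V}$'s is nonempty, so $s \in T'$ as well. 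This makes $T'$ superclosed and yields the required homeomorphism of $X$ onto $[T'] \subseteq \pre{\kappa}{2}$.
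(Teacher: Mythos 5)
Your route is genuinely different from the paper's, and most of it works. For the closed case, your argument (Sikorski's embedding into \( \pre{\kappa}{2} \) via Theorem~\ref{thm:charK-metrizable}, followed by the observation that a \( \kappa \)-Lindel\"of subspace of a \( \kappa \)-additive Hausdorff space is closed) is correct and arguably more elementary than the paper's, which instead handles both cases at once: it reduces to \( X = [T] \subseteq \pre{\kappa}{\kappa} \) via Proposition~\ref{prop:charadditiveSCandSFCkappaspaces}, invokes boundedness of \( [T] \) from \( \kappa \)-Lindel\"ofness, embeds \( [T] \) into the bounded ``cube'' \( Z = \{ z \mid \forall \alpha \, (z(\alpha) \leq y(\alpha)) \} \), and then applies Theorem~\ref{thm:charCantorweaklycompact} together with Remark~\ref{rmk:transferofsuperclosed} to identify \( Z \) with \( \pre{\kappa}{2} \) by a homeomorphism preserving superclosed sets. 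Your superclosed case for non-weakly-compact \( \kappa \) coincides with the paper's. For weakly compact \( \kappa \) you essentially inline the block machinery of Theorem~\ref{thm:charCantorweaklycompact} (your checkpoint levels play the role of Claim~\ref{claimcharcantor}); this is viable, and the counting you need (\( \nu < \kappa \) pieces in each clopen partition of the \( \kappa \)-Lindel\"of set \( V(t) \), hence a uniform \( \beta_\gamma < \kappa \) per level by regularity and inaccessibility) is right.

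There is, however, one step in the weakly compact case that does not go through as written. Your continuity-of-checkpoints argument only verifies \( {<}\kappa \)-closedness of \( T' \) at the checkpoint levels \( \lambda_\gamma \); superclosedness also requires it at limit levels lying \emph{strictly inside a block}, and there it fails for an arbitrary injective encoding. Concretely, if the \( \nu \) children of a node are coded by the sequences \( 0^{(i)} \conc 1 \conc 0^{(\dots)} \in \pre{\beta}{2} \) for \( i < \nu \) with \( \nu \) a limit ordinal, then \( 0^{(\nu)} \) has all of its proper initial segments in the downward closure of the code set but is itself an initial segment of no code, so \( T' \) is not \( {<}\kappa \)-closed at that level. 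The fix is to choose each within-block injection so that the downward closure of its range is itself a superclosed subtree of \( \pre{\leq\beta}{2} \) --- for instance code the children by \( 0^{(\beta)} \) together with \( 0^{(i)} \conc 1^{(\beta - i)} \) for \( i < \nu \) (with \( \beta \geq \nu \)); then any limit of initial segments of codes is again an initial segment of a code. With that adjustment (and the routine check that intermediate-level basic open sets of \( [T'] \) correspond to \( {<}\kappa \)-sized unions of the \( \hat{V} \)'s, which are clopen by \( \kappa \)-additivity), your construction does yield the required superclosed copy of \( X \) in \( \pre{\kappa}{2} \).
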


\begin{proof}
First notice that if $\kappa$ is not weakly compact, then the result trivially holds by 
Proposition~\ref{prop:charadditiveSCandSFCkappaspaces} since in this case 
$\pre{\kappa}{2}$ and $\pre{\kappa}{\kappa}$ are homeomorphic (via a homeomorphism 
which preserves superclosed sets). Thus we may assume that $\kappa$ is weakly 
compact. 
By Theorems~\ref{thm:charadditivefairSCkappaspaces} 
and~\ref{thm:charadditiveSCkappaspaces} we can further assume that \( X = [T] \) 
for some (superclosed, in the case of an \( \SC_\kappa \)-space) tree 
\( T \subseteq \pre{< \kappa}{\kappa} \). Since \( X \) is \( \kappa \)-Lindel\"of, 
by~\cite[Lemma 2.6(1)]{LuckeMottSchlMR3557473} the set \( [T] \) is bounded, i.e.\ there 
is \( y \in \pre{\kappa}{\kappa} \) such that \( x(\alpha) \leq y(\alpha) \) for all \( x \in [T] \) 
and \( \alpha < \kappa \). 
We can assume $y(\alpha)>0$ for all $\alpha<\kappa$. 
Consider the space 
\( Z = \{ z \in \pre{\kappa}{\kappa} \mid \forall \alpha < \kappa \, (z(\alpha) \leq y(\alpha) )  \} \). 
It is clearly a nonempty \( \kappa \)-additive \( \kappa \)-perfect \( \SC_\kappa \)-space. 
Moreover, since by definition it is bounded by \( y \) and \( \kappa \) is weakly compact, 
by~\cite[Lemma 2.6(1)]{LuckeMottSchlMR3557473} and the fact that \( Z \) is closed in 
\( \pre{\kappa}{\kappa} \) it follows that \( Z \) is also \( \kappa \)-Lindel\"of. By 
Theorem~\ref{thm:charCantorweaklycompact} there is a homeomorphism \( h \colon Z \to \pre{\kappa}{2} \), which moreover preserves superclosed subsets of \( Z \) by 
Remark~\ref{rmk:transferofsuperclosed}.
Since by definition \( X \subseteq Z \), it follows that \( h(X) \) is a (super)closed subset of 
\( \pre{\kappa}{2} \) homeomorphic to \( X \), as required.
\end{proof}

Using Proposition~\ref{prop:charadditiveSCandSFCkappaspaces+k_Lindelof}, we can 
restate Theorems~\ref{thm:charK-Polish} and~\ref{thm:SC-space+G-Polish} for the 
special case of $\kappa$-Lindel\"of spaces, further refining the picture given in 
Figure~\ref{fig:sum_up_relationships} with one more dividing line, namely \( \kappa \)-Lindel\"ofness.

\begin{theorem} \label{thm:charK-Polish-Lindelof}
For any space \( X \) the following are equivalent:
\begin{enumerate-(a)}
\item \label{thm:charK-Polish-Lindelof-1}
\( X \) is a \( \kappa \)-Lindel\"of \( \kappa \)-additive space of weight \( \leq \kappa \);
\item \label{thm:charK-Polish-Lindelof-3}
\( X \) is a \( \kappa \)-Lindel\"of \( \GG \)-metrizable space;
\item \label{thm:charK-Polish-Lindelof-4}
\( X \) is a \( \kappa \)-Lindel\"of \( \GG \)-Polish space;
\item \label{thm:charK-Polish-Lindelof-2}
\( X \) is a \( \kappa \)-Lindel\"of \( \kappa \)-additive $\SFC_\kappa$-space.
\end{enumerate-(a)}
If \( \kappa \) is weakly compact, the above conditions are also equivalent to:
\begin{enumerate}[label={\upshape (e)}, leftmargin=2pc]
\item \label{thm:charK-Polish-Lindelof-5}
\( X \) is homeomorphic to a closed subset of \( \pre{\kappa}{2} \).
\end{enumerate}
\end{theorem}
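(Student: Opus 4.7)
The plan is to prove the equivalence by chasing the implications (a) $\Rightarrow$ (b) $\Rightarrow$ (c) $\Rightarrow$ (d) $\Rightarrow$ (a), and then separately handle (e) when $\kappa$ is weakly compact. Essentially, every implication will be a direct application of one of the results already established, so the work is mainly in assembling them correctly.

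For (a) $\Rightarrow$ (b), since $X$ has weight $\leq \kappa$ and is $\kappa$-additive, Theorem~\ref{thm:charK-metrizable} (applied to any totally ordered Abelian group $\GG$ of degree $\kappa$) gives that $X$ is $\GG$-metrizable; the hypothesis of being $\kappa$-Lindel\"of is carried through. For (b) $\Rightarrow$ (c), the Corollary following Proposition~\ref{prop:K_Lindelof_implies_SFC} states exactly that a $\kappa$-Lindel\"of $\GG$-metrizable space is $\GG$-Polish. For (c) $\Rightarrow$ (d), Theorem~\ref{thm:charK-Polish} gives that $\GG$-Polish implies $\kappa$-additive $\SFC_\kappa$, preserving the additional $\kappa$-Lindel\"ofness. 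For (d) $\Rightarrow$ (a), note that every $\SFC_\kappa$-space by definition has weight $\leq \kappa$, so the three properties in (a) follow immediately.

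For the additional equivalence when $\kappa$ is weakly compact, the implication (a) $\Rightarrow$ (e) is precisely Proposition~\ref{prop:charadditiveSCandSFCkappaspaces+k_Lindelof} (the space is automatically an $\SFC_\kappa$-space by Proposition~\ref{prop:K_Lindelof_implies_SFC}). For (e) $\Rightarrow$ (a), note that $\pre{\kappa}{2}$ is $\kappa$-additive and of weight $\leq \kappa$, and both properties are inherited by arbitrary subspaces; since $\kappa$ is weakly compact, $\pre{\kappa}{2}$ is $\kappa$-Lindel\"of by \cite[Theorem 5.6]{MottoRos:2013}, and a closed subspace of a $\kappa$-Lindel\"of space is again $\kappa$-Lindel\"of (any relatively open cover extends to an open cover of the ambient space together with the complement of the closed subspace).

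There is no real obstacle here; the only subtle point is to remember that (e) cannot be added to the list of equivalences when $\kappa$ is not weakly compact, precisely because in that case closed subsets of $\pre{\kappa}{2}$ need not be $\kappa$-Lindel\"of (indeed $\pre{\kappa}{2}$ itself is homeomorphic to $\pre{\kappa}{\kappa}$ and so fails to be $\kappa$-Lindel\"of). This explains why the statement distinguishes (e) from the other items.
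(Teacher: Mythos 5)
Your proof is correct and assembles exactly the ingredients the paper intends: the theorem is stated without an explicit proof precisely because it follows by combining Theorem~\ref{thm:charK-metrizable}, the corollary to Proposition~\ref{prop:K_Lindelof_implies_SFC}, Theorem~\ref{thm:charK-Polish}, and Proposition~\ref{prop:charadditiveSCandSFCkappaspaces+k_Lindelof}, which is what you do. Your handling of the extra item~(e), including the observation that closed subspaces inherit $\kappa$-Lindel\"ofness, $\kappa$-additivity, and weight $\leq\kappa$, is also correct.
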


\begin{theorem} \label{thm:char-spherically-complete-K-Polish-Lindelof}
For any space \( X \) the following are equivalent:
\begin{enumerate-(a)}
\item \label{thm:char-spherically-complete-K-Polish-Lindelof-1} \( X \) is a \( \kappa \)-Lindel\"of \( \kappa \)-additive $\SC_\kappa$-space;
\item \label{thm:char-spherically-complete-K-Polish-Lindelof-2} \( X \) is a \( \kappa \)-Lindel\"of spherically \( {<} \kappa \)-complete \( \GG \)-metrizable space;
\item \label{thm:char-spherically-complete-K-Polish-Lindelof-3} \( X \) is a \( \kappa \)-Lindel\"of spherically complete \( \GG \)-Polish space.
\end{enumerate-(a)}
If \( \kappa \) is weakly compact, the above conditions are also equivalent to:
\begin{enumerate}[label={\upshape (d)}, leftmargin=2pc]
\item \label{thm:char-spherically-complete-K-Polish-Lindelof-4} \( X \) is homeomorphic to a superclosed subset of \( \pre{\kappa}{2} \).
\end{enumerate}
\end{theorem}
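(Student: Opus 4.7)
The plan is to establish the equivalences by leveraging Theorem~\ref{thm:SC-space+G-Polish} (which handles the core Polish-like content) together with the Corollary after Proposition~\ref{prop:K_Lindelof_implies_SFC} (which upgrades $\kappa$-Lindel\"of $\GG$-metrizable spaces to $\GG$-Polish ones) and Proposition~\ref{prop:charadditiveSCandSFCkappaspaces+k_Lindelof} (which handles the embedding into $\pre{\kappa}{2}$ under weak compactness). The strategy closely parallels the proof of Theorem~\ref{thm:charK-Polish-Lindelof} but in the setting where $\pII$ additionally wins the (non-fair) strong $\kappa$-Choquet game.

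For the implication \ref{thm:char-spherically-complete-K-Polish-Lindelof-1}~$\Rightarrow$~\ref{thm:char-spherically-complete-K-Polish-Lindelof-3}, note that every $\kappa$-additive $\SC_\kappa$-space is spherically complete $\GG$-Polish by Theorem~\ref{thm:SC-space+G-Polish}, and $\kappa$-Lindel\"ofness is preserved since it is a property of the underlying topology. The implication \ref{thm:char-spherically-complete-K-Polish-Lindelof-3}~$\Rightarrow$~\ref{thm:char-spherically-complete-K-Polish-Lindelof-2} is trivial: a compatible Cauchy-complete spherically complete $\GG$-metric is in particular a compatible spherically $<\kappa$-complete one, so $\GG$-Polish implies $\GG$-metrizable. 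For \ref{thm:char-spherically-complete-K-Polish-Lindelof-2}~$\Rightarrow$~\ref{thm:char-spherically-complete-K-Polish-Lindelof-1}, first invoke the Corollary following Proposition~\ref{prop:K_Lindelof_implies_SFC} to conclude that a $\kappa$-Lindel\"of $\GG$-metrizable space is already $\GG$-Polish; moreover, one can show that the compatible Cauchy-complete $\GG$-metric produced there is itself spherically $<\kappa$-complete (the compatible metric can be taken to be the one produced in the proof, or one can observe that $X$ is already $\kappa$-additive by Lemma~\ref{lem:K-Polishareadditive} and homeomorphic to a $\GG$-Polish space, so by Theorem~\ref{thm:SC-space+G-Polish} the existence of a compatible spherically $<\kappa$-complete $\GG$-metric already forces $X$ to be an $\SC_\kappa$-space). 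Together with $\kappa$-Lindel\"ofness this yields \ref{thm:char-spherically-complete-K-Polish-Lindelof-1}.

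Finally, assume $\kappa$ is weakly compact. The implication \ref{thm:char-spherically-complete-K-Polish-Lindelof-4}~$\Rightarrow$~\ref{thm:char-spherically-complete-K-Polish-Lindelof-1} is immediate: a superclosed subset of $\pre{\kappa}{2}$ is a $\kappa$-additive $\SC_\kappa$-space (since $\pre{\kappa}{2}$ is and both properties pass to superclosed subsets), and it is $\kappa$-Lindel\"of as a closed subset of the $\kappa$-Lindel\"of space $\pre{\kappa}{2}$ (here we use weak compactness of $\kappa$). The converse \ref{thm:char-spherically-complete-K-Polish-Lindelof-1}~$\Rightarrow$~\ref{thm:char-spherically-complete-K-Polish-Lindelof-4} is precisely the content of Proposition~\ref{prop:charadditiveSCandSFCkappaspaces+k_Lindelof} applied in its $\SC_\kappa$ (superclosed) version. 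The main obstacle, already resolved in the preceding development, was precisely Proposition~\ref{prop:charadditiveSCandSFCkappaspaces+k_Lindelof}, where one needs to transfer the superclosed embedding from $\pre{\kappa}{\kappa}$ into $\pre{\kappa}{2}$ via the homeomorphism of a bounded $\kappa$-additive $\kappa$-perfect $\SC_\kappa$-subspace of $\pre{\kappa}{\kappa}$ with $\pre{\kappa}{2}$ (Theorem~\ref{thm:charCantorweaklycompact}), using the fact (Remark~\ref{rmk:transferofsuperclosed}) that this homeomorphism preserves superclosed subsets. Thus the entire argument reduces to invoking already-established results in the correct sequence, the only delicate point being to verify that the $\SFC_\kappa$-strategy provided by $\kappa$-Lindel\"ofness in Proposition~\ref{prop:K_Lindelof_implies_SFC} upgrades to an $\SC_\kappa$-strategy in the presence of a spherically $<\kappa$-complete $\GG$-metric.
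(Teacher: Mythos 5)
Your proposal is correct and follows essentially the route the paper intends: the paper gives no separate proof, presenting the theorem as a restatement of Theorem~\ref{thm:SC-space+G-Polish} combined with Proposition~\ref{prop:charadditiveSCandSFCkappaspaces+k_Lindelof} and the corollary that \( \kappa \)-Lindel\"of \( \GG \)-metrizable spaces are \( \GG \)-Polish, which is exactly your decomposition. The one point you rightly flag---that in item~\ref{thm:char-spherically-complete-K-Polish-Lindelof-2} the compatible spherically \( {<}\kappa \)-complete metric need not be Cauchy-complete, so Theorem~\ref{thm:SC-space+G-Polish}\ref{thm:SC-space+G-Polish-2} does not literally apply as stated---is harmless, since the proof of that implication only uses that \( X \) is an \( \SFC_\kappa \)-space (which \( \kappa \)-Lindel\"ofness already supplies) and that the open balls of the spherically \( {<}\kappa \)-complete metric form a basis, never the Cauchy-completeness of that particular metric.
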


%%%%%%%%%%%%
\section{Characterizations of standard $\kappa$-Borel spaces} 
\label{sec:standardBorel}

In this section we deal with the $\kappa$-Borel structure of topological spaces, and 
show how standard $\kappa$-Borel spaces (Definition~\ref{def:standardBorel}) are 
exactly the $\kappa$-Borel spaces obtained from  Polish-like spaces in any of the classes 
considered so far by forgetting their topology.
For the sake of definiteness,
throughout the section we work with $\SFC_\kappa$-spaces and $\SC_\kappa$-spaces, 
but all results can be reformulated in terms of $\GG$-Polish and spherically complete 
$\GG$-Polish spaces---see Section~\ref{sec:relationship_SC_SFC_Polish}.

We start by proving some results about changes of topology, which might be of 
independent interest. The next proposition shows how to change the topology of an 
$\SFC_\kappa$-space while preserving its $\kappa$-Borel structure. This 
generalizes~\cite[Theorem 13.1]{KechrisMR1321597} to our setup.

\begin{proposition}\label{prop:changeoftopology}
Let \( (B_\alpha)_{\alpha < \kappa} \) be a family of \( \kappa \)-Borel subsets of an 
$\SFC_\kappa$-space \( (X,\tau) \). Then there is a topology \( \tau' \) on $X$ such that:
\begin{enumerate-(1)}
\item \label{prop:changeoftopology-1}
\( \tau' \) refines \( \tau \);
\item \label{prop:changeoftopology-2}
each \( B_\alpha \) is \( \tau' \)-clopen,
\item \label{prop:changeoftopology-3}
\( \Bor_\kappa(X,\tau') = \Bor_{\kappa}(X, \tau) \), and
\item \label{prop:changeoftopology-4}  \( (X,\tau') \) is a $\kappa$-additive $\SFC_\kappa$-space.
\end{enumerate-(1)}
\end{proposition}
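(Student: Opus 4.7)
The plan is to adapt Kechris's proof of the classical analogue \cite[Theorem~13.1]{KechrisMR1321597} to our setting. I would reduce the statement to two auxiliary lemmas: (A) the \emph{closed-set lemma}, asserting that if $F \subseteq X$ is $\tau$-closed then the topology $\tau_F$ generated by $\tau \cup \{F\}$ satisfies the four conclusions of the proposition for the singleton family $\{F\}$; and (B) the \emph{combination lemma}, asserting that if $(\tau_\alpha)_{\alpha<\kappa}$ is a $\kappa$-sequence of $\SFC_\kappa$-topologies on $X$, all refining $\tau$ and all with $\kappa$-Borel structure equal to $\Bor_\kappa(X,\tau)$, then the topology $\tau_\infty$ generated by $\bigcup_\alpha \tau_\alpha$ is also $\SFC_\kappa$ with $\Bor_\kappa(X,\tau_\infty) = \Bor_\kappa(X,\tau)$.

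Lemma~(A) is straightforward: one checks that $(X,\tau_F)$ is the topological disjoint union of the subspaces $F$ and $X\setminus F$ of $(X,\tau)$. The closed set $F$ is $G^\kappa_\delta$ in $(X,\tau)$ by Lemma~\ref{lem:closed_are_G_delta}, hence $\SFC_\kappa$ by Proposition~\ref{prop:G_deltasubspaces}; and $X \setminus F$ is trivially $G^\kappa_\delta$ (being $\tau$-open), hence also $\SFC_\kappa$. A topological disjoint union of two $\SFC_\kappa$-spaces is $\SFC_\kappa$ because $\pII$ can follow whichever component contains the point $x_\gamma$ just played by $\pI$ using the corresponding winning strategy. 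Finally, the $\kappa$-Borel structure is preserved because $F \in \Bor_\kappa(X,\tau)$ by assumption.

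For Lemma~(B), the $\kappa$-Borel part is easy: $\tau_\infty$ has a basis of size $\leq \kappa$ consisting of finite intersections of subbasic sets from $\bigcup_\alpha \tau_\alpha \subseteq \Bor_\kappa(X,\tau)$, so $\tau_\infty \subseteq \Bor_\kappa(X,\tau)$ by $\kappa^+$-additivity. The $\SFC_\kappa$ part is the main technical step, and I would prove it by embedding $(X,\tau_\infty)$ diagonally into the Tychonoff product $Y = \prod_{\alpha<\kappa}(X,\tau_\alpha)$. The diagonal is closed in the (Hausdorff) product, hence $G^\kappa_\delta$ there by Lemma~\ref{lem:closed_are_G_delta}, so by Proposition~\ref{prop:G_deltasubspaces} it suffices to show that $Y$ itself is an $\SFC_\kappa$-space of weight $\leq \kappa$. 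The weight bound is immediate since basic Tychonoff opens have finite support. For the game, I would combine the winning strategies $\sigma_\alpha$ in each factor via a bookkeeping argument: fixing a bijection $\pi \colon \kappa \to \kappa \times \kappa$, the master strategy at round $\gamma$ advances the appropriate round of the virtual $\tau_\alpha$-game for each coordinate currently activated by $\pI$'s basic open move (and, via $\pi$, for a scheduled additional coordinate), and plays the product of the resulting $\sigma_\alpha$-replies (padded by $X$ on inactive coordinates). The verification that this master strategy wins in $\fCho^s_\kappa(Y)$ then proceeds by projecting any play to each coordinate and invoking the winning condition of $\sigma_\alpha$.

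Granted~(A) and~(B), the proof of the proposition is a double transfinite induction. First, by induction on $\kappa$-Borel rank, I would show that any single $\kappa$-Borel set $B$ can be made $\tau'$-clopen: the base cases are $B$ closed (apply~(A)) or $B$ open (apply~(A) to $X \setminus B$); successor steps pass to complements; and at limit levels, where $B$ is a $\kappa$-union of sets of smaller rank, one applies the inductive hypothesis to each constituent and combines the resulting refinements via~(B). Next, iterate along $\alpha < \kappa$: at successor stage $\alpha$, use the single-set result inside the current topology to make $B_\alpha$ clopen; at limits, apply~(B) again. Finally, apply Proposition~\ref{prop:refintetoGpolish} to the resulting topology to obtain a further refinement that is $\kappa$-additive and $\SFC_\kappa$, still has the same $\kappa$-Borel structure, and keeps each $B_\alpha$ clopen (clopenness being preserved under topology refinement). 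The main obstacle is the $\SFC_\kappa$ part of~(B): designing a single master strategy that orchestrates $\kappa$-many virtual games across $\kappa$-many master rounds while respecting the Tychonoff constraint that each move of $\pI$ involves only finitely many coordinates.
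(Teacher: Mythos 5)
Your decomposition coincides with the paper's: its proof runs on exactly your Lemma~(A) (a claim that closed sets admit a witnessing topology, proved via the disjoint-union observation) and your Lemma~(B) (a claim that $\kappa$-many witnessing topologies can be combined into one), followed by the observation that the collection of sets admitting a witnessing topology contains the closed sets and is closed under complements and $\leq\kappa$-sized intersections --- which is your rank induction phrased structurally. The genuine divergence is in the proof of Lemma~(B). You embed the diagonal into the \emph{Tychonoff} (finite-support) product and run a bookkeeping master strategy for $\fCho^s_\kappa$ directly; this is workable (each round newly restricts only finitely many coordinates, so the master reply stays relatively open, and regularity of $\kappa$ guarantees that every coordinate's virtual game is advanced cofinally often), but it is the delicate part of your argument and you leave it as a sketch. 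The paper sidesteps the game entirely: because it additivizes via Proposition~\ref{prop:refintetoGpolish} at \emph{every} stage rather than only at the end, each intermediate topology is a $\kappa$-additive $\SFC_\kappa$-space, hence by Theorem~\ref{thm:charadditivefairSCkappaspaces} homeomorphic to a closed subset of $\pre{\kappa}{\kappa}$; it then forms the $<\kappa$-supported product of $\kappa$ copies of $\pre{\kappa}{\kappa}$ (homeomorphic to $\pre{\kappa}{\kappa}$ itself), notes that the diagonal is closed there, and invokes the characterization theorem once more. The paper thus trades your game-theoretic bookkeeping for two applications of the characterization theorem, while your route proves in passing a Tychonoff product theorem for $\SFC_\kappa$-spaces that the paper never needs. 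Two small points of hygiene: your Lemma~(A) cannot literally assert conclusion~(4), since the topology generated by $\tau\cup\{F\}$ need not be $\kappa$-additive --- either weaken~(4) to ``$\SFC_\kappa$-space'' in the intermediate lemmas (as your deferred application of Proposition~\ref{prop:refintetoGpolish} implicitly requires) or additivize inside~(A) as the paper does; and for the diagonal to be closed in your product you should note that all the $\tau_\alpha$ refine the Hausdorff topology $\tau$, so that distinct coordinates are separated by $\tau$-open boxes.
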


\begin{proof}
Let \( \mathscr{A} \) be the collection of those \( A \subseteq X \) for which there is a 
topology \( \tau' \) which satisfies~\ref{prop:changeoftopology-1}--\ref{prop:changeoftopology-4} above (where in~\ref{prop:changeoftopology-2} the set \( B_\alpha \) is replaced by \( A \)).
Notice that \( \mathscr{A} \) is trivially closed under complements. We first show that $\mathscr{A}$ contains all closed subsets of $X$.

\begin{claim}\label{claim:closed}
Let $C$ be a closed subset of an $\SFC_\kappa$-space $(X,\tau)$. Then there is a topology \( \tau' \) which satisfies~\ref{prop:changeoftopology-1}--\ref{prop:changeoftopology-4} above (where in~\ref{prop:changeoftopology-2} the set \( B_\alpha \) is replaced by \( C \)).
\end{claim}

\begin{proof}[Proof of the Claim]
Let \( \bar{\tau} \) be the smallest topology generated by $\tau\cup \{C\}$. 
Then
\( \bar{\tau} \supseteq \tau \) and \( C \) is \( \bar{\tau} \)-clopen. Moreover, 
\( \bar{\tau} \subseteq \Bor_\kappa(X, \tau) \) (and hence \( \Bor_\kappa(X, \bar{\tau}) = \Bor_\kappa(X,\tau) \)) because
each \(\bar{\tau} \)-open set is of the form \( (U \cap C) \cup (V \setminus C) \) for some \( U,V \in \tau \). This also shows that the identity map is a homeomorphism between $(X, \bar{\tau})$ and 
the sum of the spaces \( C \) 
and \(  X \setminus C \) (endowed with the relative topologies inherited from $X$). Since 
both $C$ and $X \setminus C$ are $\SFC_\kappa$-spaces by
Theorem~\ref{thm:charadditivefairSCkappaspaces}, and since the class of 
$\SFC_\kappa$-spaces is trivially closed under (\( \leq \kappa \)-sized) sums, then $X$ is 
an $\SFC_\kappa$-space as well. Applying 
 Proposition~\ref{prop:refintetoGpolish} to \( (X, \bar{\tau} ) \) we then get  a topology 
 \( \tau' \supseteq \bar{\tau} \supseteq \tau \) which satisfies all 
 of~\ref{prop:changeoftopology-1}--\ref{prop:changeoftopology-4}.
\end{proof}

\begin{claim}\label{claim:family}
Let \( (A_\alpha)_{\alpha<\kappa} \) be a family of sets in \( \mathscr{A} \).  
Then there is a topology $\tau'_\infty$ simultaneously witnessing \( A_\alpha \in \mathscr{A} \) for all \( \alpha < \kappa \).
\end{claim}

\begin{proof}[Proof of the Claim]
For each \( \alpha < \kappa \) let \( \tau'_\alpha \) be a topology witnessing \( A_\alpha \in \mathscr{A} \). Define \( \tau'_\infty \) as the smallest $\kappa$-additive topology containing \( \bigcup_{\alpha < \kappa} \tau'_\alpha \).
Then \ref{prop:changeoftopology-1}--\ref{prop:changeoftopology-3} are obvious, since 
\( \tau'_\infty \) refines  each \( \tau'_\alpha \supseteq \tau \) and $\tau'_\infty\subset \Bor_\kappa(X,\tau)$.
(The last inclusion holds because $\tau'_\infty$ is generated by sets of the form $\bigcap_{\beta < \gamma} U_\beta$ with $\gamma < \kappa$ and $U_\beta \in \bigcup_{\alpha < \kappa} \mathcal{B}_\alpha$ for $\mathcal{B}_\alpha$ a $\leq \kappa$-sized basis for $\tau'_\alpha$: each of these $\kappa$-many sets is thus $\kappa$-Borel with respect to $\tau$ by the properties of the chosen topologies $\tau'_\alpha$.)
To prove~\ref{prop:changeoftopology-4},  for each \( \alpha < \kappa \)
fix a closed \( C_\alpha \subseteq \pre{\kappa}{\kappa} \) and a homeomorphism
\( h_\alpha \colon C_\alpha \to (X, \tau'_\alpha) \) as given by 
Theorem~\ref{thm:charadditivefairSCkappaspaces}. Endow \( {}^{\kappa}(\pre{\kappa}{\kappa}) \)
with the \( \kappa \)-supported product topology, i.e.\ the topology generated by the sets
\( \prod_{\alpha< \kappa} U_\alpha \), where each \( U_\alpha \) is open in the bounded topology of $\pre{\kappa}{\kappa}$, and only \( {<} \kappa \)-many of them differ from
\( \pre{\kappa}{\kappa} \). 
Then \( \prod_{\alpha < \kappa} C_\alpha \) is closed in ${}^{\kappa}(\pre{\kappa}{\kappa})$, and since the maps \( h_\alpha \) are continuous, 
the set
\[
\Delta = \left\{ (x_\alpha)_{\alpha < \kappa} \in \prod_{\alpha < \kappa} C_\alpha \Bigm| \forall \alpha, \beta < \kappa \, \Big(h_\alpha(x_\alpha) = h_\beta(x_\beta)\Big) \right\}
 \]
is closed as well.
It is then easy to check that the map \( h \colon \Delta \to (X, \tau'_\infty) \) sending
\( (x_\alpha)_{\alpha<\kappa} \in \Delta \) to \( h_0(x_0) \) is a homeomorphism. 
Therefore the desired result follows from 
Theorem~\ref{thm:charadditivefairSCkappaspaces} and the fact that the spaces
\(  {}^{\kappa}(\pre{\kappa}{\kappa}) \) and \( \pre{\kappa}{\kappa} \) are clearly homeomorphic. 
\end{proof}

Claim~\ref{claim:family} in particular reduces our task of proving the theorem for a 
whole family \( (B_\alpha)_{\alpha < \kappa } \) to  showing that \( B \in \mathscr{A} \) for 
every single \( \kappa \)-Borel set \( B \subseteq X \). To this aim,
by Claim~\ref{claim:closed} and closure of \( \mathscr{A} \) under complements it is
enough to show that \( \mathscr{A} \) is closed under intersections of length 
\( \leq \kappa \). So let \( A = \bigcap_{\alpha < \kappa} A_\alpha \) be such that 
\( A_\alpha \in \mathscr{A} \) for every $\alpha<\kappa$. By Claim~\ref{claim:family}, 
there is a topology \( \tau'_\infty \) simultaneously witnessing 
\( A_\alpha \in \mathscr{A} \) for all \( \alpha < \kappa \). Then $A$ is closed in the 
\( \kappa \)-additive $\SFC_\kappa$-space $(X,\tau'_\infty)$. Therefore 
Claim~\ref{claim:closed} applied to $A$, viewed as a subset of $(X,\tau'_\infty)$, 
yields the desired topology $\tau' \supseteq \tau'_\infty \supseteq \tau$.
\end{proof}

Proposition~\ref{prop:changeoftopology} provides an alternative proof
of~\cite[Lemma 1.11]{LuckeSchlMR3430247}:  
Every $\kappa$-Borel subset of \( \pre{\kappa}{\kappa} \) is a continuous injective image of a closed subset of \( \pre{\kappa}{\kappa} \). 
To see this, let \( B \subseteq \pre{\kappa}{\kappa} \) be \( \kappa \)-Borel, and let \( \tau' \) be the topology
obtained by applying Proposition~\ref{prop:changeoftopology} with \( B_\alpha = B \) 
for all \( \alpha < \kappa \). 
Let $D$ be a closed subset of $\pre{\kappa}{\kappa}$ and \( h \colon (D, \tau_b) \to (\pre{\kappa}{\kappa},\tau')  \) be a homeomorphism as given by 
Proposition~\ref{prop:charadditiveSCandSFCkappaspaces}. Then $C = h^{-1}(B)$ is 
closed in $D$ and hence in $\pre{\kappa}{\kappa}$.
Moreover, the map \( h' \colon (D, \tau_b) \to (\pre{\kappa}{\kappa},\tau_b)  \) obtained 
by composing \( h \) with the identity function $(\pre{\kappa}{\kappa}, \tau') \to (\pre{\kappa}{\kappa}, \tau_b)$ is still a continuous bijection because 
\( \tau' \supseteq \tau_b \).
Therefore, \( h' \restriction C \) is a continuous injection from the closed set 
\( C  \subseteq \pre{\kappa}{\kappa} \) onto \( B \). Notice also that, by construction, 
\( h' \) is actually a \( \kappa \)-Borel isomorphism because \( \Bor_\kappa(\pre{\kappa}{\kappa}, \tau') = \Bor_\kappa(\pre{\kappa}{\kappa}, \tau_b) \). 
More generally, the same argument shows 
that~\cite[Lemma 1.11]{LuckeSchlMR3430247} can be extended to arbitrary 
$\SFC_\kappa$-spaces.

\begin{corollary} \label{cor:borelasinjimage}
For every \( \kappa \)-Borel subset \( B \) of an $\SFC_\kappa$-space there is a 
continuous \( \kappa \)-Borel isomorphism from a closed 
\( C \subseteq \pre{\kappa}{\kappa} \) onto \( B \).
\end{corollary}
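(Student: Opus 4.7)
The plan is to mimic the argument the authors already sketched for $X = \pre{\kappa}{\kappa}$ in the paragraph preceding the corollary, but starting from an arbitrary $\SFC_\kappa$-space $(X, \tau)$ in place of $\pre{\kappa}{\kappa}$. The only ingredient needed beyond that argument is the fact that, thanks to Proposition~\ref{prop:changeoftopology}, we can refine the topology of any $\SFC_\kappa$-space along $\kappa$-many prescribed $\kappa$-Borel sets while keeping its $\kappa$-Borel structure and promoting it to be $\kappa$-additive.

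Concretely, given a $\kappa$-Borel set $B$ of an $\SFC_\kappa$-space $(X, \tau)$, I would first apply Proposition~\ref{prop:changeoftopology} with the constant family $B_\alpha = B$ for every $\alpha < \kappa$ to obtain a finer topology $\tau' \supseteq \tau$ with $\Bor_\kappa(X, \tau') = \Bor_\kappa(X, \tau)$ such that $(X, \tau')$ is a $\kappa$-additive $\SFC_\kappa$-space and $B$ is $\tau'$-clopen. Then, using Theorem~\ref{thm:charadditivefairSCkappaspaces} (or directly Proposition~\ref{prop:charadditiveSCandSFCkappaspaces}), I would fix a homeomorphism $h \colon D \to (X, \tau')$ for some closed $D \subseteq \pre{\kappa}{\kappa}$.

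Now set $C = h^{-1}(B)$. Since $B$ is $\tau'$-closed, $C$ is closed in $D$ and thus closed in $\pre{\kappa}{\kappa}$. Let $h'$ be the composition of $h \colon D \to (X, \tau')$ with the identity map $\id \colon (X, \tau') \to (X, \tau)$; this is continuous because $\tau' \supseteq \tau$, and bijective as a composition of bijections. Hence $h' \restriction C$ is a continuous bijection from the closed set $C \subseteq \pre{\kappa}{\kappa}$ onto $B$. To conclude that $h' \restriction C$ is a $\kappa$-Borel isomorphism onto $B$, I would observe that $h$ is a $\kappa$-Borel isomorphism between $D$ and $(X, \tau')$ (being a homeomorphism), while the identity $(X, \tau') \to (X, \tau)$ is a $\kappa$-Borel isomorphism by the equality $\Bor_\kappa(X, \tau') = \Bor_\kappa(X, \tau)$; therefore $h'$ itself is a $\kappa$-Borel isomorphism, and so is its restriction to the $\kappa$-Borel set $C$ with image the $\kappa$-Borel set $B$.

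There is no genuine obstacle: every step is either a direct invocation of a previously established result or a trivial composition. The mild subtlety, already visible in the $\pre{\kappa}{\kappa}$-version of the argument, is keeping straight the interplay between the two topologies $\tau \subseteq \tau'$ living on the same set $X$, making sure that continuity is exploited in the direction $\tau' \to \tau$ while the coincidence of $\kappa$-Borel structures is used to transfer measurability in both directions.
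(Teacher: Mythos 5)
Your proposal is correct and is essentially identical to the paper's argument: the paper proves the $X=\pre{\kappa}{\kappa}$ case in the paragraph preceding the corollary (via Proposition~\ref{prop:changeoftopology} with the constant family $B_\alpha=B$, Proposition~\ref{prop:charadditiveSCandSFCkappaspaces}, and composition with the identity $(X,\tau')\to(X,\tau)$) and then observes that the same argument works verbatim for an arbitrary $\SFC_\kappa$-space, which is exactly what you carry out. All the steps you invoke apply at this level of generality, so there is nothing to add.
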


The space \( C \) in the previous corollary is an \( \SFC_\kappa \)-space by 
Theorem~\ref{thm:charadditivefairSCkappaspaces}, hence applying 
Theorem~\ref{thm:char_up_to_kBorel_iso} we further get

\begin{corollary} \label{cor:superclosed}
Each \( \kappa \)-Borel subset \(B \) of an $\SFC_\kappa$-space  is \( \kappa \)-Borel 
isomorphic to a $\kappa$-additive $\SC_\kappa$-space.
\end{corollary}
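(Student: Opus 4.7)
The plan is to assemble the conclusion directly from Corollary~\ref{cor:borelasinjimage} together with the two structural results \(\Lemma~\ref{lem:superclosed}\) and Theorem~\ref{thm:charadditiveSCkappaspaces}. First, apply Corollary~\ref{cor:borelasinjimage} to the given \(\kappa\)-Borel subset \(B\) of the \(\SFC_\kappa\)-space: this produces a closed set \(C \subseteq \pre{\kappa}{\kappa}\) together with a continuous \(\kappa\)-Borel isomorphism \(f \colon C \to B\). In particular, \(B\) is \(\kappa\)-Borel isomorphic (via \(f^{-1}\)) to \(C\).

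Second, I would like \(C\) itself to be replaced by something strictly better than a mere closed subset of \(\pre{\kappa}{\kappa}\). For this, Lemma~\ref{lem:superclosed} provides exactly the needed upgrade: every closed \(C \subseteq \pre{\kappa}{\kappa}\) is \(\kappa\)-Borel isomorphic to a superclosed set \(C' \subseteq \pre{\kappa}{\kappa}\). By Theorem~\ref{thm:charadditiveSCkappaspaces}, any superclosed subset of \(\pre{\kappa}{\kappa}\) is a \(\kappa\)-additive \(\SC_\kappa\)-space, so \(C'\) is of the desired form.

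Composing the two \(\kappa\)-Borel isomorphisms \(B \to C\) (from Step 1) and \(C \to C'\) (from Step 2) yields a \(\kappa\)-Borel isomorphism between \(B\) and the \(\kappa\)-additive \(\SC_\kappa\)-space \(C'\), which is what the corollary asserts. There is essentially no obstacle here: the whole content is in the two ingredients invoked, and the corollary is simply the transitive closure of the relation ``\(\kappa\)-Borel isomorphic to''. Alternatively, one could derive the statement in a single line from Theorem~\ref{thm:char_up_to_kBorel_iso}, which already says that \(\SFC_\kappa\)-spaces and \(\kappa\)-additive \(\SC_\kappa\)-spaces coincide up to \(\kappa\)-Borel isomorphism; the route via Lemma~\ref{lem:superclosed} has the mild advantage of exhibiting the witness explicitly as a superclosed subset of the generalized Baire space.
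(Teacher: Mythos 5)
Your proof is correct and matches the paper's argument in substance: the paper deduces the corollary by noting that the closed set \( C \) from Corollary~\ref{cor:borelasinjimage} is an \( \SFC_\kappa \)-space (Theorem~\ref{thm:charadditivefairSCkappaspaces}) and then invoking Theorem~\ref{thm:char_up_to_kBorel_iso}, which is itself proved via Lemma~\ref{lem:superclosed}. You simply unpack that chain explicitly (and you even note the packaged one-line alternative, which is literally the paper's proof), so there is nothing to correct.
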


The following is the counterpart of Proposition~\ref{prop:changeoftopology} in terms of 
functions and can be proved by applying it to the preimages of the open sets in any 
\( \leq \kappa \)-sized basis for the topology of \( Y \).

\begin{corollary} \label{cor:changeoftopology}
Let \( (X,\tau) \) be an $\SFC_\kappa$-space and \( Y \) be any space of weight 
\(\leq \kappa \). Then for every \( \kappa \)-Borel function \( f \colon X \to Y \) there is a 
topology \( \tau' \) on \( X \) such that:
\begin{enumerate-(1)}
\item \label{cor:changeoftopology-1}
\( \tau' \) refines \( \tau \);
\item \label{cor:changeoftopology-2}
\( f \colon (X,\tau') \to Y \) is continuous,
\item \label{cor:changeoftopology-3}
\( \Bor_\kappa(X,\tau') = \Bor_{\kappa}(X, \tau) \), and
\item \label{cor:changeoftopology-4}
\( (X,\tau') \) is a $\kappa$-additive $\SFC_\kappa$-space.
\end{enumerate-(1)}
\end{corollary}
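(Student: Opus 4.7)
The plan is to reduce the statement directly to Proposition~\ref{prop:changeoftopology} via a careful choice of the $\kappa$-Borel sets to ``absorb'' into the refined topology. Since $Y$ has weight $\leq \kappa$, fix a basis $\{V_\alpha \mid \alpha < \kappa\}$ for the topology of $Y$ of size $\leq \kappa$ (possibly with repetitions if $Y$ has weight strictly less than $\kappa$). Because each $V_\alpha$ is open in $Y$, it belongs to $\Bor_\kappa(Y)$, and by $\kappa$-Borel measurability of $f$ we get that $B_\alpha := f^{-1}(V_\alpha)$ is a $\kappa$-Borel subset of $(X,\tau)$ for every $\alpha < \kappa$.

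Apply Proposition~\ref{prop:changeoftopology} to the family $(B_\alpha)_{\alpha < \kappa}$, obtaining a topology $\tau'$ on $X$ that refines $\tau$, makes each $B_\alpha$ clopen, preserves the $\kappa$-Borel structure (i.e.\ $\Bor_\kappa(X, \tau') = \Bor_\kappa(X,\tau)$), and turns $X$ into a $\kappa$-additive $\SFC_\kappa$-space. Conditions~\ref{cor:changeoftopology-1}, \ref{cor:changeoftopology-3}, and~\ref{cor:changeoftopology-4} are then immediate.

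The only remaining point is~\ref{cor:changeoftopology-2}, the continuity of $f \colon (X,\tau') \to Y$. For this, observe that every open $V \subseteq Y$ can be written as $V = \bigcup_{\alpha \in I} V_\alpha$ for some $I \subseteq \kappa$, so
\[
f^{-1}(V) \;=\; \bigcup_{\alpha \in I} f^{-1}(V_\alpha) \;=\; \bigcup_{\alpha \in I} B_\alpha,
\]
which is $\tau'$-open as a union of $\tau'$-clopen sets. This yields continuity of $f$ with respect to $\tau'$, completing the argument.

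There is really no hard step here: the whole point is that Proposition~\ref{prop:changeoftopology} was designed exactly to allow such absorption of a $\kappa$-sized family of $\kappa$-Borel sets into a topology with the desired regularity properties, and continuity of a map into a space of weight $\leq \kappa$ is determined by the preimages of a basis. The only thing to double-check is that the basis for $Y$ can indeed be taken of size $\leq \kappa$, which is precisely the weight assumption on $Y$.
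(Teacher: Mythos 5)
Your proof is correct and follows exactly the route the paper indicates: the paper states that the corollary "can be proved by applying [Proposition~\ref{prop:changeoftopology}] to the preimages of the open sets in any $\leq\kappa$-sized basis for the topology of $Y$", and your argument is a careful writing-out of precisely that plan, including the routine verification that continuity follows since preimages of arbitrary open sets are unions of the now-clopen sets $B_\alpha$.
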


Finally, combining the results obtained so far we get that all the proposed 
generalizations of~\ref{intro:stbor1} and~\ref{intro:stbor2} give rise to the same class of 
spaces. In particular, up to \( \kappa \)-Borel isomorphism such class coincide with any of 
the classes of Polish-like spaces we analyzed in the previous sections. (Notice also that 
Theorem~\ref{th:standardBorel} substantially 
strengthens ~\cite[Corollary 3.4]{CoskSchlMR3453772}.)

\begin{theorem} \label{th:standardBorel}
A \( \kappa \)-Borel space \( (X,\mathscr{B}) \) is standard if and only if there is a topology 
\( \tau \) on \( X \) such that
\begin{enumerate-(1)}
\item \label{th:standardBorel1}
\( (X,\tau) \) is an $\SFC_\kappa$-space, and
\item
\(  \Bor_\kappa(X,\tau) = \mathscr{B} \).
\end{enumerate-(1)}
Moreover, condition~\ref{th:standardBorel1} can equivalently be replaced by 
\begin{enumerate}[label={\upshape (1\( ' \))}, leftmargin=2pc]
\item
\( (X,\tau) \) is a \( \kappa \)-additive $\SC_\kappa$-space.
\end{enumerate}
\end{theorem}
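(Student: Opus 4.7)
The plan is to dispatch both the main biconditional and the equivalence with the variant $(1')$ with a single circle of implications $(1') \Rightarrow (1) \Rightarrow \text{standard} \Rightarrow (1')$, relying almost entirely on the machinery already in place.

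The implication $(1') \Rightarrow (1)$ is immediate, since any $\kappa$-additive $\SC_\kappa$-space is in particular an $\SFC_\kappa$-space. For $(1) \Rightarrow \text{standard}$, suppose $(X,\tau)$ is an $\SFC_\kappa$-space with $\Bor_\kappa(X,\tau) = \mathscr{B}$. I first apply the Proposition immediately preceding Theorem~\ref{thm:char_up_to_kBorel_iso} to get a topology $\tau'$ on $X$ with $\Bor_\kappa(X,\tau') = \Bor_\kappa(X,\tau) = \mathscr{B}$ and such that $(X,\tau')$ is a $\kappa$-additive $\SC_\kappa$-space. Theorem~\ref{thm:charadditiveSCkappaspaces} then yields a homeomorphism between $(X,\tau')$ and a superclosed (in particular, $\kappa$-Borel) subset $C \subseteq \pre{\kappa}{\kappa}$. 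Any homeomorphism is automatically a $\kappa$-Borel isomorphism between the induced $\kappa$-Borel structures, so $(X,\mathscr{B})$ is $\kappa$-Borel isomorphic to $(C, \Bor_\kappa(\pre{\kappa}{\kappa}) \restriction C)$, proving that it is standard.

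For $\text{standard} \Rightarrow (1')$, suppose $f : (X,\mathscr{B}) \to (B, \Bor_\kappa(\pre{\kappa}{\kappa}) \restriction B)$ is a $\kappa$-Borel isomorphism, with $B$ a $\kappa$-Borel subset of $\pre{\kappa}{\kappa}$. Since $\pre{\kappa}{\kappa}$ is itself an $\SFC_\kappa$-space, Corollary~\ref{cor:superclosed} provides a $\kappa$-Borel isomorphism $g$ between $B$ and a $\kappa$-additive $\SC_\kappa$-space $(Y,\sigma)$. Then $g \circ f : X \to Y$ is a $\kappa$-Borel bijection. I pull the topology back by setting $\tau = \{(g \circ f)^{-1}(U) \mid U \in \sigma\}$, so that $g \circ f$ becomes a homeomorphism between $(X,\tau)$ and $(Y,\sigma)$; in particular $(X,\tau)$ is a $\kappa$-additive $\SC_\kappa$-space. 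Because $g \circ f$ is a $\kappa$-Borel isomorphism from $(X,\mathscr{B})$ to $(Y,\Bor_\kappa(Y,\sigma))$ and also a homeomorphism (hence $\kappa$-Borel iso) between $(X,\tau)$ and $(Y,\sigma)$, the two $\kappa^+$-algebras $\mathscr{B}$ and $\Bor_\kappa(X,\tau)$ on $X$ coincide, as both equal $(g \circ f)^{-1}(\Bor_\kappa(Y,\sigma))$. This establishes $(1')$.

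There is no real obstacle here: each link in the chain is a direct appeal to a result already proved, with the only mildly technical point being the routine verification that pulling back a topology along a $\kappa$-Borel isomorphism preserves the generated $\kappa^+$-algebra. The closing of the circle shows that standardness is equivalent to $(1)$ and to $(1')$, which is the content of the theorem.
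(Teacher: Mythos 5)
Your proposal is correct and follows exactly the route the paper intends: the theorem is stated as a direct combination of the change-of-topology proposition preceding Theorem~\ref{thm:char_up_to_kBorel_iso}, Theorem~\ref{thm:charadditiveSCkappaspaces}, and Corollary~\ref{cor:superclosed}, which is precisely the chain $(1') \Rightarrow (1) \Rightarrow \text{standard} \Rightarrow (1')$ you spell out. The only point you rightly flag as routine (that a homeomorphism, or a pulled-back topology along a bijection, identifies the generated $\kappa^+$-algebras) is indeed the paper's tacit convention and poses no problem.
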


\begin{remark} \label{rmk:standardBorel}
Since \( \kappa \)-additive \( \SC_\kappa \)-spaces and \( \SFC_\kappa \)-spaces form, 
respectively, the smallest and largest class of Polish-like spaces considered in this paper, 
in Theorem~\ref{th:standardBorel} we can further replace those classes with any of the 
other ones: \( \kappa \)-additive \( \SFC_\kappa \)-spaces, \( \SC_\kappa \)-spaces, 
\( \GG \)-Polish spaces, spherically complete \( \GG \)-Polish spaces, and so on.
\end{remark}

A natural question is whether  Proposition~\ref{prop:changeoftopology} can be 
extended in some way or another.
As in the classical case, the answer is mostly negative and thus 
Proposition~\ref{prop:changeoftopology} is essentially optimal. In fact:
\begin{enumerate-(a)}
\item
We cannot in general consider more than \( \kappa \)-many (even closed, or open) 
subsets, since this could force \( \tau' \) to have weight greater than \(  \kappa \)---think 
about 
turning more than \(  \kappa \)-many singletons into clopen sets.
\item \label{limitationtochangeoftopology}
We obviously cannot turn a set which is not \( \kappa \)-Borel into a clopen (or even just 
\( \kappa \)-Borel) one pretending to maintain the same \( \kappa \)-Borel structure. 
Notice however that, in contrast to the classical case, one can consistently have that 
there are non-\( \kappa \)-Borel sets \( B \subseteq \pre{\kappa}{\kappa} \) for which 
there is a \( \kappa \)-additive \( \SFC_\kappa \) topology \( \tau' \supseteq \tau_b \) 
turning \( B \) into a \( \tau' \)-clopen set, so that all conditions in 
Proposition~\ref{prop:changeoftopology} except for~\ref{prop:changeoftopology-3} are 
satisfied with respect to such \( B \) (see 
Corollary~\ref{cor:limitationchangesoftopologies} for more details and limitations). 
\item
By Example~\ref{xmp:no_top_refinement_from_SFC_to_SC}, we cannot require that the 
topology $\tau'$ be $\SC_\kappa$ (instead of just \( \SFC_\kappa \)). The same remains 
true if we consider a single \( \kappa \)-Borel set \( B \) (instead of a whole family 
\( (B_\alpha)_{\alpha < \kappa} \)), we start from the stronger hypothesis that $(X,\tau)$ 
is already a $\kappa$-additive $\SC_\kappa$-space, and we weaken the conclusions by 
dropping condition~\ref{prop:changeoftopology-3} and relaxing 
condition~\ref{prop:changeoftopology-2} to ``\( B \) is \( \tau' \)-open'' (or ``\( B \) is \( \tau' \)-closed'').
\end{enumerate-(a)} 

As it is clear from the discussion, in the last item the problem arises from the fact that 
there is a tension between condition~\ref{prop:changeoftopology-1} and our desire to 
strengthen condition~\ref{prop:changeoftopology-4} from \( \SFC_\kappa \) to 
\( \SC_\kappa \). However, we are now going to show that if we drop the problematic 
condition~\ref{prop:changeoftopology-1}, then it is possible to obtain the desired 
strengthening, at least when we just consider a few \( \kappa \)-Borel sets at a time.

\begin{proposition} \label{prop:changeoftopologyweak}
For every \( \kappa \)-Borel subset \( B \) of an $\SFC_\kappa$-space \(  (X,\tau) \) there is a topology \( \tau'' \) on \( X \) such that:
\begin{enumerate-(1)}
\item \label{prop:changeoftopologyweak-1}
\( B \) is \( \tau'' \)-clopen,
\item \label{prop:changeoftopologyweak-2}
\( \Bor_\kappa(X,\tau'') = \Bor_{\kappa}(X, \tau) \), and
\item \label{prop:changeoftopologyweak-3}
\( (X,\tau'') \)  is a \( \kappa \)-additive $\SC_\kappa$-space (hence so are its subspaces \( B \) and \( X \setminus B \) because they are \( \tau'' \)-open).
\end{enumerate-(1)}
\end{proposition}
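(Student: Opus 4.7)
The plan is to drop the refinement requirement present in Proposition~\ref{prop:changeoftopology} and instead rebuild $X$ from scratch as the topological sum of two $\kappa$-additive $\SC_\kappa$-spaces replacing $B$ and $X \setminus B$. The key ingredient is Corollary~\ref{cor:superclosed}: since both $B$ and $X \setminus B$ are $\kappa$-Borel in the $\SFC_\kappa$-space $(X,\tau)$, each of them is $\kappa$-Borel isomorphic to a $\kappa$-additive $\SC_\kappa$-space.

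So first I would apply Corollary~\ref{cor:superclosed} to obtain $\kappa$-Borel isomorphisms $h_B \colon B \to (B^*,\sigma_B)$ and $h_C \colon X \setminus B \to (C^*,\sigma_C)$ onto $\kappa$-additive $\SC_\kappa$-spaces, where $B$ and $X \setminus B$ carry the $\kappa$-Borel structures $\Bor_\kappa(X,\tau) \restriction B$ and $\Bor_\kappa(X,\tau) \restriction (X \setminus B)$ respectively. Transporting the topologies back along $h_B$ and $h_C$, I get topologies $\tau_B$ on $B$ and $\tau_C$ on $X \setminus B$ making each a $\kappa$-additive $\SC_\kappa$-space and satisfying $\Bor_\kappa(B,\tau_B) = \Bor_\kappa(X,\tau) \restriction B$ and the analogous equality on $X \setminus B$. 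Define $\tau''$ as the topological sum topology on the decomposition $X = B \sqcup (X \setminus B)$ coming from $\tau_B$ and $\tau_C$: a set $U \subseteq X$ is $\tau''$-open iff both $U \cap B$ is $\tau_B$-open and $U \cap (X \setminus B)$ is $\tau_C$-open.

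Condition~\ref{prop:changeoftopologyweak-1} is then immediate, as $B$ and $X \setminus B$ are $\tau''$-clopen by construction. For condition~\ref{prop:changeoftopologyweak-3}, one checks that the topological sum of two $\kappa$-additive $\SC_\kappa$-spaces is again such a space: $\kappa$-additivity and weight $\leq \kappa$ pass trivially to the sum, while a winning strategy for $\pII$ in $\Cho^s_\kappa(X,\tau'')$ is obtained by running the winning strategy of whichever summand is entered after $\pI$'s first move (alternatively, one may invoke Theorem~\ref{thm:charadditiveSCkappaspaces} and note that prepending distinct digits to two superclosed subsets of $\pre{\kappa}{\kappa}$ yields a superclosed subset homeomorphic to their disjoint sum).

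Finally, for condition~\ref{prop:changeoftopologyweak-2}, the standard description of the $\kappa$-Borel sets of a topological sum gives that $S \in \Bor_\kappa(X,\tau'')$ iff $S \cap B \in \Bor_\kappa(B,\tau_B) = \Bor_\kappa(X,\tau) \restriction B$ and $S \cap (X \setminus B) \in \Bor_\kappa(X \setminus B,\tau_C) = \Bor_\kappa(X,\tau) \restriction (X \setminus B)$; since $B \in \Bor_\kappa(X,\tau)$, this happens exactly when $S \in \Bor_\kappa(X,\tau)$. There is no genuine obstacle here: the one conceptual point is that relinquishing the demand $\tau'' \supseteq \tau$ buys us the freedom to reorganize $X$ as a topological sum, thereby upgrading from $\SFC_\kappa$ to $\SC_\kappa$ (which was impossible under refinement by Example~\ref{xmp:no_top_refinement_from_SFC_to_SC}).
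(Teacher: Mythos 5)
Your proposal is correct and follows essentially the same route as the paper's proof: apply Corollary~\ref{cor:superclosed} to $B$ and $X \setminus B$ to obtain $\kappa$-additive $\SC_\kappa$ topologies on each inducing the restricted $\kappa$-Borel structures, and take $\tau''$ to be the topological sum. Your extra verifications of conditions (1)--(3) are accurate but the paper leaves them implicit.
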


\begin{proof}
By Corollary~\ref{cor:superclosed}, there are $\kappa$-additive $\SC_\kappa$ topologies $\tau_1$ and \( \tau_2 \) on, respectively, $B$ and \( X \setminus B \) such that
\( \Bor_\kappa(B, \tau_1) = \Bor_\kappa(X, \tau) \restriction B \) and
\( \Bor_\kappa(X \setminus B, \tau_2) = \Bor_\kappa(X, \tau) \restriction (X \setminus B) \).
 Let $\tau''$ be the topology on \( X \) construed as
 the sum of $(B,\tau_1)$ and $(X\setminus B,\tau_2)$: then $\tau''$ is as required.
\end{proof}

The proof of Proposition~\ref{prop:changeoftopologyweak} can easily be adapted to work with 
\( \kappa \)-many \emph{pairwise disjoint} \( \kappa \)-Borel subsets of \( X \). This in 
turn implies that the proposition can e.g.\  be extended to deal with
\( {<} \kappa \)-many \( \kappa \)-Borel sets simultaneously, even when such sets are not pairwise disjoint. Indeed, if \( (B_\alpha)_{\alpha < \nu} \) with \( \nu < \kappa \) is such a family, then for each \(s \in \pre{\nu}{2} \) we can set
\[ 
C_s = \{ x \in X \mid \forall \alpha < \nu \, (x \in B_\alpha \iff s(\alpha) =1) \}.
 \] 
Since \( 2^\nu \leq \kappa^{< \kappa} = \kappa \), the family \( (C_s)_{s \in \pre{\nu}{2}} \) 
is a partition of \( X \) into \( \leq \kappa \)-many \( \kappa \)-Borel sets, and any topology 
\( \tau'' \) working simultaneously for all the \( C_s \) will work for all sets in the family 
\( (B_\alpha)_{\alpha < \nu} \) as well.
In contrast, Proposition~\ref{prop:changeoftopologyweak} cannot be extended to arbitrary \( \kappa \)-sized families of \( \kappa \)-Borel sets, even when we restrict to \( X = \pre{\kappa}{\kappa} \).
Indeed, let $C\subset \pre{\kappa}{\kappa}$ be as in Example~\ref{xmp:no_top_refinement_from_SFC_to_SC}
 and let $(B_\alpha)_{\alpha<\kappa}$ be an enumeration of $\{C\} \cup \{ \Nbhd_s \cap C \mid s \in \pre{<\kappa}{\kappa} \}$. 
Then $(B_\alpha)_{\alpha<\kappa}$ is a family of Borel subsets of 
$\pre{\kappa}{\kappa}$ 
such that there is no $\SC_\kappa$ topology $\tau''$ on $\pre{\kappa}{\kappa}$ making 
each $B_\alpha$ a $\tau''$-open subset of $\pre{\kappa}{\kappa}$, since otherwise 
$\tau'' \restriction C$ would be an $\SC_\kappa$ topology on $C$ refining $\tau_b \restriction C$.

From a different perspective, it might be interesting to understand which
subspaces of a Polish-like space inherit a standard \( \kappa \)-Borel structure form their 
superspace. 
 Of course this includes all \( \kappa \)-Borel sets, as standard \( \kappa \)-Borel spaces 
 are closed under \( \kappa \)-Borel subspaces, and we are now going to show that no 
 other set has such property. We begin with a preliminary result which is of independent 
 interest, as it shows that if a (regular Hausdorff) topology of weight \( \leq \kappa \) induces a standard 
 \( \kappa \)-Borel structure, then it can be refined to a Polish-like topology with the 
 same \( \kappa \)-Borel sets.

\begin{proposition}\label{prop:condition_for_std_Borel_topology}
Let $(X,\tau)$ be a space of weight $\leq \kappa$. Then
$(X,\Bor_\kappa(X,\tau))$ is a standard $\kappa$-Borel space if and only if there is a 
topology $\tau'\supseteq \tau$ such that $(X,\tau')$ is a $\kappa$-additive 
$\SFC_\kappa$-space and $\Bor_\kappa(X,\tau)=\Bor_\kappa(X,\tau')$.
\end{proposition}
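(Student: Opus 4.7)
The plan is to handle the two directions separately, with essentially all of the work in the forward implication.

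For the backward direction, suppose such a $\tau'$ exists. Since $(X,\tau')$ is a \( \kappa \)-additive \( \SFC_\kappa \)-space, Theorem~\ref{thm:charadditivefairSCkappaspaces} gives a homeomorphism of it onto a closed (hence $\kappa$-Borel) subset of $\pre{\kappa}{\kappa}$, so $(X,\Bor_\kappa(X,\tau'))$ is a standard $\kappa$-Borel space by Definition~\ref{def:standardBorel}. The hypothesis $\Bor_\kappa(X,\tau)=\Bor_\kappa(X,\tau')$ then immediately yields the conclusion.

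For the forward direction, I would proceed in three steps. First, since $(X,\Bor_\kappa(X,\tau))$ is standard, fix a $\kappa$-Borel isomorphism $f\colon X\to B$ where $B\subseteq\pre{\kappa}{\kappa}$ is $\kappa$-Borel. Applying Theorem~\ref{th:standardBorel} (together with Remark~\ref{rmk:standardBorel}) to the standard $\kappa$-Borel space $(B,\Bor_\kappa(\pre{\kappa}{\kappa})\restriction B)$, I obtain a topology $\sigma$ on $B$ such that $(B,\sigma)$ is a $\kappa$-additive $\SFC_\kappa$-space and $\Bor_\kappa(B,\sigma)=\Bor_\kappa(\pre{\kappa}{\kappa})\restriction B$. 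Transporting $\sigma$ across $f$ gives a topology $\tau_0$ on $X$ for which $f\colon(X,\tau_0)\to(B,\sigma)$ is a homeomorphism; then $(X,\tau_0)$ is a $\kappa$-additive $\SFC_\kappa$-space, and since $f$ is a $\kappa$-Borel isomorphism, $\Bor_\kappa(X,\tau_0)=\Bor_\kappa(X,\tau)$.

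Second, the only property the topology $\tau_0$ may fail to have is $\tau\subseteq\tau_0$, since nothing in its construction involves $\tau$ beyond the level of $\kappa$-Borel sets. To repair this I would invoke Proposition~\ref{prop:changeoftopology}. Using the weight hypothesis on $(X,\tau)$, fix a basis $\mathcal{B}=\{U_\alpha\mid\alpha<\kappa\}$ of $\tau$. Each $U_\alpha$ lies in $\tau\subseteq\Bor_\kappa(X,\tau)=\Bor_\kappa(X,\tau_0)$, so Proposition~\ref{prop:changeoftopology} applied to the $\SFC_\kappa$-space $(X,\tau_0)$ and the family $(U_\alpha)_{\alpha<\kappa}$ yields a topology $\tau'\supseteq\tau_0$ satisfying: each $U_\alpha$ is $\tau'$-clopen, $\Bor_\kappa(X,\tau')=\Bor_\kappa(X,\tau_0)=\Bor_\kappa(X,\tau)$, and $(X,\tau')$ is a $\kappa$-additive $\SFC_\kappa$-space. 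Since the $U_\alpha$ form a basis for $\tau$ and all belong to $\tau'$, every $\tau$-open set is a union of elements of $\tau'$, whence $\tau\subseteq\tau'$, as required.

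The main obstacle is conceptual rather than technical: one cannot directly refine the given $\tau$ to a Polish-like topology because $\tau$ is not assumed to be anything beyond regular Hausdorff of weight $\leq\kappa$. The key trick is to first build a Polish-like topology $\tau_0$ "from scratch" via the $\kappa$-Borel isomorphism to $\pre{\kappa}{\kappa}$ and then use Proposition~\ref{prop:changeoftopology} as a second-stage refinement to incorporate the original topology $\tau$, which is legitimate precisely because the basic $\tau$-open sets are already $\kappa$-Borel in $(X,\tau_0)$.
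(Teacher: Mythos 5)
Your proof is correct and follows essentially the same route as the paper's: obtain via Theorem~\ref{th:standardBorel} an \( \SFC_\kappa \) topology generating \( \Bor_\kappa(X,\tau) \), then use the change-of-topology machinery to force the \( \tau \)-open sets to be open while preserving the \( \kappa \)-Borel structure. The only cosmetic difference is that the paper invokes Corollary~\ref{cor:changeoftopology} applied to the identity map \( (X,\hat\tau)\to(X,\tau) \), whereas you inline that corollary by applying Proposition~\ref{prop:changeoftopology} directly to a \( \leq\kappa \)-sized basis of \( \tau \) (and you take an unnecessary but harmless detour through the \( \kappa \)-Borel subset \( B\subseteq\pre{\kappa}{\kappa} \) in your first step).
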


\begin{proof}
The backward implication follows from Theorem~\ref{th:standardBorel}.
For the forward implication, suppose that $(X,\Bor_\kappa(X,\tau))$ is standard 
$\kappa$-Borel. By Theorem~\ref{th:standardBorel}, there is a topology $\hat{\tau}$ 
such that $(X,\hat{\tau})$ is an $\SFC_\kappa$-space and 
$\Bor_\kappa(X,\hat{\tau})=\Bor_\kappa(X,\tau)$. The identity function 
$i\colon (X,\hat{\tau})\to (X,\tau)$ satisfies the hypothesis of 
Corollary~\ref{cor:changeoftopology}, hence there is a $\kappa$-additive $\SFC_\kappa$ 
topology $\tau'$ such that \( \Bor_\kappa(X,\tau')=\Bor_\kappa(X,\hat{\tau})=\Bor_\kappa(X,\tau) \) and \( i \colon (X, \tau') \to (X,\tau) \) is continuous, 
which implies $\tau\subseteq \tau'$.
\end{proof}

\begin{theorem}\label{thm:char_complexity_std_Borel_subspaces}
Let $(X,\mathscr{B})$ be a standard $\kappa$-Borel space, and let $A\subset X$. Then $(A,\mathscr{B} \restriction A)$ is a standard $\kappa$-Borel space if and only if $A\in \mathscr{B}$.
\end{theorem}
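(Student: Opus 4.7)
The $(\Leftarrow)$ direction is essentially definitional: since $(X,\mathscr{B})$ is standard, fix a $\kappa$-Borel isomorphism $\Phi\colon X\to B$ with $B\in\Bor_\kappa(\pre{\kappa}{\kappa})$. If $A\in\mathscr{B}$ then $\Phi(A)\in\Bor_\kappa(\pre{\kappa}{\kappa})$, and $\Phi\restriction A$ is a $\kappa$-Borel isomorphism between $(A,\mathscr{B}\restriction A)$ and $\Phi(A)\subseteq \pre{\kappa}{\kappa}$, witnessing that $(A,\mathscr{B}\restriction A)$ is standard.

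For the $(\Rightarrow)$ direction I plan to combine the change-of-topology machinery with a Lusin--Souslin-type result. First, by Theorem~\ref{th:standardBorel} applied to both $(X,\mathscr{B})$ and $(A,\mathscr{B}\restriction A)$, pick $\SFC_\kappa$ topologies $\tau$ on $X$ and $\sigma$ on $A$ with $\Bor_\kappa(X,\tau)=\mathscr{B}$ and $\Bor_\kappa(A,\sigma)=\mathscr{B}\restriction A$. For every $\tau$-open $U\subseteq X$ the preimage $i^{-1}(U)=U\cap A$ lies in $\mathscr{B}\restriction A=\Bor_\kappa(A,\sigma)$, so the inclusion $i\colon(A,\sigma)\to(X,\tau)$ is $\kappa$-Borel. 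Applying Corollary~\ref{cor:changeoftopology} to $i$ yields a refinement $\sigma'\supseteq\sigma$ of $\sigma$ such that $(A,\sigma')$ is a $\kappa$-additive $\SFC_\kappa$-space, $\Bor_\kappa(A,\sigma')=\mathscr{B}\restriction A$, and $i\colon(A,\sigma')\to(X,\tau)$ is continuous.

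Next, Theorem~\ref{thm:charadditivefairSCkappaspaces} gives a homeomorphism of $(A,\sigma')$ with a closed set $C\subseteq\pre{\kappa}{\kappa}$, and precomposing with $i$ produces a continuous injection $f\colon C\to(X,\tau)$ with $f(C)=A$. It then suffices to show that the image of such an injection is $\kappa$-Borel in $(X,\tau)$: this is precisely the Lusin--Souslin phenomenon in the generalized setting, and I will appeal to its known form (in the spirit of~\cite{LuckeSchlMR3430247}, where injective continuous images of closed subsets of $\pre{\kappa}{\kappa}$ are shown to be $\kappa$-Borel). Granting this, $A=f(C)\in\Bor_\kappa(X,\tau)=\mathscr{B}$.

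The main obstacle is exactly the last step, namely invoking (or establishing) the Lusin--Souslin analogue for $\kappa$. A purely intrinsic argument seems to require it: natural attempts to bypass it, such as pasting $\sigma$ and $\tau\restriction(X\setminus A)$ into a sum topology on $X$, or refining $\tau$ to turn $A$ into a clopen set, both become circular because they require either $X\setminus A\in\mathscr{B}$ or $\tau$-Borelness of $\sigma$-open subsets of $A$, which are equivalent to the desired conclusion. Thus the essential content of the theorem is captured by the Lusin--Souslin step, whereas all the earlier steps are routine applications of Theorem~\ref{th:standardBorel}, Corollary~\ref{cor:changeoftopology}, and Theorem~\ref{thm:charadditivefairSCkappaspaces}.
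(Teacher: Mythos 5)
Your $(\Leftarrow)$ direction is fine and agrees with the paper. The $(\Rightarrow)$ direction, however, has a fatal gap at exactly the step you flag as "the essential content": the Lusin--Souslin analogue you propose to invoke is \emph{false} for uncountable $\kappa$. Your construction exhibits $A$ as a continuous injective image of a closed subset of $\pre{\kappa}{\kappa}$, i.e.\ as a member of the class $I^\kappa_{\mathrm{cl}}$; but by~\cite[Corollary 1.9]{LuckeSchlMR3430247} there are non-$\kappa$-Borel sets that are continuous injective images of the whole of $\pre{\kappa}{\kappa}$, so $I^\kappa_{\mathrm{cl}}$ is strictly larger than $\Bor_\kappa(\pre{\kappa}{\kappa})$ and membership in it does not yield $\kappa$-Borelness. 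The strengthened statement that would suffice --- that the image of a $\kappa$-Borel \emph{embedding} of a $\kappa$-Borel set is $\kappa$-Borel (Corollary~\ref{cor:borelimages}) --- is deduced in the paper \emph{from} the very theorem you are proving, so appealing to it here is circular. This is precisely the point where the generalized theory diverges from the classical one: Luzin separation is unavailable, so no Lusin--Souslin-type argument can close the proof.

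The paper instead runs a Lavrentiev-type argument that you came close to but dismissed. After refining $\tau\restriction A$ to a $\kappa$-additive $\SFC_\kappa$ topology $\tau_A$ on $A$ generating $\mathscr{B}\restriction A$ (Proposition~\ref{prop:condition_for_std_Borel_topology}), one chooses, for each member $U$ of a $\leq\kappa$-sized clopen basis of $(A,\tau_A)$, a set $C_U\in\Bor_\kappa(X,\tau)$ with $U=C_U\cap A$, closes this family under complements, and lets $\hat\tau$ be the smallest $\kappa$-additive topology on $X$ containing $\tau$ together with these $C_U$. The key observation you missed is that $\hat\tau$ need not be Polish-like at all: being regular, zero-dimensional, $\kappa$-additive and of weight $\leq\kappa$, it is $\GG$-\emph{metrizable} by Theorem~\ref{thm:charK-metrizable}, it satisfies $\Bor_\kappa(X,\hat\tau)=\Bor_\kappa(X,\tau)=\mathscr{B}$, and by construction $\hat\tau\restriction A=\tau_A$, so $A$ is a completely $\GG$-metrizable subspace of the $\GG$-metrizable space $(X,\hat\tau)$. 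Corollary~\ref{cor:subspace} (the extension theorem for continuous maps into Cauchy-complete $\GG$-metric spaces) then gives that $A$ is $G^\kappa_\delta$ in $(X,\hat\tau)$, whence $A\in\Bor_\kappa(X,\hat\tau)=\mathscr{B}$. So the refinement is performed on the \emph{ambient} space, and the conclusion comes from the metric $G_\delta$ phenomenon rather than from any separation or injective-image theorem.
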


\begin{proof}
Since $(X,\mathscr{B})$ is standard $\kappa$-Borel, by definition it is $\kappa$-Borel isomorphic to a $\kappa$-Borel subset $B\subset \pre{\kappa}{\kappa}$. Given \( A \in \mathscr{B} \),  
the subspace $(A,\mathscr{B} \restriction A)$ is then $\kappa$-Borel isomorphic to a set in $\Bor_\kappa(B) \subseteq \Bor_\kappa(\pre{\kappa}{\kappa})$, hence $(A,\mathscr{B} \restriction A)$ is standard $\kappa$-Borel.

Conversely, assume that $(A,\mathscr{B} \restriction A)$ is standard $\kappa$-Borel. 
Let $\tau$ be 
a $\kappa$-additive $\SFC_\kappa$ topology on $X$ with 
$\mathscr{B}=\Bor_\kappa(X,\tau)$, as given by Theorem~\ref{th:standardBorel}, and 
use Proposition~\ref{prop:condition_for_std_Borel_topology} to refine the topology 
\( \tau \restriction A \) on \( A \) (which obviously generates 
\( \mathscr{B} \restriction A \)) to a $\kappa$-additive $\SFC_\kappa$ topology  $\tau_A$ 
on $A$ such that $\mathscr{B} \restriction A=\Bor_\kappa(A,\tau_A)$. 
Let $\mathcal{B}$ be a clopen basis for $(A,\tau_A)$ of size $\leq \kappa$. Since 
$\mathcal{B} \subseteq \Bor_\kappa(A,\tau_A)=\Bor_\kappa(X,\tau) \restriction A$, for 
every $U\in \mathcal{B}$ we can find $C_U\in \Bor_\kappa(X,\tau)$ such that 
$U=C_U\cap A$. Without loss of generality, we may assume that the family 
$\mathcal{C}=\{C_U\mid U\in \mathcal{B}\}$ is closed under complements. Define 
$\hat{\tau}$ to be the smallest $\kappa$-additive topology on $X$ containing 
$\tau\cup \mathcal{C}$. Then $\hat{\tau}$ is Hausdorff because it refines $\tau$, it is 
zero-dimensional (and hence regular) because $\tau$ is zero-dimensional and 
$\mathcal{C}$ is closed under complements, and it has weight at most 
$(\kappa+ |\mathcal{C}|)^{<\kappa}=\kappa^{<\kappa}=\kappa$. Hence, by 
Theorem~\ref{thm:charK-metrizable} we have that the space $(X,\hat{\tau})$ is 
$\GG$-metrizable. Furthermore, $\Bor_\kappa(X,\hat{\tau})=\Bor_\kappa(X,\tau)$ 
because \( \tau \subseteq \hat{\tau} \subseteq \Bor_\kappa(X, \tau) \), and $A$ is a  
\( \GG \)-Polish subspace of $(X,\hat{\tau})$ because by construction 
$\hat{\tau} \restriction A=\tau_A$ and \( \tau_A \) is a \( \kappa \)-additive 
\( \SFC_\kappa \) topology. Therefore, by Corollary~\ref{cor:subspace} we have that $A$ 
is a $G^\kappa_\delta$ subset of $(X,\hat{\tau})$, so in particular 
$A\in\Bor_\kappa(X,\hat{\tau})=\Bor_\kappa(X,\tau) = \mathscr{B}$.
\end{proof}

\begin{corollary} \label{cor:borelimages}
Let \( X,Y \) be standard \( \kappa \)-Borel spaces. If \( A \subseteq X \) is \( \kappa \)-Borel and \( f \colon A \to Y \) is a \( \kappa \)-Borel embedding, then \( f(A) \) is \( \kappa \)-Borel in \( Y \).
\end{corollary}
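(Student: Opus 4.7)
The plan is to deduce this as a quick consequence of Theorem~\ref{thm:char_complexity_std_Borel_subspaces}, which characterizes the subsets of a standard $\kappa$-Borel space that inherit a standard $\kappa$-Borel structure as precisely the $\kappa$-Borel subsets.

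First I would observe that since $A \subseteq X$ is $\kappa$-Borel and $X$ is a standard $\kappa$-Borel space, the forward direction of Theorem~\ref{thm:char_complexity_std_Borel_subspaces} tells us that $(A, \mathscr{B}_X \restriction A)$ is itself a standard $\kappa$-Borel space (here $\mathscr{B}_X$ denotes the $\kappa$-Borel structure of $X$). Next, recalling the definition of a $\kappa$-Borel embedding (given in the introduction, right before Definition~\ref{def:standardBorel}), the map $f \colon A \to Y$ is by assumption a $\kappa$-Borel isomorphism between $(A, \mathscr{B}_X \restriction A)$ and $(f(A), \mathscr{B}_Y \restriction f(A))$. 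Since being standard is manifestly invariant under $\kappa$-Borel isomorphism (standardness is formulated precisely in terms of $\kappa$-Borel isomorphism with a $\kappa$-Borel subset of $\pre{\kappa}{\kappa}$), it follows that $(f(A), \mathscr{B}_Y \restriction f(A))$ is also a standard $\kappa$-Borel space.

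Finally, I would invoke the converse direction of Theorem~\ref{thm:char_complexity_std_Borel_subspaces}, applied to the standard $\kappa$-Borel space $Y$ and its subset $f(A)$: since $(f(A), \mathscr{B}_Y \restriction f(A))$ has been shown to be standard, it must be that $f(A) \in \mathscr{B}_Y$, i.e.\ $f(A)$ is $\kappa$-Borel in $Y$, as required. There is no real obstacle here: the entire content lies in Theorem~\ref{thm:char_complexity_std_Borel_subspaces}, and the corollary is simply the natural packaging of that characterization together with the definition of a $\kappa$-Borel embedding.
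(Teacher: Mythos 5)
Your proof is correct and is exactly the argument the paper intends: the corollary is stated as an immediate consequence of Theorem~\ref{thm:char_complexity_std_Borel_subspaces}, obtained by noting that $(A,\mathscr{B}_X\restriction A)$ is standard, that a $\kappa$-Borel embedding transports standardness to $(f(A),\mathscr{B}_Y\restriction f(A))$, and then applying the nontrivial direction of that theorem to conclude $f(A)\in\mathscr{B}_Y$.
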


Corollary~\ref{cor:borelimages} is the analogue of the classical fact that an injective 
Borel image of a Borel set is still Borel (see~\cite[Section 15.A]{KechrisMR1321597}). 
Notice however that in the generalized version the hypothesis on \( f \) is stronger: we 
need it to be a \( \kappa \)-Borel embedding, and not just an injective \( \kappa \)-Borel 
map. This is mainly due to the fact that in the generalized context we lack the analogue 
of Luzin's separation theorem. Indeed, one can even 
prove~\cite[Corollary 1.9]{LuckeSchlMR3430247} that there are non-\( \kappa \)-Borel 
sets which are continuous injective images of the whole \( \pre{\kappa}{\kappa} \), hence 
our stronger requirement cannot be dropped.

We finally come to the problem of characterizing which topologies  induce a standard 
\( \kappa \)-Borel structure. Of course this class is larger than the collection of all 
\( \SFC_\kappa \) topologies, even when restricting to the \( \kappa \)-additive case. 
Indeed, the relative topology on any \( \kappa \)-Borel  non-\( G^\kappa_\delta \) 
subspace \( B \subseteq  \pre{\kappa}{\kappa} \) generates a standard \( \kappa \)-Borel 
structure, yet it is not \( \SFC_\kappa \) itself because of  
Theorems~\ref{thm:charK-Polish} and~\ref{thm:Kpolishsubspaces}. On the other hand, if 
a space \( (X,\tau) \) is homeomorphic to a \( \kappa \)-Borel subset of 
\( \pre{\kappa}{\kappa} \), then it clearly generates a standard \( \kappa \)-Borel 
structure by definition. Theorems~\ref{thm:charK-metrizable} 
and~\ref{thm:char_complexity_std_Borel_subspaces} allow us to reverse the implication, 
yielding the desired characterization in the case of \( \kappa \)-additive topologies.
(For the nontrivial direction, use the fact that by Theorem~\ref{thm:charK-metrizable} every \( \kappa \)-additive space of weight \( \leq \kappa \) is, up to homeomorphism, a subspace of \( \pre{\kappa}{\kappa} \).)

\begin{corollary} \label{cor:topgrneratingstandardBorel}
Let $(X,\tau)$ be a $\kappa$-additive space of weight $\leq \kappa$. Then 
$(X,\Bor_\kappa(X,\tau))$ is a standard $\kappa$-Borel space if and only if $(X,\tau)$ is 
\emph{homeomorphic} to a $\kappa$-Borel subset of $\pre{\kappa}{\kappa}$ (or, 
equivalently, of \( \pre{\kappa}{2} \)).
\end{corollary}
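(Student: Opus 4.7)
The backward direction is nearly immediate: if $(X,\tau)$ is homeomorphic to a $\kappa$-Borel subset $A$ of $\pre{\kappa}{\kappa}$ (endowed with the bounded topology $\tau_b$), then the homeomorphism transports the $\kappa^+$-algebra $\Bor_\kappa(X,\tau)$ onto $\Bor_\kappa(A,\tau_b\restriction A)=\Bor_\kappa(\pre{\kappa}{\kappa})\restriction A$. Since $A\in\Bor_\kappa(\pre{\kappa}{\kappa})$ and the latter is a standard $\kappa$-Borel space, Theorem~\ref{thm:char_complexity_std_Borel_subspaces} (easy direction) ensures that $(A,\Bor_\kappa(\pre{\kappa}{\kappa})\restriction A)$ is standard, and hence so is $(X,\Bor_\kappa(X,\tau))$.

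For the forward direction, assume $(X,\Bor_\kappa(X,\tau))$ is standard $\kappa$-Borel. Since $(X,\tau)$ is $\kappa$-additive of weight $\leq\kappa$, Theorem~\ref{thm:charK-metrizable} (specifically the implication \ref{thm:charK-metrizable-1}\,$\Rightarrow$\,\ref{thm:charK-metrizable-4}) yields a topological embedding $h\colon(X,\tau)\to(\pre{\kappa}{\kappa},\tau_b)$; put $A=h(X)$, so that $h$ is a homeomorphism between $(X,\tau)$ and $(A,\tau_b\restriction A)$. As in the backward direction, $h$ identifies $\Bor_\kappa(X,\tau)$ with the relative $\kappa$-Borel structure $\Bor_\kappa(\pre{\kappa}{\kappa})\restriction A$. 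By hypothesis this structure is standard, so Theorem~\ref{thm:char_complexity_std_Borel_subspaces} applied with $X$ replaced by $\pre{\kappa}{\kappa}$ and $\mathscr{B}=\Bor_\kappa(\pre{\kappa}{\kappa})$ forces $A\in\Bor_\kappa(\pre{\kappa}{\kappa})$, which is precisely the desired conclusion.

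The statement with $\pre{\kappa}{2}$ in place of $\pre{\kappa}{\kappa}$ follows by the exact same argument, this time invoking the equivalence \ref{thm:charK-metrizable-1}\,$\Leftrightarrow$\,\ref{thm:charK-metrizable-3} of Theorem~\ref{thm:charK-metrizable} to embed $(X,\tau)$ into $\pre{\kappa}{2}$, and using that $\pre{\kappa}{2}$ is itself a standard $\kappa$-Borel space (being closed, hence $\kappa$-Borel, in $\pre{\kappa}{\kappa}$), so Theorem~\ref{thm:char_complexity_std_Borel_subspaces} applies equally well with $\pre{\kappa}{2}$ as the ambient space.

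There is essentially no obstacle here: the entire content has already been extracted in Theorems~\ref{thm:charK-metrizable} and~\ref{thm:char_complexity_std_Borel_subspaces}, and the only thing to check is the routine identification of $\Bor_\kappa(X,\tau)$ with $\Bor_\kappa(\pre{\kappa}{\kappa})\restriction A$ under the homeomorphism $h$, which holds because $h$ sends a basis of $\tau$ to a basis of $\tau_b\restriction A$ and the $\kappa^+$-algebra operation commutes with restriction to a subset.
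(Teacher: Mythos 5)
Your proof is correct and follows the paper's own route exactly: the nontrivial direction embeds $(X,\tau)$ into $\pre{\kappa}{\kappa}$ via Theorem~\ref{thm:charK-metrizable} and then applies Theorem~\ref{thm:char_complexity_std_Borel_subspaces} to force the image to be $\kappa$-Borel, while the converse is essentially Definition~\ref{def:standardBorel} (the paper notes it holds ``by definition'' rather than citing the easy direction of Theorem~\ref{thm:char_complexity_std_Borel_subspaces}, but this is immaterial). The identification of $\Bor_\kappa(X,\tau)$ with $\Bor_\kappa(\pre{\kappa}{\kappa})\restriction A$ that you flag at the end is indeed the only routine verification needed.
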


In~\cite[Definition 3.6]{MottoRos:2013}, the author considered \emph{topological} spaces \( (X,\tau) \) with weight \( \leq \kappa \) such that the induced \( \kappa \)-Borel structure is \( \kappa \)-Borel isomorphic to a \( \kappa \)-Borel subset of \( \pre{\kappa}{\kappa} \). By Corollary~\ref{cor:topgrneratingstandardBorel} it turns out that when \( \tau \) is regular Hausdorff and \( \kappa \)-additive, a space \( (X,\tau) \) 
satisfies~\cite[Definition 3.6]{MottoRos:2013} if and only if it is homeomorphic (and not just \( \kappa \)-Borel isomorphic) to a \( \kappa \)-Borel subset of \( \pre{\kappa}{\kappa} \).

%%%%%%%%%%%%
\section{Final remarks and open questions}

In the classical setup, Polish spaces are closed under countable sums, countable 
products, and \( G_\delta \) subspaces. Moving to the generalized context, 
all classes considered so far are trivially closed under sums of size \( \leq \kappa \). 
However,
by Theorem~\ref{thm:SC-space+G-Polish} the class of \( \SC_\kappa \)-spaces is already 
lacking closure with respect to closed subspaces (even when restricting the attention to  
\( \kappa \)-additive spaces or, equivalently, to spherically complete \( \GG \)-Polish 
spaces). In view of Proposition~\ref{prop:G_deltasubspaces}, the class of 
\( \SFC_\kappa \)-spaces is a more promising option. Indeed, since such class is also 
straightforwardly closed under \( \leq \kappa \)-products, where the product is naturally  
endowed by the \( < \kappa \)-supported product topology, we easily get:

\begin{theorem} \label{thm:closurepropertiesSFC}
The class of \( \SFC_\kappa \)-spaces is closed under \( G^\kappa_\delta \) subspaces and \( \leq \kappa \)-sized sums and products.
\end{theorem}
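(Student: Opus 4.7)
The plan is as follows. Closure under $G^\kappa_\delta$ subspaces is exactly Proposition~\ref{prop:G_deltasubspaces}, so nothing new is needed. For closure under $\leq \kappa$-sized sums $X = \bigsqcup_{i<\mu} X_i$, the weight bound is immediate, and the strategy is equally easy: as soon as $\pI$ makes her first move $(U_0, x_0)$, the unique component $X_{i_0}$ containing $x_0$ is determined, the play is automatically confined to $X_{i_0}$ from round $1$ onward, and $\pII$ simply follows a winning strategy for $\fCho^s_\kappa(X_{i_0})$ in $X_{i_0}$.

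The real content is closure under $\leq \kappa$-sized products. Fix $\SFC_\kappa$-spaces $X_i$ with bases $\mathcal{B}_i$ of size $\leq \kappa$ and winning strategies $\sigma_i$ for $\pII$ in $\fCho^s_\kappa(X_i)$ with range in $\mathcal{B}_i$ (by Remark~\ref{rmk:strategyrangeinbasis}), for $i<\mu \leq \kappa$, and endow $X = \prod_{i<\mu} X_i$ with the $<\kappa$-supported product topology. Its canonical basis $\mathcal{B}$ consists of boxes $\prod_i U_i$ with $U_i \in \mathcal{B}_i \cup \{X_i\}$ and support $|\{i : U_i \neq X_i\}| < \kappa$. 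Using $\kappa^{<\kappa} = \kappa$ together with $\mu \leq \kappa$, one verifies $|\mathcal{B}| \leq \kappa$, so $X$ has weight $\leq \kappa$; Hausdorffness and regularity of $X$ are standard. Again by Remark~\ref{rmk:strategyrangeinbasis}, I restrict $\pI$'s moves to be basic boxes $U_\alpha = \prod_i U_{\alpha, i} \in \mathcal{B}$ (intersected with all previous $V_\beta$).

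The strategy for $\pII$ is to run a parallel copy of $\fCho^s_\kappa(X_i)$ on each factor, using $\sigma_i$ to reply on coordinate $i$ to the auxiliary move $(U_{\alpha, i}, p_i(x_\alpha))$ (where $p_i$ is the projection $X \to X_i$); on coordinates $i$ not touched at round $\alpha$, I let $\pI$'s auxiliary move be the current intersection on coordinate $i$ paired with $p_i(x_\alpha)$. Letting $V_{\alpha, i} \in \mathcal{B}_i$ be $\sigma_i$'s reply, $\pII$ plays $V_\alpha = \prod_i V_{\alpha, i} \in \mathcal{B}$ in the product game, which contains $x_\alpha$ and is contained in $U_\alpha$. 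The winning condition is verified via the basic observation that $\prod_i A_i \neq \emptyset$ iff each $A_i \neq \emptyset$: if at some limit $\gamma < \kappa$ we have $\bigcap_{\beta < \gamma} V_\beta = \emptyset$, then $\pII$ wins the product game automatically; otherwise each parallel run is a legal play in $\fCho^s_\kappa(X_i)$ with $\pII$ following $\sigma_i$ and satisfying the nonemptiness hypothesis at every limit, so its final intersection $\bigcap_{\beta < \kappa} V_{\beta, i}$ is nonempty, whence $\bigcap_{\beta<\kappa} V_\beta = \prod_i \bigcap_{\beta < \kappa} V_{\beta, i} \neq \emptyset$ as required. The main technical subtlety I expect is the bookkeeping at limit stages and on coordinates that $\pI$ never explicitly addresses, but the $<\kappa$-supported product topology interacts nicely enough with the factorwise game structure to make this routine.
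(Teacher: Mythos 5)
Your proposal is correct and matches the paper's approach: the paper derives the $G^\kappa_\delta$ part directly from Proposition~\ref{prop:G_deltasubspaces} and simply asserts that closure under $\leq\kappa$-sized sums and $<\kappa$-supported products is straightforward, which is exactly the coordinatewise parallel-game argument you spell out. Your observation that the fair winning condition is what makes the product argument go through (a product of nonempty coordinate intersections is nonempty, and an empty limit intersection lets $\pII$ win automatically) is precisely the point implicit in the paper's remark.
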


Moving to \( \GG \)-Polish spaces, by Theorem~\ref{thm:Kpolishsubspaces} we still have 
closure under \( G^\kappa_\delta \) subspaces. However, it is then not transparent how 
to achieve closure under \( \leq \kappa \)-sized products. The na\"ive attempt of mimicking 
what is done in the classical case would require to first develop a theory of convergent 
\( \kappa \)-indexed series in some suitable group \( \GG \), and then use it to try to 
define the complete \( \GG \)-metric on the product. 
Theorem~\ref{thm:charK-Polish} provides an elegant bypass to these difficulties and 
directly leads us the the following theorem.

\begin{theorem} \label{thm:closureproperties}
The class of \( \GG \)-Polish spaces (equivalently: \( \kappa \)-additive \( \SFC_\kappa \)-spaces) is closed under \( G^\kappa_\delta \)-subspaces and \( \leq \kappa \)-sized sums and products.
\end{theorem}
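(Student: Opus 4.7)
The plan is to reduce all three closure properties to statements about subsets of $\pre{\kappa}{\kappa}$, by routing through Theorem~\ref{thm:charK-Polish}. This avoids any direct manipulation of $\GG$-metrics; in particular it sidesteps the naive task of defining a Cauchy-complete $\GG$-metric on a $\kappa$-length product, which would otherwise require developing a theory of $\kappa$-indexed convergent series in $\GG$.

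Closure under $G^\kappa_\delta$ subspaces is already the content of Theorem~\ref{thm:Kpolishsubspaces}, so I would simply invoke it. For $\leq \kappa$-sized sums, I would realize each summand $X_\alpha$ as a closed subset $C_\alpha \subseteq \pre{\kappa}{\kappa}$ via Theorem~\ref{thm:charK-Polish} and embed the topological sum $\bigsqcup_{\alpha < \lambda} X_\alpha$ (with $\lambda \leq \kappa$) into $\pre{\kappa}{\kappa}$ by the map $(\alpha, x) \mapsto \langle \alpha \rangle \conc x$. Since the sets $\Nbhd_{\langle \alpha \rangle}$ form a clopen partition of $\pre{\kappa}{\kappa}$ indexed by $\kappa$, the image is closed in $\pre{\kappa}{\kappa}$, and hence the sum is $\GG$-Polish by another application of Theorem~\ref{thm:charK-Polish}.

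For $\leq \kappa$-sized products, endowed with the $<\kappa$-supported product topology as is customary, I would realize each factor as a closed $C_\alpha \subseteq \pre{\kappa}{\kappa}$, observe that $\prod_{\alpha < \lambda} C_\alpha$ is closed in $\prod_{\alpha < \lambda} \pre{\kappa}{\kappa}$, and then exhibit a homeomorphism $\prod_{\alpha < \lambda} \pre{\kappa}{\kappa} \cong \pre{\kappa}{\kappa}$. For the latter I would fix a bijection $b \colon \lambda \times \kappa \to \kappa$, which exists because $\lambda \cdot \kappa = \kappa$ for every $1 \leq \lambda \leq \kappa$, and use it to transfer through the canonical identification $\prod_{\alpha < \lambda} \pre{\kappa}{\kappa} = \pre{\lambda \times \kappa}{\kappa}$. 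The main step that requires genuine checking, and hence the principal potential obstacle, will be verifying that this identification turns the $<\kappa$-supported product topology into the bounded topology on $\pre{\kappa}{\kappa}$: a basic open of the product, fixing values at a $<\kappa$-sized set of coordinates, transports via $b$ to a cylinder in $\pre{\kappa}{\kappa}$ determined by some $<\kappa$-sized $T \subseteq \kappa$, and any such $T$-cylinder equals a union of basic opens $\Nbhd_s$ for $s \in \pre{\gamma}{\kappa}$ with $\gamma < \kappa$ large enough that $T \subseteq \gamma$, using the regularity of $\kappa$ and $\kappa^{<\kappa} = \kappa$. Conversely every $\Nbhd_s$ is itself such a cylinder. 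Once this homeomorphism is in hand, $\prod_{\alpha < \lambda} X_\alpha$ is homeomorphic to a closed subset of $\pre{\kappa}{\kappa}$, and a final invocation of Theorem~\ref{thm:charK-Polish} completes the argument.
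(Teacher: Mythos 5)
Your argument is correct, but it takes a genuinely different route from the paper's. The paper's proof of the product case is a one-liner: using the equivalence \ref{thm:charK-Polish-a}\( \Leftrightarrow \)\ref{thm:charK-Polish-b} of Theorem~\ref{thm:charK-Polish}, it simply observes that \( \kappa \)-additivity and the strong fair \( \kappa \)-Choquet property are each preserved by \( \leq \kappa \)-sized products with the \( {<}\kappa \)-supported topology (sums being trivial and \( G^\kappa_\delta \) subspaces being Theorem~\ref{thm:Kpolishsubspaces}, exactly as you say). You instead use the equivalence \ref{thm:charK-Polish-a}\( \Leftrightarrow \)\ref{thm:charK-Polish-d} and realize the product concretely as a closed subset of \( \pre{\kappa}{\kappa} \) via the identification \( \prod_{\alpha<\lambda}\pre{\kappa}{\kappa} \cong \pre{\lambda\times\kappa}{\kappa} \cong \pre{\kappa}{\kappa} \). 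Both are legitimate instances of the ``elegant bypass'' the paper advertises; yours has the merit of making explicit the content that the paper's ``straightforwardly'' conceals (in particular, the interleaving of \( \kappa \)-many strategies that a direct game-theoretic verification would require), and it is essentially the same device the paper itself deploys in Claim~\ref{claim:family} inside the proof of Proposition~\ref{prop:changeoftopology}, where a \( \kappa \)-supported product of closed subsets of \( \pre{\kappa}{\kappa} \) is observed to be closed in \( {}^{\kappa}(\pre{\kappa}{\kappa}) \cong \pre{\kappa}{\kappa} \). You correctly isolate the one point that needs genuine checking, namely that transporting along a bijection \( b \colon \lambda\times\kappa\to\kappa \) carries the \( {<}\kappa \)-supported product topology to the bounded topology; your sketch of that verification (cylinders on \( {<}\kappa \)-sized coordinate sets are unions of sets \( \Nbhd_s \) by regularity of \( \kappa \), and conversely) is sound. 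The only cosmetic remark is that \( \kappa^{<\kappa}=\kappa \) is not actually needed for the openness of the transported cylinders, only regularity is; the cardinal arithmetic hypothesis is relevant to keeping the weight \( \leq\kappa \), which in your setup comes for free from the homeomorphism with \( \pre{\kappa}{\kappa} \).
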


\begin{proof}
For \( \leq \kappa \)-sized products, just notice that both the property of being 
\( \kappa \)-additive and the property of being strong fair \( \kappa \)-Choquet are straightforwardly preserved by such an operation.
\end{proof}

Moreover, we also get the analogue of Sierpi\'nski's theorem~\cite[Theorem 8.19]{KechrisMR1321597}: 
the classes of \( \GG \)-Polish spaces and  \( \SFC_\kappa \)-spaces are both closed under 
continuous open images. (Notice that a similar result holds for \( \SC_\kappa \)-spaces, as 
observed in~\cite[Proposition 2.7]{CoskSchlMR3453772}.)

\begin{theorem} \label{thm:closurecontopenimages}
Let \( X \) be \( \GG \)-Polish, and \( Y \) be a space of weight \( \leq \kappa \). If there is a continuous open surjection \( f \) from \( X \) onto \( Y \), then \( Y \) is \( \GG \)-Polish. 

The same is true is we replace \( \GG \)-Polishness by the (weaker) property of being an \( \SFC_\kappa \)-space.
\end{theorem}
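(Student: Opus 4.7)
I would prove the assertion about \( \SFC_\kappa \)-spaces first; the \( \GG \)-Polish one then follows via Theorem~\ref{thm:charK-Polish}, because \( \kappa \)-additivity is preserved under continuous open surjections. Indeed, given a family \( (U_\alpha)_{\alpha<\gamma} \) of open sets in \( Y \) with \( \gamma < \kappa \), surjectivity of \( f \) yields \( \bigcap_\alpha U_\alpha = f\bigl(\bigcap_\alpha f^{-1}(U_\alpha)\bigr) \), which is open because \( X \) is \( \kappa \)-additive and \( f \) is open. Hence once we know that \( Y \) is an \( \SFC_\kappa \)-space, it is automatically a \( \kappa \)-additive one, which by Theorem~\ref{thm:charK-Polish} is equivalent to \( \GG \)-Polish.

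\textbf{Shadow-play strategy.} Fix a winning strategy \( \tau \) for \( \pII \) in \( \fCho^s_\kappa(X) \). I would define a strategy \( \sigma \) for \( \pII \) in \( \fCho^s_\kappa(Y) \) by maintaining a parallel ``shadow'' run in \( X \). After \( \pI \) plays \( (U_\alpha, y_\alpha) \) in \( Y \), the shadow lift consists of a preimage \( x_\alpha \in f^{-1}(y_\alpha) \) lying in the previous shadow move (namely in \( V'_{\alpha-1} \) at a successor, or in \( \bigcap_{\beta<\alpha} V'_\beta \) at a limit), together with an open neighborhood \( U'_\alpha \) of \( x_\alpha \) contained in \( f^{-1}(U_\alpha) \). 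Then \( V'_\alpha := \tau(\text{shadow history}) \) is \( \pII \)'s shadow response, and \( \sigma \) plays \( V_\alpha := f(V'_\alpha) \); this set is open by openness of \( f \), contains \( y_\alpha \), and is contained in \( U_\alpha \) since \( V'_\alpha \subseteq U'_\alpha \subseteq f^{-1}(U_\alpha) \).

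\textbf{Main obstacle and winning condition.} The lift is routine at successor stages: from \( y_{\beta+1} \in V_\beta = f(V'_\beta) \) one extracts \( x_{\beta+1} \in V'_\beta \cap f^{-1}(y_{\beta+1}) \) and takes \( U'_{\beta+1} := f^{-1}(U_{\beta+1}) \cap V'_\beta \). The hard step, and the principal obstacle, is the lift at a limit round \( \gamma \), where one needs \( x_\gamma \in f^{-1}(y_\gamma) \cap \bigcap_{\beta<\gamma} V'_\beta \); in general \( f\bigl(\bigcap V'_\beta\bigr) \) can be strictly smaller than \( \bigcap f(V'_\beta) = \bigcap V_\beta \), so that the hypothesis \( y_\gamma \in \bigcap V_\beta \) need not yield any such preimage. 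I would overcome this by invoking Remark~\ref{rmk:strategyrangeinbasis} to restrict \( \tau \) to a basis of \( X \) consisting of \( f \)-saturated open sets (those \( V' \subseteq X \) with \( V' = f^{-1}(f(V')) \)), for which \( \bigcap V'_\beta = f^{-1}\!\left(\bigcap V_\beta\right) \) and the lifting becomes automatic; when such a basis is unavailable, the fair-game rules on \( X \) allow \( \pII \) to restart the shadow play at any limit where the lift fails, with a fresh use of \( \tau \). A bookkeeping argument then shows that on any run of \( \fCho^s_\kappa(Y) \) whose limit intersections are all non-empty, there is an eventual segment of the shadow play on which \( \tau \) is followed without restart and whose limit intersections in \( X \) are also non-empty; the winning property of \( \tau \) then gives \( \bigcap V'_\alpha \neq \emptyset \) on that segment, and applying \( f \) yields the required point of \( \bigcap_{\alpha<\kappa} V_\alpha \).
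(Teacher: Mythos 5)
Your reduction to the two preservation statements and your proof that \( \kappa \)-additivity is preserved coincide with the paper's; the paper, however, simply declares the preservation of the strong fair \( \kappa \)-Choquet property ``straightforward'', whereas you correctly isolate the real difficulty, namely lifting \( \pI \)'s point \( y_\gamma \) at a limit round into \( f^{-1}(y_\gamma)\cap\bigcap_{\beta<\gamma}V'_\beta \). Unfortunately neither of your two remedies closes that gap. The \( f \)-saturated open sets are exactly the sets of the form \( f^{-1}(U) \) with \( U \) open in \( Y \); they form a basis of \( X \) only when the topology of \( X \) is the initial topology induced by \( f \), so Remark~\ref{rmk:strategyrangeinbasis} cannot be used to put the range of \( \tau \) inside them. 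The restart device is then the substantive proposal, but the asserted ``bookkeeping argument'' is precisely the missing step: nothing in the construction prevents the lift from failing at cofinally many limit ordinals, and if the restart points are unbounded in \( \kappa \) then no tail segment of the shadow play is a genuine length-\( \kappa \) run of \( \fCho^s_\kappa(X) \) in which \( \tau \) has been followed, so the winning property of \( \tau \) can never be invoked. (Fairness of the game on \( X \) only lets \( \pII \) win the \emph{shadow} run when a limit intersection upstairs is empty; it does not produce the point of \( \bigcap_{\alpha<\kappa}V_\alpha \) that \( \pII \) owes downstairs.)

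The lift can instead be forced to succeed at every limit, making restarts unnecessary, at least for the first assertion. Having shown that \( Y \) is \( \kappa \)-additive of weight \( \leq\kappa \), fix compatible \( \GG \)-metrics \( d_X \) on \( X \) (Cauchy-complete, since \( X \) is \( \GG \)-Polish) and \( d_Y \) on \( Y \) (Theorem~\ref{thm:charK-metrizable}), together with a decreasing sequence \( (r_\alpha)_{\alpha<\kappa} \) coinitial in \( \GG^+ \). When choosing \( V'_\alpha \), use continuity of \( f \) at \( x_\alpha \) to take a \( d_X \)-ball around \( x_\alpha \) of radius \( \leq r_\alpha \) with \( \cl(V'_\alpha)\subseteq U'_\alpha \) and \( f(V'_\alpha)\subseteq B_{d_Y}(y_\alpha,r_\alpha) \). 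At a limit \( \gamma \) the shadow points \( (x_\beta)_{\beta<\gamma} \) are then Cauchy and converge to some \( x^*\in\bigcap_{\beta<\gamma}\cl(V'_\beta)=\bigcap_{\beta<\gamma}V'_\beta \), while \( y_\gamma\in\bigcap_{\beta<\gamma}V_\beta\subseteq\bigcap_{\beta<\gamma}B_{d_Y}(y_\beta,r_\beta) \) forces \( y_\beta\to y_\gamma \); by continuity of \( f \) and uniqueness of limits, \( f(x^*)=y_\gamma \), so \( x^* \) is the required lift, and the same computation at \( \gamma=\kappa \) yields \( f(x^*)\in\bigcap_{\alpha<\kappa}V_\alpha\neq\emptyset \). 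Note that this only settles the \( \GG \)-Polish assertion; for the second assertion, where \( X \) is merely an \( \SFC_\kappa \)-space and no compatible metric need exist on either side, the limit-lift obstacle you identified persists and your proposal does not resolve it.
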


\begin{proof}
By Theorem~\ref{thm:charK-Polish}, it is enough to show that the properties of being 
strong fair \( \kappa \)-Choquet and being \( \kappa \)-additive are preserved by \( f \). 
The former is straightforward. For the latter, let \( (U_\alpha)_{\alpha < \nu} \) be a  
sequence of open subsets of \( Y \), for some \( \nu < \kappa \). If 
\( \bigcap_{\alpha < \nu} U_\alpha \neq \emptyset \), let \( y \) be arbitrary in 
\( \bigcap_{\alpha < \nu} U_\alpha \) and, using surjectivity of \( f \), let \( x \in X \) be such 
that \( f(x) = y \). Since \( x \in \bigcap_{\alpha < \nu} f^{-1}(U_\alpha) \) and the latter set 
is open by \( \kappa \)-additivity of \( X \), there is \( V \subseteq X \) open such that 
\( x \in V \subseteq \bigcap_{\alpha < \nu} f^{-1}(U_\alpha) \). It follows that \( f(V) \) is an 
open neighborhood of \( y \) such that \( f(V) \subseteq \bigcap_{\alpha < \nu} U_\alpha \), as desired.
\end{proof}

There is still one interesting open question related to \( \SFC_\kappa \)-subspaces of a 
given space of weight \( \leq \kappa \). By 
Corollary~\ref{cor:subspace}, if \( X \) is also \( \kappa \)-additive and \( Y \subseteq X \) is 
an \( \SFC_\kappa \)-subspace of it, then \( Y \) is \( G^\kappa_\delta \) in \( X \). We do 
not know if the same remains true if we drop \( \kappa \)-additivity. The following 
corollary is the best result we have in this direction: it follows from 
Theorem~\ref{thm:charK-metrizable} and the fact that 
by \( \kappa^{< \kappa} = \kappa \) and the proof of Proposition~\ref{prop:refintetoGpolish}, 
every (regular Hausdorff) topology of weight \( \leq \kappa \) 
can be naturally refined to a \( \kappa \)-additive one  in such a way that 
the new open sets are $F^\kappa_\sigma$ 
in the old topology.

\begin{corollary} \label{cor:not_k_additive_subspace}
Let \( X \) be a space of weight $\leq \kappa$, and \( Y \subseteq X \) 
be an \( \SFC_\kappa \)-subspace of it. Then \( Y \) is a \( \leq \kappa \)-sized intersection 
of \( F^\kappa_\sigma \) subsets of \( X \).
\end{corollary}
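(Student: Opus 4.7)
The plan is to refine the topology $\tau$ on $X$ to a $\kappa$-additive topology $\tau'$ under which $Y$ becomes a $G^\kappa_\delta$ set, and then to check that every $\tau'$-open set is already $F^\kappa_\sigma$ in $\tau$. Concretely, I would let $\tau'$ be the topology on $X$ generated by all $<\kappa$-sized intersections of $\tau$-open sets. Arguing as in the proof of Proposition~\ref{prop:refintetoGpolish} and using $\kappa^{<\kappa}=\kappa$, the space $(X,\tau')$ is $\kappa$-additive, regular Hausdorff, and of weight $\leq\kappa$, hence $\GG$-metrizable by Theorem~\ref{thm:charK-metrizable}. The same refinement applied to $Y$ gives precisely the relative topology $\tau'\restriction Y$, and the strategy-preservation argument of Proposition~\ref{prop:refintetoGpolish} applied to the $\SFC_\kappa$-space $(Y,\tau\restriction Y)$ shows that $(Y,\tau'\restriction Y)$ is a $\kappa$-additive $\SFC_\kappa$-space, equivalently a $\GG$-Polish space by Theorem~\ref{thm:charK-Polish}.

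Since $(X,\tau')$ is $\GG$-metrizable and $(Y,\tau'\restriction Y)$ is $\GG$-Polish, Corollary~\ref{cor:subspace} yields that $Y$ is $G^\kappa_\delta$ in $(X,\tau')$, say $Y=\bigcap_{\alpha<\kappa}W_\alpha$ with each $W_\alpha\in\tau'$. It then suffices to verify that each $W_\alpha$ is $F^\kappa_\sigma$ in $\tau$. For this, I first observe that in the regular space $(X,\tau)$ of weight $\leq\kappa$, every $\tau$-open $U$ equals $\bigcup\{\cl_\tau(V)\mid V\in\B,\ \cl_\tau(V)\subseteq U\}$ for a fixed basis $\B$ of size $\leq\kappa$, so $U$ is itself $F^\kappa_\sigma$ in $\tau$. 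A basic $\tau'$-open set has the form $\bigcap_{\beta<\nu}U_\beta$ with $\nu<\kappa$ and $U_\beta\in\tau$; writing $U_\beta=\bigcup_{\gamma<\kappa}C_{\beta,\gamma}$ with $\tau$-closed $C_{\beta,\gamma}$, distributivity gives
\[
\bigcap_{\beta<\nu}U_\beta \;=\; \bigcup_{f\in\pre{\nu}{\kappa}}\bigcap_{\beta<\nu}C_{\beta,f(\beta)},
\]
a union of size $\kappa^{<\kappa}=\kappa$ of $\tau$-closed sets. Since $(X,\tau')$ has weight $\leq\kappa$, an arbitrary $\tau'$-open set is a $\leq\kappa$-sized union of such basic pieces, hence itself $F^\kappa_\sigma$ in $\tau$. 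Applying this to each $W_\alpha$ finishes the proof.

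The main technical point to watch is the preservation of the $\SFC_\kappa$ property when passing from $(Y,\tau\restriction Y)$ to $(Y,\tau'\restriction Y)$: this is exactly what the proof of Proposition~\ref{prop:refintetoGpolish} delivers, and crucially it only requires $Y$ (rather than $X$) to already be strongly fair $\kappa$-Choquet, which is precisely our hypothesis.
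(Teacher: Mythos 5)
Your argument is correct and follows essentially the same route as the paper, which proves this corollary exactly by combining Theorem~\ref{thm:charK-metrizable} with the observation that the $\kappa$-additive refinement from the proof of Proposition~\ref{prop:refintetoGpolish} produces new open sets that are $F^\kappa_\sigma$ in the original topology. Your write-up merely fills in the details (the distributivity computation and the appeal to Corollary~\ref{cor:subspace}) that the paper leaves implicit.
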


It is then natural to ask whether the above computation can be improved.

\begin{question} 
In the same hypotheses of Corollary~\ref{cor:not_k_additive_subspace}, is $Y$ a 
$G_\delta^\kappa$ subset of $X$? What if we assume that \( X \) be \( \SFC_\kappa \)?
\end{question}

In the literature on generalized descriptive set theory, the notion of an analytic set is 
usually generalized as follows.

\begin{definition} \label{def:kappa-analytic}
A subset of a space%
\footnote{Since \( \SFC_\kappa \)-spaces have been introduced in the present paper, the definition of \( \kappa \)-analytic sets given in the literature is of course usually restricted to the spaces \( \pre{\kappa}{\kappa} \) and \( \pre{\kappa}{2} \) and their powers. The only exception is~\cite{MottoRos:2013}, where it is given for all \( \leq \kappa \)-weighted topologies generating a standard \( \kappa \)-Borel structure (see~\cite[Definitions 3.6 and 3.8]{MottoRos:2013}).}
 of weight \( \leq \kappa \)
 is \markdef{\( \kappa \)-analytic} if and only if it is a continuous image of a closed subset 
of \( \pre{\kappa}{\kappa} \). A set is \markdef{\( \kappa \)-coanalytic} if its complement 
is \( \kappa \)-analytic, and it is \markdef{\( \kappa \)-bianalytic} if it is both 
\( \kappa \)-analytic and \( \kappa \)-coanalytic.
\end{definition}
Although the definition works for a larger class of spaces, in this paper we will concentrate on subsets of \( \SFC_\kappa \)-spaces.
Analogously to what happens 
in the classical case, one can then prove that Definition~\ref{def:kappa-analytic} is equivalent to several 
other variants: for example, a set \( A \subseteq \pre{\kappa}{\kappa} \) is 
\( \kappa \)-analytic if and only if it is the projection of a closed 
\( C \subseteq (\pre{\kappa}{\kappa})^2 \), if and only if%
\footnote{This reformulation involves only \( \kappa \)-Borel sets and functions, thus the notion of a \( \kappa \)-analytic set is independent on the actual topology. This allows us to naturally extend this concept to subsets of arbitrary (standard) \( \kappa \)-Borel spaces.}
 it is a \( \kappa \)-Borel image of 
some set \( B \in \Bor_\kappa(\pre{\kappa}{\kappa}) \) (see \cite[Corollary 7.3]{AM} 
and~\cite[Proposition 3.11]{MottoRos:2013}). As explained 
in~\cite[Theorem 1.5]{LuckeSchlMR3430247}, a major difference from the classical setup 
is instead that we cannot add among the equivalent reformulations of 
\( \kappa \)-analyticity that of being a continuous image of the whole 
\( \pre{\kappa}{\kappa} \)---this condition defines a properly smaller class when 
\( \kappa \) is uncountable (and, as usual, \( \kappa^{< \kappa} = \kappa \)).

The reason for using Definition~\ref{def:kappa-analytic} instead of directly 
generalizing~\cite[Definition 14.1]{KechrisMR1321597} is precisely that we were still 
lacking an appropriate notion of generalized Polish-like space. We can now fill this gap.

\begin{proposition} \label{prop:kappa-analytic}
Let \( X \) be an \( \SFC_\kappa \)-space. For any \( A \subseteq X \),
the following are equivalent:
\begin{enumerate-(a)}
\item \label{prop:kappa-analytic-a}
\( A \) is \( \kappa \)-analytic (i.e.\ a continuous image of a closed subset of \( \pre{\kappa}{\kappa} \));
\item \label{prop:kappa-analytic-b}
\( A \) is a continuous image of a \( \GG \)-Polish space;
\item \label{prop:kappa-analytic-c}
\( A \) is a continuous image of an \( \SFC_\kappa \)-space.
\end{enumerate-(a)}
\end{proposition}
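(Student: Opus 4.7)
The proof plan is to show the cyclic chain of implications \ref{prop:kappa-analytic-a} $\Rightarrow$ \ref{prop:kappa-analytic-b} $\Rightarrow$ \ref{prop:kappa-analytic-c} $\Rightarrow$ \ref{prop:kappa-analytic-a}, with all three being consequences of results already established in the paper. The first two implications are immediate from the classification of Polish-like spaces. Indeed, \ref{prop:kappa-analytic-a} $\Rightarrow$ \ref{prop:kappa-analytic-b} follows from Theorem~\ref{thm:charK-Polish} (specifically implication \ref{thm:charK-Polish-d} $\Rightarrow$ \ref{thm:charK-Polish-a}), which tells us that every closed subset of $\pre{\kappa}{\kappa}$ is already $\GG$-Polish, so a continuous surjection from such a closed set onto $A$ is in particular a continuous surjection from a $\GG$-Polish space. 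Similarly, \ref{prop:kappa-analytic-b} $\Rightarrow$ \ref{prop:kappa-analytic-c} follows from Lemma~\ref{lem:K-PolishareSFC}, which shows that every $\GG$-Polish space is an $\SFC_\kappa$-space.

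The content of the proposition therefore lies in \ref{prop:kappa-analytic-c} $\Rightarrow$ \ref{prop:kappa-analytic-a}. Here the natural tool is Corollary~\ref{cor:refintetoGpolish-}: if $Y$ is an $\SFC_\kappa$-space and $g \colon Y \to A$ is a continuous surjection witnessing \ref{prop:kappa-analytic-c}, then Corollary~\ref{cor:refintetoGpolish-} provides a pruned tree $T \subseteq \pre{<\kappa}{\kappa}$ and a continuous bijection $f \colon [T] \to Y$. The composition $g \circ f \colon [T] \to A$ is then a continuous surjection from the closed set $[T] \subseteq \pre{\kappa}{\kappa}$ onto $A$, which is precisely the definition of $A$ being $\kappa$-analytic.

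There is essentially no obstacle: the work has already been carried out in Corollary~\ref{cor:refintetoGpolish-}, which in turn rests on the topology-refinement machinery of Proposition~\ref{prop:refintetoGpolish} together with the characterization of $\kappa$-additive $\SFC_\kappa$-spaces as closed subsets of $\pre{\kappa}{\kappa}$ (Proposition~\ref{prop:charadditiveSCandSFCkappaspaces} / Theorem~\ref{thm:charadditivefairSCkappaspaces}). The only thing worth highlighting is the observation that the presence of the ambient $\SFC_\kappa$-space $X$ in the statement plays no role in the argument: $A$ is simply required to be some set of points, and its being $\kappa$-analytic depends only on $A$ itself together with its inclusion into a space of weight $\leq \kappa$ (this matches the discussion preceding Definition~\ref{def:kappa-analytic}). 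The proposition may thus be read as saying that the definition of $\kappa$-analyticity is robust under replacing $\pre{\kappa}{\kappa}$ in Definition~\ref{def:kappa-analytic} with any class of Polish-like spaces lying between ``closed subspaces of $\pre{\kappa}{\kappa}$'' and ``$\SFC_\kappa$-spaces'', which in light of Theorem~\ref{thm:char_up_to_kBorel_iso} is the widest such gap we have introduced.
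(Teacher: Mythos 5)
Your proof is correct and follows essentially the same route as the paper's: the first two implications are read off from Theorem~\ref{thm:charK-Polish}, and for \ref{prop:kappa-analytic-c} \( \Rightarrow \) \ref{prop:kappa-analytic-a} the paper performs explicitly the two steps (refine via Proposition~\ref{prop:refintetoGpolish}, then embed as a closed subset of \( \pre{\kappa}{\kappa} \)) that your citation of Corollary~\ref{cor:refintetoGpolish-} packages into a single invocation. The only difference is this bookkeeping choice, so the arguments coincide in substance.
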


\begin{proof}
The implications~\ref{prop:kappa-analytic-a} \( \Rightarrow \) \ref{prop:kappa-analytic-b} 
and \ref{prop:kappa-analytic-b} \( \Rightarrow \) \ref{prop:kappa-analytic-c} follow from 
Theorem~\ref{thm:charK-Polish}. For the remaining 
implication~\ref{prop:kappa-analytic-c} \( \Rightarrow \) \ref{prop:kappa-analytic-a}, 
suppose that \( Y \) is an \( \SFC_\kappa \)-space and that \( g \colon Y \to X \) is 
continuous and onto \( A \). Use
 Proposition~\ref{prop:refintetoGpolish} to refine the topology \( \tau \) of \( Y \) to a 
 topology \( \tau' \) such that \( (Y,\tau') \) is \( \kappa \)-additive and  still 
 \( \SFC_\kappa \). 
Notice that $g \colon (Y,\tau') \to X$ is still continuous because $\tau' \supseteq \tau$.
 Use Theorem~\ref{thm:charK-Polish} again to find a closed set 
 \( C \subseteq \pre{\kappa}{\kappa} \) and a homeomorphism \( f \colon C \to (Y,\tau') \): 
 then \( g \circ f \) is a continuous surjection from \( C \) onto \( A \).
\end{proof}

Clearly, in  Proposition~\ref{prop:kappa-analytic}\ref{prop:kappa-analytic-c} we can
equivalently consider \( \kappa \)-additive \( \SFC_\kappa \)-spaces. We
 instead cannot restrict ourselves to  \( \SC_\kappa \)-spaces, even when further 
 requiring \( \kappa \)-additivity. Indeed,  by Theorem~\ref{thm:SC-space+G-Polish} 
 and~\cite[Proposition 1.3]{LuckeSchlMR3430247}
every such space is a continuous image of the whole \( \pre{\kappa}{\kappa} \): it follows 
that the collection of all continuous images of \( \kappa \)-additive 
\( \SC_\kappa \)-spaces coincides with the collection of continuous images of 
\( \pre{\kappa}{\kappa } \), and it is thus strictly smaller than the class of all 
\( \kappa \)-analytic sets by the mentioned~\cite[Theorem 1.5]{LuckeSchlMR3430247}.

A variant of Definition~\ref{def:kappa-analytic} considered 
in~\cite{LuckeSchlMR3430247} is the class \( I^\kappa_{\mathrm{cl}} \) of continuous 
\emph{injective} images of closed subsets of \( \pre{\kappa}{\kappa} \) (clearly, all such 
sets are in particular \( \kappa \)-analytic). When \( \kappa = \omega \) the class 
\( I^\kappa_{\mathrm{cl}} \) coincides with Borel sets, but when 
\( \kappa > \omega \) the class \( I^\kappa_{\mathrm{cl}} \) is strictly 
larger than \( \Bor_\kappa(\pre{\kappa}{\kappa}) \) 
by~\cite[Corollary 1.9]{LuckeSchlMR3430247}. Moreover, if 
\( \mathrm{V} = \mathrm{L}[x] \) with \( x \subseteq \kappa \), then 
by~\cite[Corollary 1.14]{LuckeSchlMR3430247} all \( \kappa \)-analytic subsets of 
\( \pre{\kappa}{\kappa} \) belong to \( I^\kappa_{\mathrm{cl}} \). This result can be extended to \( \kappa \)-analytic subsets of arbitrary \( \SFC_\kappa \)-spaces.

\begin{corollary} \label{cor:Ikappaclforsfc}
Assume that \( \mathrm{V} = \mathrm{L}[x] \) with \( x \subseteq \kappa \), and let \( X \) be an arbitrary \( \SFC_\kappa \)-space. Then every \( \kappa \)-analytic \( A \subseteq X \) is a continuous injective image of a closed subset of \( \pre{\kappa}{\kappa} \).
\end{corollary}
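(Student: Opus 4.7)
The plan is to reduce the statement to the case $X = \pre{\kappa}{\kappa}$, which is precisely~\cite[Corollary 1.14]{LuckeSchlMR3430247}. By Corollary~\ref{cor:refintetoGpolish-} applied to the $\SFC_\kappa$-space $X$, there is a pruned tree $T \subseteq \pre{<\kappa}{\kappa}$ and a continuous bijection $h \colon [T] \to X$. If we can show that the preimage $h^{-1}(A)$ is $\kappa$-analytic as a subset of $\pre{\kappa}{\kappa}$, the cited result under the hypothesis $V = L[x]$ will yield a closed $C \subseteq \pre{\kappa}{\kappa}$ and a continuous injection $g \colon C \to \pre{\kappa}{\kappa}$ with $g(C) = h^{-1}(A) \subseteq [T]$. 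Then $h \circ g \colon C \to X$ is a continuous injection (composition of the injection $g$ and the bijection $h$) with image $h(h^{-1}(A)) = A$, as required.

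To verify the $\kappa$-analyticity of $h^{-1}(A)$, fix by hypothesis a continuous surjection $f \colon D \to A$ with $D \subseteq \pre{\kappa}{\kappa}$ closed. Since $X$ is Hausdorff, the fiber product
\[
E = \{ (d,t) \in D \times [T] \mid f(d) = h(t) \}
\]
is closed in $D \times [T]$ (as the preimage of the diagonal of $X \times X$ under the continuous map $(d,t) \mapsto (f(d), h(t))$), hence closed in $\pre{\kappa}{\kappa} \times \pre{\kappa}{\kappa}$. Since the latter is canonically homeomorphic to $\pre{\kappa}{\kappa}$ (via interleaving), $E$ is, up to homeomorphism, a closed subset of $\pre{\kappa}{\kappa}$. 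The second projection $p_2 \colon E \to [T]$ is continuous, and for $t \in [T]$ we have $t \in p_2(E)$ precisely when there exists $d \in D$ with $f(d) = h(t)$, i.e., precisely when $h(t) \in f(D) = A$, i.e., $t \in h^{-1}(A)$. Thus $h^{-1}(A) = p_2(E)$ is a continuous image of a closed subset of $\pre{\kappa}{\kappa}$, and so $\kappa$-analytic by Definition~\ref{def:kappa-analytic}.

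I do not anticipate a substantial obstacle: all ingredients are already in place, and the proof is essentially a two-step transfer via the continuous bijection provided by Corollary~\ref{cor:refintetoGpolish-}. The only point requiring attention is that $h$ is merely a continuous bijection and not a homeomorphism, which is why we cannot simply pull $A$ back by $h^{-1}$ and must instead use the fiber product $E$ to witness $\kappa$-analyticity of $h^{-1}(A)$; for this step only Hausdorffness of $X$ is used, so no additional assumptions on $X$ (such as $\kappa$-additivity) are needed.
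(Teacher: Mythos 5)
Your proof is correct and follows essentially the same route as the paper: apply Corollary~\ref{cor:refintetoGpolish-} to get a continuous bijection from a closed subset of \( \pre{\kappa}{\kappa} \) onto \( X \), observe that the preimage of \( A \) is still \( \kappa \)-analytic, invoke \cite[Corollary 1.14]{LuckeSchlMR3430247}, and compose. The only difference is that the paper simply asserts that \( \kappa \)-analytic sets are closed under continuous preimages, whereas you spell this out via the fiber-product argument; that verification is correct.
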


\begin{proof}
By Corollary~\ref{cor:refintetoGpolish-} there is a closed \( C \subseteq \pre{\kappa}{\kappa} \) and a continuous bijection \( f \colon C \to X \). Notice that \( f^{-1}(A) \) is \( \kappa \)-analytic in \( C \) because the class of \( \kappa \)-analytic sets is easily seen to be closed under continuous preimages, hence it is \( \kappa \)-analytic in \( \pre{\kappa}{\kappa} \) as well. By~\cite[Corollary 1.14]{LuckeSchlMR3430247} there is a continuous injection from some closed \( D \subseteq \pre{\kappa}{\kappa} \) onto \( f^{-1}(A) \), which composed with \( f \) gives the desired result.
\end{proof}

We are now going to 
show that the class \( I^\kappa_{\mathrm{cl}} \) can be characterized through changes of 
topology.

\begin{theorem} \label{thm:Ikappacl}
Let \( (X,\tau) \) be an \( \SFC_\kappa \)-space and \( A  \subseteq X \). Then the following are equivalent:
\begin{enumerate-(a)}
\item
\( A \in I^\kappa_{\mathrm{cl}} \);
\item
there is an \( \SFC_\kappa \) topology \( \tau' \) on \( A \) such that \( \tau' \supseteq \tau \restriction A \).
\end{enumerate-(a)}
\end{theorem}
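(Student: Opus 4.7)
The plan is to treat the two directions as essentially formal manipulations, since the hard work (refining topologies, matching $\SFC_\kappa$-spaces with tree bodies) has already been done in earlier results.

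For the implication (b) $\Rightarrow$ (a), I would start from an $\SFC_\kappa$ topology $\tau'$ on $A$ with $\tau' \supseteq \tau \restriction A$ and apply Corollary~\ref{cor:refintetoGpolish-} to $(A,\tau')$. This yields a pruned tree $T \subseteq \pre{<\kappa}{\kappa}$ together with a continuous bijection $f \colon [T] \to (A, \tau')$. The inclusion $\tau \restriction A \subseteq \tau'$ means that the identity map $(A,\tau') \to (A, \tau \restriction A)$ is continuous, and composing with this identity shows that $f$ remains continuous when viewed as a map $[T] \to (X, \tau)$. Since $f$ is still injective, it witnesses that $A$ is a continuous injective image of the closed set $[T] \subseteq \pre{\kappa}{\kappa}$, hence $A \in I^\kappa_{\mathrm{cl}}$.

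For the implication (a) $\Rightarrow$ (b), fix a closed $C \subseteq \pre{\kappa}{\kappa}$ and a continuous injection $g \colon C \to X$ with $g(C) = A$. Pushforward the topology of $C$ along $g$: that is, set
\[
\tau' = \{ g(U) \mid U \text{ is open in } C \},
\]
so that $g$ becomes a homeomorphism between $C$ and $(A, \tau')$ by construction (using injectivity of $g$). Since $C$ is a closed subspace of $\pre{\kappa}{\kappa}$, by Theorem~\ref{thm:charadditivefairSCkappaspaces} it is a (in fact $\kappa$-additive) $\SFC_\kappa$-space, and this property transfers to the homeomorphic copy $(A, \tau')$. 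It remains to verify $\tau' \supseteq \tau \restriction A$: given any $V \in \tau$, continuity of $g \colon C \to (X,\tau)$ gives that $g^{-1}(V)$ is open in $C$, and since $g$ is a bijection onto $A$ we have $g(g^{-1}(V)) = V \cap A$, which is therefore in $\tau'$ by definition.

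There is no real obstacle; both directions are essentially bookkeeping once Corollary~\ref{cor:refintetoGpolish-} (for the forward direction) and Theorem~\ref{thm:charadditivefairSCkappaspaces} (for the reverse) are in hand. The only thing to watch carefully is injectivity of $g$ in the second direction, which is exactly why we must work with the class $I^\kappa_{\mathrm{cl}}$ rather than the broader class of $\kappa$-analytic sets; indeed, for a merely continuous surjection the pushforward construction above would fail to produce a well-defined topology on $A$.
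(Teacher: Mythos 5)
Your proof is correct and follows essentially the same route as the paper's: pushing the topology of a closed $C \subseteq \pre{\kappa}{\kappa}$ forward along the continuous injection for (a) $\Rightarrow$ (b), and realizing $(A,\tau')$ as a continuous bijective image of a closed subset of $\pre{\kappa}{\kappa}$ for (b) $\Rightarrow$ (a). If anything, your appeal to Corollary~\ref{cor:refintetoGpolish-} in the second direction is slightly more careful than the paper's wording, since it correctly handles the case where $\tau'$ is not $\kappa$-additive (where one only gets a continuous bijection from a closed set rather than a homeomorphism, which is all that is needed).
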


\begin{proof}
Suppose first that \( C \subseteq \pre{\kappa}{\kappa} \) is closed and 
\( f \colon C \to X \) is a continuous injection with range \( A \). Let \( \tau' \) be obtained 
by pushing forward along \( f \) the (relative) topology of \( C \), so that \( (A, \tau') \) and 
\( C \) are homeomorphic. Then \( (A,\tau') \) is an \( \SFC_\kappa \)-space by 
Theorem~\ref{thm:charK-Polish}, and \( \tau' \) refines \( \tau \restriction A \) because 
\( f \) was continuous.

Conversely, if \( (A, \tau' ) \) is an \( \SFC_\kappa \)-space then by Theorem~\ref{thm:charK-Polish} 
again  there is a closed \( C \subseteq \pre{\kappa}{\kappa} \) and a homeomorphism 
\( f \colon C \to (A, \tau') \). Since \( \tau' \supseteq \tau \restriction A \), it follows that 
\( C \) and \( f \) witness \( A \in I^\kappa_{\mathrm{cl}} \).
\end{proof}

This also allows us to precisely determine to what extent the technique of change of 
topology discussed in Section~\ref{sec:standardBorel} can be applied to 
non-\( \kappa \)-Borel sets.

\begin{corollary} \label{cor:limitationchangesoftopologies}
Let \( (X,\tau) \) be an \( \SFC_\kappa \)-space.
\begin{enumerate-(1)}
\item \label{cor:limitationchangesoftopologies-a}
Let \( A \subseteq X \).
If there is an \( \SFC_\kappa \) topology \( \tau' \supseteq \tau \) on \( X \) such that \( A \) is \( \tau' \)-clopen (or even just \( A \in \Bor_\kappa(X,\tau') \)), then \( A \) is \( \kappa \)-bianalytic.
\item \label{cor:limitationchangesoftopologies-b}
If \( \mathrm{V} = \mathrm{L}[x] \) with \( x \subseteq \kappa \), then for all \( \kappa \)-bianalytic \( A \subseteq X \) there is a \( \kappa \)-additive \( \SFC_\kappa \) topology \( \tau' \supseteq \tau \) on \( X \) such that \( A \) is \(\tau' \)-clopen.
\end{enumerate-(1)}
\end{corollary}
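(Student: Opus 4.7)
Both parts should follow almost mechanically from results already established earlier in the paper; the work lies in combining them correctly.

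For part~\ref{cor:limitationchangesoftopologies-a}, the plan is to use Corollary~\ref{cor:borelasinjimage} to witness $\kappa$-analyticity simultaneously for $A$ and its complement. Given an $\SFC_\kappa$ topology $\tau' \supseteq \tau$ with $A \in \Bor_\kappa(X,\tau')$, apply Corollary~\ref{cor:borelasinjimage} to find a closed $C \subseteq \pre{\kappa}{\kappa}$ and a continuous surjection $f \colon C \to (A, \tau' \restriction A)$. Since $\tau \subseteq \tau'$, the inclusion $(A, \tau' \restriction A) \hookrightarrow (X,\tau)$ is continuous, so $f$ witnesses that $A$ is $\kappa$-analytic in $(X,\tau)$. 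Apply the same argument to $X \setminus A \in \Bor_\kappa(X,\tau')$ to get the other half of $\kappa$-bianalyticity.

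For part~\ref{cor:limitationchangesoftopologies-b}, the plan is to refine $\tau$ on $A$ and on $X \setminus A$ separately and then glue. Under $V = L[x]$, Corollary~\ref{cor:Ikappaclforsfc} applied to the $\kappa$-analytic sets $A$ and $X \setminus A$ in the $\SFC_\kappa$-space $(X,\tau)$ gives $A, X \setminus A \in I^\kappa_{\mathrm{cl}}$ (with respect to $(X,\tau)$). By Theorem~\ref{thm:Ikappacl}, there are then $\SFC_\kappa$ topologies $\tau_0$ on $A$ and $\tau_1$ on $X \setminus A$ refining $\tau \restriction A$ and $\tau \restriction (X \setminus A)$ respectively. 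Using Proposition~\ref{prop:refintetoGpolish}, further refine $\tau_0$ and $\tau_1$ to $\kappa$-additive $\SFC_\kappa$ topologies (still refining the original relative topologies). Let $\tau'$ be the resulting sum topology on $X = A \sqcup (X \setminus A)$, so that $A$ and $X \setminus A$ are by construction $\tau'$-clopen.

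It then remains to verify the three properties of $\tau'$. That $\tau'$ is $\kappa$-additive $\SFC_\kappa$ is immediate from closure of $\kappa$-additive $\SFC_\kappa$-spaces under $\leq \kappa$-sized sums (Theorem~\ref{thm:closurepropertiesSFC} together with Theorem~\ref{thm:charK-Polish}). The nontrivial check is $\tau' \supseteq \tau$: given a $\tau$-open $U \subseteq X$, write $U = (U \cap A) \cup (U \cap (X \setminus A))$; then $U \cap A$ is $(\tau \restriction A)$-open, hence $\tau_0$-open by the refinement, hence $\tau'$-open since $A$ is $\tau'$-clopen, and symmetrically for $U \cap (X \setminus A)$. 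I do not expect any real obstacle: the only subtle point is ensuring that the pieces we glue individually refine the correct relative topologies, which is exactly what Theorem~\ref{thm:Ikappacl} provides.
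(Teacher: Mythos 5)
Your proof is correct. For part~\ref{cor:limitationchangesoftopologies-b} you follow essentially the same route as the paper: decompose $X$ into $A$ and $X\setminus A$, use Corollary~\ref{cor:Ikappaclforsfc} to place both pieces in $I^\kappa_{\mathrm{cl}}$, and glue; the paper packages this as pushing forward the topology of a sum $C_0\sqcup C_1$ of closed subsets of $\pre{\kappa}{\kappa}$ along a continuous bijection, whereas you phrase it via Theorem~\ref{thm:Ikappacl} plus a further refinement by Proposition~\ref{prop:refintetoGpolish} to secure $\kappa$-additivity --- both are sound, and your explicit verification that the sum topology refines $\tau$ is the right check. For part~\ref{cor:limitationchangesoftopologies-a} your route differs slightly from the paper's: the paper notes that a $\tau'$-clopen $A$ is $G^\kappa_\delta$, invokes Proposition~\ref{prop:G_deltasubspaces} to see that $(A,\tau'\restriction A)$ and $(X\setminus A,\tau'\restriction(X\setminus A))$ are $\SFC_\kappa$, and then applies Theorem~\ref{thm:Ikappacl}, handling the general case $A\in\Bor_\kappa(X,\tau')$ by first refining to a $\tau''$ via Proposition~\ref{prop:changeoftopology}. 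Your appeal to Corollary~\ref{cor:borelasinjimage} (applied in $(X,\tau')$, then composed with the continuous identity into $(X,\tau)$) reaches the same conclusion in one step and treats the clopen and the merely $\kappa$-Borel cases uniformly, which is a small economy over the paper's two-stage argument.
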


\begin{proof}
For part~\ref{cor:limitationchangesoftopologies-a} observe that since \( A \) is 
\( \tau' \)-clopen, then by Proposition~\ref{prop:G_deltasubspaces} both \( A \) and 
\( X \setminus A \) are \( \SFC_\kappa \)-spaces when endowed with the relativization of 
\( \tau' \). Therefore by Theorem~\ref{thm:Ikappacl} they are in 
\( I^\kappa_{\mathrm{cl}} \), and thus \( \kappa \)-analytic. If instead of \( A \) being 
\(\tau' \)-clopen we just have \( A \in \Bor_\kappa(X,\tau') \), use 
Proposition~\ref{prop:changeoftopology} to further refine \( \tau' \) to a suitable 
\( \tau'' \) turning \( A \) into a \( \tau'' \)-clopen set, and then apply the previous 
argument to \( \tau'' \) instead of \( \tau' \).

We now move to part~\ref{cor:limitationchangesoftopologies-b}. 
By Corollary~\ref{cor:Ikappaclforsfc}, under our assumption all 
\( \kappa \)-analytic subsets of \( X \) are in \( I^\kappa_{\mathrm{cl}} \). It follows that for every 
\( \kappa \)-bianalytic set \( B \subseteq X \) there is a continuous 
bijection \( f \colon C \to X \) with 
\( C \subseteq \pre{\kappa}{\kappa} \) closed and \( f^{-1}(B) \) clopen relatively to  \( C \): 
just fix \( f_0 \colon C_0 \to B \) and \( f_1 \colon C_1 \to X \setminus B \) witnessing 
\( B  \in I^\kappa_{\mathrm{cl}} \) and \( X \setminus B \in I^\kappa_{\mathrm{cl}} \), 
respectively,  let \( C \) be the sum of \( C_0 \) and \( C_1 \), and set \( f = f_0 \cup f_1 \). 
Pushing forward the topology of \( C \) along \( f \) we then get the desired \( \tau' \) (the 
fact that \( \tau' \supseteq \tau_b \) follows again from the continuity of \( f \)). 
\end{proof}

Corollary~\ref{cor:limitationchangesoftopologies} justifies our claim that there might be non-\( \kappa \)-Borel sets that can be turned into clopen sets via a nice change of topology (see item~\ref{limitationtochangeoftopology} on page~\pageref{limitationtochangeoftopology}). Indeed, 
when \( \kappa \) is uncountable there are
\( \kappa \)-bianalytic subsets of \( \pre{\kappa}{\kappa} \) which are not \( \kappa \)-Borel (see e.g.\ \cite[Theorem 18]{Friedman:2011nx}), and Corollary~\ref{cor:limitationchangesoftopologies}\ref{cor:limitationchangesoftopologies-b} applies to them.

Having extended the notion of a \( \kappa \)-analytic set to arbitrary 
\( \SFC_\kappa \)-spaces, it is natural to ask whether the deep analysis carried out 
in~\cite{LuckeSchlMR3430247} can be transferred to such wider context. Some of the results have already been explicitly extended in this paper, see e.g.\ Corollaries~\ref{cor:simultaneousembeddings}, \ref{cor:borelasinjimage}, and~\ref{cor:Ikappaclforsfc},  which extend, respectively,~\cite[Proposition 1.3, Lemma 1.11, and Corollary 1.14]{LuckeSchlMR3430247}. Other results naturally transfer to our general setup using the ideas developed so far. 
 
\begin{question}
Which other results from \cite{LuckeSchlMR3430247} hold for $\kappa$-analytic 
subsets of arbitrary $\SFC_\kappa$-spaces? For example, for which \( \SFC_\kappa \)-spaces \( X \) is there  a closed \( C \subseteq X \) which is not a continuous image of the whole \( \pre{\kappa}{\kappa} \), or a non-\( \kappa \)-Borel set \( A \subseteq X \) which is an injective continuous image of \( \pre{\kappa}{\kappa} \)?
\end{question}

Similar questions can be raised about the  analogue of the Hurewicz dichotomy for 
\( \kappa \)-analytic subsets of $\pre{\kappa}{\kappa}$ studied 
in~\cite{LuckeMottSchlMR3557473}.

We now move to generalizations of the perfect set property.

\begin{definition} \label{def:kappaPSP}
Let \( X \) be a space. A set \( A \) has the \markdef{\( \kappa \)-perfect set property} (\markdef{\( \kappa \)-{\PSP}} for short) if either \( |A| \leq \kappa \) or \( A \) contains a closed set homeomorphic to \( \pre{\kappa}{2} \).
\end{definition}

The \( \kappa \)-Borel version of the \( \kappa \)-{\PSP} would then read as follows: either \( |A| \leq \kappa \) or \( A \) contains a \( \kappa \)-Borel set which is \( \kappa \)-Borel isomorphic to \( \pre{\kappa}{2} \). However, for most applications it is convenient to consider a slightly stronger reformulation.

\begin{definition} \label{def:BorelkappaPSP}
Let \( X \) be a space. A set \( A \) has the \markdef{Borel \( \kappa \)-perfect set property} (\markdef{\bPSP} for short) if either \( |A| \leq \kappa \) or there is a \emph{continuous} \( \kappa \)-Borel embedding  \( f \colon \pre{\kappa}{2} \to A \) with \( f(\pre{\kappa}{2}) \in \Bor_\kappa(X) \).
\end{definition}

By Corollary~\ref{cor:borelimages}, if the \( \kappa \)-Borel structure of \( X \) is standard then the fact that \( f(\pre{\kappa}{2}) \in \Bor_\kappa(X) \) follows from the other conditions.
Notice also that the {\bPSP} is in general strictly weaker than the \( \kappa \)-{\PSP}. For example, consider the space \( X =  \pre{\kappa}{2} \) equipped with the \emph{product} topology. It is a \( \kappa \)-perfect \( \SC_\kappa \)-space, hence \( X \) itself and all its open subsets have the {\bPSP} (see Corollary~\ref{cor:perfectSC_kappa} below). However, \( X \) is compact: thus its clopen subsets cannot contain a closed homeomorphic copy of the generalized Cantor space, which clearly is not compact, and thus they do not have the \( \kappa \)-{\PSP}.

In Definitions~\ref{def:kappaPSP} and~\ref{def:BorelkappaPSP} we are of course allowing the special case \( A = X \). With this 
terminology, Theorem~\ref{thm:PSP for perfect SC_kappa kappa-additive} asserts that 
the \( \kappa \)-{\PSP} holds for all \( \kappa \)-additive \( \kappa \)-perfect 
\( \SC_\kappa \)-spaces.
From this and Proposition~\ref{prop:refintetoGpolish}, we can 
easily infer the following fact, which is just a more precise formulation 
of~\cite[Proposition 3.1]{CoskSchlMR3453772}. (Of course here we are also using that if 
\( \tau \) is \( \kappa \)-perfect, then the topology from the proof of 
Proposition~\ref{prop:refintetoGpolish} is still \( \kappa \)-perfect.)

\begin{corollary} \label{cor:perfectSC_kappa}
If \( X \) is a nonempty \( \kappa \)-perfect \( \SC_\kappa \)-space, then there is a 
continuous \( \kappa \)-Borel embedding from \( \pre{\kappa}{2} \) into \( X \) (with a 
\( \kappa \)-Borel range, necessarily). In particular,  the \emph{\bPSP} holds for 
\( \kappa \)-perfect \( \SC_\kappa \)-spaces.
\end{corollary}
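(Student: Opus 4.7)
My proof plan is to reduce to the $\kappa$-additive case by refining the topology, apply Theorem~\ref{thm:PSP for perfect SC_kappa kappa-additive}, and then carefully verify that the resulting homeomorphism, viewed through the original topology, becomes a continuous $\kappa$-Borel embedding with $\kappa$-Borel range.

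Concretely, let $(X,\tau)$ be a nonempty $\kappa$-perfect $\SC_\kappa$-space. Apply Proposition~\ref{prop:refintetoGpolish} to obtain a refinement $\tau' \supseteq \tau$ such that $(X,\tau')$ is a $\kappa$-additive $\SC_\kappa$-space and $\Bor_\kappa(X,\tau') = \Bor_\kappa(X,\tau)$. The first step I would check is that $(X,\tau')$ remains $\kappa$-perfect: since $\tau'$ is generated by $<\kappa$-sized intersections of $\tau$-open sets and $\kappa$ is regular, any point that is $\kappa$-isolated in $\tau'$ would already be $\kappa$-isolated in $\tau$, contradicting the hypothesis on $(X,\tau)$.

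Next I would apply Theorem~\ref{thm:PSP for perfect SC_kappa kappa-additive} to $(X,\tau')$ to obtain a set $C \subseteq X$ that is $\tau'$-superclosed and $\tau'$-homeomorphic to $\pre{\kappa}{2}$ via some homeomorphism $f \colon \pre{\kappa}{2} \to C$. I claim this $f$ is the desired map from $\pre{\kappa}{2}$ into $(X,\tau)$. Continuity of $f$ into $(X,\tau)$ follows from continuity into $(X,\tau')$ together with $\tau \subseteq \tau'$. Injectivity is immediate. The range $C$ is $\tau'$-closed, hence $\tau'$-$\kappa$-Borel, and since $\Bor_\kappa(X,\tau') = \Bor_\kappa(X,\tau)$ we conclude $C \in \Bor_\kappa(X,\tau)$.

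It remains to verify that $f$ is a $\kappa$-Borel embedding, i.e., that $f^{-1} \colon (C,\tau\restriction C) \to \pre{\kappa}{2}$ is $\kappa$-Borel. For any open $U \subseteq \pre{\kappa}{2}$, the set $f(U)$ is $\tau'$-open in $C$ (as $f$ is a $\tau'$-homeomorphism), hence belongs to $\Bor_\kappa(X,\tau')\restriction C = \Bor_\kappa(X,\tau)\restriction C$. Thus $(f^{-1})^{-1}(U) = f(U)$ is $\kappa$-Borel in $(C,\tau\restriction C)$, as required. The main potential obstacle is the perfectness preservation step, but as noted above it is essentially automatic from the construction of $\tau'$ in Proposition~\ref{prop:refintetoGpolish}; everything else is bookkeeping using the agreement of the $\kappa$-Borel structures.
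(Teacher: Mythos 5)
Your proposal is correct and follows exactly the paper's intended argument: refine the topology via Proposition~\ref{prop:refintetoGpolish}, observe that \( \kappa \)-perfectness survives the refinement (the paper only remarks this parenthetically, whereas you supply the regularity argument), and apply Theorem~\ref{thm:PSP for perfect SC_kappa kappa-additive}, with the agreement of the \( \kappa \)-Borel structures handling the rest. The verification that \( f \) is a continuous \( \kappa \)-Borel embedding with \( \kappa \)-Borel range is exactly the bookkeeping the paper leaves implicit.
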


It is instead independent of \( \ZFC \) whether the (Borel) \( \kappa \)-perfect set 
property holds for (\( \kappa \)-additive) \( \SFC_\kappa \)-spaces. Indeed, if there is a 
\( \kappa \)-Kurepa tree \( T \) with \( < 2^\kappa \)-many branches, then no 
\( \kappa \)-{\PSP}-like property can hold for \( [T] \) because of cardinality reasons. 
Moreover, this statement is also independent from $\ZFC+\mathsf{GCH}$. To see this, suppose that  $\mathsf{GCH}$ holds and $T$ is a \emph{slim} $\kappa$-Kurepa tree, that is, a \( \kappa \)-Kurepa tree such that $|\Lev_\alpha(T)|\leq |\alpha|$ for all infinite $\alpha<\kappa$. 
In $L$, such trees exist for uncountable regular $\kappa$ if and only if $\kappa$ is not ineffable \cite[Section 2, Theorems 9 \& 10]{jensenkunen1969}. Since \( T \) is \( \kappa \)-Kurepa, \( [T] \) has more than \( \kappa \)-many branches. 
If there were a continuous $\kappa$-Borel embedding $f \colon \pre{\kappa}{2} \rightarrow [T]$, then by~\cite[Lemma 2.9]{LuckeMottSchlMR3557473} the tree \( T \) would contain a splitting superclosed subtree \( T' \).
By an argument similar to Claim~\ref{claimcharcantor}, we could then find \( \alpha < \kappa \) such that \( | \Lev_\alpha(T')| > |\alpha| \), contradicting the fact that \( T \supseteq T' \) is slim. 
An analogous argument works for \emph{stationary slim} $\kappa$-Kurepa trees as in \cite[Section 1.2]{lucke2020descriptive}, with the upshot that the existence of such trees is consistent with $\kappa$ being supercompact \cite[Section IV.2.3]{Friedman:2011nx}.

On 
the other hand, in~\cite{Schlicht2017ac} the third author constructed a model of 
\( \ZFC \) where all ``definable'' subsets of \( \pre{\kappa}{\kappa} \) (including e.g.\ all 
\( \kappa \)-analytic sets and way more) have the {\bPSP}: combining 
Proposition~\ref{prop:refintetoGpolish} with Theorem~\ref{thm:charK-Polish} we then 
get that such property holds for arbitrary \( \SFC_\kappa \)-spaces and their ``definable'' 
subsets. Indeed, the same reasoning combined with 
Proposition~\ref{prop:condition_for_std_Borel_topology} can be used to show that 
\emph{if} the {\bPSP} holds for all closed subsets of, say, \( \pre{\kappa}{\kappa} \), then 
it automatically propagates to all \( \kappa \)-Borel subsets of all 
\( \SFC_\kappa \)-spaces. Moreover, we can even just start from superclosed sets 
(equivalently, up to homeomorphism, from \( \kappa \)-additive \( \SC_\kappa \)-spaces). 
Indeed, if \( C = [T] \subseteq \pre{\kappa}{\kappa} \) is closed, then arguing as in the 
proof of Lemma~\ref{lem:superclosed} we can construct a superclosed set \( C' = [T'] \) 
such that \( C \subseteq C' \), \( |C'| \leq \max \{ |C|, \kappa \} \), and all points in 
\( C' \setminus C \) are isolated in \( C ' \). It follows that if the {\bPSP} holds for \( C' \) 
then it holds also for \( C \) because if
 \( f \colon \pre{\kappa}{2} \to C' \) is a continuous injection then 
 \( f(\pre{\kappa}{2}) \subseteq C \) (use the fact that \( \pre{\kappa}{2} \) is  perfect).
Summing up we thus have:

\begin{theorem} \label{thm:PSPpropagates}
The following are equivalent:
\begin{enumerate-(a)}
\item
the \emph{\bPSP} holds for superclosed subsets of \( \pre{\kappa}{\kappa} \);
\item
the \emph{\bPSP} holds for closed subsets of \( \pre{\kappa}{\kappa} \);
\item
the \emph{\bPSP} holds for all (\( \kappa \)-additive) \( \SFC_\kappa \)-spaces;
\item
the \emph{\bPSP} holds for all \( \kappa \)-Borel subsets of all \( \SFC_\kappa \)-spaces.
\end{enumerate-(a)}
\end{theorem}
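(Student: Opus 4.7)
The plan is to prove the cycle of implications $(d) \Rightarrow (c) \Rightarrow (b) \Rightarrow (a)$ (each trivial by inclusion) together with the reverse nontrivial implications $(a) \Rightarrow (b) \Rightarrow (c) \Rightarrow (d)$. The trivial directions are immediate: every closed subset of $\pre{\kappa}{\kappa}$ is a $\kappa$-additive $\SFC_\kappa$-space (even $\GG$-Polish) by Theorem~\ref{thm:charK-Polish}, every $\SFC_\kappa$-space $X$ is a $\kappa$-Borel subset of itself, and superclosed sets are closed. So the real content is the reverse chain.

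For $(a) \Rightarrow (b)$, assume \bPSP holds for all superclosed subsets of $\pre{\kappa}{\kappa}$ and let $C \subseteq \pre{\kappa}{\kappa}$ be closed with $|C| > \kappa$. Perform the superclosed completion of Lemma~\ref{lem:superclosed} to obtain a superclosed $C' \supseteq C$ with $C' \setminus C = \{s \conc 0^{(\kappa)} \mid s \in L(T)\}$, where $T$ is a pruned tree with $C = [T]$. A direct inspection shows each new point $s \conc 0^{(\kappa)}$ is isolated in $C'$: the only element of $[T']$ extending $s \conc 0^{(\alpha)}$ is $s \conc 0^{(\kappa)}$ itself (because $s \notin T$ forces every extension in $T'$ of $s \conc 0^{(\alpha)}$ to lie on the branch $s \conc 0^{(\kappa)}$). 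Since $|C'| \geq |C| > \kappa$, assumption $(a)$ yields a continuous $\kappa$-Borel embedding $f \colon \pre{\kappa}{2} \to C'$ with $\kappa$-Borel range. Perfectness of $\pre{\kappa}{2}$ rules out that any $f(y)$ be isolated in $f(\pre{\kappa}{2})$ (hence in $C'$), so $f(\pre{\kappa}{2}) \subseteq C$; and $f(\pre{\kappa}{2}) \in \Bor_\kappa(C')$ implies $f(\pre{\kappa}{2}) \in \Bor_\kappa(C)$ since $C \in \Bor_\kappa(C')$.

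For $(b) \Rightarrow (c)$, let $(X,\tau)$ be an $\SFC_\kappa$-space. Apply Proposition~\ref{prop:refintetoGpolish} to find a $\kappa$-additive $\SFC_\kappa$ topology $\tau' \supseteq \tau$ with $\Bor_\kappa(X,\tau') = \Bor_\kappa(X,\tau)$, and then Theorem~\ref{thm:charK-Polish} to identify $(X,\tau')$ with a closed subset of $\pre{\kappa}{\kappa}$. Applying $(b)$ and pulling back, any resulting continuous $\kappa$-Borel embedding $f \colon \pre{\kappa}{2} \to (X,\tau')$ remains continuous into $(X,\tau)$ (coarser topology), retains $\kappa$-Borel range, and is still a $\kappa$-Borel embedding because the two topologies share the same $\kappa$-Borel structure. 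For $(c) \Rightarrow (d)$, given $B \in \Bor_\kappa(X)$ with $(X,\tau)$ an $\SFC_\kappa$-space, apply Proposition~\ref{prop:changeoftopology} with the single set $B$ to obtain $\tau' \supseteq \tau$, again with the same $\kappa$-Borel structure, turning $(X,\tau')$ into a $\kappa$-additive $\SFC_\kappa$-space in which $B$ is clopen. Then $(B,\tau'\restriction B)$ is open (hence $G^\kappa_\delta$) in $(X,\tau')$, so by Proposition~\ref{prop:G_deltasubspaces} it is itself an $\SFC_\kappa$-space. Invoke $(c)$ to produce an embedding $f \colon \pre{\kappa}{2} \to (B, \tau'\restriction B)$ and pull back along $\tau'\restriction B \supseteq \tau \restriction B$ exactly as in the previous step.

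The main obstacle is the step $(a) \Rightarrow (b)$: one needs a completion of a closed set to a superclosed one that adds only isolated points, so that any perfect continuous injective image is forced back into the original set. Fortunately, Lemma~\ref{lem:superclosed}'s explicit construction already has this property, so once that observation is made the remaining implications follow mechanically from the topology-refinement toolkit (Propositions~\ref{prop:refintetoGpolish} and~\ref{prop:changeoftopology}) combined with Theorem~\ref{thm:charK-Polish}.
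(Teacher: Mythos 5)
Your proof is correct and follows essentially the same route as the paper: the superclosed completion of Lemma~\ref{lem:superclosed} adds only isolated points, so perfectness of \( \pre{\kappa}{2} \) forces any continuous injective image back into the original closed set, and the remaining implications are handled by the topology-refinement machinery (Proposition~\ref{prop:refintetoGpolish}, Theorem~\ref{thm:charK-Polish}, Proposition~\ref{prop:changeoftopology}). The only cosmetic difference is that you invoke Proposition~\ref{prop:changeoftopology} directly for the Borel-subset step where the paper cites Proposition~\ref{prop:condition_for_std_Borel_topology}, which amounts to the same argument.
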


The Borel \( \kappa \)-perfect set property for \( \SFC_\kappa \)-spaces has important consequences for their classification up to \( \kappa \)-Borel isomorphisms. 

\begin{corollary} \label{cor:kappaBorelisom}
Suppose that the \emph{\bPSP} holds for (super)closed subsets of \( \pre{\kappa}{\kappa} \). If \( X \) is an \( \SFC_\kappa \)-space with \( |X| > \kappa \), then \( X \) is \( \kappa \)-Borel isomorphic to \( \pre{\kappa}{2} \).
In particular, any two \( \SFC_\kappa \)-spaces \( X,Y \) are \( \kappa \)-Borel isomorphic if and only if \( |X| = |Y| \).
\end{corollary}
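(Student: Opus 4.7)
The plan is to establish \( \kappa \)-Borel embeddings in both directions between \( X \) and \( \pre{\kappa}{2} \), and then conclude by a \( \kappa \)-Borel version of the Schr\"oder--Bernstein theorem. For the embedding \( X \hookrightarrow \pre{\kappa}{2} \) no hypothesis on the {\bPSP} is needed: since \( X \) is an \( \SFC_\kappa \)-space, Theorem~\ref{th:standardBorel} says its canonical \( \kappa \)-Borel structure is standard, so \( X \) is \( \kappa \)-Borel isomorphic to some \( \kappa \)-Borel \( B \subseteq \pre{\kappa}{\kappa} \); composing with the embedding of \( \pre{\kappa}{\kappa} \) onto a \( G^\kappa_\delta \) subset of \( \pre{\kappa}{2} \) provided by Fact~\ref{fct:Baire_is_G_delta_of_Cantor} gives a \( \kappa \)-Borel embedding \( j_0 \colon X \to \pre{\kappa}{2} \).

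For the reverse direction \( \pre{\kappa}{2} \hookrightarrow X \) the standing assumption enters in an essential way. By Theorem~\ref{thm:PSPpropagates}, the {\bPSP} for (super)closed subsets of \( \pre{\kappa}{\kappa} \) propagates to all \( \SFC_\kappa \)-spaces; since \( |X| > \kappa \), this yields a continuous \( \kappa \)-Borel embedding \( j_1 \colon \pre{\kappa}{2} \to X \) (with \( \kappa \)-Borel range).

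It thus remains to prove a \( \kappa \)-Borel Schr\"oder--Bernstein theorem: if \( (Z_0, \mathscr{B}_0) \) and \( (Z_1, \mathscr{B}_1) \) are standard \( \kappa \)-Borel spaces admitting \( \kappa \)-Borel embeddings \( k_0 \colon Z_0 \to Z_1 \) and \( k_1 \colon Z_1 \to Z_0 \), then they are \( \kappa \)-Borel isomorphic. The classical proof, e.g.\ \cite[Theorem 15.7]{KechrisMR1321597}, adapts verbatim: set \( A_0 = Z_0 \setminus k_1(Z_1) \), recursively \( A_{n+1} = (k_1 \circ k_0)(A_n) \), put \( A = \bigcup_{n \in \omega} A_n \), and define \( h \colon Z_0 \to Z_1 \) by \( h(z) = k_0(z) \) if \( z \in A \) and \( h(z) = k_1^{-1}(z) \) otherwise. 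Each \( A_n \) is \( \kappa \)-Borel by Corollary~\ref{cor:borelimages} (applied inductively to the \( \kappa \)-Borel embedding \( k_1 \circ k_0 \)), hence so is \( A \); since \( k_1^{-1} \) is \( \kappa \)-Borel on the \( \kappa \)-Borel set \( k_1(Z_1) \), the map \( h \) is a \( \kappa \)-Borel bijection whose inverse admits the analogous piecewise description and is therefore \( \kappa \)-Borel as well. The main obstacle is precisely this Schr\"oder--Bernstein step: in the absence of Luzin separation for \( \kappa \)-analytic sets the identity ``\( \kappa \)-Borel injective image of a \( \kappa \)-Borel set is \( \kappa \)-Borel'' is nontrivial, and this is exactly where Corollary~\ref{cor:borelimages}, itself a delicate consequence of the change-of-topology machinery of Section~\ref{sec:standardBorel}, pays off.

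For the ``in particular'' clause, the forward implication is immediate since \( \kappa \)-Borel isomorphisms are bijections. For the converse, if \( |X| = |Y| > \kappa \) then both spaces are \( \kappa \)-Borel isomorphic to \( \pre{\kappa}{2} \) by the first part and hence to each other. If instead \( |X| = |Y| \leq \kappa \), then because the standard \( \kappa \)-Borel structure of \( X \) separates points, each singleton \( \{x\} \) equals the intersection of the at most \( \kappa \)-many \( \kappa \)-Borel sets separating \( x \) from the remaining points, hence is \( \kappa \)-Borel; consequently every subset of \( X \) (as a \( \leq \kappa \)-sized union of singletons) is \( \kappa \)-Borel, and the same holds for \( Y \), so any bijection between \( X \) and \( Y \) is automatically a \( \kappa \)-Borel isomorphism.
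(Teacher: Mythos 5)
Your proof is correct and follows essentially the same route as the paper: a \( \kappa \)-Borel embedding of \( X \) into \( \pre{\kappa}{2} \) via Theorem~\ref{th:standardBorel}, a reverse embedding via Theorem~\ref{thm:PSPpropagates}, and a \( \kappa \)-Borel Cantor--Schr\"oder--Bernstein argument. You merely make explicit two points the paper leaves implicit --- that Corollary~\ref{cor:borelimages} is what keeps the sets \( A_n \) \( \kappa \)-Borel in the Schr\"oder--Bernstein step, and the \( |X| = |Y| \leq \kappa \) case of the ``in particular'' clause --- both of which are handled correctly.
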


In particular, if the \emph{\bPSP} holds for (super)closed subsets of \( \pre{\kappa}{\kappa} \)
then up to \( \kappa \)-Borel isomorphism
 the generalized Cantor space \( \pre{\kappa}{2} \) is the unique \( \SFC_\kappa \)-space of size \( > \kappa \).

\begin{proof}
By our assumption and Theorem~\ref{thm:PSPpropagates}, \( \pre{\kappa}{2} \) is 
\( \kappa \)-Borel isomorphic to a \( \kappa \)-Borel subsets of \( X \). Conversely, \( X \) is 
\( \kappa \)-Borel isomorphic to a \( \kappa \)-Borel subset of \( \pre{\kappa}{2} \) by 
Theorem~\ref{th:standardBorel} and the fact that \( \pre{\kappa}{2} \) and 
\( \pre{\kappa}{\kappa} \) are \( \kappa \)-Borel isomorphic. Thus the result follows from 
the natural \( \kappa \)-Borel version of the usual Cantor-Schr\"oder-Bernstein argument.
\end{proof}

Using the same arguments and Corollary~\ref{cor:perfectSC_kappa} we also obtain that 
when restricting to \( \kappa \)-perfect \( \SC_\kappa \)-spaces the conclusions of 
Corollary~\ref{cor:kappaBorelisom} hold unconditionally---see~\cite[Corollary 3.7]{CoskSchlMR3453772}.

Without the \bPSP, the $\kappa$-Borel isomorphism types of $ \SFC_\kappa$-spaces are not necessarily uniquely determined by their cardinality. For instance, suppose that $T$ is a slim $\kappa$-Kurepa tree in a model of \( \mathsf{GCH} \), so that \( [T] \) is a \( \kappa \)-additive \( \SFC_\kappa \)-space with the same size  of \( \pre{\kappa}{2} \) (by $2^\kappa = \kappa^+$). Suppose towards a contradiction that there exists a $\kappa$-Borel isomorphism $f\colon  \pre{\kappa}{2}\rightarrow [T]$. 
Since $f$ is continuous on a comeager subset of $\pre{\kappa}{2}$,  there is an injective continuous map $g \colon \pre{\kappa}{2} \to [T]$. 
As in the paragraph after Corollary~\ref{cor:perfectSC_kappa}, this contradicts the fact that \( T \) is slim.

When dealing with topological game theory, one often wonders about what kind of 
winning strategies the players have at disposal in the given game. In this context, one can differentiate 
between perfect information strategies, that need to know all previous moves in order 
to be able to give an answer, and tactics, that instead rely only on the last move to 
determine the answer. 
The two notions do not coincide in general: there are games where a player has a 
winning strategy, but not a winning tactic. For example, ~\cite{DebsMR817083} 
describes a topological space where player $\pII$ has a winning strategy but no winning 
tactic in the classical strong Choquet game (see also \cite{lupton2014}).  Debs' example 
can easily be adapted to show that there exists a (non-$\kappa$-additive) topological 
space of weight $\kappa$ where player $\pII$ has a winning strategy but not a winning 
tactic in $\fCho^s_\kappa(X)$ (or in $\Cho^s_\kappa(X)$), or that there is a 
$\kappa$-additive topological space of weight $>\kappa$ with the same property.
In contrast, Proposition~\ref{prop:charadditiveSCandSFCkappaspaces} implies that 
for $\kappa$-additive spaces of weight $\leq \kappa$
the two notions of winning tactic and winning strategy can be used interchangeably.

\begin{corollary}\label{cor:strategies_and_tactics}
Let $X$ be a $\kappa$-additive space of weight $ \leq \kappa$. Then $\pII$ has a winning strategy in $\fCho^s_\kappa(X)$ (resp. $\Cho^s_\kappa(X)$) if and only if she has a winning tactic.
\end{corollary}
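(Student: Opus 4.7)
The forward direction from tactics to strategies is trivial, so the plan is to derive a winning tactic from a winning strategy. By Proposition~\ref{prop:charadditiveSCandSFCkappaspaces}, the existence of a winning strategy for $\pII$ in $\fCho^s_\kappa(X)$ (respectively $\Cho^s_\kappa(X)$) ensures that $X$ is homeomorphic to a closed (respectively superclosed) subspace $[T]$ of $\pre{\kappa}{\kappa}$. Since the existence of a winning tactic transfers across homeomorphisms, I would work directly on $[T]$.

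The tactic I would propose for $\pII$ on $[T]$ is the following: when $\pI$ plays $(U,x)$, let $s$ be the shortest initial segment of $x$ such that $\Nbhd_s \cap [T] \subseteq U$, and have $\pII$ respond with $V = \Nbhd_s \cap [T]$. Such $s$ exists because $\{\Nbhd_{x \restriction \alpha} \cap [T] \mid \alpha<\kappa\}$ is a neighborhood base of $x$ in $[T]$, and the move is automatically legal since $V \subseteq U$. The core of the analysis is then to study a run $\langle (U_\alpha, x_\alpha), V_\alpha \mid \alpha<\kappa \rangle$ in which $\pII$ follows this tactic, writing $V_\alpha = \Nbhd_{t_\alpha} \cap [T]$. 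Two easy observations would be: (a) $t_\alpha \in T$ for every $\alpha$, because $x_\alpha \in V_\alpha$ witnesses $\Nbhd_{t_\alpha} \cap [T] \neq \emptyset$; (b) the sequence $(t_\alpha)_{\alpha<\kappa}$ is $\subseteq$-nondecreasing, because $V_{\alpha+1} \subseteq V_\alpha$ forces $\Nbhd_{t_{\alpha+1}} \subseteq \Nbhd_{t_\alpha}$. Hence at each limit $\gamma \leq \kappa$ one has $\bigcap_{\alpha<\gamma} V_\alpha = \Nbhd_{t^*_\gamma} \cap [T]$, where $t^*_\gamma = \bigcup_{\alpha<\gamma} t_\alpha$.

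In the strong Choquet game with $T$ superclosed, nonemptiness at each limit $\gamma < \kappa$ is immediate: $\leng(t^*_\gamma) < \kappa$ by regularity, so $t^*_\gamma \in T$ by $<\kappa$-closedness of $T$, and hence $\Nbhd_{t^*_\gamma} \cap [T] \neq \emptyset$ by prunedness. At $\gamma = \kappa$, either $\sup \leng(t_\alpha) = \kappa$ (in which case $t^*_\kappa \in [T]$ because each initial segment $t^*_\kappa \restriction \beta$ coincides with some $t_\alpha \restriction \beta \in T$), or the lengths stabilize below $\kappa$ and the intersection equals an eventual value $\Nbhd_{t_{\alpha_0}} \cap [T]$. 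For the fair game with $T$ merely pruned, the winning condition comes for free if $\bigcap V_\alpha = \emptyset$ at some intermediate limit $\gamma < \kappa$. Otherwise, at each such $\gamma$ we still have $t^*_\gamma \in T$, and the remaining task is to control the situation at the final stage $\kappa$.

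The main obstacle, in my view, is precisely this final-stage analysis in the fair case: one must rule out that $\delta := \sup_{\alpha<\kappa} \leng(t_\alpha)$ is a limit ordinal strictly less than $\kappa$ that is never attained by any single $t_\alpha$. The plan is to invoke regularity of $\kappa$: since $\cof(\delta) < \kappa$, there exists $\beta < \kappa$ such that $(\leng(t_\alpha))_{\alpha<\beta}$ is already cofinal in $\delta$; at that limit stage, the fair hypothesis forces $t^*_\beta \in T$, and then the nondecreasing bounded sequence $(t_\alpha)_{\alpha \geq \beta}$ must stabilize at $t^*_\beta$, yielding $\bigcap_{\alpha<\kappa} V_\alpha = \Nbhd_{t^*_\beta} \cap [T] \neq \emptyset$. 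The remaining case $\delta = \kappa$ is handled by the same prunedness argument as in the strong Choquet case. This cofinality trick is the one genuinely delicate step; the rest is bookkeeping around the definition of the tactic.
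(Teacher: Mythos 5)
Your proof is correct and follows essentially the same route as the paper's: reduce to a (super)closed set $[T]$ via Proposition~\ref{prop:charadditiveSCandSFCkappaspaces} and use the tactic that always answers with a basic set $\Nbhd_s\cap[T]$, so that $\pII$'s moves line up along a single branch of $T$ and the final intersection is $\Nbhd_{\bigcup_\alpha t_\alpha}\cap[T]$. If anything, your cofinality analysis of the final stage in the fair game is more careful than the paper's one-line parenthetical, which simply asserts that this intersection contains $\bigcup_\alpha t_\alpha$ or some extension of it.
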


\begin{proof}
For the nontrivial direction,
by Proposition~\ref{prop:charadditiveSCandSFCkappaspaces} we can restrict the 
attention to (super)closed subsets of \( \pre{\kappa}{\kappa} \), so let $X=[T]$ for some
pruned tree $T\subset \pre{<\kappa}{\kappa}$. Then any function $\sigma \colon \tau \to \tau$ 
that associate to every nonempty open set $U \subseteq [T]$ a basic open set 
$\Nbhd_s\cap [T]\subset U$ for some $s\in T$ is a winning tactic for $\pII$ in 
$\fCho^s_\kappa([T])$. Indeed, the answers $N_{s_\alpha}\cap [T]$  of $\sigma$ at every 
round $\alpha$ are such that $s_\alpha\subset s_\beta$ for any $\alpha<\beta<\kappa$. 
Hence, if the game does not stop before $\kappa$-many rounds, then the final 
intersection $\bigcap_{\alpha<\kappa}\Nbhd_{s_\alpha}\cap [T]$ is nonempty, since it 
contains $s=\bigcup_{\alpha<\kappa} s_\alpha$ (or any sequence extending $s$, if $s$ has length $<\kappa$).
A similar argument shows that if $T$ is superclosed, than the tactic described above is winning also for $\Cho^s_\kappa([T])$.
\end{proof}

For more details about perfect information strategies and tactics, and for some interesting problems in the field, see for example \cite{ScheepersMR2367385}.

In this paper we generalized metrics by allowing values in structures different from $\RR$. Another possible generalization of metric spaces is given by uniform spaces.
In this context  we have a notion of completeness as well, which is however strictly weaker than the notions we considered so far.
Indeed, all \( \GG \)-metrizable spaces of weight \( \leq \kappa \) (that is, by Theorem~\ref{thm:charK-metrizable}, all subspaces of \( \pre{\kappa}{\kappa} \)) are paracompact and Hausdorff, and this entails that they are completely uniformizable.
It follows that any non-\( G^\kappa_\delta \) subset of \( \pre{\kappa}{\kappa} \) is a 
completely uniformizable space of weight \( \leq \kappa \) which is not \( \SFC_\kappa \) 
and, more generally, that the class of completely uniformizable spaces of weight 
\( \leq \kappa \) properly extends the class of all $\kappa$-additive spaces with weight 
\( \leq \kappa \) (recall that we are tacitly restricting to regular Hausdorff spaces). Thus 
by Theorem~\ref{thm:char_complexity_std_Borel_subspaces} such a class contains spaces 
which are not even \( \kappa \)-Borel isomorphic to an \( \SFC_\kappa \)-space (that is, 
they are not standard \( \kappa \)-Borel): this seems to rule out the possibility of 
developing a decent (generalized) descriptive set theory in such a generality. 
Nevertheless, from the topological perspective
it would still be interesting to know whether this property also  extends the class of 
non-$\kappa$-additive $\SFC_\kappa$-spaces or at least $\SC_\kappa$-spaces. 

\begin{question}
Is every $\SFC_\kappa$-space completely uniformizable?
\end{question}

\end{document}